\theoremstyle{definition}
\newtheorem{defn}{Definition}[section]
\newtheorem{constr}[defn]{Construction}
\newtheorem*{defn*}{Definition}
\theoremstyle{plain}
\newtheorem{lem}[defn]{Lemma}
\newtheorem{cor}[defn]{Corollary}
\newtheorem{prop}[defn]{Proposition}
\newtheorem{thm}[defn]{Theorem}
\newtheorem*{thm*}{Theorem}
\newtheorem*{lem*}{Lemma}
\theoremstyle{plain}
\newtheorem{introthm}{Theorem}
\newtheorem{introcor}[introthm]{Corollary}
\newtheorem{introquestion}[defn]{Question}
\theoremstyle{remark}
\newtheorem{rmk}[defn]{Remark}
\crefname{lem}{Lemma}{Lemmas}
\crefname{defn}{Definition}{Definitions}
\crefname{cor}{Corollary}{Corollaries}
\crefname{introcor}{Corollary}{Corollaries}
\crefname{prop}{Proposition}{Propositions}
\crefname{thm}{Theorem}{Theorems}
\crefname{introthm}{Theorem}{Theorems}
\crefname{rmk}{Remark}{Remarks}
\crefname{section}{Section}{Sections}
\crefname{equation}{Equivalence}{Equivalences}
\newcommand{\Z}{{\mathbb{Z}}}
\newcommand{\N}{{\mathbb{N}}}
\newcommand{\Q}{{\mathbb{Q}}}
\renewcommand{\S}{{\mathbb S}}
\newcommand{\finfld}[1]{\mathbb{F}_{#1}}
\newcommand{\colim}[1]{{\underset{\substack{#1}}{\operatorname{colim}}\,}}
\newcommand{\colimil}[1]{{{\operatorname{colim}}_{#1}\,}}
\renewcommand{\lim}[1]{{\underset{\substack{#1}}{\operatorname{lim}}\,}}
\newcommand{\limil}[1]{{{\operatorname{lim}}_{#1}\,}}
\newcommand{\Map}[3]{{\operatorname{Map}_{#1}({#2},{#3})}}
\newcommand{\Spc}[1]{\operatorname{Spc}(#1)}
\newcommand{\SpcEt}[1]{\operatorname{Spc}_{\et}(#1)}
\newcommand{\Sp}{{\operatorname{Sp}}}
\newcommand{\SH}[1]{\mathcal{SH}(#1)}
\newcommand{\SHone}[1]{\mathcal{SH}^{S^1}(#1)}
\newcommand{\SHet}[1]{\mathcal{SH}_{\et}(#1)}
\newcommand{\SHoneet}[1]{\mathcal{SH}^{S^1}_{\et}(#1)}
\newcommand{\op}[1]{#1^{\operatorname{op}}}
\newcommand{\Sus}{{\Sigma^\infty}}
\newcommand{\SusP}{{\Sigma^\infty_{\Pp^1}}}
\newcommand{\pSus}{{\Sigma^\infty_+}}
\newcommand{\Loop}{{\Omega^\infty}}
\newcommand{\pLoop}{{\Omega_*^\infty}}
\newcommand{\pLoopP}{{\Omega_{\Pp^1,*}^\infty}}
\newcommand{\Fib}[1]{{\operatorname{fib}\mathopen{}\left(#1\right)\mathclose{}}}
\newcommand{\topos}[1]{{\mathcal {#1}}}
\newcommand{\Cat}[1]{{\mathcal {#1}}}
\newcommand{\spectra}[1]{{\operatorname{Sp}(\topos {#1})}}
\newcommand{\Stab}[1]{{\operatorname{Sp}(#1)}}
\newcommand{\An}{{\mathcal An}}
\newcommand{\Ab}{{\mathcal{A}b}}
\newcommand{\AbObj}[1]{{\mathcal{A}b(#1)}}
\newcommand{\Disc}[1]{{\operatorname{Disc}(#1)}}
\newcommand{\ShvTop}[2]{\operatorname{Shv}_{#1}({#2})}
\newcommand{\ShvTopH}[2]{\operatorname{Shv}^{\operatorname{h}}_{#1}({#2})}
\newcommand{\id}[1]{\operatorname{id}_{#1}}
\newcommand{\Mod}[2]{\operatorname{Mod}_{#1}(#2)}
\newcommand{\fgt}[1]{\operatorname{fgt}_{#1}}
\newcommand{\loccitdot}{\textit{loc.~cit..}}
\newcommand{\et}{\operatorname{\acute{e}t}}
\newcommand{\nis}{\operatorname{nis}}
\newcommand{\smet}[1]{{#1}_{\et}}
\newcommand{\Sm}[1]{\operatorname{Sm}_{#1}}
\newcommand{\nilop}{\operatorname{nil}}
\newcommand{\complete}[1]{{#1}\phantom{}^{\wedge}_{p}}
\newcommand{\compnil}[1]{{#1}\phantom{}^{\wedge}_{p,\nilop}}
\newcommand{\AffSpc}[1]{\mathbb{A}^{#1}}
\newcommand{\Pp}{\mathbb{P}}
\newcommand{\Gm}{\mathbb{G}_m}
\newcommand{\Ga}{\mathbb{G}_a}
\newcommand{\sh}{\operatorname{sh}}
\DeclareMathOperator{\Spec}{Spec}
\newcommand{\sslash}{/\mkern-6mu/}
\newcommand{\cartsymb}{\arrow[dr, phantom,"\scalebox{1.5}{\color{black}$\lrcorner$}", near start, color=black]}
\newcommand{\cd}[2]{\operatorname{cd}_{#1}(#2)}
\newcommand{\shens}[2]{{#1}_{\overline{#2}}^{\operatorname{sh}}}
\newcommand\noloc{%
  \nobreak
  \mspace{6mu plus 1mu}
  {:}
  \nonscript\mkern-\thinmuskip
  \mathpunct{}
  \mspace{2mu}
}
\author{Klaus Mattis\footnote{\href{www.klaus-mattis.com}{www.klaus-mattis.com}}}
\date{\today}
\title{Unstable \'etale motives}
\begin{document}

\maketitle 

\begin{abstract}
    We prove a rigidity result for certain $p$-complete étale $\AffSpc{1}$-invariant sheaves 
    of anima over a qcqs finite-dimensional base scheme $S$ of bounded étale cohomological dimension with $p$ invertible on $S$.
    This generalizes results of 
    Suslin--Voevodsky \cite{suslin1996singular}, Ayoub \cite{ayoub2014realisation}, Cisinski--Déglise \cite{Cisinski_2015}, and Bachmann \cite{Bachmann2021etalerigidity,bachmann2021remarksetalemotivicstable}
    to the unstable setting. Over a perfect field we exhibit a large class of sheaves to which our main theorem applies,
    in particular the $p$-completion of the étale sheafification of any 
    $2$-effective $2$-connective motivic space,
    as well as the $p$-completion of any $4$-connective $\AffSpc{1}$-invariant étale sheaf.
    We use this rigidity result to prove (a weaker version of) an étale analog of Morel's 
    theorem stating that for a Nisnevich sheaf of abelian groups, strong $\AffSpc{1}$-invariance implies strict $\AffSpc{1}$-invariance.
    Moreover, this allows us to construct an unstable étale realization functor on $2$-effective $2$-connective motivic spaces.
\end{abstract}

\hypersetup{pdfborder=0 0 0}
\tableofcontents
\hypersetup{pdfborder=1 1 1}
\newpage

\section{Introduction}
It is well-known that étale cohomology with locally constant $\finfld{p}$-coefficients 
is $\AffSpc{1}$-invariant, cf.\ \cite[Corollaire XV.2.2]{SGA4}. 
One can ask whether the converse is true, resulting in rigidity results 
of Suslin--Voevodsky \cite{suslin1996singular}, Ayoub \cite{ayoub2014realisation}
or Cisinski--Déglise \cite{Cisinski_2015}.
The most general of such results is the following version for spectral coefficients,
due to Bachmann.
\begin{thm*}[{\cite{Bachmann2021etalerigidity,bachmann2021remarksetalemotivicstable}}]
    Let $S$ be a scheme, and $p$ a prime invertible on $S$.
    There are canonical equivalences
    \begin{equation*}
        \complete{\ShvTopH{\et}{\smet{S}, \Sp}} \xrightarrow{\simeq} \complete{\SHoneet{S}} \xrightarrow{\simeq} \complete{\SHet{S}}
    \end{equation*}
    between $p$-complete hypersheaves of spectra on the small étale site of $S$,
    $p$-complete $\AffSpc{1}$-invariant étale hypersheaves of spectra on $\Sm{S}$,
    and its ($p$-complete) $\Gm$-stabilization.
\end{thm*}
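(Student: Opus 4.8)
The plan is to prove the two displayed equivalences separately. The first, identifying $p$-complete $\AffSpc{1}$-invariant étale hypersheaves of spectra on $\Sm{S}$ with hypersheaves of spectra on the small étale site $\smet{S}$, is a rigidity statement. I would realize the comparison via the morphism of sites from the big étale site of $\Sm{S}$ to $\smet{S}$ sending $X\in\Sm{S}$, with structure map $f_{X}$, to the small étale site of $X$; the induced $u^{*}$ takes a hypersheaf $F$ of spectra on $\smet{S}$ to the hypersheaf $X\mapsto R\Gamma_{\et}(X, f_{X}^{*}F)$ on $\Sm{S}$. Since $p$ is invertible on $S$, the $p$-complete object $u^{*}F$ is $\AffSpc{1}$-invariant by homotopy invariance of étale cohomology with $\Z/p$-coefficients (the input \cite[Corollaire XV.2.2]{SGA4} recalled above, applied to the homotopy sheaves of each $F/p^{n}$), so $u^{*}$---composed with hypercompletion if $S$ has unbounded cohomological dimension---lands in $\complete{\SHoneet{S}}$, with right adjoint $u_{*}$ the restriction to $\smet{S}$. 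The unit $F\to u_{*}u^{*}F$ is the comparison $F(X)\to R\Gamma_{\et}(X, F|_{X_{\et}})$ for $X\in\smet{S}$, which is an equivalence because $F$ is already an étale hypersheaf; hence $u^{*}$ is fully faithful on $p$-complete objects and it remains to prove the counit $u^{*}u_{*}G\to G$ is an equivalence for every $G\in\complete{\SHoneet{S}}$.

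By étale hyperdescent in the base $S$ it suffices to check this counit on stalks, i.e.\ for $S=\Spec R$ with $R$ strictly Henselian local and separably closed residue field $k$. Then, $R$ being strictly Henselian, hypersheaves of spectra on $\smet{S}$ are just spectra, so $\complete{\ShvTopH{\et}{\smet{S},\Sp}}\simeq\complete{\Sp}$ and the claim becomes that every $p$-complete $\AffSpc{1}$-invariant étale hypersheaf of spectra on $\Sm{R}$ is constant on its global sections. Rigidity for Henselian pairs---Gabber's theorem, promoted to $\AffSpc{1}$-invariant étale sheaves of spectra---makes the base change $\complete{\SHoneet{R}}\to\complete{\SHoneet{k}}$ an equivalence, reducing to $R=k$ separably closed. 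Over $k$ I would run the Suslin--Voevodsky rigidity argument \cite{suslin1996singular}: $\AffSpc{1}$-invariance makes $G(k)\to G(\AffSpc{1}_{k})$ an equivalence with both $0^{*}$ and $1^{*}$ inverse to it, so the two rational points of $\AffSpc{1}$ induce the same map on $G$; combined with the fact that, after $p$-complete étale localization, the homotopy sheaves of $G$ acquire enough functoriality (transfers along finite étale covers) for the classical specialization argument for relative curves to apply, this forces the value of $G$ on any smooth $k$-scheme to be computed pointwise, so $G$ is locally constant, hence constant as $k$ is separably closed.

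For the second equivalence, $\complete{\SHet{S}}$ is the $\Gm$-stabilization of $\complete{\SHoneet{S}}$, so it suffices to see that the pointed space $\Gm$ is already $\otimes$-invertible after $p$-completion in $\SHoneet{S}$. The Kummer sequence $1\to\mu_{p}\to\Gm\xrightarrow{\,p\,}\Gm\to1$ of étale sheaves identifies $\Sus\Gm/p$ étale-locally with a suspension of the mod-$p$ unit---its connecting map is the Bott element and $\mu_{p}\simeq\Z/p$ étale-locally since $p$ is invertible---so $\Sus\Gm/p$, and therefore $\Sus\Gm$, is invertible in $\complete{\SHoneet{S}}$ and $\Gm$-stabilization is an equivalence. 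I expect the main obstacle to be the rigidity step over a separably closed field, in particular establishing the transfer structure on $p$-complete $\AffSpc{1}$-invariant étale sheaves that the Suslin--Voevodsky argument needs (this is the genuinely geometric input, following \cite{Bachmann2021etalerigidity}); a secondary issue, arising when $S$ has unbounded étale cohomological dimension, is to ensure hypercompleteness is preserved under $u^{*}$ and $p$-completion, handled by a further reduction to the finite-dimensional case.
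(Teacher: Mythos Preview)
The paper does not supply its own proof of this statement: it is quoted in the introduction, with attribution to \cite{Bachmann2021etalerigidity,bachmann2021remarksetalemotivicstable}, as the stable rigidity theorem that motivates the paper's unstable generalization. There is therefore no proof in the paper to compare your proposal against; the paper merely \emph{uses} the stable result as a black box (for instance in Step~1 of the proof of \cref{lem:rigidity:ess-img}, where \cite[Theorem 6.6]{Bachmann2021etalerigidity} is invoked).

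That said, your outline is broadly faithful to Bachmann's actual strategy in the cited references: fully faithfulness of the comparison functor from the small site, essential surjectivity via reduction to strictly Henselian local bases and then to separably closed fields (using Gabber rigidity for the first reduction), and finally a Suslin--Voevodsky style argument over a separably closed field; the $\Gm$-invertibility via a Kummer/Bott-element construction is likewise the approach of \cite[\S3]{Bachmann2021etalerigidity}. Two remarks. First, the functor you write down, $F\mapsto\bigl(X\mapsto R\Gamma_{\et}(X,f_X^*F)\bigr)$, is the \emph{right} adjoint $\iota_*$ of restriction, whereas the equivalence in Bachmann's theorem is realized by the \emph{left} adjoint $\iota_!$ (left Kan extension followed by hypersheafification); your unit/counit bookkeeping is therefore transposed, though after $p$-completion the two agree and this does not affect the logic. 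Second, you correctly flag the transfer/correspondence structure needed for the Suslin--Voevodsky step as the crux; in \cite{Bachmann2021etalerigidity} this is handled not by producing transfers directly but by a dévissage through truncations and an explicit analysis of the twisting spectrum, so your ``expect the main obstacle to be\ldots'' is accurate but the resolution is somewhat different from what you sketch.
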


The main goal of this article is a generalization of Bachmann's result to the unstable setting.
We write $\cd{}{k}$ for the étale cohomological dimension of a field $k$.
Our main theorem is the following.
\begin{introthm} [{\cref{lem:etale-nilpotent:4-connective-main-thm}}] \label{lem:intro:main-thm-cor}
    Let $k$ be a perfect field with $\cd{}{k} < \infty$, and $p \neq \operatorname{char}(k)$ be a prime.
    There is a canonical equivalence
    \begin{equation*}
        L_p \iota_! \colon \complete{\left(\ShvTopH{\et}{\smet{k}}_{*,\ge 4}\right)} \xrightarrow{\simeq} \complete{\left(\ShvTopH{\et}{\Sm{k}}_{*,\AffSpc{1},\ge 4}\right)}.
    \end{equation*}
    Here, $\complete{(\ShvTopH{\et}{\smet{k}}_{*,\ge 4})}$ denotes the full subcategory 
    of $\ShvTopH{\et}{\smet{k}}_{*}$ (the $\infty$-topos of étale hypersheaves on $\smet{k}$) consisting of those sheaves that are the $p$-completion 
    of a $4$-connective sheaf, and similarly $\complete{(\ShvTopH{\et}{\Sm{k}}_{*,\AffSpc{1},\ge 4})}$ 
    denotes the full subcategory of $\ShvTopH{\et}{\Sm{k}}_{*}$ consisting of those sheaves that are the $p$-completion 
    of an $\AffSpc{1}$-invariant $4$-connective sheaf.
\end{introthm}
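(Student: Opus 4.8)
The plan is to reduce everything to Bachmann's stable rigidity theorem (recalled above) by a Postnikov-tower argument, using finiteness of $\cd{}{k}$ to control the tower. Write $\iota\colon\smet{k}\hookrightarrow\Sm{k}$ for the inclusion of sites, with induced adjunctions $\iota_!\dashv\iota^*\dashv\iota_*$ on étale hypersheaves. Two formal remarks dispose of the bookkeeping. First, $\iota$ is a fully faithful inclusion of sites inducing the étale topology on $\smet{k}$, so $\iota^*\iota_!\simeq\operatorname{id}$, hence $\iota_!$ is fully faithful; moreover an étale cover of an étale $k$-scheme is again one, so $\iota^*$ is literally restriction of sections and therefore commutes with all limits, in particular with $p$-completion. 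Consequently $L_p\iota_!$ is automatically fully faithful on $p$-complete objects, because $\operatorname{Map}(L_p\iota_!X,L_p\iota_!Y)\simeq\operatorname{Map}(X,\iota^*L_p\iota_!Y)\simeq\operatorname{Map}(X,L_p\iota^*\iota_!Y)\simeq\operatorname{Map}(X,L_pY)$. So only two things remain: (a) $L_p\iota_!$ sends a $4$-connective small étale hypersheaf to a $p$-complete, $4$-connective, $\AffSpc{1}$-invariant hypersheaf on $\Sm{k}$; and (b) for every object $G$ of the target, the counit $L_p\iota_!\iota^*G\to G$ is an equivalence.

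I would establish (a) and (b) together by induction along Postnikov towers, the base case being Eilenberg--MacLane sheaves $K(A,n)$ with $n\ge 4$. The hypothesis $\cd{}{k}<\infty$ makes the étale hypertopoi of $\smet{k}$ and $\Sm{k}$ Postnikov-complete and supplies uniform convergence bounds, so that in any fixed homotopy degree the Postnikov limit is computed by a bounded sub-tower; on that bounded piece $\iota_!$ commutes with the now-finite limits and $p$-completion stays controlled. A $4$-connective object is $\AffSpc{1}$-nilpotent, in the sense that it is the limit of a tower of principal fibrations, i.e.\ pullbacks of path fibrations over $K(\pi_n,n+1)$ classified by $k$-invariants living in étale cohomology with coefficients in the homotopy sheaves (abelian, since $n\ge 2$). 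For this inclusion of sites $\iota_!$ is compatible with the formation of Eilenberg--MacLane objects and with these pullback squares, and $p$-completion preserves such nilpotent fiber sequences; so $L_p\iota_!$ applied to a Postnikov tower over $\smet{k}$ yields again a tower of principal fibrations over $\Sm{k}$ with fibres the $p$-completions of $K(\iota_!\pi_n,n)$. Since $\AffSpc{1}$-invariant sheaves are closed under limits (their inclusion is a right adjoint), assertion (a) reduces to its Eilenberg--MacLane case.

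For the Eilenberg--MacLane case, let $A'$ be a $p$-complete abelian étale sheaf on $\smet{k}$. Then $A'=\operatorname{lim}_m A'/p^m$, each $A'/p^m$ being torsion of order prime to $\operatorname{char}(k)$; since $\iota_!$ preserves cokernels, the $p$-completion of $\iota_!A'$ is $\operatorname{lim}_m\iota_!(A'/p^m)$, and each $\iota_!(A'/p^m)$ is $\AffSpc{1}$-invariant by the classical $\AffSpc{1}$-invariance of étale cohomology with torsion coefficients prime to the characteristic \cite{SGA4}. A limit of $\AffSpc{1}$-invariant sheaves is $\AffSpc{1}$-invariant and $B^n$ commutes with $p$-completion on $n$-connective objects, so $L_p\iota_!K(A',n)$ is $\AffSpc{1}$-invariant; this proves (a). For (b), hypercompleteness and the Whitehead theorem reduce ``$L_p\iota_!\iota^*G\to G$ is an equivalence'' to conservativity of $\iota^*$ on the abelian sheaves occurring as homotopy sheaves of such $G$; and Bachmann's stable rigidity theorem \cite{Bachmann2021etalerigidity,bachmann2021remarksetalemotivicstable}, applied to Eilenberg--MacLane spectra, identifies $p$-complete $\AffSpc{1}$-invariant abelian étale sheaves on $\Sm{k}$ with $p$-complete abelian sheaves on $\smet{k}$ via $\iota^*$, giving the required conservativity. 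Running this back through the Postnikov tower (with the same convergence input) yields (b).

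The main obstacle is exactly this interaction of $\iota_!$ and of $p$-completion with the \emph{infinite} Postnikov limit: $\iota_!$ is merely a left adjoint, $p$-completion is not left exact, and $\AffSpc{1}$-localization commutes with neither truncation nor colimits, so every step above has to be justified degree by degree using the bound on $\cd{}{k}$. A second point that must be settled along the way is that $\pi_n$ of a $p$-complete $\AffSpc{1}$-invariant hypersheaf is again $\AffSpc{1}$-invariant---a highly connected étale analogue of Morel's implication ``strongly $\AffSpc{1}$-invariant $\Rightarrow$ strictly $\AffSpc{1}$-invariant''---which here is not hypothesized but must be extracted from stable rigidity. Perfectness of $k$ enters through the structure theory of $\AffSpc{1}$-homotopy sheaves over a field on which these arguments rely.
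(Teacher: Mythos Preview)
Your argument for fully faithfulness and for part (a) is essentially what the paper does (cf.\ \cref{lem:rigidity:iota-shriek-p-fully-faithful} and \cref{lem:rigidity:factors-through}). The genuine gap is in (b), and you have in fact identified it yourself without resolving it.

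Your approach to (b) runs the Postnikov tower of $G$ and needs, for the induction step, that $L_p K(\pi_n G,n)$ is $\AffSpc{1}$-invariant, equivalently that the homotopy sheaves of a $4$-connective $\AffSpc{1}$-invariant étale sheaf are (after $p$-completion) strictly étale $\AffSpc{1}$-invariant. You propose to ``extract this from stable rigidity'', but stable rigidity only applies to sheaves of spectra that are already $\AffSpc{1}$-invariant; it gives you nothing about $H\pi_n G$ unless you first know $\pi_n G$ is $\AffSpc{1}$-invariant. In the Nisnevich world this is Morel's theorem; in the étale world it is precisely \cref{lem:intro:strictly}, which the paper proves \emph{after} and \emph{using} \cref{lem:intro:main-thm-cor}. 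So the argument as written is circular. (A second, smaller issue: to run your conservativity argument you also need $\pi_n(\iota_!\iota^*G)\cong\iota_!\pi_n\iota^*G$ to be $\AffSpc{1}$-invariant, which is again not automatic.)

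The paper breaks the circularity by abandoning the Postnikov tower of $G$ entirely. Instead it writes $G$ as a sifted colimit of sheaves of the form $\Sigma^4 V$ via the canonical bar resolution of \cref{lem:canonical:main-thm}, so it suffices to show $L_{\et,\AffSpc{1}}(S^4\wedge V)$ is $p$-completely small for arbitrary $V$. Over an algebraically closed field, the retract $L_p S^2\to L_p\Pp^1\to L_p S^2$ of \cref{lem:retract:main-thm} exhibits $L_p L_{\et,\AffSpc{1}}(S^4\wedge V)$ as a retract of $L_p L_{\et,\AffSpc{1}}(\Pp^1\wedge\Pp^1\wedge V)$; the latter is the étale sheafification of a $2$-effective $2$-connective motivic space, to which the Asok--Bachmann--Hopkins $\Pp^1$-Postnikov tower theory (\cref{lem:etale-nilpotent:effective-Postnikov-refinement}) and \cref{lem:etale-nilpotent:2-effective-nilpotent} apply, reducing to stable rigidity for the $\Pp^1$-infinite-loop layers. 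The general perfect field case is then deduced by base change to $\overline{k}$ (\cref{lem:etale-nilpotent:4-connective-nilpotent}). None of this touches the individual homotopy sheaves of $G$, which is how the circularity is avoided; the étale Morel-type statement you wanted as input is instead obtained as output in \cref{lem:strictly:main-thm}.
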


\begin{rmk}
    The above result is likely true for integers $2 \le m < 4$, but our methods 
    are not strong enough to show this.
    Note however that the functor 
    \begin{equation*}
        L_p \iota_! \colon \complete{\left(\ShvTopH{\et}{\smet{k}}_*\right)} \to \complete{\left(\ShvTopH{\et}{\Sm{k}}_{*,\AffSpc{1}}\right)}
    \end{equation*}
    is \emph{not} an equivalence. Indeed, any sheaf of sets is automatically $p$-complete (cf.\ \cite[Lemma 3.13]{mattis2024unstable}),
    but there are $\AffSpc{1}$-invariant étale sheaves of sets that are not coming from the small étale site,
    e.g.\ $\Gm$.
\end{rmk}

We will deduce this theorem from a related version that holds over any base scheme 
with some bound on the étale cohomological dimension of its residue fields.
To state the result, we need the following definition:
\begin{defn*}[\cref{def:etale-nilpotent:etale-nilpotent}]
    Let $S$ be a scheme, and $X \in \ShvTopH{\et}{\Sm{S}}$.
    We say that $X$ is $p$-completely étale $\AffSpc{1}$-nilpotent
    if there exists a highly connected tower $(X_n)_n$ under $X$ (i.e.\ the connectivity of the transition maps $X_{n+1} \to X_n$ goes to infinity as $n \to \infty$) 
    such that the following holds:
    \begin{enumerate}
        \item $X_0 \cong *$.
        \item $X \cong \limil{n} X_n$.
        \item Each of the morphisms $X_{n+1} \to X_n$ is part of a fiber sequence $X_{n+1} \to X_n \to K_n$
            where $K_n \cong \pLoop E_n$ is a connected infinite loop sheaf
            for some $E_n \in \Stab{\ShvTopH{\et}{\Sm{S}}}_{\ge 1}$.
        \item For every $n$ the object $L_p E_n = \limil{k} E_n \sslash p^k$ is $\AffSpc{1}$-invariant.
    \end{enumerate}
    In other words, an sheaf $X$ is $p$-completely étale $\AffSpc{1}$-nilpotent
    if it admits a principal refinement of (a version of) the Postnikov-tower of $X$,
    such that the layers are $\AffSpc{1}$-invariant after $p$-completion.
\end{defn*}

\begin{introthm}[{\cref{lem:rigidity:main-thm,lem:etale-nilpotent:nilpotent-A1-inv}}] \label{lem:intro:main-thm}
    Let $S$ be a qcqs scheme of finite Krull-dimension with $\sup_{s \in S} \operatorname{cd}(s) < \infty$,
    and let $p$ be a prime invertible on $S$.
    The functor 
    \begin{equation*}
        \iota_!^p \colon \compnil{\ShvTopH{\et}{\smet{S}}} \to \complete{\ShvTopH{\et}{\Sm{S}}}
    \end{equation*}
    is fully faithful,
    with essential image exactly the $p$-completions of $p$-completely étale $\AffSpc{1}$-nilpotent sheaves.
    In particular, every sheaf in the essential image is $\AffSpc{1}$-invariant.
\end{introthm}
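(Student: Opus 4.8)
The plan is: full faithfulness is a formal consequence of the adjunction, while the description of the essential image is obtained by dévissage along the towers of \cref{def:etale-nilpotent:etale-nilpotent}, reducing it to Bachmann's stable rigidity theorem applied to a single layer. Set $\topos X := \ShvTopH{\et}{\smet S}$ and $\topos Y := \ShvTopH{\et}{\Sm S}$, and let $\iota^*\colon \topos Y \to \topos X$ be restriction along $\smet S \hookrightarrow \Sm S$, with left adjoint $\iota_!$, so $\iota_!^p = L_p\circ\iota_!$. The facts I would set up first: $\smet S$ has finite limits and the composite $\smet S \xrightarrow{\iota} \Sm S \to \topos Y$ preserves them (a fibre product of schemes étale over $S$ is again étale over $S$), so $\iota_!$ is \emph{left exact}; and the site inclusion is fully faithful and compatible with the topologies (an étale cover of an object of $\smet S$ lies in $\smet S$), so $\iota^*$ preserves all limits and colimits (being the right adjoint of $\iota_!$ and the left adjoint of $\iota_*$) and $\iota^*\iota_! \simeq \id{\topos X}$; in particular $\iota^*$ commutes with $L_p$. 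Full faithfulness of $\iota_!^p$ (indeed on all of $\complete{\topos X}$) is then the chain
\[
\Map{\topos Y}{\iota_!^p X}{\iota_!^p Y} \simeq \Map{\topos X}{X}{\iota^* L_p\iota_! Y} \simeq \Map{\topos X}{X}{L_p\iota^*\iota_! Y} \simeq \Map{\topos X}{X}{Y}
\]
for $p$-complete $X,Y \in \topos X$, using that $L_p\iota_! Y$ and $Y$ are $p$-complete.

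Next I would establish the key layer computation: for $E \in \Stab{\topos X}_{\ge 1}$ there is a canonical equivalence $L_p\iota_!(\pLoop E) \simeq \pLoop(L_p\iota_!^{\Sp}E)$, where $\iota_!^{\Sp}\colon \Stab{\topos X}\to\Stab{\topos Y}$ is the functor induced by the left-exact $\iota_!$ on spectrum objects. Here I use that in an $\infty$-topos connective spectrum objects are the same as grouplike $E_\infty$-objects and that left-exact functors preserve this structure, giving $\iota_!(\pLoop E)\simeq\pLoop(\iota_!^{\Sp}E)$; then that $\pLoop$ commutes with $p$-completion on $\ge 1$-connective spectra (the tower $\{E\sslash p^k\}$ has levelwise-surjective transition maps on homotopy sheaves, so a Mittag-Leffler argument applies, and the $\ge 1$-connectivity is essential here—it already fails for $\Gm$), giving $L_p\pLoop(\iota_!^{\Sp}E)\simeq\pLoop(L_p\iota_!^{\Sp}E)$. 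Finally, Bachmann's theorem identifies $L_p\iota_!^{\Sp}$ with the equivalence $\complete{\ShvTopH{\et}{\smet S,\Sp}}\xrightarrow{\,\simeq\,}\complete{\SHoneet S}$, so $L_p\iota_!^{\Sp}E$ is $\AffSpc 1$-invariant, and it is still $\ge 1$-connective (by the same Mittag-Leffler argument $p$-completion preserves $\ge 1$-connectivity, and $\iota_!^{\Sp}$ is $t$-exact). This is the step I expect to be the main obstacle: it is the single point where the stable and unstable theories, and $\iota_!$ and $p$-completion, must be made compatible, and it rests on both left-exactness of $\iota_!$ and the connectivity-restricted behaviour of $p$-completion with respect to $\pLoop$.

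Granting this, the identification of the essential image is bookkeeping. Since $\iota_!$ is left exact and preserves effective epimorphisms it preserves the connectivity of morphisms, so for a highly connected tower $(X_n)$ with $X_0\simeq *$ the map $\iota_!(\limil n X_n)\to\limil n\iota_! X_n$ is an equivalence (the fibres stay highly connected and $\topos Y$ is hypercomplete), and $L_p$ likewise commutes with limits of highly connected towers and with the (principal, hence nilpotent) fibre sequences of the definition. Thus, applying $\iota_!^p$ to the defining tower of $X\in\compnil{\topos X}$—with fibre sequences $X_{n+1}\to X_n\to\pLoop E_n$, $E_n\in\Stab{\topos X}_{\ge1}$—yields a tower presenting $\iota_!^p X$ with fibre sequences $\iota_!^p X_{n+1}\to\iota_!^p X_n\to\pLoop(L_p\iota_!^{\Sp}E_n)$ whose layers are $\AffSpc 1$-invariant, so $\iota_!^p X$ is $p$-completely étale $\AffSpc 1$-nilpotent. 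Conversely, given such a $Y$, replace it by $L_p Y$ (again of this type, with $p$-complete $Y_n$ and layers $\pLoop G_n$, $G_n\in\Stab{\topos Y}_{\ge1}$ $\AffSpc 1$-invariant and $p$-complete—using $\ge1$-connectivity to commute $L_p$ past $\pLoop$), so it suffices to realize such a $Y$; by Bachmann each $G_n\simeq L_p\iota_!^{\Sp}E_n$ for a unique $p$-complete $E_n\in\Stab{\topos X}_{\ge1}$, and I build $X\simeq\limil n X_n\in\compnil{\topos X}$ with $\iota_!^p X\simeq Y$ inductively: given $\iota_!^p X_n\simeq Y_n$, transport the $k$-invariant $Y_n\to\pLoop G_n\simeq\iota_!^p(\pLoop E_n)$ through full faithfulness (valid since $X_n,\pLoop E_n$ are $p$-complete) to a map $X_n\to\pLoop E_n$, set $X_{n+1}:=\Fib{X_n\to\pLoop E_n}$, and note $\iota_!^p X_{n+1}\simeq\Fib{Y_n\to\pLoop G_n}\simeq Y_{n+1}$; passing to the limit gives $\iota_!^p X\simeq Y$.

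Finally, every sheaf in the essential image is $\AffSpc 1$-invariant: being $p$-completely étale $\AffSpc 1$-nilpotent by the above, it is $\AffSpc 1$-invariant by \cref{lem:etale-nilpotent:nilpotent-A1-inv}, since the $\AffSpc 1$-invariant sheaves are closed under limits—hence under fibres of maps between them and under limits of highly connected towers—and $\pLoop$ of an $\AffSpc 1$-invariant spectrum is again $\AffSpc 1$-invariant. To summarize, full faithfulness and the concluding $\AffSpc 1$-invariance are formal, the tower manipulations are bookkeeping resting on left-exactness of $\iota_!$ and hypercompleteness, and the genuine content—the place where one goes beyond the stable theorem—is the layer computation $L_p\iota_!(\pLoop E)\simeq\pLoop(L_p\iota_!^{\Sp}E)$.
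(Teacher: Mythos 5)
Your overall strategy---dévissage along the tower and reducing to Bachmann's stable rigidity on the layers---matches the paper's, and your alternative route to full faithfulness via $\iota^*\iota_!\simeq\id{}$ (rather than the paper's triple-adjoint argument through $\iota_*^p$) is a perfectly good variant. The genuine gap is in the tower bookkeeping. You assert that $\iota_!(\limil{n}X_n)\to\limil{n}\iota_!X_n$ is an equivalence because ``the fibres stay highly connected and $\topos Y$ is hypercomplete'', but hypercompleteness alone does not yield this: a cocontinuous left-exact functor between hypercomplete $\infty$-topoi need not commute with limits of highly connected towers. What is actually needed is \cref{lem:highly-connected:main-thm}, which requires the source topos to be locally of finite homotopy dimension and the target to admit a locally finite dimensional cover; these are supplied by \cref{lem:etale:finite-htyp-dim,lem:etale:cover} and are exactly where the hypotheses on $S$ (qcqs, finite Krull dimension, $\sup_s\operatorname{cd}(s)<\infty$) enter. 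The same remark applies to your assertion that $L_p$ commutes with such limits, which is \cref{lem:p-comp:p-comp-tower-limit} and again rests on that cover structure. Your proposal never invokes the finiteness hypotheses on $S$, which is a warning sign.

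A second, subtler set of inaccuracies: you suppress the $\tau_{\ge 1}$'s throughout. For $E\in\Stab{\topos X}_{\ge 1}$ the correct identity is $L_p\pLoop E\simeq\tau_{\ge 1}\pLoop L_p E$, not $\pLoop L_p E$, since $L_p E$ need not be $1$-connective; and your Mittag--Leffler justification for it is not right as stated (the transition maps $\pi_*(E\sslash p^{k+1})\to\pi_*(E\sslash p^k)$ are not levelwise surjective on the torsion part of the long exact sequence). Likewise $L_p$ does not preserve fibre sequences on the nose: \cref{lem:fib-lem:main-thm} gives only $L_p\tau_{\ge 1}\Fib{f}\simeq\tau_{\ge 1}\Fib{L_p f}$. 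So your claim that applying $\iota_!^p$ to the tower ``yields a tower presenting $\iota_!^p X$ with fibre sequences $\iota_!^p X_{n+1}\to\iota_!^p X_n\to\pLoop(L_p\iota_!^{\Sp}E_n)$'' is not literally correct; one gets fibre sequences only up to $\tau_{\ge 1}$, which the paper manages by proving the inductive comparison $L_p W_n\cong L_p\iota_!\iota^* W_n$ via the Fibre Lemma rather than claiming $L_p$-preserved fibre sequences. Your ``replace $Y$ by $L_p Y$'' step inherits the same tension: the $p$-completed layers are $\tau_{\ge 1}\pLoop L_p E_n=\pLoop(\tau_{\ge 1}L_p E_n)$, and $\tau_{\ge 1}L_p E_n$ is $1$-connective but not $p$-complete, so the dichotomy ``$G_n$ both $1$-connective and $p$-complete'' you rely on does not come for free. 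The paper avoids this by proving $W$ is $p$-completely small directly (so the preimage is just $\iota^* W$, which is nilpotent because $\iota^*$ preserves the tower) rather than reconstructing the preimage inductively.
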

We will deduce \cref{lem:intro:main-thm-cor} from \cref{lem:intro:main-thm} 
by doing a careful analysis of which étale sheaves are actually $p$-completely étale $\AffSpc{1}$-nilpotent,
which we will now explain.
\subsection*{Étale \texorpdfstring{$\AffSpc{1}$}{A1}-nilpotent sheaves over a perfect field}
Over a perfect field $k$, by a result of Asok--Fasel--Hopkins,
in the Nisnevich local world there is no difference between Nisnevich nilpotence and Nisnevich $\AffSpc{1}$-nilpotence:
\begin{lem*}[{\cite[Proposition 3.2.3]{asok2022localization} and \cite[Lemma 5.19]{mattis2024unstable}}]
    Let $k$ be a perfect field and $X \in \ShvTop{\nis}{\Sm{k}}_*$ be connected and $\AffSpc{1}$-invariant.
    Then $X$ is nilpotent (in the topos theoretic sense,
    i.e.\ there exists a principal refinement of the Postnikov tower of $X$) 
    if and only if $X$ is $\AffSpc{1}$-nilpotent (i.e.\ there exists a 
    principal refinement of the Postnikov tower of $X$ with $\AffSpc{1}$-invariant layers).
\end{lem*}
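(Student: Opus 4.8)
The plan is to treat the two implications separately. The implication ``$\AffSpc{1}$-nilpotent $\Rightarrow$ nilpotent'' is formal: a principal refinement of the Postnikov tower of $X$ whose layers happen to be $\AffSpc{1}$-invariant is in particular a principal refinement of the Postnikov tower, so $X$ is nilpotent in the topos-theoretic sense. All of the content is in the converse, and the idea is to take the canonical principal refinement produced by nilpotence and verify that its layers are automatically $\AffSpc{1}$-invariant once $X$ is.

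So suppose $X$ is connected, $\AffSpc{1}$-invariant and nilpotent. Nilpotence means that $\pi_1(X)$ is a nilpotent sheaf of groups and acts nilpotently on each $\pi_n(X)$; the standard refinement then interpolates each Postnikov step $\tau_{\le n}X \to \tau_{\le n-1}X$ by the finite tower whose layers are torsors under $K\!\left(\Gamma^{j}\pi_n(X)/\Gamma^{j+1}\pi_n(X),\,n\right)$, where $\Gamma^{\bullet}$ denotes the lower central series of the $\pi_1(X)$-action (and, for $n=1$, the lower central series of $\pi_1(X)$ itself, whose successive quotients are abelian). Splicing these together over all $n$ and using $X\cong\limil{n}\tau_{\le n}X$ yields a principal refinement of the Postnikov tower of $X$; it therefore suffices to show that every abelian subquotient sheaf $Q$ of a homotopy sheaf appearing this way is strictly $\AffSpc{1}$-invariant, for then each layer $K(Q,n)$ is $\AffSpc{1}$-invariant and the refinement witnesses that $X$ is $\AffSpc{1}$-nilpotent.

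This is where perfectness of $k$ enters, via Morel's theory. Morel's $\AffSpc{1}$-connectivity theorem implies that $\AffSpc{1}$-localization preserves connectivity over $k$, so that the Postnikov sections $\tau_{\le n}X$ of the $\AffSpc{1}$-invariant space $X$ are again $\AffSpc{1}$-invariant and the homotopy sheaves $\pi_n(X)$ are strictly $\AffSpc{1}$-invariant for $n\ge 2$ and strongly $\AffSpc{1}$-invariant for $n=1$. For $n\ge 2$ one now only needs that strictly $\AffSpc{1}$-invariant sheaves are stable under subobjects and quotients among abelian Nisnevich sheaves (a theorem of Morel), since the $\Gamma^{j}\pi_n(X)$ and their quotients are subquotients of the strictly $\AffSpc{1}$-invariant sheaf $\pi_n(X)$. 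For $n=1$ one needs in addition that the terms of the lower central series of the strongly $\AffSpc{1}$-invariant sheaf of groups $\pi_1(X)$, and their abelian quotients, remain strongly $\AffSpc{1}$-invariant, and then invokes Morel's theorem that a strongly $\AffSpc{1}$-invariant \emph{abelian} sheaf over a perfect field is automatically strictly $\AffSpc{1}$-invariant. Putting these together gives that every $Q$ as above is strictly $\AffSpc{1}$-invariant, which completes the argument; it recovers \cite[Proposition 3.2.3]{asok2022localization} together with the translation into the language of $\infty$-topoi of \cite[Lemma 5.19]{mattis2024unstable}.

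The main obstacle is the $n=1$ case of the last step. The point is that subsheaves of a strongly $\AffSpc{1}$-invariant sheaf of (non-abelian) groups need not be strongly $\AffSpc{1}$-invariant in general, so one cannot simply quote closure properties; one has to show by hand that the lower central series $\gamma^{\bullet}\pi_1(X)$ of a \emph{nilpotent} strongly $\AffSpc{1}$-invariant sheaf of groups stays within strongly $\AffSpc{1}$-invariant sheaves (this is the substantive content of \cite[Proposition 3.2.3]{asok2022localization}), after which the passage from ``strongly'' to ``strictly'' for the abelian quotients uses Morel's hard theorem over a perfect field -- exactly the statement whose étale analogue the remainder of this paper is devoted to establishing.
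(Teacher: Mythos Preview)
The paper does not supply its own proof of this lemma; it appears in the introduction purely as a citation to \cite[Proposition 3.2.3]{asok2022localization} and \cite[Lemma 5.19]{mattis2024unstable}, serving as motivation for the \'etale question that follows. Your proposal is a faithful reconstruction of the argument in those references: the trivial direction is trivial, and for the converse you take the canonical principal refinement via the lower central series of the $\pi_1(X)$-action on each $\pi_n(X)$ and verify that the resulting abelian subquotients are strictly $\AffSpc{1}$-invariant, invoking Morel's theorems (homotopy sheaves of motivic spaces are strongly/strictly $\AffSpc{1}$-invariant over a perfect field, and strongly $\AffSpc{1}$-invariant abelian sheaves are strictly $\AffSpc{1}$-invariant). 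You also correctly isolate the genuine difficulty at $n=1$, where closure of strongly $\AffSpc{1}$-invariant sheaves of groups under passage to terms of the lower central series is not formal and is exactly the content of \cite[Proposition 3.2.3]{asok2022localization}.

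One small caveat: your phrasing ``strictly $\AffSpc{1}$-invariant sheaves are stable under subobjects and quotients among abelian Nisnevich sheaves'' is slightly too strong as stated. What one actually uses is that the category of strictly $\AffSpc{1}$-invariant sheaves is abelian and the inclusion into Nisnevich sheaves of abelian groups is exact, so kernels, cokernels and images of maps \emph{between} strictly $\AffSpc{1}$-invariant sheaves remain strictly $\AffSpc{1}$-invariant; the $\Gamma^j\pi_n(X)$ for $n\ge 2$ arise in this way (this also requires a small argument, handled in \cite{asok2022localization}), not as arbitrary subsheaves. This does not affect the correctness of your outline.
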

This naturally leads to the following analogous question:
\begin{introquestion} \label{question:intro:etale-nilpotent}
    Let $k$ be a perfect field, $p \neq \operatorname{char}(k)$ be a prime,
    and $X \in \ShvTopH{\et}{\Sm{k}}_*$ be connected $\AffSpc{1}$-invariant.
    If $X$ is étale nilpotent, is it true that $X$ is ($p$-completely) étale $\AffSpc{1}$-nilpotent?
\end{introquestion}
We are able to give a partial answer to this question in form of the following result, which is interesting on its own:
\begin{introthm}[{\cref{lem:etale-nilpotent:2-effective-nilpotent}}] \label{lem:intro:2-effective}
    Let $k$ be a perfect field of exponential characteristic $e$ with $\cd{}{k} < \infty$, $p \neq e$ a prime,
    and $X \in \Spc{k}_*$ be a motivic space that is $2$-effective, nilpotent (as a Nisnevich sheaf)
    and $\Z[\frac{1}{e}]$-local.
    Then $L_{\et} X$ is étale $\AffSpc{1}$-nilpotent (so in particular $\AffSpc{1}$-invariant
    and $p$-completely étale $\AffSpc{1}$-nilpotent).
\end{introthm}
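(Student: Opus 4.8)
The plan is to apply étale hypersheafification to a principal refinement of the Nisnevich Postnikov tower of $X$ and to verify directly that the resulting tower exhibits $L_{\et}X$ as étale $\AffSpc{1}$-nilpotent. \textbf{Step 1 (Nisnevich input).} Since $k$ is perfect and $X$ is $\Z[\tfrac1e]$-local, Morel's theorem applies, so every homotopy sheaf $\underline{\pi}_n(X)$ is strictly $\AffSpc{1}$-invariant and a $\Z[\tfrac1e]$-module; such sheaves form an abelian category $\mathbf{HI}(k)[\tfrac1e]$. As $X$ is nilpotent as a Nisnevich sheaf, I would fix a principal refinement of its Postnikov tower: pointed motivic spaces $(X_m)_{m\ge0}$ with $X_0\simeq *$, with $X\simeq\limil{m}X_m$, with the connectivity of $X_{m+1}\to X_m$ tending to $\infty$, and with fiber sequences $X_{m+1}\to X_m\to K(A_m,d_m)$ where $d_m\ge1$, $d_m\to\infty$, and each $A_m$ is a subquotient of some $\underline{\pi}_n(X)$, hence again an object of $\mathbf{HI}(k)[\tfrac1e]$. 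Using that the effective subcategory is compatible with the homotopy $t$-structure over a perfect field, $2$-effectivity of $X$ passes to the $\underline{\pi}_n(X)$ and, the $2$-effective objects forming a Serre subcategory of this abelian category, to their subquotients; thus each $A_m$ is $2$-effective.

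\textbf{Step 2 (sheafification of the tower).} The functor $L_{\et}\colon\ShvTop{\nis}{\Sm{k}}\to\ShvTopH{\et}{\Sm{k}}$ is a left-exact localization, so it preserves colimits, finite limits, and the connectivity of objects and of morphisms; in particular $\pi_nL_{\et}(-)\simeq a_{\et}\pi_n(-)$ on pointed objects, whence $L_{\et}K(A,d)\simeq K(a_{\et}A,d)$. Applying $L_{\et}$ to the tower yields $(L_{\et}X_m)_m$ with $L_{\et}X_0\simeq *$ and fiber sequences $L_{\et}X_{m+1}\to L_{\et}X_m\to K(a_{\et}A_m,d_m)\simeq\pLoop E_m$, where $E_m\coloneqq\Sigma^{d_m}H(a_{\et}A_m)\in\Stab{\ShvTopH{\et}{\Sm{k}}}_{\ge1}$. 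Moreover $\Fib{L_{\et}X\to L_{\et}X_m}\simeq L_{\et}\Fib{X\to X_m}$ is highly connective, so, $\ShvTopH{\et}{\Sm{k}}$ being hypercomplete, $L_{\et}X\simeq\limil{m}L_{\et}X_m$. This establishes conditions (1)--(3) for the tower $(L_{\et}X_m)$; it remains to prove that each $E_m$ is $\AffSpc{1}$-invariant, which gives a fortiori that $L_pE_m$ is $\AffSpc{1}$-invariant and, as $\AffSpc{1}$-invariant étale sheaves are closed under limits and extensions, that $L_{\et}X$ itself is $\AffSpc{1}$-invariant.

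\textbf{Step 3 (the crux: $\AffSpc{1}$-invariance of the layers).} The spectrum $\Sigma^{d_m}HA_m\in\SHone{k}$ is $\AffSpc{1}$-invariant since $A_m$ is strictly $\AffSpc{1}$-invariant, but its topos-theoretic étale hypersheafification $E_m$ need not be $\AffSpc{1}$-invariant for a general such $A_m$ --- already the constant sheaf $\Z$ fails over $\finfld{q}$ because of Artin--Schreier covers, which is exactly why $\Z[\tfrac1e]$-locality is assumed. The key lemma I would prove is that $2$-effectivity repairs this: for a $2$-effective $A_m$, the sheaf $a_{\et}A_m$ is strictly $\AffSpc{1}$-invariant as an étale sheaf, i.e.\ $E_m\in\SHoneet{k}$. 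I would argue prime by prime. For each prime $p\ne e$, effectivity provides the extra structure needed to see that $L_pE_m$ comes, via étale $p$-completion, from an object of the small étale site, so that Bachmann's rigidity theorem places $L_pE_m$ in $\complete{\SHet{k}}\simeq\complete{\SHoneet{k}}$ --- in particular it is $\AffSpc{1}$-invariant --- the bound $\cd{}{k}<\infty$ entering to guarantee convergence of the completions. Rationally, $E_m\otimes\Q$ is $\AffSpc{1}$-invariant because $A_m\otimes\Q$ is strictly $\AffSpc{1}$-invariant and the rational étale-localization of such a sheaf remains $\AffSpc{1}$-invariant. Finally, since $E_m$ is $\Z[\tfrac1e]$-local, the arithmetic fracture square exhibits it as the pullback of $E_m\otimes\Q\to(\prod_{p\ne e}L_pE_m)\otimes\Q\leftarrow\prod_{p\ne e}L_pE_m$; closure of $\AffSpc{1}$-invariant étale sheaves of spectra under limits then yields that $E_m$ is $\AffSpc{1}$-invariant, completing the verification.

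\textbf{The main obstacle} is the key lemma of Step 3, ``$2$-effective $\Rightarrow a_{\et}A$ is $\AffSpc{1}$-invariant'', which is where perfectness, $\cd{}{k}<\infty$ and $2$-effectivity are genuinely used and which is not formal: it requires exploiting effectivity (through a Milnor--Witt or cycle-module refinement, or the slice filtration) to connect $a_{\et}A$ to the small étale site so as to apply Bachmann's stable rigidity theorem, followed by the fracture-square bookkeeping over $p\ne e$ and $\Q$ permitted by $\Z[\tfrac1e]$-locality. The subsidiary input of Step 1 --- that $2$-effectivity is inherited by homotopy sheaves and subquotients --- is comparatively routine, following from compatibility of the effective subcategory with the homotopy $t$-structure over a perfect field.
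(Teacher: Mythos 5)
Your plan shares the paper's high-level shape — resolve $X$ by a principal tower, sheafify, and fight for $\AffSpc{1}$-invariance of the layers via the arithmetic fracture square — but the route you take diverges from the paper's at the most load-bearing step, and that step is left unproven.

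The paper does \emph{not} start from the ordinary Nisnevich Postnikov tower with $S^1$-Eilenberg--MacLane layers $K(A_m,d_m)$; it uses the Asok--Bachmann--Hopkins $\Pp^1$-Postnikov tower (\cref{lem:etale-nilpotent:effective-Postnikov-refinement}), whose layers are $\pLoopP E_n$ for genuine motivic spectra $E_n\in\SH{k}$ that are $2$-effective and $2$-connective, and (after \cref{lem:etale-nilpotent:effective-Postnikov-refinement-rational}) have $e$ inverted. The distinction matters: having the layers come from $\SH{k}$ rather than from $\SHone{k}$ is exactly what supplies the transfer/framed structure used in the rigidity arguments. Concretely, the paper reduces to $\AffSpc{1}$-invariance of $L_{\et}\omega^\infty E_n$ (\cref{lem:etale-nilpotent:transfers}), whose proof (i) first reduces to $k$ algebraically closed using that $\rho^*$ detects $\AffSpc{1}$-invariance, (ii) handles the rational part via the $+$/$-$ splitting $\SH{k}_\Q=\SH{k}_\Q^+\times\SH{k}_\Q^-$ of Cisinski--Déglise (\cref{lem:rational:transfers-etale-descent}), and (iii) handles the mod-$p$ part via Bachmann's stable rigidity together with Ananyevskiy's rigidity theorem, the last of which needs the $\SH{k}$-structure (transfers) on $E_n$ and the reduction to closed points over an algebraically closed field. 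None of (i)--(iii) appears in your sketch. You gesture at ``effectivity provides the extra structure'' and ``Milnor--Witt or cycle-module refinement'', which is pointing at transfers, but you never connect this to the concrete mechanism: the equivalence $\SH{k}(\epsilon)^\heartsuit\cong\SHone{k}(\epsilon)^\heartsuit$ for $\epsilon\ge2$ (Bachmann--Yakerson et al.), which upgrades a $2$-effective strictly $\AffSpc{1}$-invariant sheaf of abelian groups to a homotopy module in $\SH{k}$ so that Ananyevskiy rigidity becomes available. You correctly identify Step~3 as the ``main obstacle'', but the argument there is a plan for a plan, not a proof; this is the genuine gap.

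Two further issues, smaller but real. In Step~1, the claim that $2$-effectivity of $X$ descends to each $\underline{\pi}_n(X)$ and to their subquotients is exactly the content of the Asok--Bachmann--Hopkins machinery on $\Pp^1$-Postnikov towers and is not a priori formal; you'd need to cite or re-derive the compatibility of the effective slice filtration with the homotopy $t$-structure over a perfect field, which is the very input the paper imports wholesale via \cref{lem:etale-nilpotent:effective-Postnikov-refinement}. In Step~2, ``$\ShvTopH{\et}{\Sm{k}}$ being hypercomplete'' is not enough to justify $L_{\et}X\simeq\limil{m}L_{\et}X_m$: even for a highly connected tower, commuting a left-exact left adjoint past the inverse limit requires a finiteness hypothesis on the target topos. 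The paper handles this with the ``locally finite dimensional cover'' apparatus (\cref{lem:highly-connected:morphism-of-covers-preserves-towers,lem:etale:morphism-of-covers}), which ultimately rests on $\cd{}{k}<\infty$ — so that hypothesis enters earlier than you allow for.
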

Note that this is a priori surprising, as, in general,
the étale sheafification of an $\AffSpc{1}$-invariant presheaf is no longer $\AffSpc{1}$-invariant,
and simultaneous étale sheafification and $\AffSpc{1}$-localization 
is computed by the countable colimit of the alternating application of the two localization functors.
The above theorem says that if we start with a nilpotent and $2$-effective motivic space,
then this procedure is unnecessary, as the étale sheafification is already $\AffSpc{1}$-invariant.
The proof makes heavy use of the theory of $\Pp^1$-Postnikov towers as developed in \cite{asok2023freudenthal}.

Using this, we can give more examples of étale sheaves for which rigidity holds.
Bachmann showed that in $p$-complete étale stable motivic homotopy theory over an algebraically closed field,
there is an equivalence $L_p \Sus \Gm \cong L_p \S^1$, cf.\ \cite[Theorem 6.5]{Bachmann2021etalerigidity} and \cite[proof of Theorem 3.1]{bachmann2021remarksetalemotivicstable}
(note that if the field is algebraically closed, the twisting spectrum is trivialized, yielding the above claim).
A similar statement is true unstably:
\begin{introthm}[{\cref{lem:retract:main-thm,lem:retract:double-suspension}}] \label{lem:intro:retract}
    Let $k$ be an algebraically closed field with $p \neq \operatorname{char}(k)$.
    Then there is a retract diagram (in $\ShvTopH{\et}{\Sm{k}}$)
    \begin{equation*}
        L_p S^2 \xrightarrow{\tau} L_p L_{\et, \AffSpc{1}} \Pp^1 \xrightarrow{\sigma} L_p S^2.
    \end{equation*}
    Moreover, after (the $p$-completion of) a twofold suspension,
    $\tau$ becomes an equivalence,
    i.e.\ we have 
    \begin{equation*}
        L_p S^4 \cong L_p L_{\et, \AffSpc{1}} \Sigma^2 \Pp^1.
    \end{equation*}
\end{introthm}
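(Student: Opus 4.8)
The plan is to reduce the statement to Bachmann's stable rigidity for $\Gm$ and then to destabilise, using that $\Pp^1$ is a simplicial suspension of $\Gm$ up to $\AffSpc{1}$-homotopy. Write $X := L_p L_{\et, \AffSpc{1}} \Pp^1$.

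\emph{A stable input.} From the pushout $\Pp^1 \simeq \AffSpc{1} \cup_{\Gm} \AffSpc{1}$ and the fact that $L_{\et, \AffSpc{1}}$ is a left-exact localisation contracting $\AffSpc{1}$, one has $L_{\et, \AffSpc{1}} \Sigma^j \Pp^1 \simeq S^{j+1} \wedge L_{\et, \AffSpc{1}} \Gm$ for all $j \ge 0$. By \cite[Theorem 6.5]{Bachmann2021etalerigidity} (cf.\ the discussion in the Introduction), over the algebraically closed field $k$ the twisting spectrum is trivialised, i.e.\ $L_p \Sigma^\infty_{S^1} L_{\et,\AffSpc{1}} \Gm \simeq L_p \Sigma^\infty_{S^1} S^1$ in $\complete{\SHoneet{k}}$; smashing with $S^{j+1}$ yields $L_p \Sigma^\infty_{S^1} \Sigma^j X \simeq L_p \Sigma^\infty_{S^1} S^{j+2}$. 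Since reduced $\Z/p$-(co)homology of a sheaf of anima is a stable invariant, this gives $\widetilde{H}_\ast(X; \Z/p) \cong \widetilde{H}_\ast(S^2; \Z/p)$ and, for every abelian sheaf $A$, $\widetilde{H}^\ast(X; A) \cong \widetilde{H}^\ast(S^2; A)$; in particular these are concentrated in degree $2$, and likewise for $\Sigma^j X$ in degree $j+2$.

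\emph{The retract.} The étale homotopy type of $\Pp^1_k$ is the profinite $S^2$; after fixing compatible $p$-power roots of unity (which trivialises the relevant twist) this produces a map $\tau \colon L_p S^2 \to X$ from the constant sheaf realising the stable equivalence of the previous step, so in particular an isomorphism on $\widetilde{H}_2(-; \Z_p)$. For the retraction one takes a generator of $\widetilde{H}^2(X; \Z_p)$, i.e.\ a map $X \to K(\Z_p, 2) = \tau_{\le 2} L_p S^2$, and lifts it up the (principal) Postnikov tower of $L_p S^2$; all obstructions lie in $\widetilde{H}^{\ge 4}(X; -) = 0$, so we obtain $\sigma \colon X \to L_p S^2$ whose composite $\sigma \circ \tau$ is an isomorphism on $\widetilde{H}_2(-; \Z_p)$. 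As $S^2$ is a suspension and $L_p S^2$ is $p$-complete, $[L_p S^2, L_p S^2]$ is a $\Z_p$-module detected by the induced map on $\widetilde{H}_2(-;\Z_p)$, so after rescaling $\tau$ we get $\sigma \circ \tau = \operatorname{id}$, i.e.\ the asserted retract diagram. One cannot promote this to an equivalence: we have no handle on $\underline{\pi}_1 X$, so $X$ need not be nilpotent.

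\emph{The double suspension and the main obstacle.} A simplicial suspension of a connected sheaf of anima is simply connected, so $\Sigma^2 X$ and $\Sigma^2 L_p S^2$ are simply connected, hence nilpotent. After $p$-completion $\Sigma^2 \tau$ becomes a map $L_p S^4 = L_p \Sigma^2 L_p S^2 \longrightarrow L_p \Sigma^2 X = L_p L_{\et, \AffSpc{1}} \Sigma^2 \Pp^1$ which, by the first step, is an isomorphism on all $\Z/p$-homology sheaves, hence a $\Z/p$-homology equivalence between $p$-complete nilpotent sheaves of anima in the hypercomplete étale topos, hence an equivalence; this is the second assertion. The points I expect to be genuinely delicate are: (i) the first step, namely extracting from Bachmann's equivalence the comparison on exactly these suspension spectra and checking that the geometrically defined $\tau$ realises it; and (ii) the bookkeeping in the last step — identifying the source and target of the $p$-completion of $\Sigma^2\tau$ — together with the rescaling making $\sigma\circ\tau$ literally the identity.
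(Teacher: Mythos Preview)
Your construction of $\sigma$ via obstruction theory up the Postnikov tower of $L_p S^2$ is a nice alternative to the paper's use of Bachmann's $\sigma$, and the endgame (both $[L_p S^2, L_p S^2]$ and its stable analogue are $\Z_p$, so an $H_2$-iso is an automorphism) matches the paper's Proposition~\ref{lem:retract:pi_2_S_2}. Note however that the stable $p$-equivalence only controls cohomology of $X$ with $p$-complete coefficients, not ``every abelian sheaf $A$''; this is harmless since the obstruction groups have coefficients $\pi_{n}(L_p S^2)$, $n\ge 3$, which are $p$-torsion.

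There are two genuine gaps. First, your construction of $\tau$ is the hard direction and you have not carried it out: invoking the étale homotopy type of $\Pp^1$ or the stable equivalence produces at best a \emph{stable} class, and the problem is precisely to lift it to $\pi_2(L_p L_{\et,\AffSpc{1}}\Pp^1)(k)$. Since we have no control over $\underline{\pi}_1 X$ (as you yourself note), no Hurewicz argument is available. The paper works Nisnevich-locally instead: it writes down $\tau_0\in\pi_2(L_p K_{\nis}(\Gm,1))(k)$ from a compatible system of roots of unity, and lifts it along $\pi_2(L_p L_{\nis,\AffSpc{1}}\Pp^1)\to\pi_2(L_p K_{\nis}(\Gm,1))$ using Morel's exact sequence $0\to\mathbf{K}^{\mathrm{MW}}_2\to\pi_1(L_{\nis,\AffSpc{1}}\Pp^1)\to\Gm\to 0$; the obstruction lives in the derived $p$-completion of $\mathbf{K}^{\mathrm{MW}}_2(k)$, which vanishes because $\mathbf{K}^{\mathrm{MW}}_2(k)$ is uniquely $p$-divisible for $k$ algebraically closed. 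It then checks that this unstable $\tau$ stabilises to a map of Bachmann's ``stable $\tau$-type'', so that $\sigma\tau$ is an automorphism stably and hence unstably.

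Second, the double-suspension step does not go through as written. The target of the theorem is $L_p L_{\et,\AffSpc{1}}\Sigma^2\Pp^1$, whereas your argument treats $L_p\Sigma^2 X$ with $\Sigma^2$ the \emph{topos} suspension. These differ by an $\AffSpc{1}$-localisation $\Sigma^2 L_{\et,\AffSpc{1}}\Pp^1\to L_{\et,\AffSpc{1}}\Sigma^2\Pp^1$, which is an $\AffSpc{1}$-equivalence but has no reason to be a $p$-equivalence. Even granting the identification, there is no reason $L_{\et,\AffSpc{1}}\Sigma^2\Pp^1$ is simply connected (the étale $L_{\AffSpc{1}}$ is only known to preserve connectedness, Proposition~\ref{lem:etale-motives:preserves-connectedness}), so your $\Z/p$-Whitehead argument lacks the needed nilpotence hypothesis. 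The paper avoids all of this by a purely formal trick: since the swap on $\Pp^1\wedge\Pp^1$ is homotopic to the identity (because $\langle -1\rangle=1$ in $\mathrm{GW}(k)$ for $k$ algebraically closed), a short diagram chase shows $(S^2\wedge\tau)(S^2\wedge\sigma)=\id$, so that $S^2\wedge\sigma$ is a two-sided inverse of $S^2\wedge\tau$ directly from the retract.
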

Using this retract, we can prove that in fact every $4$-connective $\AffSpc{1}$-invariant étale sheaf over
an algebraically closed field is ($p$-completely) a retract 
of a $2$-effective and $2$-connective étale motivic space.
Using étale descent, this result suggests the following corollary.
\begin{introcor} [{\cref{lem:etale-nilpotent:4-connective-nilpotent}}] \label{lem:intro:4-connective}
    Let $k$ be a perfect field with $\cd{}{k} < \infty$ and $p \neq \operatorname{char}(k)$ a prime,
    and $X \in \ShvTopH{\et}{\Sm{k}}_*$ be $4$-connective and 
    $\AffSpc{1}$-invariant.
    Then $X$ is $p$-completely small, i.e.\ $L_p \iota_! \iota^* X \cong L_p X$.
\end{introcor}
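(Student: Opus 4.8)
The plan is to reduce the claim, via the rigidity theorem \cref{lem:intro:main-thm}, to the construction of a motivic model for $X$, and then to build that model using \cref{lem:intro:retract} together with \cref{lem:intro:2-effective}. By \cref{lem:intro:main-thm}, the assertion $L_p \iota_! \iota^* X \simeq L_p X$ is equivalent to $L_p X$ lying in the essential image of the fully faithful functor $\iota_!^p$, which \cref{lem:intro:main-thm} identifies with the class of $p$-completions of $p$-completely étale $\AffSpc{1}$-nilpotent sheaves. Since $\iota_!^p$ is fully faithful and its target $\complete{\ShvTopH{\et}{\Sm{k}}}$ is idempotent complete (being presentable), this essential image is closed under retracts: an idempotent of $\iota_!^p(Z)$ is the image of an idempotent of $Z$, which splits on the small side. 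It therefore suffices to exhibit $L_p X$ as a retract, in $\complete{\ShvTopH{\et}{\Sm{k}}}$, of $L_{\et} Y$ for some $2$-effective, $2$-connective motivic space $Y \in \Spc{k}_*$, because then $L_{\et} Y$ is étale $\AffSpc{1}$-nilpotent by \cref{lem:intro:2-effective} (after harmlessly $\Z[\tfrac{1}{e}]$-localizing $Y$, which does not change $L_p$), hence $p$-completely small, and so is its retract $L_p X$. I also reduce to $k = \bar{k}$ algebraically closed: writing $\pi \colon \Spec \bar{k} \to \Spec k$ for a separable closure, the pullback $\pi^*$ commutes with $\iota^*$, with $\iota_!$ (both are left adjoints, and the comparison map is checked on representable sheaves, which are sent to representable sheaves), and with $L_p$, and it is conservative since $\cd{}{k} < \infty$ makes Galois descent along $\pi$ effective; hence it is enough to prove the statement over $\bar{k}$ for $\pi^* X$, which is again $4$-connective and $\AffSpc{1}$-invariant.

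Over $\bar{k}$ the engine is \cref{lem:intro:retract}: it gives $L_p S^4 \simeq L_p L_{\et, \AffSpc{1}} \Sigma^2 \Pp^1$, and since $\Sigma$ commutes with $L_{\et}$ and $L_{\AffSpc{1}}$ and is compatible with $L_p$, one obtains for every $n \ge 4$ an equivalence $L_p S^n \simeq L_p L_{\et} Y_n$ with $Y_n$ a $2$-connective, $2$-effective, simply connected motivic space built from suspensions (and smash powers) of $\Pp^1$; running the argument of the previous paragraph then shows that $L_p S^n$ is $p$-completely small for all $n \ge 4$. To produce the motivic model $Y$ for a given $4$-connective $X$, I lift a cell presentation of $X$ one cell at a time: $X$ is built from the point by attaching cells whose $p$-completed étale sheafifications are, by \cref{lem:intro:retract}, the $p$-complete spheres just treated, and I inductively construct partial motivic models $Y_m$ by attaching the corresponding $\Pp^1$-suspension cells --- keeping them $2$-effective and $2$-connective by means of the $\Pp^1$-Postnikov and Freudenthal machinery of \cite{asok2023freudenthal} --- together with retraction data $L_p X_m \rightleftarrows L_p L_{\et} Y_m$ compatible with the transition maps, and let $Y$ be their colimit. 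At each stage the motivic attaching map is produced because, after applying $L_p L_{\et}$ (which preserves the pushouts), the motivic mapping anima surjects onto the relevant homotopy classes; this is precisely where the ``after a twofold suspension $\tau$ becomes an equivalence'' clause of \cref{lem:intro:retract} is needed, as it gives room to lift attaching maps rather than merely to retract them. Passing to the colimit --- and noting that the transition maps become arbitrarily highly connected, so that $L_p$ commutes with it --- exhibits $L_p X$ as a retract of $L_p L_{\et} Y$, and the previous paragraph concludes.

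The main obstacle is this inductive construction of $Y$ and its retraction data, where two things must be controlled through the pushouts: that the motivic attaching maps can be chosen compatibly with the étale-$p$-complete attaching maps of $X$ via the retraction, so that the retraction extends to the next stage (the delicate bookkeeping made possible only by the ``twofold suspension'' improvement of \cref{lem:intro:retract}); and that the pushouts do not leave the subcategory of $2$-effective $2$-connective motivic spaces, which is not automatic since effectivity is not preserved by arbitrary pushouts --- so one must work inside the appropriate effective subcategory of \cite{asok2023freudenthal}, which goes through because each cell is a sufficiently high suspension of $\Pp^1$. A secondary technical point is the reduction to $\bar{k}$: the compatibility of the left adjoint $\iota_!$ with base change along $\pi$, and the effectivity (conservativity) of Galois descent, must be justified, the latter being where $\cd{}{k} < \infty$ genuinely enters.
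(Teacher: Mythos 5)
Your proposal shares the paper's high-level plan — reduce to an algebraically closed field, use the retract $L_pS^2 \to L_p L_{\et,\AffSpc{1}}\Pp^1 \to L_p S^2$ twice, and feed a $2$-effective $2$-connective motivic space into \cref{lem:intro:2-effective} — but the central step of expressing $X$ in terms of $4$-fold suspensions is handled very differently, and your version has a genuine gap.

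The paper never builds a motivic model of $X$ and never needs a cell presentation. By \cref{lem:canonical:main-thm}, monadicity of $\Omega^4 \colon \topos X_{*,\ge 4} \to \topos X_*$ writes an arbitrary $4$-connective sheaf canonically as a geometric realization $X \simeq \colim_i (\Sigma^4\Omega^4)^{i+1}X$, i.e.\ a sifted colimit of objects $W_i = \Sigma^4 V_i$. The proof of \cref{lem:etale-nilpotent:4-connective-nilpotent-alg-closed} then works termwise: each $L_p L_{\et,\AffSpc{1}}(S^4\wedge V_i)$ is a retract of $L_p L_{\et,\AffSpc{1}}(\Pp^1\wedge\Pp^1\wedge V_i) \simeq L_p L_{\et,\AffSpc{1}}(S^2\wedge\Gm^{\wedge 2}\wedge V_i)$ by \cref{lem:retract:main-thm} applied twice, the latter comes from a $2$-effective $2$-connective nilpotent motivic space via \cref{lem:etale-nilpotent:2-effective-nilpotent}, and one passes to the colimit (the retraction $X \to \colim_i L_{\et,\AffSpc{1}}W_i \to X$ is produced directly from $\AffSpc{1}$-invariance of $X$, and $\iota_!,\iota^*$ and $p$-equivalences are all colimit-stable). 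In contrast, your plan is to lift a cell presentation of $X$ to a motivic space $Y$ cell by cell, and you correctly identify this as the main obstacle — but it is more than an obstacle. A general $4$-connective hypersheaf in $\ShvTopH{\et}{\Sm{k}}$ does not carry a cell structure with sphere cells, and even granting one, the inductive compatibility of attaching maps through the pushouts (together with the fact that $2$-effectivity is not preserved by arbitrary pushouts, which you also flag) is left unjustified. The free resolution of \cref{lem:canonical:main-thm} is exactly the device that makes all of this unnecessary, and it is absent from your argument.

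Two smaller points. The reduction to $\bar k$ in the paper goes through conservativity of $\rho^* = \rho_{\bar s}^*$ (\cref{lem:etale:morphism-to-local-conservative}), its compatibility with $\iota_!$ and $\iota^*$ (\cref{lem:etale:morphism-to-local-commutation}), and preservation of connectivity and $\AffSpc{1}$-invariance (\cref{lem:etale-motives:morphism-to-local-preserves-A1}); effectivity of Galois descent is not what is used. And the claim that $\Sigma$ "commutes with $L_{\et}$, $L_{\AffSpc{1}}$ and is compatible with $L_p$" is false as stated for $L_{\AffSpc{1}}$ and $L_p$; the paper sidesteps this by phrasing everything in terms of smash products and symmetric monoidality of the localizations, and the double-suspension equivalence $L_pS^4\simeq L_pL_{\et,\AffSpc{1}}\Sigma^2\Pp^1$ is a separate nontrivial statement (\cref{lem:retract:double-suspension}) that is not actually needed for this corollary.
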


\subsection*{Étale motivic spaces at the characteristic}
If $k$ is a field of characteristic $p > 0$, then the Artin--Schreier sequence $\finfld{p} \to \AffSpc{1} \xrightarrow{1 - F} \AffSpc{1}$
shows that the $\AffSpc{1}$-localization of the constant sheaf of abelian groups $\finfld{p}$ is $0$.
From this, one can for example deduce the vanishing of the whole category of motives with ${p}$-torsion coefficients,
cf.\ \cite[Proposition A.3.1]{Cisinski_2015}.
The strongest such vanishing result is due to Bachmann and Hoyois:
\begin{thm*}[{\cite[Theorem A.1]{bachmann2021remarksetalemotivicstable}}]
    $\complete{\SHet{S}} \cong \complete{\SHoneet{S}} \cong 0$ for $S$ any scheme over $\finfld{p}$.
\end{thm*}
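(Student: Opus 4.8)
The plan is to reduce the entire statement to the single assertion that $\mathbf{1}/p \simeq 0$ in $\SHoneet{S}$, where $\mathbf{1} = L_{\AffSpc{1}}\mathbb{S}$ is the unit ($\mathbb{S}$ denoting the constant sheaf on the sphere spectrum). Granting this, $E/p \simeq E \otimes \mathbf{1}/p \simeq 0$ for every $E \in \SHoneet{S}$, so $p$ acts invertibly on every object; as any $p$-complete object of a stable category on which $p$ is invertible vanishes, $\complete{\SHoneet{S}} \simeq 0$. The $\Gm$-stabilization functor $\SHoneet{S} \to \SHet{S}$ is symmetric monoidal and unit-preserving, so $\mathbf{1}/p \simeq 0$ in $\SHet{S}$ as well, and the same argument gives $\complete{\SHet{S}} \simeq 0$. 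Finally, base change along the structure map $S \to \Spec \finfld{p}$ is symmetric monoidal and unit-preserving, so it suffices to treat $S = \Spec \finfld{p}$ --- although nothing below really uses more than that $S$ is an $\finfld{p}$-scheme.

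The geometric input is the Artin--Schreier sequence. First I would observe that the $S$-morphism $\AffSpc{1} \to \AffSpc{1}$, $x \mapsto x^p - x$, is finite étale of degree $p$ (its derivative is the unit $-1$), so the induced map $\Ga \xrightarrow{F - 1} \Ga$ of étale sheaves of abelian groups on $\Sm{S}$ is an epimorphism with kernel the constant sheaf $\finfld{p}$; applying the Eilenberg--MacLane functor $H \colon \AbObj{\ShvTopH{\et}{\Sm{S}}} \to \Stab{\ShvTopH{\et}{\Sm{S}}}$ (which sends short exact sequences to fiber sequences) produces a fiber sequence $H\finfld{p} \to H\Ga \xrightarrow{H(F - 1)} H\Ga$. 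The key computation is that $L_{\AffSpc{1}} H\Ga \simeq 0$: the scaling action $\AffSpc{1} \times \Ga \to \Ga$, $(t, x) \mapsto tx$, is additive in the second variable, hence defines a map of sheaves $\AffSpc{1} \to \iHom(H\Ga, H\Ga)$ carrying the two sections $0$ and $1$ to the endomorphisms $0$ and $\operatorname{id}$; composing with the localization map and using that internal mapping objects into $\AffSpc{1}$-local objects are again $\AffSpc{1}$-local forces $\operatorname{id}$ and $0$ to agree in $\pi_0 \Map{\SHoneet{S}}{L_{\AffSpc{1}}H\Ga}{L_{\AffSpc{1}}H\Ga}$, i.e.\ $L_{\AffSpc{1}}H\Ga \simeq 0$. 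Since $L_{\AffSpc{1}}$ is an exact localization of a stable category, the fiber sequence then gives $L_{\AffSpc{1}}H\finfld{p} \simeq 0$, i.e.\ $H\finfld{p} \simeq 0$ in $\SHoneet{S}$.

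Next comes a dévissage. As $L_{\AffSpc{1}}$ is a monoidal localization, for any sheaf $\underline{A}$ of $\finfld{p}$-modules the spectrum $H\underline{A}$ is a module over $H\finfld{p}$, so $L_{\AffSpc{1}}H\underline{A}$ is a module over $L_{\AffSpc{1}}H\finfld{p} \simeq 0$ and hence vanishes; filtering by powers of $p$ and using exactness extends this to all bounded $p$-power-torsion abelian sheaves, and Postnikov dévissage with exactness then shows $L_{\AffSpc{1}}X \simeq 0$ for every bounded connective sheaf of spectra whose homotopy sheaves are $p$-power torsion of bounded exponent. I would apply this to the truncations of the constant sheaf on the mod-$p$ Moore spectrum $\mathbb{S}/p = \Cofib{\mathbb{S} \xrightarrow{p} \mathbb{S}}$, whose homotopy sheaves are the constant sheaves on the finite $p$-groups $\pi_i(\mathbb{S}/p)$: this yields $L_{\AffSpc{1}}\tau_{\le n}(\mathbb{S}/p) \simeq 0$ for every $n$, whence the fiber sequence $\tau_{\ge n+1}(\mathbb{S}/p) \to \mathbb{S}/p \to \tau_{\le n}(\mathbb{S}/p)$ and exactness of $L_{\AffSpc{1}}$ give an equivalence $\mathbf{1}/p = L_{\AffSpc{1}}(\mathbb{S}/p) \simeq L_{\AffSpc{1}}\tau_{\ge n+1}(\mathbb{S}/p)$ for all $n$.

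The last step, which I expect to be the main obstacle, is a connectivity estimate forcing the right-hand side to vanish. Over an $\finfld{p}$-scheme, every strictly henselian local ring $R$ of a smooth $S$-scheme has characteristic $p$, and then $H^i_{\et}(\Spec R; \finfld{p}) = 0$ for $i \ge 1$: for $i = 1$ because $x \mapsto x^p - x$ is surjective on $R$ by Hensel's lemma (its derivative is a unit), and for $i \ge 2$ because $\cd{p}{\kappa} \le 1$ for every field $\kappa$ of characteristic $p$. Hence, for $p$-torsion coefficients, étale hypersheafification is computed stalkwise and preserves connectivity, and since $L_{\AffSpc{1}}$ on $p$-torsion sheaves is the countable colimit of alternating applications of the singular construction and étale hypersheafification --- each of which preserves connectivity --- it preserves connectivity of $p$-torsion sheaves. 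Therefore $L_{\AffSpc{1}}\tau_{\ge n+1}(\mathbb{S}/p)$ is $(n+1)$-connective; since $n$ is arbitrary, $\mathbf{1}/p$ is $\infty$-connective and hence $\simeq 0$ in the hypercomplete category $\SHoneet{S}$. This proves $\mathbf{1}/p \simeq 0$ and with it the theorem. The delicacy is entirely in this connectivity statement: with coefficients prime to the characteristic, or over bases of unbounded étale cohomological dimension, the analogous preservation of connectivity fails, and it is the characteristic-$p$ vanishing of higher étale $\finfld{p}$-cohomology of henselian local rings that makes the argument work.
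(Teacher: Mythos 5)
The cited result is from Bachmann--Hoyois and is not proven in the present paper, so your proposal must be measured against feasibility and the known argument. The first three steps are sound and are the expected ones: the reduction to $\mathbf 1/p \simeq 0$ in $\SHoneet S$ and its transport to $\SHet S$ and to general $S$, the Artin--Schreier fiber sequence $H\finfld{p} \to H\Ga \to H\Ga$, the scaling-action argument giving $L_{\AffSpc{1}} H\Ga \simeq 0$, and the module-structure dévissage from $\finfld{p}$-vector-space sheaves through bounded connective $p$-power-torsion sheaves to the identity $\mathbf 1/p \simeq L_{\AffSpc{1}}\tau_{\ge n+1}(\mathbb S/p)$ for all $n$.

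The connectivity estimate in the final step contains a genuine gap, and the justification you offer does not carry the weight you place on it. You invoke $H^i_{\et}(\Spec R; \finfld{p}) = 0$ for $i \ge 1$ and $R$ strictly henselian local. But this vanishing is \emph{trivially} true for every abelian sheaf over every base — a strictly henselian local ring is a point of the étale topos — and encodes no characteristic-$p$ information at all; in particular it cannot be the reason the theorem holds precisely in characteristic $p$. The genuine characteristic-$p$ input would be the Artin--Schreier bound $\cd_p(U) \le 1$ for smooth \emph{affine} (not henselian local) $\finfld{p}$-schemes $U$, which you do not actually use. Even granting it, your argument does not close. Hypersheafification preserves stalkwise connectivity (the homotopy sheaves), but degrades sectionwise connectivity by up to $\cd_p$; the singular construction preserves sectionwise connectivity of its input, but to control the \emph{homotopy sheaves} of $\operatorname{Sing}^{\AffSpc{1}}(G)$ for a hypersheaf $G$ you need the sections $G(U \times \AffSpc{m})$ to be connective, which costs the cohomological dimension. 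Each round of $L_{\et}\circ\operatorname{Sing}^{\AffSpc{1}}$ can therefore lose a degree, and over a countable iteration the loss is unbounded; the filtered colimit of the homotopy sheaves across the iterations is not obviously concentrated in any range. The phrase ``each of which preserves connectivity'' conflates two different connectivity notions — stalkwise for hypersheafification, sectionwise for the singular construction — and is not an argument. You correctly identify this step as the main obstacle, but it is a substantial result in its own right (an étale, $p$-torsion, characteristic-$p$ analog of a stable $\AffSpc{1}$-connectivity theorem), and it must be proved or cited, not inferred from the trivial vanishing of cohomology of strictly henselian local rings.
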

Unstably, we can show the following version:
\begin{introthm}[{\cref{lem:at-char:p-completely-null}}] \label{lem:intro:vanishing-at-char-easy}
    Let $S$ be a qcqs $\finfld{p}$-scheme of finite Krull-dimension with $\sup_{s \in S} \operatorname{cd}(s) < \infty$,
    and $X \in \ShvTopH{\et}{\Sm{S}}$ 
    be $p$-completely étale $\AffSpc{1}$-nilpotent.
    Then $L_p X \cong *$.
\end{introthm}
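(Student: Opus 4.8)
The plan is to peel off the Postnikov-type layers of a $p$-completely étale $\AffSpc{1}$-nilpotent sheaf one at a time, using the Bachmann--Hoyois vanishing theorem to kill each layer after $p$-completion. Fix a tower $(X_n)_n$ as in \cref{def:etale-nilpotent:etale-nilpotent} witnessing the hypothesis on $X$: so $X_0 \simeq *$, $X \simeq \limil{n} X_n$, the transition maps $X_{n+1}\to X_n$ have connectivity tending to infinity, and there are fiber sequences $X_{n+1}\to X_n \to K_n$ with $K_n \simeq \pLoop E_n$ for some $E_n \in \Stab{\ShvTopH{\et}{\Sm{S}}}_{\ge 1}$ such that $L_p E_n$ is $\AffSpc{1}$-invariant.

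The first move is to observe that $L_p E_n \simeq 0$ for every $n$. Indeed, $E_n$ is by construction an étale hypersheaf of spectra on $\Sm{S}$, hence so is its $p$-completion $L_p E_n$; the latter is $p$-complete, and $\AffSpc{1}$-invariant by the fourth clause of \cref{def:etale-nilpotent:etale-nilpotent}, so $L_p E_n \in \complete{\SHoneet{S}}$. Since $S$ is an $\finfld{p}$-scheme, \cite[Theorem A.1]{bachmann2021remarksetalemotivicstable} gives $\complete{\SHoneet{S}} \simeq 0$, whence $L_p E_n \simeq 0$. As $p$-completion commutes with $\pLoop$ on connective spectrum objects, this already yields $L_p K_n \simeq \pLoop L_p E_n \simeq *$. (Alternatively: $L_p E_n \simeq 0$ forces multiplication by $p$ on $E_n$ to be an equivalence, so the homotopy sheaves of the connected infinite loop sheaf $K_n$ are uniquely $p$-divisible, making $K_n$ an $\finfld{p}$-acyclic sheaf, and hence $L_p K_n \simeq *$.)

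It remains to climb the tower. We prove $L_p X_n \simeq *$ by induction on $n$: the base case is $L_p X_0 = L_p * \simeq *$, and given $L_p X_n \simeq *$, applying $L_p$ to the principal fiber sequence $X_{n+1}\to X_n \to K_n$ — which $p$-completion preserves, $K_n$ being a pointed connected, hence nilpotent, infinite loop sheaf — produces a fiber sequence $L_p X_{n+1}\to L_p X_n \to L_p K_n \simeq *$, so $L_p X_{n+1}\xrightarrow{\simeq} L_p X_n \simeq *$. Finally, since $X \simeq \limil{n} X_n$ along a tower whose transition maps are highly connected, $p$-completion commutes with this limit, so $L_p X \simeq \limil{n} L_p X_n \simeq *$, as claimed.

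I expect the only real friction to be in the supporting statements about unstable $p$-completion rather than in the strategy itself: that $L_p$ commutes with $\pLoop$ on connective spectrum objects (equivalently, that it kills $\finfld{p}$-acyclic connected sheaves), that it preserves principal fibrations over pointed connected infinite loop sheaves, and that it commutes with limits of highly connected towers. These are exactly the unstable-$p$-completion bookkeeping facts established earlier (cf.\ \cite{mattis2024unstable}); the one genuinely new ingredient is the reduction of each layer to $p$-complete étale stable motivic homotopy theory over $\finfld{p}$, where the Bachmann--Hoyois theorem makes everything collapse.
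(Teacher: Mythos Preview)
Your proposal is correct and follows essentially the same strategy as the paper's proof of \cref{lem:at-char:p-completely-null}: invoke \cite[Theorem A.1]{bachmann2021remarksetalemotivicstable} to kill each $L_p E_n$, deduce $L_p K_n \simeq *$, climb the tower inductively via the fiber lemma, and pass to the limit using \cref{lem:p-comp:p-comp-tower-limit}. The only points the paper makes slightly more explicit are the $\tau_{\ge 1}$ in $L_p K_n \cong \tau_{\ge 1}\pLoop L_p E_n$ (via \cite[Lemma~3.17]{mattis2024unstable}) and the inductive nilpotence of the $X_n$ needed to apply \cref{lem:fib-lem:main-thm}, both of which you already flag as bookkeeping.
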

Related is the following result:
\begin{introthm}[{\cref{lem:at-char:EM-vanishing}}] \label{lem:intro:vanishing-at-char-hard}
    Let $k$ be a perfect field of characteristic $p > 0$ with $\cd{}{k} < \infty$.
    Let $A \in \ShvTop{\et}{\Sm{k}, \Ab}$ be a sheaf of abelian groups, and $k \ge 3$.
    If $L_p K(A, k)$ is $\AffSpc{1}$-invariant, then $L_p K(A, k) \cong *$.
\end{introthm}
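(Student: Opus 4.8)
Write $n$ for the cohomological degree (called $k$ in the statement; we keep $k$ for the field, so $n \ge 3$). The plan is to show that $Y := L_p K(A,n)$ is $p$-completely étale $\AffSpc{1}$-nilpotent in the sense of \cref{def:etale-nilpotent:etale-nilpotent}, and then to apply \cref{lem:intro:vanishing-at-char-easy} over the qcqs $\finfld{p}$-scheme $\Spec k$ (finite-dimensional, $\cd{}{k}<\infty$), which gives $Y = L_p Y \cong *$. It is worth recording what the conclusion amounts to: since $X\to L_p X$ is a $\Z/p$-equivalence, $L_p(\Sigma^n HA)$ has the same mod-$p$ reduction as $\Sigma^n HA$, namely $\Sigma^n\,\Cofib{p\colon HA\to HA}$, whose homotopy sheaves are $A/p$ in degree $n$ and $A[p]$ in degree $n+1$; as $Y = \Omega^\infty L_p(\Sigma^n HA)$ with $L_p(\Sigma^n HA)$ a $p$-complete connective sheaf of spectra, this forces $Y\cong *$ precisely when $A/p = A[p] = 0$, i.e.\ when $p$ acts invertibly on $A$ étale-locally. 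So the theorem says that $L_p K(A,n)$ can be $\AffSpc{1}$-invariant only for this trivial reason.

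To set up the tower, observe that since $n\ge 3$ the sheaf of spectra $\Sigma^n HA$ is $2$-connective, so $p$-completion commutes with $\Omega^\infty$ and $Y = \Omega^\infty L_p(\Sigma^n HA)$; the sheaf of spectra $L_p(\Sigma^n HA)$ is the $n$-fold shift of the derived $p$-completion of $HA$, hence concentrated in degrees $n$ and $n+1$, with $\underline\pi_{n+1}Y$ the Tate sheaf $T_p A$ and $\underline\pi_n Y$ the remaining, classically $p$-complete, part. In particular $Y$ is simply connected, hence nilpotent, and its two-stage Postnikov tower is already a principal tower $* \to K(\underline\pi_n Y, n) \to Y$ with layers $\Omega^\infty\Sigma^n H\underline\pi_n Y$ and $\Omega^\infty\Sigma^{n+1}H\underline\pi_{n+1}Y$ and $k$-invariant $K(\underline\pi_n Y, n)\to K(\underline\pi_{n+1}Y, n+2)$. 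Feeding this into \cref{def:etale-nilpotent:etale-nilpotent}, and using that suspension is invertible on $\Stab{\ShvTopH{\et}{\Sm{k}}}$ and commutes with $p$-completion and $\AffSpc{1}$-localization, the $p$-complete étale $\AffSpc{1}$-nilpotence of $Y$ comes down to the single point that the étale sheaves of abelian groups $\underline\pi_n Y$ and $\underline\pi_{n+1}Y$ are \emph{strictly} $\AffSpc{1}$-invariant — equivalently, that $H\underline\pi_n Y$ and $H\underline\pi_{n+1}Y$ are $\AffSpc{1}$-invariant objects of $\Stab{\ShvTopH{\et}{\Sm{k}}}$.

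When $n$ is large enough that $Y$ is $4$-connective, this needs no extra input: \cref{lem:intro:4-connective} then makes $Y$ $p$-completely small, so it lies in the essential image of $\iota_!^p$ and hence, by \cref{lem:intro:main-thm}, is the $p$-completion of a $p$-completely étale $\AffSpc{1}$-nilpotent sheaf, and \cref{lem:intro:vanishing-at-char-easy} concludes. For the remaining small degree(s) one reduces to that case by delooping: since $p$-completion and the bar construction both commute with the relevant colimits, $L_p K(A,n+1)\simeq B\,L_p K(A,n)$, so it is enough to show that $\AffSpc{1}$-invariance of $Y$ propagates to $BY$; one raises the degree until $4$-connectivity is reached and then loops back down, using that in the simply connected range $\Omega$ commutes with $p$-completion and preserves $\AffSpc{1}$-invariance. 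Equivalently, one shows directly that $\underline\pi_n Y$ and $\underline\pi_{n+1}Y$ are strictly $\AffSpc{1}$-invariant.

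This passage from ``$Y$ is $\AffSpc{1}$-invariant'' — a condition only on $\underline\pi_0$-level sections — to ``the homotopy sheaves of $Y$ are strictly $\AffSpc{1}$-invariant'' is the heart of the argument, and the step I expect to be the main obstacle: it is an étale, $p$-complete, characteristic-$p$ version of Morel's ``strong $\Rightarrow$ strict'' phenomenon. What makes it go through in characteristic $p$ is the failure of $\AffSpc{1}$-invariance of $\finfld{p}$-coefficient étale cohomology there: by the Artin--Schreier sequence $\finfld{p}\to\AffSpc{1}\xrightarrow{1-F}\AffSpc{1}$ one has $L_{\AffSpc{1}}\finfld{p}=0$, while $\finfld{p}$ itself is not $\AffSpc{1}$-invariant as an étale sheaf (e.g.\ $H^1_{\et}(\AffSpc{1}_k,\finfld{p})\ne H^1_{\et}(\Spec k,\finfld{p})$). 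Reducing modulo $p$ expresses the obstruction to strict $\AffSpc{1}$-invariance of $\underline\pi_n Y$ and $\underline\pi_{n+1}Y$ through the étale cohomology of the $p$-torsion sheaves $A/p$ and $A[p]$, and the finiteness hypotheses — $\cd{}{k}<\infty$ together with the fact that a characteristic-$p$ field has $p$-cohomological dimension at most $1$ — are precisely what let one propagate the vanishing from sections to all of that cohomology, forcing $A/p = A[p] = 0$. The remainder is bookkeeping with the tower together with the already-established rigidity (\cref{lem:intro:main-thm}, \cref{lem:intro:4-connective}) and vanishing (\cref{lem:intro:vanishing-at-char-easy}) results.
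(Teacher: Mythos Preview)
Your strategy has a fatal error: the results you invoke to establish $p$-complete étale $\AffSpc{1}$-nilpotence of $Y$ all require $p$ to be invertible on the base. Both \cref{lem:intro:main-thm} and \cref{lem:intro:4-connective} explicitly assume $p \neq \operatorname{char}(k)$ (respectively, $p$ invertible on $S$), whereas in the present statement $p = \operatorname{char}(k)$. So the ``large $n$'' case of your argument simply does not apply. Your fallback for the small-$n$ case is also incomplete: you assert that $\AffSpc{1}$-invariance of $Y$ propagates to $BY$, but this is precisely the kind of ``strong $\Rightarrow$ strict'' passage that is nontrivial and that you yourself flag as the main obstacle; you do not supply an argument for it, and in fact the paper's \cref{lem:intro:strictly} only establishes such a result starting from degree $4$ and again only for $p \neq \operatorname{char}(k)$.

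The paper's proof is entirely different and does not go through $p$-complete étale $\AffSpc{1}$-nilpotence at all. It first treats $2$-connective $H\finfld{p}$-modules $M$ directly: tensoring the Artin--Schreier sequence $H\finfld{p} \to H\Ga \to H\Ga$ with $M$ over $H\finfld{p}$ produces a fiber sequence whose outer terms are $H\Ga$-modules, and since $L_{\AffSpc{1}} \Ga \cong *$, these die after $\AffSpc{1}$-localization. The technical point is that the (rotated) fiber sequence is preserved by Nisnevich $\AffSpc{1}$-localization; this is where the connectivity assumption and the bound on $p$-torsion étale cohomology of affine schemes in characteristic $p$ (from SGA4) enter, via a theorem of Asok--Fasel--Hopkins on unstable $\AffSpc{1}$-localization of fiber sequences. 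One then reduces the $H\Z$-module case (hence the Eilenberg--MacLane case) to the $H\finfld{p}$-module case by passing to $M \sslash p$. Your intuition that Artin--Schreier and the cohomological bounds are the engine is correct, but the mechanism is this module-theoretic trick rather than the rigidity machinery.
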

\begin{rmk}
    It is not true that unstably the whole $p$-complete category vanishes:
    Indeed, étale sheaves of sets are always $p$-complete by \cite[Lemma 3.13]{mattis2024unstable},
    but there are $\AffSpc{1}$-invariant sheaves of sets that are nontrivial, e.g.\ $\Gm$.
\end{rmk}

\subsection*{Étale hyperdescent for rational motivic spaces} 
Let $k$ be a perfect field. Recall from e.g.\ \cite[§16]{cisinski2019triangulated}
that there is a splitting $\SH{k}_\Q \cong \SH{k}_\Q^+ \times \SH{k}_\Q^-$.
Every object in $\SH{k}_\Q^+$ satisfies étale hyperdescent,
whereas if the cohomological dimension of $k$ is finite, $\SH{k}_\Q^- = 0$.
In particular, one obtains the following result:
\begin{thm*}[{\cite[§16]{cisinski2019triangulated}}]
    Let $k$ be a perfect field with $\cd{}{k} < \infty$.
    Then any rational motivic spectrum $E \in \SH{k}_\Q$ 
    satisfies étale hyperdescent.
\end{thm*}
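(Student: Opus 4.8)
\emph{Plan.} The statement is immediate from the three facts recalled just above. Given $E \in \SH{k}_\Q$, the splitting $\SH{k}_\Q \simeq \SH{k}_\Q^+ \times \SH{k}_\Q^-$ presents it as $E \simeq E^+ \times E^-$ with $E^\pm \in \SH{k}_\Q^\pm$. Since $\cd{}{k} < \infty$, the minus factor $\SH{k}_\Q^-$ is zero, so $E^- \simeq 0$; hence $E \simeq E^+$ lies in $\SH{k}_\Q^+$ and therefore satisfies étale hyperdescent. The reduction to the two summands is harmless: étale hyperdescent of a motivic spectrum $F$ is the condition that the presheaf of spectra $U \mapsto \Map{\SH{k}}{\pSus U}{F}$ sends étale hypercovers to limit diagrams, and, limits commuting with finite products, this condition is inherited by the summands of $F$ and conversely implied by the summands — so $E$ satisfies étale hyperdescent as soon as $E^+$ (by the recalled statement about $\SH{k}_\Q^+$) and $E^- \simeq 0$ (trivially) do.

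\emph{Provenance of the two inputs.} Both lie in \cite[§16]{cisinski2019triangulated} and go back to Morel and to Cisinski--Déglise. For the plus factor one identifies $\SH{k}_\Q^+$ with the category of Beilinson motives $\mathrm{DM}(k,\Q)$; étale (indeed $h$-) hyperdescent there holds because with rational coefficients the étale and Nisnevich localizations of a homotopy-invariant sheaf with transfers agree — such a sheaf is unramified, and the discrepancy between its étale and Nisnevich cohomology is torsion, hence killed by $\otimes\Q$. For the minus factor, finiteness of $\cd{}{k}$ forces $\cd{2}{k} < \infty$, so $k$ is not formally real (a formally real field has an element of order $2$ in its absolute Galois group and thus infinite $2$-cohomological dimension), and the same holds for every finitely generated extension of $k$, hence for all residue fields of points of smooth $k$-schemes. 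By Pfister's local--global principle the Witt ring of a non-formally-real field is torsion, so $\mathrm W(k')\otimes\Q = 0$ for all such $k'$; since the unramified Witt sheaf on $\Sm{k}$ injects into the constant sheaf attached to its generic stalks, $\mathbf W \otimes \Q \simeq 0$. Combined with Morel's computation of the zeroth homotopy module $\underline\pi_0(\mathbf 1)_* \simeq \mathbf K^{\mathrm{MW}}_*$ of the motivic sphere (whose minus part, rationally, is $\mathbf W \otimes \Q$ in every weight), this gives $\underline\pi_0(\mathbf 1_\Q^-) = 0$; as $\mathbf 1_\Q^-$ is connective and every homotopy module of a connective ring spectrum is a module over its zeroth homotopy module, all homotopy modules of $\mathbf 1_\Q^-$ vanish, and by convergence of Postnikov towers of bounded-below rational motivic spectra over a perfect field (Morel's connectivity theorem) one concludes $\mathbf 1_\Q^- \simeq 0$, hence $\SH{k}_\Q^- = \Mod{\mathbf 1_\Q^-}{\SH{k}_\Q} \simeq 0$.

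\emph{Where the difficulty is.} The deduction in the first paragraph is purely formal; the real content sits in the two cited inputs. The single step where the hypothesis $\cd{}{k} < \infty$ is genuinely used, and cannot be dropped, is the vanishing $\SH{k}_\Q^- \simeq 0$: over $\R$, where $\cd{}{\R} = \infty$, the minus part is nonzero and rational motivic spectra need not satisfy étale hyperdescent (étale-locally one passes to $\C$, destroying the Witt-theoretic information carried by the minus part). So if one wants a genuinely self-contained proof, the hard part is exactly the Witt-theoretic argument of the second paragraph turning non-formal-realness of $k$ and its function fields into triviality of $\SH{k}_\Q^-$.
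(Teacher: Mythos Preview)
Your proposal is correct and follows exactly the same line as the paper: the paper states this theorem as a cited result from \cite[§16]{cisinski2019triangulated}, and its ``proof'' is the sentence immediately preceding it---the splitting $\SH{k}_\Q \cong \SH{k}_\Q^+ \times \SH{k}_\Q^-$, étale hyperdescent for the plus part, and vanishing of the minus part under $\cd{}{k} < \infty$. Your first paragraph reproduces this argument verbatim; the additional paragraphs on provenance and the role of the hypothesis are helpful elaborations but go beyond what the paper records (the paper defers entirely to Cisinski--Déglise for those details, and uses the same inputs again in the proof of \cref{lem:rational:transfers-etale-descent}).
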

Using similar techniques as in the proof of \cref{lem:intro:2-effective}, 
we can prove an unstable version of this result.
\begin{introthm}[\cref{lem:rational:main-thm}] \label{lem:intro:rational}
    Let $k$ be a perfect field with $\cd{}{k} < \infty$.
    Any rational nilpotent and $2$-effective motivic space $X \in \Spc{k}_*$ satisfies étale hyperdescent. 
\end{introthm}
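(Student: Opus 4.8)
The plan is to run the argument of \cref{lem:intro:2-effective} (i.e.\ \cref{lem:etale-nilpotent:2-effective-nilpotent}) with the roles of $p$-completion and stable rigidity played instead by rationalization and the rational étale hyperdescent of Cisinski--Déglise recalled above. Recall that a motivic space $Y$ satisfies étale hyperdescent exactly when, regarded as a Nisnevich sheaf on $\Sm{k}$, it is already an étale hypersheaf, i.e.\ the unit $Y \to L_{\et} Y$ is an equivalence. Since étale hypersheaves form a full subcategory of presheaves on $\Sm{k}$ closed under all limits --- in particular under fibers of maps, cofiltered limits, and the functor $\pLoopP \colon \SH{k} \to \Spc{k}_*$ (a limit-preserving functor, compatible with the étale localizations) --- it will suffice to exhibit $X$ as an iterated application of such operations to objects already known to satisfy étale hyperdescent.

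First I would, using that $X$ is $2$-effective and nilpotent together with the theory of $\Pp^1$-Postnikov towers from \cite{asok2023freudenthal}, argue exactly as in the proof of \cref{lem:etale-nilpotent:2-effective-nilpotent} to produce a highly connected tower $X \cong \limil{n} X_n$ with $X_0 \cong *$ and, for each $n$, a fiber sequence $X_n \to X_{n-1} \to K_n$ in which $K_n \cong \pLoopP E_n$ is the pointed infinite $\Pp^1$-loop space of a connective effective motivic spectrum $E_n \in \SH{k}$ whose connectivity tends to infinity with $n$. Because the homotopy sheaves of $X$ are sheaves of $\Q$-vector spaces, the spectra $E_n$ arising from this construction are rational, so $E_n \in \SH{k}_\Q$.

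Next, by the theorem of Cisinski--Déglise quoted above, every object of $\SH{k}_\Q$ satisfies étale hyperdescent once $\cd{}{k} < \infty$; in particular each $E_n$ does, and hence so does each layer $K_n \cong \pLoopP E_n$. Inducting up the tower, $X_0 \cong *$ satisfies étale hyperdescent, and if $X_{n-1}$ does then so does $X_n \cong \Fib{X_{n-1} \to K_n}$, being a fiber of a morphism of étale hypersheaves; finally $X \cong \limil{n} X_n$ is a cofiltered limit of étale hypersheaves, hence an étale hypersheaf. Since $X \cong \limil{n} X_n$ holds by construction of the tower and this limit is preserved by the inclusion of étale hypersheaves into Nisnevich sheaves, no convergence issue arises, and this proves the claim.

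The main obstacle is the first step: producing the tower whose layers are infinite $\Pp^1$-loop spaces of connective effective motivic spectra. This is exactly where $2$-effectivity and nilpotence are used and where the Freudenthal-type results on $\Pp^1$-Postnikov towers of \cite{asok2023freudenthal} carry the argument, precisely as in \cref{lem:etale-nilpotent:2-effective-nilpotent}; the only additional input needed in the rational setting is the (immediate) observation that the layers $E_n$ inherit rationality. The remaining steps are formal. I also note that $2$-effectivity cannot be omitted: the analogous rational étale hyperdescent statement fails for the $S^1$-stable category $\SHone{k}_\Q$, so it is genuinely necessary that the layers of the tower come from the $\Pp^1$-stable category $\SH{k}$, and it is $2$-effectivity that guarantees this.
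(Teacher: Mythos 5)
Your proposal is correct and follows essentially the same route as the paper's own proof: take the $\Pp^1$-Postnikov tower of \cref{lem:etale-nilpotent:effective-Postnikov-refinement}, arrange the layers $E_n$ to be rational (the paper does this via \cref{lem:etale-nilpotent:effective-Postnikov-refinement-rational} rather than the informal observation about rational homotopy sheaves, but the content is the same), apply Cisinski--Déglise's rational étale hyperdescent (\cref{lem:rational:transfers-etale-descent}) to conclude that each $\pLoopP E_n$ is an étale hypersheaf, and then pass up the tower by closure of étale hypersheaves under fibers and cofiltered limits.
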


\subsection*{Strict \texorpdfstring{$\AffSpc{1}$}{A1}-invariance for étale sheaves of abelian groups}
Let $k$ be a perfect field.
Recall that a Nisnevich sheaf of abelian groups $A$ is $1$-strictly 
(resp.\ strictly) Nisnevich $\AffSpc{1}$-invariant
if $K_{\nis}(A, 1) \in \ShvTop{\nis}{\Sm{k}}$ is $\AffSpc{1}$-invariant 
(resp.\ $K_{\nis}(A, n) \in \ShvTop{\nis}{\Sm{k}}$ is $\AffSpc{1}$-invariant for all $n \ge 0$).
Morel showed in \cite[Theorem 4.46]{morel2012a1} that
1-strictly Nisnevich $\AffSpc{1}$-invariant sheaves of abelian groups are 
already strictly Nisnevich $\AffSpc{1}$-invariant.
This naturally leads to the analogous question for étale sheaves.
Let us say that an étale sheaf of abelian groups $A$ on $\Sm{k}$
is \emph{$m$-strictly étale $\AffSpc{1}$-invariant} for $m \ge 1$
if $K(A, m) \in \ShvTopH{\et}{\Sm{k}}$ is $\AffSpc{1}$-invariant, and that it is \emph{strictly 
étale $\AffSpc{1}$-invariant} if $K(A, n)$ is $\AffSpc{1}$-invariant for all $n \ge 0$.
\begin{introquestion}
    Let $k$ be a perfect field with $\cd{}{k} < \infty$ and $A$
    be an étale sheaf of abelian groups on $\Sm{k}$.
    Suppose that $A$ is $1$-strictly étale $\AffSpc{1}$-invariant.
    Is it true that $A$ is strictly étale $\AffSpc{1}$-invariant?
\end{introquestion}

Using the rigidity result, we can partially answer this question.
\begin{introthm}[{\cref{lem:strictly:main-thm}}] \label{lem:intro:strictly}
    Let $k$ be a perfect field with $\cd{}{k} < \infty$ and $A \in \ShvTopH{\et}{\Sm{k}, \Ab}$
    be an étale sheaf of abelian groups.
    If $A$ is $4$-strictly étale $\AffSpc{1}$-invariant,
    then $A$ is strictly étale $\AffSpc{1}$-invariant.
\end{introthm}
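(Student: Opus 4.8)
The plan is to bootstrap from the hypothesis that $K(A,4)$ is $\AffSpc{1}$-invariant up the Postnikov/delooping direction using the rigidity theorem (\cref{lem:intro:main-thm}), and then separately handle the low degrees $n \le 3$ by a descending argument. First I would observe that since $A$ is an étale sheaf of abelian groups, $K(A,n)$ is automatically $p$-complete for every prime $p \neq \Char(k)$: indeed any sheaf of sets is $p$-complete by \cite[Lemma 3.13]{mattis2024unstable}, and $K(A,n)$ is built from $A$ by finitely many (de)loopings and truncations, so one can reduce to the discrete case — or alternatively, it suffices to prove $\AffSpc{1}$-invariance after $p$-completion for each $p \neq \Char(k)$ and integrally arc-locally, since an étale hypersheaf of anima that becomes $\AffSpc{1}$-invariant after $p$-completion at all such $p$ is $\AffSpc{1}$-invariant (the obstruction would live in a rational/characteristic piece that a sheaf of abelian groups does not see, cf.\ the mechanism behind \cref{lem:intro:vanishing-at-char-easy} and the rational statement \cref{lem:intro:rational}).

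For the upward step: suppose $K(A,m)$ is $\AffSpc{1}$-invariant for some $m \ge 4$; I claim $K(A,m+1)$ is too. The sheaf $K(A,m+1)$ is $(m+1)$-connective, hence in particular $4$-connective, and it is an étale hypersheaf; by \cref{lem:intro:4-connective} (which applies verbatim once $\AffSpc{1}$-invariance of $K(A,m+1)$ is known — but that is what we want) this is circular, so instead I use \cref{lem:intro:main-thm} directly. The point is that $L_p K(A,m)$ is $p$-completely étale $\AffSpc{1}$-nilpotent (it has a single nontrivial layer, $K(A,m)$ itself, and by hypothesis $L_p$ of the corresponding spectrum $A[m]$ is $\AffSpc{1}$-invariant), so it lies in the essential image of $\iota_!^p$ and there is a small étale hypersheaf $B$ on $\smet{k}$ with $L_p \iota_! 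B \cong L_p K(A,m)$; examining homotopy sheaves, $\iota^* A \cong \pi_m B$ and $B \cong K(\iota^* A, m)$ in $\complete{\ShvTopH{\et}{\smet{k}}}$. Now $\iota_!$ is a left adjoint hence commutes with the delooping that builds $K(-,m+1)$ from $K(-,m)$ via the bar construction (being the left adjoint $\iota_!$ preserves colimits, and $K(A,m+1) = |B_\bullet K(A,m)|$ is such a colimit), so $L_p \iota_! K(\iota^* A, m+1) \cong L_p K(A, m+1)$; but $\iota_! $ of anything from the small site is $\AffSpc{1}$-invariant, so $L_p K(A,m+1)$ is $\AffSpc{1}$-invariant, and letting $p$ range (plus the rational/characteristic clean-up) gives $\AffSpc{1}$-invariance of $K(A,m+1)$ integrally. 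By induction $K(A,n)$ is $\AffSpc{1}$-invariant for all $n \ge 4$.

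For the downward step to $n = 3, 2, 1, 0$: here one cannot simply loop, since $\Omega$ does not preserve $\AffSpc{1}$-invariance of sheaves of anima in general. Instead I would use the fibration sequence relating $K(A,n)$ to $K(A,n+1)$ within the universe of small-versus-large sheaves, or more efficiently: from the upward step we now know $K(A,4)$ is small after $p$-completion, $L_p K(A,4) \cong L_p \iota_! K(\iota^* A,4)$, and small étale hypersheaves are closed under looping; so $\Omega^{4-n} L_p \iota_! K(\iota^* A,4) \cong L_p \iota_! K(\iota^* A, n)$ for $0 \le n \le 4$, and this is $\AffSpc{1}$-invariant because everything pulled back from $\smet{k}$ is. The one subtlety is whether $\Omega$ commutes with $L_p$ and with $\iota_!$ on these objects — $\iota^*$ commutes with all limits including $\Omega$, $\iota_!$ does not, but on the image of $\iota_!$ (equivalently, using $\iota_!\iota^* \simeq \mathrm{id}$ there after $p$-completion, which is exactly \cref{lem:intro:4-connective} read as an equivalence on the relevant subcategory) one can transport the computation to the small site where $\Omega$ is harmless, then push back. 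Reconciling $\pi_n(\Omega^{4-n} K(A,4))$ with $K(A,n)$ as sheaves on $\Sm{k}$, not just as their $p$-completions, and checking the edge case $n=1$ (nonabelian $\pi_1$, though here $A$ abelian makes $K(A,1)$ simple so it is fine) is the only real bookkeeping.

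The main obstacle I expect is exactly the passage between "$\AffSpc{1}$-invariant after $p$-completion for every $p \neq \Char(k)$" and "$\AffSpc{1}$-invariant integrally": one must rule out a residual contribution at the exponential characteristic and rationally. For a sheaf of abelian groups $A$, $K(A,n)$ has no rational phantom ($\SH{k}_\Q^- = 0$ under $\cd{}{k}<\infty$, and the $+$-part is étale-hyperdescent so $\AffSpc{1}$-invariant; the unstable input is \cref{lem:intro:rational}), and the characteristic-$p$ part is handled by \cref{lem:intro:vanishing-at-char-hard}, which says $L_p K(A,k)$, if $\AffSpc{1}$-invariant, is contractible — so no obstruction survives there either. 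Assembling these three localizations (away-from-$p$, rational, at-$\Char k$) into a single integral $\AffSpc{1}$-invariance statement via a fracture/arithmetic-square argument for étale hypersheaves is the technical heart, but all the pieces are among the results already established earlier in the paper.
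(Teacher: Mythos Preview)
Your overall architecture---fracture square assembling the $p$-complete pieces, the rational piece, and the characteristic-$p$ piece---matches the paper's. But there are two genuine gaps and one unnecessary detour.

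\textbf{The downward step is trivial, and your stated obstruction is false.} You write that ``$\Omega$ does not preserve $\AffSpc{1}$-invariance of sheaves of anima in general.'' It does: $\AffSpc{1}$-invariant sheaves are the local objects of a Bousfield localization and are therefore closed under all limits, including $\Omega$. Since $K(A,n) \cong \Omega^{m-n} K(A,m)$ for $n \le m$, the hypothesis that $K(A,4)$ is $\AffSpc{1}$-invariant immediately gives $K(A,n)$ $\AffSpc{1}$-invariant for $n \le 4$ (this is the paper's \cref{lem:strict:loops}). All of your work on the downward step, including the smallness and commutation bookkeeping, is unnecessary.

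\textbf{The upward step has a gap; the paper's fix is to go stable.} Your bar-construction argument needs $L_p |B_\bullet K(A,m)| \cong |B_\bullet L_p K(A,m)|$, which you do not justify (and $L_p$ does not commute with geometric realizations in general). The paper avoids this entirely: from $K(A,4)$ being $4$-connective and $\AffSpc{1}$-invariant, \cref{lem:etale-nilpotent:4-connective-nilpotent} gives that $\iota_!\iota^* K(A,4) \to K(A,4)$ is a $p$-equivalence; since $\pLoop$ detects $p$-equivalences on connective sheaves of spectra (\cref{lem:p-comp:loop-detects-peq}) and commutes with $\iota_!, \iota^*$, this upgrades to $\iota_!\iota^* \Sigma^4 HA \to \Sigma^4 HA$ being a $p$-equivalence, hence (desuspending stably) $\iota_!\iota^* HA \to HA$ is a $p$-equivalence. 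Stable rigidity then gives $L_p HA$ $\AffSpc{1}$-invariant, and $L_p K(A,n) \cong \tau_{\ge 1}\pLoop \Sigma^n L_p HA$ is $\AffSpc{1}$-invariant for every $n$ at once. No induction, no bar construction.

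\textbf{The rational step needs a different input.} You invoke \cref{lem:intro:rational}, but that applies to $2$-effective nilpotent motivic spaces, and $K(A,n)$ is neither a priori a motivic space nor $2$-effective. The paper instead uses that $L_\Q K(A,n) \cong K(A_\Q, n)$, that rationally the \'etale and Nisnevich Eilenberg--MacLane sheaves coincide (Voevodsky), and then applies Morel's classical theorem on the Nisnevich side to pass from $n=1$ to all $n$ (\cref{lem:strict:rationally-strict-if-1-strict}).

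Finally, your opening claim that $K(A,n)$ is $p$-complete is false (take $A = \Z$, $n = 1$); you were right to abandon it for the fracture-square route.
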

In other words, this is a weak version of Morel's theorem in the étale world.

\begin{rmk}
    It is unknown to the author if for $m \in \{1, 2, 3\}$
    we still have the implication that $m$-strictly étale $\AffSpc{1}$-invariant sheaves 
    are already strictly étale $\AffSpc{1}$-invariant.
    This is closely related to whether \cref{lem:intro:main-thm-cor} holds for $m < 4$.
\end{rmk}
Using the equivalence $H^i_{\et}(U, A) \cong \pi_{n - i}(K(A, n)(U))$
for every smooth $k$-scheme $U$, integers $n \ge i \ge 0$ and étale sheaf 
of abelian groups $A$, we can reformulate the above theorem as the following.
\begin{introcor} \label{lem:intro:strictly-cor}
    Let $k$ be a perfect field with $\cd{}{k} < \infty$ and $A \in \ShvTopH{\et}{\Sm{S}, \Ab}$
    be an étale sheaf of abelian groups.
    If $H^{i}_{\et}(-, A)$ is $\AffSpc{1}$-invariant for every $i \in \{ 0, \dots, 4 \}$,
    then $H^{i}_{\et}(-, A)$ is $\AffSpc{1}$-invariant for every $i \ge 0$.
\end{introcor}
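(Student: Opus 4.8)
The plan is to deduce this directly from \cref{lem:intro:strictly} by unwinding the definition of $m$-strict étale $\AffSpc{1}$-invariance in terms of the cohomology functors $H^i_{\et}(-,A)$. The bridge is the standard computation of the homotopy groups of the value of an Eilenberg--MacLane hypersheaf: for every $U \in \Sm{k}$ and every $n \ge 1$ the descent spectral sequence gives a natural equivalence $K(A,n)(U) \simeq \Loop \tau_{\ge 0}\big(R\Gamma_{\et}(U,A)[n]\big)$, so that $\pi_j\big(K(A,n)(U)\big) \cong H^{n-j}_{\et}(U,A)$ for $0 \le j \le n$ and $\pi_j\big(K(A,n)(U)\big) = 0$ for $j > n$. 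In particular $K(A,n)(U)$ has the homotopy type of an infinite loop space, so there are no basepoint subtleties and all the homotopy groups in question are abelian.

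From this I would extract the following dictionary: for a fixed $n \ge 1$, the hypersheaf $K(A,n)$ is $\AffSpc{1}$-invariant if and only if $H^i_{\et}(-,A)$ is $\AffSpc{1}$-invariant for every $0 \le i \le n$. Indeed, $\AffSpc{1}$-invariance of $K(A,n)$ means that the projection-induced map $K(A,n)(U) \to K(A,n)(U \times \AffSpc{1})$ is an equivalence for every $U \in \Sm{k}$, which is equivalent to it inducing an isomorphism on all homotopy groups, and under the computation above these maps are precisely the pullback maps $H^i_{\et}(U,A) \to H^i_{\et}(U \times \AffSpc{1},A)$ for $0 \le i \le n$ (with everything zero above). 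Note that the $i = 0$ term $H^0_{\et}(-,A) = A$ is included, matching the fact that $4$-strict étale $\AffSpc{1}$-invariance of $A$ in particular forces $A$ itself to be $\AffSpc{1}$-invariant.

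Granting the dictionary, the corollary follows in two steps. The hypothesis that $H^i_{\et}(-,A)$ is $\AffSpc{1}$-invariant for $i \in \{0, \dots, 4\}$ is, by the dictionary with $n = 4$, exactly the statement that $K(A,4)$ is $\AffSpc{1}$-invariant, i.e.\ that $A$ is $4$-strictly étale $\AffSpc{1}$-invariant. By \cref{lem:intro:strictly}, $A$ is then strictly étale $\AffSpc{1}$-invariant, so $K(A,n)$ is $\AffSpc{1}$-invariant for every $n \ge 0$. Applying the dictionary once more, now in the converse direction for each $n$, shows that $H^i_{\et}(-,A)$ is $\AffSpc{1}$-invariant for every $0 \le i \le n$, and letting $n \to \infty$ finishes the proof. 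The only genuine content is \cref{lem:intro:strictly}; the remaining work is the bookkeeping above, and the one point needing (mild) care is the precise indexing in the descent spectral sequence, including its low-degree terms and the identification $\pi_n(K(A,n)(U)) \cong H^0_{\et}(U,A) = A(U)$.
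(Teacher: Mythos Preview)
Your proposal is correct and follows exactly the approach indicated in the paper, which simply remarks that the equivalence $H^i_{\et}(U,A) \cong \pi_{n-i}(K(A,n)(U))$ for $n \ge i \ge 0$ allows one to reformulate \cref{lem:intro:strictly} as \cref{lem:intro:strictly-cor}. You have spelled out carefully the dictionary that the paper leaves implicit, including the infinite loop space structure that handles basepoint issues; there is nothing to add.
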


\subsection*{Étale realization functor}
As another application of this theory, we obtain an unstable étale realization functor:

\begin{introthm} \label{lem:intro:realization-functor}
    Let $k$ be a perfect field with $\cd{}{k} < \infty$, and $p \neq \operatorname{char}(k)$ a prime.
    There exists an \emph{étale realization functor} 
    \begin{equation*}
        Re_p \colon \Spc{k}_{*,\mathrm{nil},2-\mathrm{eff}} \to \complete{\ShvTopH{\et}{\smet{k}}_{*,}}.
    \end{equation*}
    Here, the left-hand side is the $\infty$-category of nilpotent and $2$-effective pointed motivic spaces.
\end{introthm}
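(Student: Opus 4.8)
The plan is to realize $Re_p$ as a composite of three functors: hyper-étale sheafification $L_{\et}$, $p$-completion $L_p$, and the inverse of the equivalence of \cref{lem:intro:main-thm}.

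Let $e$ be the exponential characteristic of $k$. Given a pointed motivic space $X \in \Spc{k}_*$ that is nilpotent and $2$-effective, I would first reduce to the case where $X$ is in addition $\Z[1/e]$-local: since $\Z[1/e]$-localization of a nilpotent motivic space is again nilpotent and, being an exact localization compatible with the $\Pp^1$-filtration, again $2$-effective, and since $X \to L_{\Z[1/e]} X$ is a mod-$p$ equivalence (because $p \neq e$), one has $L_p L_{\et} X \simeq L_p L_{\et} L_{\Z[1/e]} X$; hence replacing $X$ by $L_{\Z[1/e]} X$ does not change the value of the functor being constructed. Thus we may assume $X$ is moreover $\Z[1/e]$-local.

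Now form $L_{\et} X \in \ShvTopH{\et}{\Sm{k}}_*$. By \cref{lem:intro:2-effective} — applicable because $X$ is $2$-effective, nilpotent and $\Z[1/e]$-local, and $\cd{}{k} < \infty$, $p \neq e$ — the sheaf $L_{\et} X$ is étale $\AffSpc{1}$-nilpotent, and in particular $p$-completely étale $\AffSpc{1}$-nilpotent (so that $L_{\et} X$ already coincides with the full étale $\AffSpc{1}$-localization of $X$). Consequently $L_p L_{\et} X$ is, tautologically, the $p$-completion of a $p$-completely étale $\AffSpc{1}$-nilpotent sheaf, which by \cref{lem:intro:main-thm} is precisely a description of the essential image of the fully faithful functor $\iota_!^p \colon \compnil{\ShvTopH{\et}{\smet{k}}} \to \complete{\ShvTopH{\et}{\Sm{k}}}$. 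I therefore define
\begin{equation*}
    Re_p(X) \coloneqq (\iota_!^p)^{-1}\!\left(L_p L_{\et} X\right) \ \in\ \compnil{\ShvTopH{\et}{\smet{k}}} \ \subseteq\ \complete{\ShvTopH{\et}{\smet{k}}_{*}} .
\end{equation*}

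Functoriality is then formal: $X \mapsto L_p L_{\et} X$ is a functor $\Spc{k}_{*,\mathrm{nil},2-\mathrm{eff}} \to \complete{\ShvTopH{\et}{\Sm{k}}}$ (a composite of a hypersheafification with $p$-completion), its image lies in the full subcategory spanned by the essential image of $\iota_!^p$, so it factors uniquely through that subcategory, and composing with the inverse equivalence of \cref{lem:intro:main-thm} yields $Re_p$. The only step that requires genuine work — and which I expect to be essentially the sole obstacle — is the reduction to the $\Z[1/e]$-local case, i.e.\ verifying that $\Z[1/e]$-localization preserves nilpotence and $2$-effectivity of motivic spaces and that $L_p L_{\et}$ is insensitive to it; this is an arithmetic-fracture argument using $p \neq e$ together with the compatibility of hyper-étale sheafification with $p$-completion. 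If instead one takes the source to be the $\Z[1/e]$-local nilpotent $2$-effective motivic spaces, even this step disappears and the construction is immediate from \cref{lem:intro:main-thm,lem:intro:2-effective}.
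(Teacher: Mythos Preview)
Your proposal is correct and follows essentially the same route as the paper's proof: localize at $\Z[1/e]$, apply \cref{lem:intro:2-effective} to obtain étale $\AffSpc{1}$-nilpotence of $L_{\et} L_{\Z[1/e]} X$, then invert $\iota_!^p$ via \cref{lem:intro:main-thm}. The paper simply defines $Re_p(X) \coloneqq (\iota_!^p)^{-1}(L_p L_{\et} L_{\Z[1/e]} X)$ directly, citing \cite[Proposition 4.3.8]{asok2022localization} and \cite[Proposition 4.3.4]{asok2023freudenthal} for the preservation of nilpotence and $2$-effectivity under $L_{\Z[1/e]}$; your additional observation that $L_p L_{\et} X \simeq L_p L_{\et} L_{\Z[1/e]} X$ is a pleasant extra (showing the auxiliary localization is invisible after $p$-completion) but not needed for the statement as phrased.
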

\begin{proof}
    Write $e$ for the exponential characteristic of $k$.
    Let $X \in \Spc{k}_{*,\mathrm{nil},2-\mathrm{eff}}$.
    Then also $L_{\Z[\frac{1}{e}]} X$ is nilpotent and $2$-effective:
    Indeed, nilpotence was shown in \cite[Proposition 4.3.8]{asok2022localization}
    (they only show that $L_{\Z[\frac{1}{e}]} X$ is weakly $\AffSpc{1}$-nilpotent, 
    but under the assumption that $X$ is nilpotent their proof in fact shows that $L_{\Z[\frac{1}{e}]} X$ is nilpotent).
    That it is $2$-effective was shown in \cite[Proposition 4.3.4]{asok2023freudenthal}.
    Hence, $Y \coloneqq L_{\et} L_{\Z[\frac{1}{e}]} X$ is étale $\AffSpc{1}$-nilpotent by \cref{lem:intro:2-effective},
    so in particular $p$-completely étale $\AffSpc{1}$-nilpotent by \cref{lem:etale-nilpotent:etale-nilpotent-implies-p-comp-rationally}.
    We can therefore define $Re_p X$ to be the inverse of the equivalence $\iota_!^p$ from \cref{lem:intro:main-thm},
    applied to $L_p Y$.
\end{proof}

\subsection*{Improving the bounds}
Let $k$ be a perfect field and $\epsilon \ge 0$ be an integer.
Recall that $\SHone{k}(\epsilon)$, the category of $\epsilon$-effective 
motivic $S^1$-spectra, and $\SH{k}(\epsilon)$, the category of $\epsilon$-effective motivic spectra 
both possess a t-structure \cite[§6.1]{bachmannyakerson}.
In particular, by \cite[Lemma 6.2]{bachmannyakerson}, there is an induced functor on the hearts 
\begin{equation*}
    \omega^\infty \colon \SH{k}(\epsilon)^\heartsuit \to \SHone{k}(\epsilon)^\heartsuit.
\end{equation*}
By \cite{bachmannyakerson,bachmann2021zerothP1,feld2021milnor}, this functor is an equivalence for $\epsilon \ge 2$
(see also \cite[Theorem 2.2.30 (2)]{asok2023freudenthal} for a combined proof of this fact).
This equivalence lets us resolve any $\epsilon$-effective motivic space 
by a tower, where the layers are $\Pp^1$-infinite loop spaces, cf.\ \cref{lem:etale-nilpotent:effective-Postnikov-refinement}.
All the bounds presented in the above introduction are derived from the fact that this works for $\epsilon = 2$.
If it turns out that also for $\epsilon = 1$ the above functor is an equivalence, one can improve the above results as follows:
\begin{itemize}
    \item In \cref{lem:intro:main-thm-cor}, one can replace $4$-connective by $3$-connective.
    \item In \cref{lem:intro:2-effective}, one can replace $2$-effective by $1$-effective.
    \item In \cref{lem:intro:4-connective}, one can replace $4$-connective by $3$-connective.
    \item In \cref{lem:intro:rational}, one can replace $2$-effective by $1$-effective.
    \item In \cref{lem:intro:strictly}, one can replace $4$-strictly by $3$-strictly.
    \item In \cref{lem:intro:strictly-cor}, one can replace $\{ 0, \dots, 4 \}$ by $\{ 0, \dots, 3 \}$. 
    \item In \cref{lem:intro:realization-functor}, one can replace $2$-effective by $1$-effective.
\end{itemize}

\subsection*{Linear Outline}
We begin in \cref{sec:prelim} with background material.

\Cref{sec:highly-connected} establishes a criterion for when a geometric morphism of $\infty$-topoi preserves limits along certain highly connected towers.

In \cref{sec:p-comp}, we strengthen some results on unstable $p$-completion from \cite{mattis2024unstable,mattis2024fracture}.

In \cref{sec:effective}, we collect key results on $\Pp^1$-Postnikov towers from the work of Asok–Bachmann–Hopkins \cite{asok2023freudenthal}, which serve as an important tool in our later arguments.

\Cref{sec:etale-topos} collects key facts about hypercomplete étale $\infty$-topoi.

Finally, in \cref{sec:canonical}, we show that any $n$-connective pointed sheaf in an $\infty$-topos $\topos X$ has a canonical resolution by ``free $n$-connective pointed sheaves'',
i.e.\ sheaves of the form $S^n \wedge T$ for $T \in \topos X_*$.

\Cref{sec:etale-A1-inv} examines the connection between étale sheaves and $\AffSpc{1}$-invariant presheaves. In particular, we introduce the notion of ($p$-completely) étale $\AffSpc{1}$-nilpotent sheaves, which play a central role in the rigidity theorem.

\Cref{sec:rational} shows that, after rationalization, étale and Nisnevich motivic spaces become closely related, culminating in \cref{lem:intro:rational}.

\cref{sec:at-char} provides some vanishing results of $p$-complete étale motivic spaces, if $p$ is the characteristic of the base scheme, 
proving \cref{lem:intro:vanishing-at-char-easy,lem:intro:vanishing-at-char-hard}.

The unstable rigidity theorem is proven in \cref{sec:rigidity}, where we establish \cref{lem:intro:main-thm}.

In \cref{sec:retract}, we construct a retract $S^2 \to \Gm \to S^2$ in the $p$-complete étale unstable motivic setting, yielding \cref{lem:intro:retract}.

\Cref{sec:examples} presents concrete examples of étale $\AffSpc{1}$-nilpotent sheaves, thereby verifying \cref{lem:intro:2-effective,lem:intro:4-connective,lem:intro:main-thm-cor}.

Finally, \cref{sec:application:strict} proves \cref{lem:intro:strictly}, an étale analog of Morel’s theorem on $1$-strictly and strictly $\AffSpc{1}$-invariant sheaves of abelian groups.

Furthermore, in Appendix \ref{sec:nilpotent-morphisms}, we collect some results 
about Moore--Postnikov towers for nilpotent morphisms in the setting of $\infty$-topoi.

\subsection*{Notation}
This article is written in the language of $\infty$-categories, as developed e.g.\ by 
Lurie in \cite{highertopoi}.
We will use the word \emph{anima} for an object of the $\infty$-category of anima/spaces/homotopy types,
whereas we will use the word \emph{sheaf} for an object of an $\infty$-topos $\topos X$.
We will furthermore employ the following notation:
\begin{center}
    \begin{tabular}{l|ll}
        $\An$ &$\infty$-category of anima \\
        $\Sp$ &$\infty$-category of spectra &\cite[§1.4.3]{higheralgebra}\\
        $\Stab{-}$ &Stabilization of an $\infty$-category &\cite[§1.4.2]{higheralgebra}\\
        $\ShvTop{\tau}{\Cat C}$ &$\infty$-topos of sheaves on a site $(\Cat C, \tau)$ \\
        $\ShvTopH{\tau}{\Cat C}$ &$\infty$-topos of hypersheaves on a site $(\Cat C, \tau)$, \\ &i.e.\ the hypercompletion of $\ShvTop{\tau}{\Cat C}$ \\
        $\Ab$ &Category of abelian groups \\
        $L_\Q$ &(Un)stable rationalization functor &\cite[§§2, 3]{mattis2024fracture} \\
        $L_p$ &(Un)stable $p$-completion functor &\cite[§§2, 3]{mattis2024unstable} \\
        $\complete{\Cat C}$ &$p$-complete objects in an $\infty$-category $\Cat C$, &\cite[§§2, 3]{mattis2024unstable} \\ &i.e.\ the essential image of $L_p$ \\
        $\Sm{S}$ &Category of quasicompact smooth $S$-schemes \\
        $\smet{S}$ &Category of quasicompact étale $S$-schemes \\
        $\Spc{S}$ &$\infty$-category of motivic spaces &\cite[§2.2]{bachmann2020norms} \\
        $\SHone{S}$ &$\infty$-category of motivic $S^1$-spectra &\cite[§4]{morel2003trieste} \\
        $\SH{S}$ &$\infty$-category of motivic spectra &\cite[§4.1]{bachmann2020norms} \\
        $\SHoneet{S}$ &$\infty$-category of étale motivic $S^1$-spectra &\cite[§5]{Bachmann2021etalerigidity} \\
        $\SHet{S}$ &$\infty$-category of étale motivic spectra &\cite[§5]{Bachmann2021etalerigidity} \\
        $\mathbf{K}^{\mathrm{MW}}_n$ &$n$-th Milnor--Witt K-theory group &\cite[§2]{morel2012a1}\\
    \end{tabular}
\end{center}
We will use without mention that there is a canonical equivalence 
\begin{equation*}
    \Stab{\ShvTop{\tau}{\Cat C}} \cong \ShvTop{\tau}{\Cat C, \Sp}
\end{equation*}
between the stabilization of the $\infty$-category of sheaves on $(\Cat C, \tau)$
and the $\infty$-category of sheaves of spectra on $(\Cat C, \tau)$,
cf.\ \cite[Remark 1.3.2.2 and Proposition 1.3.1.7]{sag},
and similarly in the hypercomplete case.
In particular, an object $E \in \Stab{\ShvTop{\tau}{\Cat C}}$ will be called 
a \emph{sheaf of spectra}.

Recall that there are by construction adjunctions 
\begin{equation*}
    \Sus \colon \Spc{S}_* \rightleftarrows \SHone{S} \noloc \pLoop
\end{equation*}
and 
\begin{equation*}
    \sigma^\infty \colon \SHone{S} \rightleftarrows \SH{S} \noloc \omega^\infty.
\end{equation*}
We also write $\SusP \coloneqq \sigma^\infty \Sus$ and $\pLoopP \coloneqq \pLoop \omega^\infty$
for the composed adjunction.

If $k$ is a field, then we define the \emph{exponential characteristic of $k$} as
\begin{equation*}
    e = \begin{cases}
        1 &\text{if } \mathrm{char}(k) = 0 \\
        p &\text{if } \mathrm{char}(k) = p > 0. 
    \end{cases}
\end{equation*}

\subsection*{Acknowledgments}
I thank Tom Bachmann, Elden Elmanto, Lorenzo Mantovani and Swann Tubach for helpful discussions about the topic.
Furthermore, I thank Tom Bachmann, Julie Bannwart and Timo Weiß for helpful comments on a draft of this article.

The author acknowledges support by the Deutsche Forschungsgemeinschaft 
(DFG, German Research Foundation) through the Collaborative Research 
Centre TRR 326 \textit{Geometry and Arithmetic of Uniformized 
Structures}, project number 444845124.

\section{Preliminaries}         \label{sec:prelim}
\subsection{Limits of highly connected towers}
Recall from \cite[Definition 5.1 and Lemma 5.2]{mattis2024fracture} that a locally finite dimensional cover of an $\infty$-topos $\topos X$
consists of a jointly conservative collection of limit-preserving geometric morphisms $\{p_i^* \colon \topos X \to \topos U_i\}_{i \in I}$,
such that moreover $\topos U_i$ has enough points and is locally of homotopy dimension $\le n_i$ for some $n_i \ge 0$ (depending on $i$).
Moreover, any $\infty$-topos that admits such a cover is automatically Postnikov-complete, cf.\ \cite[Lemma 5.3]{mattis2024fracture}.
Recall that a tower $(X_n)_n$ is called \emph{locally highly connected (subordinate to the cover $\{\topos U_i\}_i)$} \cite[Definition 6.1 (3)]{mattis2024fracture}
if $(p_i^* X_n)_n$ is a \emph{highly connected tower} in $\topos U_i$ for all $i \in I$,
i.e.\ all the $p_i^* X_n$ are connected, and for every $i \in I$ and $k \ge 1$ there exists an $N_{i,k} \ge 0$ 
such that for all $n \ge N_{i,k}$ 
the map $\pi_k(p_i^* X_n) \to \pi_k(p_i^* X_{N_{i,k}})$ is an isomorphism.
In this section, we will show that under suitable finiteness conditions, geometric morphisms preserve limits of (locally) highly connected towers.

\begin{prop} \label{lem:highly-connected:main-thm}
    Let $f^* \colon \topos X \to \topos Y$ be a geometric morphism of $\infty$-topoi.
    Suppose that $\topos X$ is locally of homotopy dimension $\le N$ for some $N$
    and that $\topos Y$ admits a locally finite dimensional cover.
    Let $(X_n)_n$ be a highly connected tower in $\topos X$.
    Then the canonical map 
    \begin{equation*}
        f^* \limil{n} X_n \to \limil{n} f^* X_n
    \end{equation*}
    is an equivalence.
\end{prop}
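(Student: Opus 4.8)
The plan is to reduce to the case where the target $\topos Y$ is itself locally of bounded homotopy dimension and has enough points, and then argue by comparing homotopy sheaves level by level, using that a highly connected tower stabilizes in each fixed degree.

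\medskip

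\emph{Step 1: Reduce to the case of a single chart.} Let $\{q_i^* \colon \topos Y \to \topos U_i\}_{i \in I}$ be a locally finite dimensional cover of $\topos Y$. Since the $q_i^*$ are jointly conservative and each preserves limits (being a limit-preserving geometric morphism), it suffices to check that $q_i^* f^* \limil{n} X_n \to q_i^* \limil{n} f^* X_n \cong \limil{n} q_i^* f^* X_n$ is an equivalence for each $i$. The composite $q_i^* f^* \colon \topos X \to \topos U_i$ is again a geometric morphism, $\topos X$ is still locally of homotopy dimension $\le N$, and now the target $\topos U_i$ has enough points and is locally of homotopy dimension $\le n_i$. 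So I may assume from the start that $\topos Y$ has enough points and is locally of homotopy dimension $\le M$ for some $M$.

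\medskip

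\emph{Step 2: Reduce to stalks.} Since $\topos Y$ has enough points, a map in $\topos Y$ is an equivalence iff it is so on all stalks $x^* \colon \topos Y \to \An$. A stalk functor is a limit-preserving geometric morphism, so $x^*$ commutes with the countable limit $\limil{n}$; composing, $x^* f^* \colon \topos X \to \An$ is a geometric morphism with source locally of homotopy dimension $\le N$ and target $\An$, which has homotopy dimension $0$. Hence it suffices to prove the statement when $\topos Y = \An$.

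\medskip

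\emph{Step 3: The case $\topos Y = \An$.} Here I want to show $f^* \limil{n} X_n \to \limil{n} f^* X_n$ is an equivalence of anima, where $(X_n)_n$ is highly connected in $\topos X$ and $\topos X$ is locally of homotopy dimension $\le N$. Both sides are limits of towers of connected anima/sheaves; the key point is that $f^*$ preserves $k$-connectivity (it is a geometric morphism, hence exact, so preserves $n$-truncatedness of maps and finite limits, and it preserves the terminal object, so it preserves connectivity of objects) and that the tower $(X_n)_n$ — and hence $(f^* X_n)_n$ — is highly connected, so in each degree the homotopy (pro-)groups are eventually constant and the relevant $\limone{}$ terms vanish. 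Concretely: because $\topos X$ is locally of homotopy dimension $\le N$ and the tower is highly connected, $X \coloneqq \limil{n} X_n$ is the limit of its Postnikov tower, and for each $m$ the truncation $\tau_{\le m} X$ agrees with $\tau_{\le m} X_n$ for $n$ large (this is the content of the tower being highly connected together with Postnikov completeness of $\topos X$, which is automatic from the local homotopy dimension bound). Applying $f^*$ and using that it commutes with $\tau_{\le m}$, I get $\tau_{\le m} f^* X \cong f^* \tau_{\le m} X \cong f^* \tau_{\le m} X_n \cong \tau_{\le m} f^* X_n \cong \tau_{\le m} \limil{n} f^* X_n$ for $n$ large, where the last equivalence again uses that $(f^* X_n)_n$ is highly connected. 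Since the maps $f^* X \to \limil{n} f^* X_n$ are then equivalences after every truncation $\tau_{\le m}$, and — crucially — the target $\limil{n} f^* X_n$ lives in $\An$ where Postnikov towers converge, the map is an equivalence.

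\medskip

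\emph{Main obstacle.} The delicate point is Step 3: one must be careful that both $\limil{n} X_n$ and $\limil{n} f^* X_n$ are genuinely recovered from their Postnikov towers. For $\limil{n} X_n$ this follows from the hypothesis that $\topos X$ is locally of homotopy dimension $\le N$ (hence Postnikov-complete). For $\limil{n} f^* X_n$ — after the reductions, a limit of a highly connected tower of anima — convergence of the Postnikov tower is automatic in $\An$. One also needs that $f^*$ preserves connectivity and commutes with truncation, which are standard properties of geometric morphisms, and that ``highly connected'' is preserved by $f^*$, which follows since $\pi_k(f^* X_n) \cong f^* \pi_k(X_n)$ in low degrees via the exactness of $f^*$ together with connectivity. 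Assembling these ingredients — and in particular making the interchange of $\tau_{\le m}$ with the countable limit precise on both sides — is the substantive part of the argument; the reductions in Steps 1 and 2 are formal.
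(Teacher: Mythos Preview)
Your overall approach matches the paper's: reduce to stalks via the cover of $\topos Y$, then handle the case of a point $\topos X \to \An$ directly. The paper cites \cite[Lemma 6.5 and Corollary 6.6]{mattis2024fracture} for the latter; your Step~3 essentially reproves it.

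There is one genuine error. In Step~2 you claim that ``a stalk functor is a limit-preserving geometric morphism.'' This is false: a point $x^* \colon \topos Y \to \An$ only preserves \emph{finite} limits (and all colimits), not sequential limits --- indeed, if points preserved all limits the proposition would be trivial. What you actually need is that $x^* \limil{n} f^* X_n \to \limil{n} x^* f^* X_n$ is an equivalence. This does hold, but for the following reason: after Step~1 the target $\topos Y$ is locally of homotopy dimension $\le M$, and $(f^* X_n)_n$ is again a highly connected tower there (geometric morphisms preserve homotopy sheaves), so your own Step~3 --- applied with $x^*$ in place of $f^*$ and $\topos Y$ in place of $\topos X$ --- supplies the required equivalence. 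In other words, the result of Step~3 must be invoked \emph{twice}: once for $x^*$ acting on the tower $(f^* X_n)_n$ in $\topos Y$, and once for the composite $x^* f^*$ acting on the original tower $(X_n)_n$ in $\topos X$. The paper's proof makes both uses explicit, as the ``vertical'' and ``diagonal'' maps in a commutative triangle. With this correction your argument goes through; as written, the justification in Step~2 is wrong.
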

\begin{proof}
    Since $\topos Y$ admits a locally finite dimensional cover,
    there exists a jointly conservative set of points $\Cat S$ of $\topos Y$
    such that each point $s^* \in \Cat S$ factors through an $\infty$-topos 
    which is locally of homotopy dimension $\le N_s$ for some $N_s$,
    cf.\ \cite[Definition 5.1 (3)]{mattis2024fracture}.
    It thus suffices to check the equivalence after applying $s^* \in \Cat S$.
    Note that we have a commutative diagram of the form 
    \begin{center}
        \begin{tikzcd}
            s^* f^* \limil{n} X_n\ar[r] \ar[dr] \ar[dr] &s^* \limil{n} f^* X_n \ar[d] \\
            &\limil{n} s^* f^* X_n
        \end{tikzcd}
    \end{center}
    where all maps are limit-assembly maps.
    Since $s^* f^*$ is a point of $\topos X$,
    the diagonal map is an equivalence by \cite[Lemma 6.5]{mattis2024fracture}.
    By \cite[Example 6.3]{mattis2024fracture},
    we see that $(f^* X_n)_n$ is highly connected (and thus in particular locally highly connected).
    Hence, since $s^* \in \Cat S$,
    the vertical map is an equivalence by \cite[Corollary 6.6]{mattis2024fracture}.
    Therefore, also the horizontal map is an equivalence, which is what we wanted to show.
\end{proof}

We want a version of the above proposition where we relax the assumption that $\topos X$ is locally of 
homotopy dimension $\le N$. Instead, we only want both $\topos X$ and $\topos Y$ to admit locally finite dimensional covers.
In order to prove such a result one needs a certain compatibility of locally finite dimensional covers 
which is captured by the following straightforward definition.
\begin{defn}
    Let $\topos X$ be an $\infty$-topos with a locally finite dimensional cover $\{ p_i^* \colon \topos X \to \topos U_i \}_{i \in I}$,
    and similarly $\topos Y$ be an $\infty$-topos with a locally finite dimensional cover 
    $\{ q_j^* \colon \topos Y \to \topos V_j \}_{j \in J}$.

    A \emph{morphism of $\infty$-topoi with locally finite dimensional covers}
    consists of a geometric morphism $f^* \colon \topos X \to \topos Y$,
    a map $\kappa \colon J \to I$,
    and geometric morphisms $f_j^* \colon \topos U_{\kappa j} \to \topos V_j$,
    together with the datum of a commutative square 
    \begin{center}
        \begin{tikzcd}
            \topos X \ar[r, "f^*"] \ar[d, "p_{\kappa j}^*"] &\topos Y \ar[d, "q_j^*"]\\ 
            \topos U_{\kappa j} \ar[r, "f_j^*"] &\topos V_j
        \end{tikzcd}
    \end{center}
    for every $j \in J$.
\end{defn}

\begin{prop} \label{lem:highly-connected:morphism-of-covers-preserves-towers}
    Let $\topos X$ be an $\infty$-topos with a locally finite dimensional cover $\{ p_i^* \colon \topos X \to \topos U_i \}_{i \in I}$,
    and similarly $\topos Y$ be an $\infty$-topos with a locally finite dimensional cover 
    $\{ q_j^* \colon \topos Y \to \topos V_j \}_{j \in J}$.
    Let $(f^* \colon \topos X \to \topos Y, \kappa \colon J \to I, (f_j^* \colon \topos U_{\kappa j} \to \topos V_j)_j)$ 
    be a morphism of $\infty$-topoi with locally finite dimensional covers.
    
    If $(X_n)_n$ is a locally highly connected tower in $\topos X$ (subordinate to $\{ \topos U_i \}_i$),
    then the canonical map 
    \begin{equation*}
        f^* \limil{n} X_n \to \limil{n} f^* X_n.
    \end{equation*}
    is an equivalence.
\end{prop}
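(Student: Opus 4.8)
The plan is to reduce \cref{lem:highly-connected:morphism-of-covers-preserves-towers} to the special case already established in \cref{lem:highly-connected:main-thm} by working locally on both covers. Since the cover $\{q_j^* \colon \topos Y \to \topos V_j\}_{j \in J}$ is jointly conservative, it suffices to check that the limit-assembly map becomes an equivalence after applying each $q_j^*$. So fix $j \in J$, set $i = \kappa j$, and consider the commutative square provided by the morphism-of-covers datum. Applying $q_j^*$ to the assembly map and using this square, we get a commutative triangle
\begin{equation*}
    \begin{tikzcd}[ampersand replacement=\&]
        q_j^* f^* \limil{n} X_n \ar[r] \ar[d, equals] \& q_j^* \limil{n} f^* X_n \ar[d] \\
        f_j^* p_i^* \limil{n} X_n \ar[r] \& \limil{n} f_j^* p_i^* X_n
    \end{tikzcd}
\end{equation*}
where on the left we used the commutativity of the square twice (once for $\limil{n} X_n$, once levelwise in the tower), and the right vertical map is the assembly map for $q_j^*$ applied to the tower $(f^* X_n)_n$ in $\topos Y$. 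A diagram chase then shows it is enough to prove: (a) the bottom horizontal map is an equivalence, and (b) the right vertical map is an equivalence.

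For (a): by hypothesis $(X_n)_n$ is locally highly connected subordinate to $\{\topos U_i\}_i$, so $(p_i^* X_n)_n$ is a highly connected tower in $\topos U_i$. Moreover $p_i^* \colon \topos X \to \topos U_i$ is a limit-preserving geometric morphism onto an $\infty$-topos that (being part of a locally finite dimensional cover) is locally of homotopy dimension $\le n_i$ for some $n_i$; and $\topos V_j$, being part of a locally finite dimensional cover of $\topos Y$, admits a locally finite dimensional cover (namely the trivial one-element cover by itself, or more directly it \emph{is} locally of bounded homotopy dimension, hence has enough points and admits such a cover). Thus $f_j^* \colon \topos U_{\kappa j} \to \topos V_j$ together with the highly connected tower $(p_i^* X_n)_n$ satisfies exactly the hypotheses of \cref{lem:highly-connected:main-thm}, which gives that $f_j^* \limil{n} p_i^* X_n \to \limil{n} f_j^* p_i^* X_n$ is an equivalence; combined with $p_i^*$ preserving the limit $\limil{n} X_n$ (it is limit-preserving by definition of a cover), this yields (a). For (b): by \cite[Example 6.3]{mattis2024fracture} the tower $(f^* X_n)_n$ is locally highly connected in $\topos Y$ subordinate to $\{\topos V_j\}_j$ — indeed applying $q_j^*$ and using the square identifies $q_j^* f^* X_n$ with $f_j^* p_i^* X_n$, and $f_j^*$ sends the highly connected tower $(p_i^* X_n)_n$ to a highly connected tower by \loccit — so \cite[Corollary 6.6]{mattis2024fracture} applies directly to give that $q_j^*$ commutes with $\limil{n}$ of this tower, which is (b).

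The main obstacle, such as it is, is bookkeeping rather than mathematics: one must be careful that the various assembly maps in the triangle above genuinely agree up to the identifications coming from the commutative squares, i.e.\ that the triangle commutes, and that the hypotheses of \cref{lem:highly-connected:main-thm} (specifically: $\topos U_{\kappa j}$ locally of bounded homotopy dimension, $\topos V_j$ admitting a locally finite dimensional cover) are genuinely met by the constituents of a locally finite dimensional cover — both of which follow from unwinding \cite[Definition 5.1]{mattis2024fracture}. Once this is in place, the proof is a formal consequence of \cref{lem:highly-connected:main-thm} together with \cite[Example 6.3 and Corollary 6.6]{mattis2024fracture}.
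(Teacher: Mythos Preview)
Your proof is correct and follows essentially the same route as the paper: reduce via joint conservativity of the $q_j^*$ to a local statement, then invoke \cref{lem:highly-connected:main-thm} on the highly connected tower $(p_{\kappa j}^* X_n)_n$ in $\topos U_{\kappa j}$. The one place where you work harder than necessary is in establishing (b): the paper simply observes that $q_j^*$ is \emph{limit-preserving} by definition of a locally finite dimensional cover (cf.\ \cite[Definition 5.1]{mattis2024fracture}), so $q_j^* \limil{n} f^* X_n \to \limil{n} q_j^* f^* X_n$ is an equivalence immediately, with no need to appeal to local high connectedness of $(f^* X_n)_n$ or to \cite[Corollary 6.6]{mattis2024fracture}.
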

\begin{proof}
    Since the $q_j^*$ are jointly conservative, it suffices to show that the canonical map 
    $q_j^* f^* \limil{n} X_n \to q_j^* \limil{n} f^* X_n$ is an equivalence for all $j \in J$.
    So fix $j \in J$ and consider the following commutative diagram:
    \begin{center}
        \begin{tikzcd}
            q_j^* f^* \limil{n} X_n \ar[d, "\alpha"]\ar[r, "\simeq"] & f_j^* p_{\kappa j}^* \limil{n} X_n \ar[d, "\gamma"] \\
            q_j^* \limil{n} f^* X_n \ar[d, "\beta"] & f_j^* \limil{n} p_{\kappa j}^* X_n \ar[d, "\delta"]\\
            \limil{n} q_j^* f^* X_n \ar[r, "\simeq"] &\limil{n} f_j^* p_{\kappa j}^* X_n
        \end{tikzcd}
    \end{center}
    The two horizontal arrows are given by the morphism of $\infty$-topoi with finite dimensional covers, and are thus equivalences by definition.
    All the other maps are given by limit-assembly maps.
    The maps $\beta$ and $\gamma$ are equivalences since $p_{\kappa j}^*$ and $q_j^*$ commute with all limits by definition.
    Moreover, the map $\delta$ is an equivalence by \cref{lem:highly-connected:main-thm},
    since $(p_{\kappa j}^* X_n)_n$ is highly connected and $\topos U_{\kappa j}$ is locally 
    of homotopy dimension $\le N$ for some $N$ by assumption.
    Hence, also the map $\alpha$ is an equivalence which is precisely what we wanted to show.
\end{proof}

\begin{lem} \label{lem:highly-connected:tower-limit-htpy-groups}
    Let $\topos X$ be an $\infty$-topos locally of homotopy dimension $\le N$ with enough points,
    and $(X_n)_n$ be a highly connected tower in $\topos X$.
    Write $X \coloneqq \limil{n} X_n$
    Then for all $k \ge 0$ 
    the canonical map $\pi_k(X) \to \pi_k(X_n)$ is an isomorphism for all $n \gg 0$.
\end{lem}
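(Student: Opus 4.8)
The plan is to reduce the statement to the case of anima, where it is the classical fact that the homotopy groups of a limit of a highly connected tower stabilize. First I would fix $k \ge 0$. Since $\topos X$ has enough points, the map $\pi_k(X) \to \pi_k(X_n)$ is an isomorphism if and only if it becomes an isomorphism after applying every point $s^* \colon \topos X \to \An$; and since $\topos X$ is locally of homotopy dimension $\le N$, each such point factors through an $\infty$-topos that is locally of homotopy dimension $\le N_s$, so by \cref{lem:highly-connected:main-thm} (applied with $\topos Y = \An$, which is of homotopy dimension $0$ and hence trivially admits a locally finite dimensional cover) we have $s^* X = s^* \limil{n} X_n \cong \limil{n} s^* X_n$. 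Moreover $s^*$ preserves homotopy groups of connected (indeed of all) sheaves, so it suffices to prove the claim for the tower $(s^* X_n)_n$ in $\An$.

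It therefore remains to check: if $(Y_n)_n$ is a highly connected tower of anima (all $Y_n$ connected, and for each $k$ the groups $\pi_k(Y_n)$ are eventually constant in $n$), then $\pi_k(\limil{n} Y_n) \to \pi_k(Y_n)$ is an isomorphism for $n \gg 0$. This is standard: pick $N_{k}$ and $N_{k+1}$ large enough that $\pi_j(Y_{n+1}) \to \pi_j(Y_n)$ is an isomorphism for all $n \ge N_j$ and $j \in \{k, k+1\}$; then the Milnor exact sequence
\begin{equation*}
    0 \to \limone{n} \pi_{k+1}(Y_n) \to \pi_k(\limil{n} Y_n) \to \limil{n} \pi_k(Y_n) \to 0
\end{equation*}
has vanishing $\lim^1$-term because the tower $(\pi_{k+1}(Y_n))_n$ is eventually constant (hence Mittag-Leffler), and the $\lim$-term equals $\pi_k(Y_n)$ for $n \ge N_k$ by eventual constancy. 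One has to be mildly careful about basepoints and about the fact that for $k = 0, 1$ these are pointed sets resp.\ groups rather than abelian groups, but eventual constancy of the tower makes all of this routine, and it also guarantees the tower of $\pi_k$'s satisfies whatever Mittag-Leffler condition is needed for the $\lim^1$ to vanish in the non-abelian range.

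The main obstacle is essentially bookkeeping rather than conceptual: one must make sure that "highly connected tower in $\topos X$" really does push forward, under an arbitrary point $s^*$, to a highly connected tower of anima in the sense needed for the Milnor sequence argument — this is exactly the content of \cite[Example 6.3]{mattis2024fracture} cited in the proof of \cref{lem:highly-connected:main-thm}, namely that $s^*$ being limit-preserving and colimit-preserving implies it preserves the connectivity estimates defining the tower. Once that is in hand, and once \cref{lem:highly-connected:main-thm} is invoked to commute $s^*$ past the limit, the remaining argument is the classical stabilization lemma for towers of anima and requires no new ideas.
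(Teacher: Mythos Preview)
Your proposal is correct and follows essentially the same approach as the paper: reduce to $\An$ via points (commuting $s^*$ past the limit), then use the Milnor/$\lim^1$ sequence with Mittag--Leffler. The only cosmetic difference is that the paper cites \cite[Lemma 6.5]{mattis2024fracture} directly to commute points past the limit (rather than the slightly more packaged \cref{lem:highly-connected:main-thm}) and then cites \cite[Proposition 2.2.9]{MCAT} for the classical stabilization in $\An$, which is exactly your Milnor sequence argument.
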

\begin{proof}
    Let $k \ge 0$.
    By assumption, there exists $N$ such that $\pi_k(X_n) \cong \pi_k(X_N)$ for all $n \ge N$.
    We will show that $\pi_k(X) \cong \pi_k(X_n)$ for all $n \ge N$.
    Since there are enough points, and since $s^* \limil{n} X_n \cong \limil{n} s^* X_n$
    for all points $s^*$ by \cite[Lemma 6.5]{mattis2024fracture},
    and $s^* \pi_k (-) \cong \pi_k(s^* -)$ since $s^*$ is a geometric morphism,
    we may assume that $\topos X \cong \An$.
    The result is now an immediate consequence of 
    the computation of homotopy groups of the limit as 
    e.g.\ done in \cite[Proposition 2.2.9]{MCAT},
    as the system $(\pi_{k+1}(X_n))_n$ is Mittag--Leffler (it consists of isomorphisms for $n \gg 0$).
\end{proof}        \label{sec:highly-connected}
\subsection{Unstable \texorpdfstring{$p$}{p}-completion}
Let $\topos X$ be a Postnikov-complete $\infty$-topos with enough points.
Write $L_p \colon \topos X \to \topos X$ for the unstable $p$-completion functor,
the Bousfield localization at the class of $p$-equivalences,
cf.\ \cite[Section 3]{mattis2024unstable},
and $\complete{\topos X}$ for its essential image, i.e.\ the subcategory of $p$-complete sheaves.
The goal of this section is to strengthen some of the results about $L_p$ 
from \cite[Section 3]{mattis2024unstable} and \cite[Section 6]{mattis2024fracture}.

\begin{lem} \label{lem:p-comp:loop-detects-peq}
    Let $f \colon E \to F$ be a morphism in $\spectra X$ 
    between $1$-connective sheaves of spectra.
    Then $f$ is a $p$-equivalence if and only if $\pLoop f$ is a $p$-equivalence.
\end{lem}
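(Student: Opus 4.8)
The plan is to exploit the fact that for $1$-connective sheaves of spectra the loop functor $\pLoop \colon \spectra X_{\ge 1} \to \topos X_*$ loses no information: it is conservative on $1$-connective objects, and on homotopy groups it just shifts, $\pi_k(\pLoop E) \cong \pi_k(E)$ for $k \ge 1$ (with $\pi_0$ of the underlying anima recovering $\pi_1 E$, but all homotopy sheaves from degree $1$ upward agreeing). So the statement reduces to comparing the notion of $p$-equivalence in $\spectra X$ with that in $\topos X_*$ through this functor.

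First I would recall the homotopy-sheaf characterization of $p$-equivalences. For a map $f \colon E \to F$ of sheaves of spectra, $f$ is a $p$-equivalence iff $f \sslash p$ (equivalently $f/p$, the induced map on mod-$p$ Moore objects) is an equivalence, which by the long exact sequence is controlled by the homotopy sheaves $\pi_*(E; \Z/p)$ — the mod-$p$ homotopy sheaves. Dually, the unstable $p$-completion functor $L_p$ on $\topos X_*$ from \cite[Section 3]{mattis2024unstable} is the localization at maps inducing an equivalence on mod-$p$ (co)homology, or equivalently — for nilpotent or suitably connected objects — on mod-$p$ homotopy sheaves; in any case, for a $1$-connective sheaf of spectra $E$, one has $\pi_k(\pLoop E; \Z/p) \cong \pi_k(E; \Z/p)$ for all $k \ge 1$, since $\pLoop$ is right adjoint to $\Sigma^\infty$ and on $1$-connective objects the unit/counit identify the relevant homotopy sheaves. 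Granting this, a map $f \colon E \to F$ between $1$-connective sheaves of spectra induces isomorphisms on all mod-$p$ homotopy sheaves iff $\pLoop f$ does, and since both source and target of $\pLoop f$ are connected this is equivalent to $\pLoop f$ being a $p$-equivalence in $\topos X_*$.

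So the two steps are: (i) show that for $E \in \spectra X_{\ge 1}$, a map $f$ out of $E$ to another $1$-connective $F$ is a $p$-equivalence of spectra iff it induces isomorphisms on $\pi_*(-;\Z/p)$, iff the stable Moore-object map $f\sslash p$ is an equivalence — this is formal from the cofiber sequence $E \xrightarrow{p} E \to E\sslash p$ and the fact that $\spectra X$ is stable, so an equivalence is detected on homotopy sheaves; and (ii) transport this across $\pLoop$, using that $\pLoop$ commutes with the relevant constructions ($\pLoop(E \sslash p) \simeq (\pLoop E)\sslash p$ since $\pLoop$ is a right adjoint and $E \sslash p$ is still $1$-connective hence its loop space is connected, and using that $\pLoop$ detects equivalences between connected sheaves — indeed $\pLoop$ is conservative on $\spectra X_{\ge 1}$ and a map of connected pointed sheaves coming from a map of $1$-connective spectra is an equivalence iff it is on underlying spectra). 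One also needs that the unstable $p$-completion on $\topos X_*$, restricted to $1$-fold loop sheaves of $1$-connective spectra, agrees with the stable one — i.e.\ the $p$-equivalences of \cite[Section 3]{mattis2024unstable} pulled back along $\pLoop$ are exactly the mod-$p$-homotopy-sheaf isomorphisms; this should already be recorded in \cite{mattis2024unstable} or follow from the fact that $\pLoop E$ is a nilpotent (indeed infinite-loop, simply connected) sheaf, on which the various notions of $p$-equivalence coincide.

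The main obstacle I expect is purely bookkeeping: carefully matching the \emph{definition} of unstable $p$-equivalence used in \cite{mattis2024unstable} (which is a homological/Bousfield localization condition, not a priori phrased via homotopy sheaves) against the stable one, and making sure the comparison $\pi_k(\pLoop E;\Z/p) \cong \pi_k(E;\Z/p)$ and $\pLoop(E\sslash p)\simeq (\pLoop E)\sslash p$ hold at the level of sheaves in the (Postnikov-complete, enough-points) topos $\topos X$, rather than just pointwise — but since $\topos X$ has enough points and $\pLoop$ is computed pointwise, checking on stalks reduces everything to the classical statement for anima, where it is standard ($\Omega^\infty$ of a connected spectrum, mod-$p$ homotopy, and the fact that $\Omega^\infty$ reflects $p$-equivalences between connected spectra). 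The only genuinely delicate point is the $1$-connectivity hypothesis: it is exactly what guarantees $\pLoop E$ is simply connected, so that no $\pi_1$-action subtleties (which obstruct $\pLoop$ detecting $p$-equivalences in general) can arise.
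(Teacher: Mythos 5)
Your proposal takes a genuinely different route from the paper's. For the nontrivial direction (assuming $\pLoop f$ is a $p$-equivalence), the paper uses the commutation identity $L_p \pLoop E \simeq \pLoop \tau_{\ge 1} L_p E$ from \cite[Lemma 3.17]{mattis2024unstable}, the conservativity of $\pLoop$ on connective spectra, and then --- crucially --- a careful analysis of the low-degree homotopy of $L_p E$: since stable $p$-completion does \emph{not} preserve $1$-connectivity, one is left needing $L_p \tau_{\le 0} L_p E \simeq 0$, which the paper extracts from the unique $p$-divisibility of $\pi_n(\tau_{\le 0} L_p E)$ for $E$ $1$-connective. By contrast, you want to characterize both stable and unstable $p$-equivalences as mod-$p$ homotopy-sheaf isomorphisms and transport this across $\pLoop$, checking stalkwise; that plan avoids forming $L_p E$ and so never confronts the connectivity drop, and in principle could work, but as written it has concrete gaps.

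First, the identity ``$\pLoop(E\sslash p)\simeq (\pLoop E)\sslash p$'' is not correct as stated: $\pLoop$ is a right adjoint and does not commute with cofibers, and $(\pLoop E)\sslash p$ has no evident meaning in $\topos X_*$. The statement you likely want involves fibers, say comparing $\pLoop\, \Fib{E \xrightarrow{p} E}$ with the fiber of multiplication by $p$ on $\pLoop E$, but that is not what you wrote, and the parenthetical justification (``since $\pLoop$ is a right adjoint'') does not support the cofiber statement. Second, and more seriously, your entire reduction hinges on two assertions that you flag as ``should already be recorded'' but do not verify: (i) that unstable $p$-equivalences between simply connected sheaves in $\topos X$ are exactly the mod-$p$ homotopy-sheaf isomorphisms, and (ii) that this condition can be checked on stalks of the topos. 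These are precisely where the mathematical content of the lemma lives, and dismissing them as bookkeeping or as a citation that ``should'' exist leaves the argument incomplete. The paper's proof is organized exactly so as to sidestep the need for such a characterization by staying inside the $L_p$-calculus and dealing explicitly with the low-degree discrepancy between $L_p E$ and $\tau_{\ge 1} L_p E$.
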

\begin{proof}
    If $f$ is a $p$-equivalence, then $\pLoop f$ is a $p$-equivalence by \cite[Lemma 3.16]{mattis2024unstable}.
    Suppose on the other hand that $\pLoop f$ is a $p$-equivalence.
    Hence, we see that 
    $L_p \pLoop f \colon L_p \pLoop E \to L_p \pLoop F$ is an equivalence.
    Note that by \cite[Lemma 3.17]{mattis2024unstable} this morphism is equivalent to the morphism 
    \begin{equation*}
        \pLoop \tau_{\ge 1} L_p E \to \pLoop \tau_{\ge 1} L_p F.
    \end{equation*}
    Since $\pLoop$ is conservative on connective objects (this can e.g.\ be checked on stalks,
    where it reduces to the same claim about $\pLoop \colon \Sp_{\ge 0} \to \An_*$,
    which is conservative by \cite[Corollary 5.2.6.27]{higheralgebra}),
    it follows that $\tau_{\ge 1} L_p E \to \tau_{\ge 1} L_p F$ 
    is an equivalence.
    Consider the canonical morphism of fiber sequences 
    \begin{center}
        \begin{tikzcd}
            \tau_{\ge 1} L_p E \ar[r] \ar[d, "\cong"] &L_p E\ar[r] \ar[d, "L_p f"] &\tau_{\le 0} L_p E \ar[d]\\
            \tau_{\ge 1} L_p F \ar[r] &L_p F\ar[r] &\tau_{\le 0} L_p F\rlap{.}
        \end{tikzcd}
    \end{center}
    As $L_p$ is an exact functor, $p$-completing again yields the morphism of fiber sequences 
    \begin{center}
        \begin{tikzcd}
            L_p \tau_{\ge 1} L_p E \ar[r] \ar[d, "\cong"] &L_p E\ar[r] \ar[d, "L_p f"] &L_p \tau_{\le 0} L_p E \ar[d]\\
            L_p \tau_{\ge 1} L_p F \ar[r] &L_p F\ar[r] &L_p \tau_{\le 0} L_p F\rlap{.}
        \end{tikzcd}
    \end{center}
    In order to see that $f \colon E \to F$ is a $p$-equivalence, we have to show that $L_p f$ 
    is an equivalence.
    By the above morphism of fiber sequences, it suffices to show that $L_p \tau_{\le 0} L_p E \to L_p \tau_{\le 0} L_p F$ 
    is an equivalence. We will in fact show that both objects are equivalent to $0$.
    We will show that $L_p \tau_{\le 0} L_p E \cong 0$, the proof for $F$ is the same.
    Since $E$ is $1$-connective, we see that $\pi_n(\tau_{\le 0} L_p E)$ is uniquely $p$-divisible 
    for all $n$ (for $n > 0$ the homotopy object is $0$, and for $n \le 0$ use \cite[Lemma 2.9]{mattis2024unstable} 
    applied to the $p$-equivalence $E \to L_p E$).
    Hence, the claim follows immediately from \cite[Lemma 2.10]{mattis2024unstable},
    as the standard t-structure on $\spectra{X}$ is left-separated (cf.\ e.g.\ the first paragraph of \cite[Section 3.2]{mattis2024unstable},
    using that $\topos X$ is hypercomplete).
    This proves the lemma.
\end{proof}
Next, we strengthen \cite[Proposition 3.19]{mattis2024unstable} to arbitrary 
maps of nilpotent sheaves $X \to Y$ in $\topos X_*$, i.e.\ we remove the assumption that the $p$-completions 
of $X$ and $Y$ are still nilpotent.
For this, we need the following cofinality result:
\begin{lem} \label{lem:fib-lem:cofinal}
    Let $f \colon X \to Y$ be a morphism in $\topos X_*$ of pointed connected sheaves.
    Then the $\N$-indexed inverse systems 
    $(\tau_{\le n} \Fib{X \to Y})_n$ and $(\Fib{\tau_{\le n} X \to \tau_{\le n} Y})_n$
    are cofinal,
    i.e.\ for every $n$ there exists morphisms
    \begin{equation*}
        \alpha_n \colon \Fib{\tau_{\le n} X \to \tau_{\le n} Y} \to \tau_{\le n-1} \Fib{X \to Y}
    \end{equation*}
    and 
    \begin{equation*}
        \beta_n \colon \tau_{\le n} \Fib{X \to Y} \to \Fib{\tau_{\le n} X \to \tau_{\le n} Y}
    \end{equation*}
    such that the compositions $\beta_{n-1} \alpha_n$ and $\alpha_n \beta_n$ 
    are just the transition morphisms on the respective towers.
    In particular, the towers have equivalent limits.
\end{lem}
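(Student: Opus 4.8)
The plan is to read off both comparison maps from a single natural transformation, the map of fibre sequences induced by truncation. Write $F \coloneqq \Fib{X \to Y}$ and $G_n \coloneqq \Fib{\tau_{\le n} X \to \tau_{\le n} Y}$, and let $\phi_n \colon F \to G_n$ be the map on fibres induced by the commutative square whose vertical arrows are the truncation units $X \to \tau_{\le n}X$ and $Y \to \tau_{\le n}Y$. Everything rests on two facts about $\phi_n$. First, $G_n$ is $n$-truncated: the $n$-truncated objects of $\topos X_*$ are closed under limits, and $G_n$ is the pullback of the $n$-truncated diagram $* \to \tau_{\le n}Y \leftarrow \tau_{\le n}X$. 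Second, $\phi_n$ is highly connected; precisely, $\tau_{\le n-1}\phi_n$ is an equivalence. To see this one takes vertical fibres in the map of fibre sequences $(F \to X \to Y) \to (G_n \to \tau_{\le n}X \to \tau_{\le n}Y)$, which identifies $\Fib{\phi_n}$ with the total fibre of the square, namely $\Fib{\Fib{X \to \tau_{\le n}X} \to \Fib{Y \to \tau_{\le n}Y}}$; both inner terms are $(n+1)$-connective covers, so their fibre is $n$-connective, and combined with the connectedness of $X$ and $Y$ (which pins down $\pi_0$) this gives that $\phi_n$ is an $n$-connective morphism, hence a $\tau_{\le n-1}$-equivalence.

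Granting this, the maps are essentially forced. Since $G_n$ is $n$-truncated, $\phi_n$ factors through the truncation unit $F \to \tau_{\le n}F$ by an essentially unique map $\beta_n \colon \tau_{\le n}F \to G_n$. Since $\tau_{\le n-1}\phi_n$ is an equivalence, we may set $\alpha_n \colon G_n \to \tau_{\le n-1}F$ to be the composite of the truncation unit $G_n \to \tau_{\le n-1}G_n$ with $(\tau_{\le n-1}\phi_n)^{-1}$. One also notes that the $\phi_n$ assemble into a map of towers $(\const F)_n \to (G_n)_n$ (the square for $n$ lies over the square for $n-1$), which is what makes the various triangles below commute.

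It then remains to match the two composites with the transition maps, and this is a routine diagram chase using the universal property of truncation (a map out of $\tau_{\le m}Z$ into an $m$-truncated object is the same datum as a map out of $Z$) together with naturality of the truncation units. For $\alpha_n\beta_n$ one precomposes with $F \to \tau_{\le n}F$: by the defining property of $\beta_n$ and naturality of the unit, $(\tau_{\le n-1}\phi_n)^{-1}$ cancels against $\tau_{\le n-1}\phi_n$ and one is left with the unit $F \to \tau_{\le n-1}F$, which is also the precomposition of the transition map $\tau_{\le n}F \to \tau_{\le n-1}F$ with $F \to \tau_{\le n}F$; as $\tau_{\le n-1}F$ is $n$-truncated, the two maps out of $\tau_{\le n}F$ coincide. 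For $\beta_{n-1}\alpha_n$ one argues symmetrically, factoring the transition map $G_n \to G_{n-1}$ through $G_n \to \tau_{\le n-1}G_n$ (possible since $G_{n-1}$ is $(n-1)$-truncated) and identifying the resulting map $\tau_{\le n-1}G_n \simeq \tau_{\le n-1}F \to G_{n-1}$ with $\beta_{n-1}$ via its defining property $\beta_{n-1}\circ(F \to \tau_{\le n-1}F) = (F \to G_{n-1})$. Finally, the last assertion of the statement is the standard fact that two towers linked by maps $\alpha_n$, $\beta_n$ with these properties induce mutually inverse maps $\limil{n} \tau_{\le n}F \rightleftarrows \limil{n} G_n$, hence have canonically equivalent limits. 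The one place that needs genuine (if short) work is the $n$-connectivity of $\phi_n$ in the first paragraph; the rest is bookkeeping with truncation functors.
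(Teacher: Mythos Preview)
Your proof is correct and follows essentially the same strategy as the paper's. Both arguments hinge on the equivalence $\tau_{\le n-1}F \simeq \tau_{\le n-1}G_n$ (what you call $\tau_{\le n-1}\phi_n$ being an equivalence), and both then read off $\alpha_n$ and $\beta_n$ from the truncation units together with this identification. The paper phrases the second half slightly differently---it first replaces $(\tau_{\le n}F)_n$ by the tower $(\tau_{\le n-1}G_n)_n$ and then compares that with $(G_n)_n$---but unwinding the identifications shows your $\beta_n$ agrees with theirs, and the verification of the compatibilities is the same bookkeeping in either presentation.

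One remark on your connectivity step: you compute the \emph{pointed} fibre of $\phi_n$ and find it $n$-connective, then invoke connectedness of $X$ and $Y$ to conclude that $\phi_n$ is an $n$-connective morphism. Strictly speaking, knowing the fibre over the basepoint is $n$-connective is not enough when $G_n$ is disconnected; one really wants all fibres. A clean way to close this is to factor $\phi_n$ as
\[
F = X\times_Y * \longrightarrow X\times_{\tau_{\le n}Y} * \longrightarrow \tau_{\le n}X \times_{\tau_{\le n}Y} * = G_n,
\]
where the second map is a pullback of the $(n+1)$-connective map $X\to\tau_{\le n}X$ and the first is a pullback of $*\to\tau_{\ge n+1}Y$, which is $n$-connective since $\tau_{\ge n+1}Y$ is $(n+1)$-connective. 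The paper's ``long exact sequence argument'' is at the same level of detail as yours, so this is not a discrepancy between your proof and theirs---just a point where both could be made more explicit.
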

\begin{proof}
    A long exact sequence argument shows that the canonical map
    \begin{equation*}
        \tau_{\le n-1} \Fib{X \to Y} \to \tau_{\le n-1} \Fib{\tau_{\le n} X \to \tau_{\le n} Y}
    \end{equation*}
    is an equivalence for all $n$.
    Thus, it suffices to show that the two towers 
    $(\tau_{\le n-1} \Fib{\tau_{\le n} X \to \tau_{\le n} Y})_n$ and $(\Fib{\tau_{\le n} X \to \tau_{\le n} Y})_n$
    are cofinal.

    Note first that for every $n$ the unit gives a morphism 
    \begin{equation*}
        \alpha_n \colon \Fib{\tau_{\le n} X \to \tau_{\le n} Y} \to \tau_{\le n-1} \Fib{\tau_{\le n} X \to \tau_{\le n} Y}.
    \end{equation*}
    On the other hand, $\Fib{\tau_{\le n} X \to \tau_{\le n} Y}$ is $n$-truncated (as a limit of $n$-truncated objects),
    and hence the map $\Fib{\tau_{\le n + 1} X \to \tau_{\le n + 1} Y} \to \Fib{\tau_{\le n} X \to \tau_{\le n} Y}$
    induced by the respective units $\tau_{\le n+1}X \to \tau_{\le n}X$ and $\tau_{\le n+1}Y \to \tau_{\le n}Y$ factors over a map 
    \begin{equation*}
        \beta_n \colon \tau_{\le n} \Fib{\tau_{\le n + 1} X \to \tau_{\le n +1 } Y} \to \Fib{\tau_{\le n} X \to \tau_{\le n} Y}.
    \end{equation*}
    It is now easy (but tedious) to see that for every $n$ the compositions $\beta_{n-1} \alpha_n$ and $\alpha_n \beta_n$ 
    are just the transition morphisms on the respective towers.
\end{proof}

\begin{lem} \label{lem:fib-lem:truncated}
    Let $f \colon X \to Y$ be a morphism in $\topos X_*$ of pointed 
    nilpotent sheaves.
    Suppose that $X$ and $Y$ are $n$-truncated for some $n \ge 0$.
    Then 
    \begin{equation*}
        L_p \tau_{\ge 1} \Fib{X \to Y} \cong \tau_{\ge 1} \Fib{L_p X \to L_p Y}.
    \end{equation*}
\end{lem}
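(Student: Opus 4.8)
The plan is to deduce this from \cite[Proposition 3.19]{mattis2024unstable}, whose hypotheses are those of the present lemma together with the extra requirement that $L_p X$ and $L_p Y$ be nilpotent. Thus the real content is the auxiliary statement that \emph{the $p$-completion of a pointed nilpotent $m$-truncated sheaf $Z \in \topos X_*$ is again nilpotent} (and $(m+1)$-truncated). This fails for general nilpotent $Z$ --- along an infinite Postnikov tower the nilpotency class of the layers, or the length of the relevant $\pi_1$-module filtrations, can grow without bound --- which is precisely why truncatedness is imposed.

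To prove the auxiliary statement I would induct along a principal refinement $* = W_0 \to W_1 \to \dots \to W_r = Z$ of the Postnikov tower of $Z$, which may be taken finite because $Z$ is truncated (finitely many Postnikov stages, each requiring only finitely many principal steps by nilpotence). Here each $W_{j+1}$ sits in a fiber sequence $W_{j+1} \to W_j \xrightarrow{k_j} K(A_j, m_j + 1)$ with $A_j$ an abelian sheaf and $m_j \ge 1$, and $W_{j+1}$ is connected since its base of construction $K(A_j, m_j+1)$ is simply connected ($m_j + 1 \ge 2$). Inductively $W_j$ is nilpotent and truncated, hence $L_p W_j$ is nilpotent. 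Writing $K(A_j, m_j+1) \cong \pLoop E_j$ for the appropriate $E_j \in \Stab{\topos X}$, we get $L_p K(A_j, m_j+1) \simeq \pLoop \tau_{\ge 1} L_p E_j$ by \cite[Lemma 3.17]{mattis2024unstable}; this is again an infinite loop sheaf, hence simple, in particular nilpotent and truncated. So \cite[Proposition 3.19]{mattis2024unstable} applies to $k_j$ and yields $L_p W_{j+1} \simeq L_p \tau_{\ge 1}\Fib{k_j} \simeq \tau_{\ge 1}\Fib{L_p k_j}$.

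It remains to see that $M_j \coloneqq \tau_{\ge 1}\Fib{L_p k_j}$ is nilpotent; truncatedness is immediate. Now $M_j$ is the basepoint component of the fiber of the map $L_p W_j \to L_p K(A_j, m_j+1)$ of nilpotent sheaves; since the target is simple, the associated fibration is a nilpotent fibration over the nilpotent base $L_p W_j$, and the total space of a nilpotent fibration with nilpotent base is nilpotent (and then so is its basepoint component). Here I would invoke the Moore--Postnikov machinery for nilpotent morphisms collected in Appendix~\ref{sec:nilpotent-morphisms}. This establishes the auxiliary statement; the lemma then follows by applying \cite[Proposition 3.19]{mattis2024unstable} to $f \colon X \to Y$ itself, whose remaining hypotheses are now verified.

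I expect the last step to be the main obstacle: $p$-completion need not preserve nilpotence, and the difficulty concentrates in the bottom layers, where $L_p K(A_j, 2)$ can acquire a nonzero $\pi_1$ (a derived-$\varprojlim$ phenomenon) so that $\Fib{L_p k_j}$ becomes disconnected --- which is exactly what forces the $\tau_{\ge 1}$ in the statement, and restricting to truncated $X, Y$ ensures that only finitely many such layers occur, keeping the growth controlled. An alternative organization runs the same argument through a Moore--Postnikov tower of the morphism $f$ together with the cofinality result \cref{lem:fib-lem:cofinal}, but the bookkeeping is the same.
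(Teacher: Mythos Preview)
Your approach matches the paper's: reduce to \cite[Proposition 3.19]{mattis2024unstable} by first showing that $L_p X$ and $L_p Y$ remain nilpotent, which the paper obtains in one line by citing \cite[Proposition 3.20]{mattis2024unstable} rather than reproving it by induction as you do. One small slip in your inductive step: what you need is that the \emph{fiber} $M_j$ of a map from a nilpotent sheaf to a simple (infinite loop) sheaf is nilpotent---this is \cite[Lemma A.12]{mattis2024unstable}---not the statement about total spaces you invoke.
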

\begin{proof}
    Since $X$ and $Y$ are $n$-truncated, it follows from \cite[Proposition 3.20]{mattis2024unstable}
    that also $L_p X$ and $L_p Y$ are nilpotent.
    Thus, the lemma is an immediate consequence of \cite[Proposition 3.19]{mattis2024unstable}.
\end{proof}

\begin{prop} [Bousfield--Kan Fiber Lemma] \label{lem:fib-lem:main-thm}
    Suppose that $\topos X$ has a locally finite dimensional cover.
    Let $f \colon X \to Y$ be a morphism in $\topos X_*$ of pointed nilpotent
    sheaves.
    Then 
    \begin{equation*}
        L_p \tau_{\ge 1} \Fib{X \to Y} \cong \tau_{\ge 1} \Fib{L_p X \to L_p Y}.
    \end{equation*}
\end{prop}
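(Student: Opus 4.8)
The plan is to reduce the general (non-truncated) statement to the truncated case handled in \cref{lem:fib-lem:truncated} by writing everything as a limit along a Postnikov-type tower and then commuting $L_p$ past that limit using the machinery of \cref{sec:highly-connected}. First I would set $F \coloneqq \tau_{\ge 1}\Fib{X \to Y}$, which is a $1$-connective pointed sheaf, and replace $X$ and $Y$ by their Postnikov truncations: consider the towers $(\tau_{\le n} X)_n$ and $(\tau_{\le n} Y)_n$, with $X \cong \limil{n} \tau_{\le n} X$ and $Y \cong \limil{n}\tau_{\le n}Y$ since $\topos X$ is Postnikov-complete (it admits a locally finite dimensional cover, cf.\ \cite[Lemma 5.3]{mattis2024fracture}). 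By \cref{lem:fib-lem:cofinal} the tower $(\tau_{\le n}\Fib{X \to Y})_n$ is cofinal with $(\Fib{\tau_{\le n}X \to \tau_{\le n}Y})_n$, and after applying $\tau_{\ge 1}$ and passing to limits one gets $F \cong \limil{n} F_n$ where $F_n \coloneqq \tau_{\ge 1}\Fib{\tau_{\le n}X \to \tau_{\le n}Y}$.

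The key point is that $(F_n)_n$ is (locally) highly connected: the transition map $F_{n+1} \to F_n$ is an isomorphism on $\pi_k$ for $k \le n-1$ by the long exact sequence argument already used in the proof of \cref{lem:fib-lem:cofinal} (the fiber $\Fib{\tau_{\le n}X \to \tau_{\le n}Y}$ agrees with $\Fib{X \to Y}$ on truncations up to degree $n-1$), so the connectivity of the transition maps goes to infinity. Here one should pass to a locally finite dimensional cover $\{p_i^*\colon \topos X \to \topos U_i\}_i$ of $\topos X$ and check high connectivity of $(p_i^* F_n)_n$ in each $\topos U_i$; since each $p_i^*$ is a geometric morphism commuting with $\tau_{\le n}$ and finite limits, $p_i^* F_n \cong \tau_{\ge 1}\Fib{\tau_{\le n}p_i^*X \to \tau_{\le n}p_i^*Y}$, and the connectivity statement reduces to the same computation stalkwise. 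Now $L_p$ commutes with the limit: applying \cref{lem:p-comp:loop-detects-peq}-style reasoning is not needed here — instead, since $L_p$ on a $1$-connective sheaf of anima is computed via \cite[Section 3]{mattis2024unstable} and, by \cite[Lemma 6.?]{mattis2024fracture}, $p$-completion commutes with limits of (locally) highly connected towers (this is precisely the content assembled in \cref{sec:highly-connected}, e.g.\ the proof of \cref{lem:highly-connected:main-thm}), we get $L_p F \cong L_p \limil{n} F_n \cong \limil{n} L_p F_n$.

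Finally, apply \cref{lem:fib-lem:truncated} to each $n$: since $\tau_{\le n}X \to \tau_{\le n}Y$ is a morphism of $n$-truncated nilpotent sheaves (nilpotence of the truncations follows from nilpotence of $X,Y$, cf.\ the Moore--Postnikov discussion of Appendix \ref{sec:nilpotent-morphisms}), we have $L_p F_n = L_p \tau_{\ge 1}\Fib{\tau_{\le n}X \to \tau_{\le n}Y} \cong \tau_{\ge 1}\Fib{L_p\tau_{\le n}X \to L_p\tau_{\le n}Y}$. To conclude one identifies $\limil{n}\tau_{\ge 1}\Fib{L_p\tau_{\le n}X \to L_p\tau_{\le n}Y}$ with $\tau_{\ge 1}\Fib{L_p X \to L_p Y}$: this again uses \cref{lem:fib-lem:cofinal} (now applied to $L_p X \to L_p Y$, whose source and target need not be nilpotent, but the cofinality lemma has no nilpotence hypothesis) together with $L_p X \cong \limil{n} L_p \tau_{\le n} X$, which holds because $(\tau_{\le n}X)_n$ is a highly connected tower after passing to the cover and $L_p$ commutes with such limits; same for $Y$. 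Assembling the chain of equivalences gives the claim. The main obstacle I expect is the bookkeeping around the cover: one must verify that all the towers in play are \emph{locally} highly connected subordinate to a \emph{single} fixed cover, and that $L_p$ commutes with each relevant limit — this is where \cref{lem:highly-connected:morphism-of-covers-preserves-towers} and the Postnikov-completeness of $\topos X$ do the real work, and care is needed because $L_p$ itself is only known to interact well with limits of highly connected (not arbitrary) towers.
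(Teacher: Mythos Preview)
Your approach is essentially the same as the paper's: reduce to the truncated case via Postnikov towers, use the cofinality lemma \cref{lem:fib-lem:cofinal}, and invoke \cite[Proposition 6.12]{mattis2024fracture} to commute $L_p$ with the relevant tower limits; the paper simply runs the chain of equivalences from the other end. Two small points to tighten: the commutation $L_p \limil{n} F_n \cong \limil{n} L_p F_n$ does \emph{not} come from \cref{sec:highly-connected} (those results are about geometric morphisms, and the general $L_p$-version \cref{lem:p-comp:p-comp-tower-limit} is proved \emph{after} and \emph{using} the present proposition), but rather from \cite[Proposition 6.12]{mattis2024fracture} applied to $F$ together with the observation that $(F_n)_n$ is cofinal with the Postnikov tower of $F$; and in the final identification you need \cite[Lemma 4.2]{mattis2024fracture} to pull $\tau_{\ge 1}$ out of the limit rather than \cref{lem:fib-lem:cofinal}, since the tower is $(L_p \tau_{\le n} X)_n$, not $(\tau_{\le n} L_p X)_n$.
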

\begin{proof}
    Since $\topos X$ has a locally finite dimensional cover,
    we have equivalences 
    \begin{equation*}
        L_p X \cong \limil{n} L_p \tau_{\le n} X
    \end{equation*}
    and
    \begin{equation*}
        L_p Y \cong \limil{n} L_p \tau_{\le n} Y,
    \end{equation*}
    see \cite[Proposition 6.12]{mattis2024fracture}.
    Hence, we compute 
    \begin{align*}
        \tau_{\ge 1} \Fib{L_p X \to L_p Y} 
        &\cong \tau_{\ge 1} \Fib{\limil{n} L_p \tau_{\le n} X \to \limil{n} L_p \tau_{\le n} Y} \\
        &\cong \tau_{\ge 1} \limil{n} \Fib{L_p \tau_{\le n} X \to L_p \tau_{\le n} Y} \\
        &\cong \tau_{\ge 1} \limil{n} \tau_{\ge 1} \Fib{L_p \tau_{\le n} X \to L_p \tau_{\le n} Y} \\
        &\cong \tau_{\ge 1} \limil{n} L_p \tau_{\ge 1} \Fib{\tau_{\le n} X \to \tau_{\le n} Y} \\
        &\cong \tau_{\ge 1} \limil{n} L_p \tau_{\ge 1} \tau_{\le n} \Fib{X \to Y} \\
        &\cong L_p \tau_{\ge 1} \Fib{X \to Y}.
    \end{align*}
    Here, we used that limits commute with limits in the second equivalence,
    e.g.\ \cite[Lemma 4.2]{mattis2024fracture} in the third equivalence,
    \cref{lem:fib-lem:truncated} in the fourth equivalence,
    \cref{lem:fib-lem:cofinal} in the fifth equivalence,
    and again \cite[Proposition 6.12]{mattis2024fracture} and the fact that the $p$-completion 
    of a connected sheaf is connected, cf.\ \cite[Lemma 3.12]{mattis2024unstable}, in the last equivalence.
    This proves the lemma.
\end{proof}

\begin{rmk}
    Note that in the proof of \cref{lem:fib-lem:main-thm}, 
    we cannot argue as in \cref{lem:fib-lem:truncated},
    because it is unclear (and probably wrong in general)
    that $L_p X$ is nilpotent for every nilpotent sheaf $X \in \topos X_*$.
\end{rmk}

In the last part of this section,
our goal is a strengthened version of \cite[Proposition 6.12]{mattis2024fracture}:
We want to prove that $p$-completion commutes with limits along locally highly connected towers,
and not just the tower of truncations $(\tau_{\le n} X)_n$.
For this, we need the following well-known lemma for which we could not find a reference in the language of $\infty$-categories.
\begin{lem} \label{lem:p-comp:three-fibers}
    Let $\Cat C$ be a pointed $\infty$-category with finite limits,
    and $f \colon X \to Y$ and $g \colon Y \to Z$ be morphisms in $\Cat C$.
    Then there is a canonical fiber sequence 
    \begin{equation*}
        \Fib{f} \to \Fib{gf} \to \Fib{g}.
    \end{equation*}
\end{lem}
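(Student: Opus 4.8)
The plan is to exhibit $\Fib f$ as the fiber of the canonical map $\phi\colon \Fib{gf} \to \Fib g$ induced by $f$, using nothing more than the pasting law for pullback squares. Recall that in a pointed $\infty$-category every object is canonically pointed by the (essentially unique) map from the zero object $0$, that $\Fib h = X' \times_{Y'} 0$ for a morphism $h\colon X' \to Y'$ (the pullback along the basepoint $0 \to Y'$), and that any morphism of pointed objects carries basepoint to basepoint.

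First I would form the diagram
\begin{center}
\begin{tikzcd}
\Fib{gf} \ar[r] \ar[d] & \Fib g \ar[r] \ar[d] & 0 \ar[d] \\
X \ar[r, "f"] & Y \ar[r, "g"] & Z
\end{tikzcd}
\end{center}
in which the right-hand square is a pullback by the definition of $\Fib g$, and the outer rectangle is a pullback by the definition of $\Fib{gf}$ (its bottom composite is $gf$). By the pasting law for pullback squares, the left-hand square is then a pullback as well; write $\phi\colon \Fib{gf} \to \Fib g$ for its upper horizontal map.

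Next I would compute $\Fib \phi$. Since the left-hand square above is cartesian, pulling it back along the basepoint $0 \to \Fib g$ and using associativity of pullbacks gives
\begin{equation*}
\Fib \phi \;=\; \Fib{gf}\times_{\Fib g} 0 \;\simeq\; \bigl(X\times_Y \Fib g\bigr)\times_{\Fib g} 0 \;\simeq\; X\times_Y 0,
\end{equation*}
where the map $0 \to Y$ on the right is the composite of the basepoint $0 \to \Fib g$ with the projection $\Fib g \to Y$; as the latter is a map of pointed objects, this composite is the basepoint of $Y$, so the right-hand side is precisely $\Fib f$. This produces the asserted fiber sequence $\Fib f \to \Fib{gf} \xrightarrow{\phi} \Fib g$, and unwinding the identifications shows the first map is the one induced by the basepoint inclusions, as expected.

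I do not expect any real obstacle: the statement is the pointed-$\infty$-categorical counterpart of the classical ``three fibers'' sequence (equivalently, the dual of the analogous statement for cofibers $\Cofib f \to \Cofib{gf} \to \Cofib g$), and the only points needing care are tracking basepoints --- so that the fiber appearing at the end is genuinely $\Fib f$ rather than a fiber of $f$ over some other point of $Y$ --- and invoking the pasting law in the correct direction.
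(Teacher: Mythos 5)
Your proof is correct and is essentially the paper's own argument: both proofs form the $2\times 3$ pasting diagram with bottom row $X \to Y \to Z$, identify the left-hand square as a pullback via the pasting law, and then apply the pasting law a second time (in your case phrased as associativity of iterated pullbacks, with a careful basepoint check) to identify $\Fib\phi$ with $\Fib f$. The only difference is cosmetic — the paper draws the full $3\times 3$-shaped diagram including $\Fib f$ at the top and applies the pasting law to the two rectangles at once, while you first compute $\Fib{gf}\simeq X\times_Y\Fib g$ and then take the fiber — but the substance is identical.
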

\begin{proof}
    Consider the following commutative diagram:
    \begin{center}
        \begin{tikzcd}
            \Fib{f} \ar[r] \ar[d] &* \ar[d]\\
            \Fib{gf} \ar[d] \ar[r] &\Fib{g} \ar[d] \ar[r] &* \ar[d] \\
            X \ar[r, "f"] &Y \ar[r, "g"] &Z\rlap{.}
        \end{tikzcd}
    \end{center}
    We want to show that the upper left square is cartesian.
    By definition, the horizontal rectangle, the vertical rectangle 
    and the lower right square are all cartesian.
    Hence, we conclude by applying the pasting law for pullback squares twice (cf.\ 
    the dual of \cite[Lemma 4.4.2.1]{highertopoi}).
\end{proof}

The next (very technical) lemma states that $p$-completion reduces the connectivity of a morphism 
between nilpotent sheaves roughly by the local homotopy dimension of the topos.

\begin{lem} \label{lem:p-comp:connectivity}
    Suppose that $\topos X$ is locally of homotopy dimension $\le N$.
    Let $f \colon X \to Y \in \topos X_*$ be a morphism between nilpotent sheaves
    such that $\Fib{f}$ is $(N + k + 2)$-connective for some $k \ge 1$.
    Then the fiber $\Fib{L_p f}$ is $k$-connective. 
\end{lem}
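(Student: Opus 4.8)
The plan is to reduce the statement to the analogous claim about spectra by passing through Postnikov towers, since $p$-completion of a nilpotent sheaf is assembled from the $p$-completions of the (spectral) layers of a principal refinement of its Postnikov tower. First I would pick principal refinements of the Postnikov towers of $X$ and $Y$; by the standard Moore--Postnikov theory for nilpotent morphisms in $\infty$-topoi (Appendix~\ref{sec:nilpotent-morphisms}), the morphism $f$ itself admits a principal Moore--Postnikov factorization whose layers are twisted Eilenberg--MacLane sheaves $K(A_j, m_j)$, indexed so that $m_j$ is non-decreasing. Since $\Fib{f}$ is $(N+k+2)$-connective, only layers with $m_j \ge N + k + 3$ occur. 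The key input is that $p$-completion interacts with such a tower of fibrations the way one expects: on each stage, one has a fiber sequence whose fiber is $K(A_j, m_j)$ (or its delooping a $K(A_j, m_j-1)$ appears as the relevant obstruction), and $L_p$ applied to a connected sheaf stays connected by \cite[Lemma 3.12]{mattis2024unstable}, so $L_p$ of the tower is again a tower of fibrations.

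The heart of the argument is the single-layer estimate. By \cref{lem:fib-lem:main-thm} (the Bousfield--Kan fiber lemma, applicable since $\topos X$ is locally of homotopy dimension $\le N$ hence has a locally finite dimensional cover), we have $L_p \tau_{\ge 1}\Fib{f} \cong \tau_{\ge 1}\Fib{L_p X \to L_p Y}$; since $\Fib f$ is $(N+k+2)$-connective and in particular $1$-connective, this reads $L_p \Fib{f} \cong \Fib{L_p f}$. So it suffices to show that the $p$-completion of the $(N+k+2)$-connective nilpotent sheaf $F := \Fib f$ is $k$-connective. Now run the Postnikov argument on $F$: by the local homotopy dimension bound, $L_p F \cong \limil{n} L_p \tau_{\le n} F$ (using \cite[Proposition 6.12]{mattis2024fracture}), and each $L_p \tau_{\le n} F$ is built by finitely many principal fibrations with layers $L_p K(\pi_m F, m)$ for $m \ge N+k+2$. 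The crucial point is that $p$-completion of an Eilenberg--MacLane sheaf $K(A, m)$ in a topos of local homotopy dimension $\le N$ is again $(m-N)$-connective: this is because $L_p K(A,m) \simeq \pLoop L_p (\Sigma^\infty_+ \text{-- }$ no --- more precisely, $L_p K(A,m) \simeq \pLoop \tau_{\ge 1} L_p (HA[m])$ by \cite[Lemma 3.17]{mattis2024unstable}, and $L_p$ of the connective spectrum $HA[m]$ has homotopy only in degrees $\ge m-1$ up to the local homotopy dimension shift — concretely, in a topos locally of homotopy dimension $\le N$, the $p$-completion of an $m$-connective spectrum is $(m-N-1)$-connective, which one checks on stalks and via the arithmetic fracture / Ext-$p$-completion formula. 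Thus each layer $L_p K(\pi_m F, m)$ is $(m - N - 1) \ge (k+1)$-connective, hence $k$-connective. A finite extension of $k$-connective sheaves by $k$-connective sheaves is $k$-connective, and $k$-connectivity is closed under the relevant limit of the Postnikov tower (again using the local homotopy dimension bound so that the limit does not destroy connectivity, cf.\ the computation of homotopy groups in \cref{lem:highly-connected:tower-limit-htpy-groups}). Therefore $L_p F$ is $k$-connective, as desired.

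I expect the main obstacle to be making the single Eilenberg--MacLane layer estimate precise with the correct bookkeeping of the constant $N$: one needs that $L_p$ of an $m$-connective sheaf of spectra in a topos of local homotopy dimension $\le N$ is $(m-N-1)$-connective, which requires the $\lim^1$-type contribution coming from the derived $p$-completion (the $\operatorname{Ext}(\Z/p^\infty, -)$ term) to be controlled — this is where the ``$+2$'' (rather than ``$+1$'') in the hypothesis is consumed, accounting for both the homotopy-dimension shift of the topos and the degree drop from the derived-completion fiber sequence $0 \to \widehat{A}_p \to A^\wedge_p \to \operatorname{Ext}(\Z[1/p]/\Z, A) \to 0$ on each homotopy object. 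Secondarily, one must ensure the nilpotence hypothesis genuinely lets the Postnikov tower of $F$ be refined by \emph{principal} fibrations with honest (untwisted, or at least controlled) Eilenberg--MacLane layers, so that the single-layer estimate can be iterated; this is exactly what the appendix on Moore--Postnikov towers for nilpotent morphisms provides, and once invoked the rest is a routine connectivity induction over the (locally finite) tower.
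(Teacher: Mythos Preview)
Your reduction via the fiber lemma has a gap. The Bousfield--Kan fiber lemma (\cref{lem:fib-lem:main-thm}) gives
\[
L_p \Fib{f} \;\cong\; \tau_{\ge 1}\Fib{L_p X \to L_p Y},
\]
and you cannot drop the $\tau_{\ge 1}$ on the right without knowing that $\Fib{L_p f}$ is already connected. The hypothesis that $\Fib{f}$ is highly connective says nothing a priori about $\pi_0(\Fib{L_p f})$: by the long exact sequence, this $\pi_0$ is the cokernel of $\pi_1(L_p X)\to\pi_1(L_p Y)$, and while $\pi_1(X)\cong\pi_1(Y)$, the $\pi_1$ of a $p$-completion depends on the whole sheaf, not just on $\pi_1$. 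So even after you (correctly) show that $L_p\Fib{f}$ is $(k+1)$-connective via the Postnikov tower of $\Fib{f}$, you only conclude that $\pi_i(\Fib{L_p f})=0$ for $1\le i\le k$, missing the $\pi_0$ statement needed for $k$-connectivity.

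The paper's proof is organized precisely to avoid this problem: instead of working with the Postnikov tower of $\Fib{f}$, it uses the Moore--Postnikov tower of $f$ itself, i.e.\ a tower $(X_n)_n$ with $X_0\cong Y$, $\lim_n X_n\cong X$, and fiber sequences $X_n\to X_{n-1}\to K(A_n,k_n)$ with $k_n\ge N+k+2$. At each stage one applies the fiber lemma to $X_n=\Fib{X_{n-1}\to K(A_n,k_n)}$; here the $\tau_{\ge 1}$ \emph{can} be dropped because $L_p K(A_n,k_n)$ is at least $(k+1)$-connective, so the fiber over a connected base into a $2$-connective target is already connected. This yields $\Fib{L_p X_n\to L_p X_{n-1}}\cong\Omega L_p K(A_n,k_n)$, which is $k$-connective, and then the three-fibers lemma (\cref{lem:p-comp:three-fibers}) inductively shows $\Fib{L_p X_n\to L_p Y}$ is $k$-connective---including connected---for all $n$. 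Passing to the limit gives the full statement. The moral is that the relative tower lets you control $\pi_0$ of the fiber at every finite stage, whereas your absolute approach loses that information in the single application of the fiber lemma.
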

\begin{proof}
    By \cref{lem:nilpotent:morphism-of-object}, also the morphism $f$ is nilpotent (cf.\ \cref{def:nilpotent:morphism-def} for a definition).
    Hence, using \cref{lem:nilpotent:refined-Post-Moore-tower}, we may choose a Moore--Postnikov refinement of $f$,
    i.e.\ a sequence of connected sheaves $(X_n)_n$ under $X$ and over $Y$ 
    with $X \cong \limil{n} X_n$, $X_0 \cong Y$,
    and fiber sequences $X_n \to X_{n-1} \to K(A_n, k_n)$,
    where the $A_n \in \AbObj{\Disc{\topos X}}$ are sheaves of abelian groups,
    and the $k_n \ge 2$ are integers, such that $k_n \to \infty$ as $n \to \infty$.
    Since $\Fib{f}$ is $(N + k + 2)$-connective, 
    we may even assume that $k_n \ge N + k + 2$ for all $n$, cf.\ \cref{lem:nilpotent:refined-Post-Moore-tower-lower-bound}.
    Note that since $Y$ is nilpotent,
    using \cite[Lemma A.12]{mattis2024unstable} inductively, we conclude 
    that also all the $X_n$ are nilpotent.
    Moreover, it is clear that $(X_n)_n$ is a highly connected tower.
    By \cref{lem:fib-lem:main-thm}
    we see that $L_p X_n \cong \tau_{\ge 1} \Fib{L_p X_{n-1} \to L_p K(A_n, k_n)}$.
    Using the proof of \cite[Lemma 6.11]{mattis2024fracture}
    (where the connectivity of the $p$-completion of an Eilenberg-MacLane sheaf is computed), 
    we see that $L_p K(A_n, k_n)$ is at least $(k+1)$-connective, and the connectivity tends to infinity 
    as $n \to \infty$. As $L_p X_{n-1}$ is connected,
    we see by a long exact sequence argument that already $\Fib{L_p X_{n-1} \to L_p K(A_n, k_n)}$ 
    is connected,
    and hence $L_p X_n \cong \Fib{L_p X_{n-1} \to L_p K(A_n, k_n)}$.
    Hence, $\Fib{L_p X_n \to L_p X_{n-1}} \cong \Omega L_p K(A_n, k_n)$ is at least $k$-connective,
    again with connectivity tending to infinity as $n \to \infty$.
    By \cref{lem:p-comp:three-fibers} we have a fiber sequence 
    \begin{equation*}
        \Fib{L_p X_{n+1} \to L_p X_n} \to \Fib{L_p X_{n+1} \to L_p Y} \to \Fib{L_p X_n \to L_p Y}.
    \end{equation*}
    Since $X_0 \cong Y$, we can thus inductively prove that also 
    $\Fib{L_p X_{n+1} \to L_p Y}$ is at least $k$-connective.
    Moreover, as the connectivity of $\Fib{L_p X_{n+1} \to L_p X_n}$ 
    tends to infinity as $n \to \infty$, we conclude that in fact 
    $(\Fib{L_p X_{n} \to L_p Y})_n$ is a highly connected tower.
    Additionally, we can compute its limit:
    \begin{equation*}
        \limil{n} \Fib{L_p X_n \to L_p Y} \cong \Fib{\limil{n} L_p X_n \to L_p Y} \cong \Fib{L_p X \to L_p Y},
    \end{equation*}
    where the second equivalence is \cref{lem:p-comp:p-comp-tower-limit}, using that $(X_n)_n$ is a highly connected tower of nilpotent sheaves.
    Hence, we conclude from \cref{lem:highly-connected:tower-limit-htpy-groups}
    that also $\Fib{L_p X \to L_p Y}$ is $k$-connective,
    which is exactly what we wanted to prove.
\end{proof}

We also need the following result which is a strengthening of \cite[Lemma 6.11]{mattis2024fracture}:
\begin{lem} \label{lem:p-comp:p-comp-highly-connected}
    Suppose that $\topos X$ admits a locally finite dimensional cover.
    Let $(X_n)_n$ be a locally highly connected tower (subordinate to this cover),
    such that all the sheaves $X_n \in \topos X_*$ are nilpotent.
    Then also the tower $(L_p X_n)_n$ is locally highly connected. 
\end{lem}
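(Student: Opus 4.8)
The plan is to verify the conclusion one topos of the cover at a time, thereby reducing to the locally-finite-dimensional situation of \cref{lem:p-comp:connectivity}. Write $\{p_i^* \colon \topos X \to \topos U_i\}_{i \in I}$ for the given locally finite dimensional cover, with each $\topos U_i$ having enough points and being locally of homotopy dimension $\le N_i$. By definition of a locally highly connected tower, I must show that $(p_i^* L_p X_n)_n$ is a highly connected tower in $\topos U_i$ for every $i$.

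The first step --- and the one I expect to be the crux --- is the identification $p_i^* L_p X_n \cong L_p p_i^* X_n$. Here it is essential that $p_i^*$ is \emph{limit-preserving} (this is part of the definition of a locally finite dimensional cover, and would fail for a generic geometric morphism): $p_i^*$ commutes with $m$-truncations, with all colimits, and with all limits, hence with the two inverse limits that enter the construction of $p$-completion. Concretely, by \cite[Proposition 6.12]{mattis2024fracture} (applicable in both $\topos X$ and in $\topos U_i$, the latter admitting a locally finite dimensional cover since it is locally of homotopy dimension $\le N_i$) one has $L_p(-) \cong \limil{m} L_p \tau_{\le m}(-)$, so it suffices to treat the truncated nilpotent sheaves $\tau_{\le m} X_n$; and on such a sheaf $L_p$ is built out of a finite principal refinement with Eilenberg--MacLane layers $K(A,k)$, whose $p$-completion is computed from the spectral $p$-completion $\limil{j} HA \sslash p^j$ of $HA$ via $\pLoop$ and connective covers --- all operations that commute with $p_i^*$ by the above. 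This gives the claim.

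Granting this, fix $i$ and set $Z_n \coloneqq p_i^* X_n \in (\topos U_i)_*$. By hypothesis $(Z_n)_n$ is a highly connected tower, and each $Z_n$ is nilpotent because $p_i^*$, being limit-preserving and left exact, carries a principal Postnikov refinement of $X_n$ to one of $Z_n$. Fix $k \ge 1$. Since $(Z_n)_n$ is highly connected, a long exact sequence argument shows $\Fib{Z_{n+1} \to Z_n}$ is $(N_i + k + 3)$-connective for all $n$ large enough, and $Z_{n+1} \to Z_n$ is a nilpotent morphism by \cref{lem:nilpotent:morphism-of-object}. Hence \cref{lem:p-comp:connectivity} applies in $\topos U_i$ and yields that $\Fib{L_p Z_{n+1} \to L_p Z_n}$ is $(k+1)$-connective for all large $n$; as $k + 1 \ge 2$ this fiber is $1$-connected, so a further long exact sequence argument shows that $\pi_k(L_p Z_n) \to \pi_k(L_p Z_N)$ is an isomorphism for all $n \ge N$ provided $N$ is large. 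Together with the fact that each $L_p Z_n$ is connected (by \cite[Lemma 3.12]{mattis2024unstable}, since $Z_n$ is connected), this shows that $(L_p Z_n)_n = (p_i^* L_p X_n)_n$ is a highly connected tower in $\topos U_i$. As $i$ was arbitrary, $(L_p X_n)_n$ is locally highly connected subordinate to the cover.

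The main obstacle is the identification $p_i^* L_p X_n \cong L_p p_i^* X_n$ in the second paragraph: $L_p$ is only a Bousfield localization, geometric morphisms do not in general preserve local objects, and the commutation genuinely relies on unwinding the explicit description of $p$-completion on truncated nilpotent sheaves together with the limit-preservation of the structure maps of the cover. Everything after that is routine bookkeeping with \cref{lem:p-comp:connectivity} and long exact sequences of fibrations.
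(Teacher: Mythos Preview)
Your proof is correct and follows essentially the same strategy as the paper: reduce to each $\topos U_i$ via the commutation $p_i^* L_p \cong L_p p_i^*$, then invoke \cref{lem:p-comp:connectivity}. Two minor differences are worth noting. First, the commutation you flag as ``the crux'' and ``the main obstacle'' is already established as \cite[Lemma 6.10]{mattis2024fracture}, which the paper simply cites; your sketch of a direct argument is fine but unnecessary. Second, you apply \cref{lem:p-comp:connectivity} to each consecutive transition $Z_{n+1} \to Z_n$ and then compose the resulting isomorphisms on $\pi_k$, whereas the paper applies it once to the composite $X_m \to X_L$ (for a fixed $L$), which is slightly more direct but amounts to the same thing.
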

\begin{proof}
    Since $p_i^*$ commutes with the $p$-completion by \cite[Lemma 6.10]{mattis2024fracture},
    and preserves nilpotent objects (as it is the left adjoint of a geometric morphism),
    we can assume that $\topos X$ is locally of homotopy dimension $\le N$ for some $N \in \N$
    and that $(X_n)_n$ is a highly connected tower, and our goal is to show that also 
    $(L_p X_n)_n$ is highly connected.
    So let $k \in \N$.
    Since $(X_n)_n$ is highly connected, there exists $L \ge 0$ such that for all 
    $m \ge L$ the fiber $\Fib{X_m \to X_L}$ is $(k + N + 3)$-connective.
    We have to find $M \ge 0$ 
    such that for all $m \ge M$ the fiber $\Fib{L_p X_m \to L_p X_M}$ is $(k + 1)$-connective.
    The claim follows from \cref{lem:p-comp:connectivity} using $M \coloneqq L$.
\end{proof}

\begin{prop} \label{lem:p-comp:p-comp-tower-limit}
    Suppose that $\topos X$ admits a locally finite dimensional cover.
    Let $(X_n)_n$ be a locally highly connected tower (subordinate to this cover),
    such that all the sheaves $X_n \in \topos X_*$ are nilpotent.
    Then the canonical map
    $L_p \limil{n} X_n \to \limil{n} L_p X_n$ is an equivalence.
\end{prop}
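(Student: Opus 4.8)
The plan is to reduce to a topos of bounded local homotopy dimension and then to reorganise the whole computation in terms of Postnikov truncations, where \cite[Proposition 6.12]{mattis2024fracture} computes the $p$-completions and \cref{lem:highly-connected:tower-limit-htpy-groups} computes the limit of the highly connected tower. For the reduction, I would fix a locally finite dimensional cover $\{p_i^* \colon \topos X \to \topos U_i\}_{i \in I}$ and apply each $p_i^*$ to the canonical comparison map $L_p \limil{n} X_n \to \limil{n} L_p X_n$. Since the $p_i^*$ are jointly conservative, commute with all limits and with $L_p$ (by \cite[Lemma 6.10]{mattis2024fracture}), and preserve nilpotent sheaves (being left adjoints of geometric morphisms), and since $(p_i^* X_n)_n$ is highly connected by definition of a locally highly connected tower, this reduces the statement to the case where $\topos X$ is locally of homotopy dimension $\le N$ with enough points and $(X_n)_n$ is a highly connected tower of nilpotent sheaves. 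Write $X \coloneqq \limil{n} X_n$.

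Next I would analyse the truncations. By \cref{lem:highly-connected:tower-limit-htpy-groups}, for every $m$ there is an $N(m)$ so that $\pi_k(X) \to \pi_k(X_n)$ is an isomorphism for all $k \le m$ and all $n \ge N(m)$; since the source and target of $\tau_{\le m} X \to \tau_{\le m} X_n$ are $m$-truncated and this map induces isomorphisms on all homotopy sheaves, Whitehead's theorem in $\topos X$ makes it an equivalence for $n \ge N(m)$, compatibly with the transition maps of the tower. Hence for fixed $m$ the tower $(\tau_{\le m} X_n)_n$ has eventually invertible transition maps, with eventual value $\tau_{\le m} X$, and the same then holds for $(L_p \tau_{\le m} X_n)_n$, so that $\limil{n} L_p \tau_{\le m} X_n \cong L_p \tau_{\le m} X$. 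Along the way one records that each $\tau_{\le m} X \cong \tau_{\le m} X_{N(m)}$ is nilpotent (a truncation of a nilpotent sheaf), and hence that $X \cong \limil{m} \tau_{\le m} X$ is nilpotent as well, since it shares homotopy sheaves and $\pi_1$-actions with the nilpotent $\tau_{\le m} X$; in particular \cite[Proposition 6.12]{mattis2024fracture} applies to $X$.

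Finally I would assemble the limits: using \cite[Proposition 6.12]{mattis2024fracture} for each nilpotent $X_n$, then that limits commute with limits, then the previous step, and then \cite[Proposition 6.12]{mattis2024fracture} for $X$, one gets
\begin{equation*}
    \limil{n} L_p X_n \cong \limil{n} \limil{m} L_p \tau_{\le m} X_n \cong \limil{m} \limil{n} L_p \tau_{\le m} X_n \cong \limil{m} L_p \tau_{\le m} X \cong L_p X,
\end{equation*}
after which a diagram chase identifies this composite with the canonical comparison map.

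I expect the main obstacle to be bookkeeping rather than conceptual: making sure the equivalences assembled above really do compose to the canonical assembly map $L_p \limil{n} X_n \to \limil{n} L_p X_n$ and not to some other isomorphism, and, secondarily, discharging the reduction cleanly—in particular that \cref{lem:highly-connected:tower-limit-htpy-groups} and \cite[Proposition 6.12]{mattis2024fracture} may legitimately be invoked in the $\topos U_i$. One could instead try to argue directly with \cref{lem:p-comp:p-comp-highly-connected} together with a Milnor-sequence argument for the tower $(X_n \to L_p X_n)_n$, but the truncation route seems the least painful.
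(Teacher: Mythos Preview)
Your argument is correct and takes a genuinely different route from the paper's. The paper's one-line proof adapts the argument of \cite[Proposition 6.12]{mattis2024fracture} verbatim, the only change being that the local high connectivity of the $p$-completed tower $(L_p X_n)_n$ now has to be supplied by \cref{lem:p-comp:p-comp-highly-connected} (which in turn rests on the connectivity estimate \cref{lem:p-comp:connectivity}). You instead invoke \cite[Proposition 6.12]{mattis2024fracture} as a black box for each $X_n$ and for $X$, and then exploit \cref{lem:highly-connected:tower-limit-htpy-groups} to see that for each fixed $m$ the truncated tower $(\tau_{\le m} X_n)_n$ is eventually constant with value $\tau_{\le m} X$, so the double limit $\limil{m}\limil{n} L_p \tau_{\le m} X_n$ collapses. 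The payoff is that you sidestep \cref{lem:p-comp:p-comp-highly-connected} and \cref{lem:p-comp:connectivity} entirely; the paper's route, by contrast, records as a byproduct that $(L_p X_n)_n$ remains locally highly connected, which is used elsewhere. Your residual worry about applying the cited results inside the $\topos U_i$ dissolves: each $\topos U_i$ already has enough points and bounded local homotopy dimension, so the singleton $\{\id{\topos U_i}\}$ is itself a locally finite dimensional cover, and both \cref{lem:highly-connected:tower-limit-htpy-groups} and \cite[Proposition 6.12]{mattis2024fracture} apply there. The naturality bookkeeping you flag is routine: each step in your chain is the canonical comparison map for the relevant limit, and these compose to the canonical assembly map by functoriality of $\tau_{\le m}$ and $L_p$.
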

\begin{proof}
    The proof is identical to that of \cite[Proposition 6.12]{mattis2024fracture},
    but we now need \cref{lem:p-comp:p-comp-highly-connected} 
    to see that $(L_p X_n)_n$ is still locally highly connected.
\end{proof}
                  \label{sec:p-comp}
\subsection{\texorpdfstring{$\Pp^1$}{P1}-Postnikov towers}
In the sequel we will need that over a perfect field any $2$-effective nilpotent motivic space 
admits a version of the Postnikov-tower, where the layers are infinite $\Pp^1$-loop spaces.
This is a deep result of Asok--Bachmann--Hopkins \cite{asok2023freudenthal}, which we now recall:
\begin{prop} [Asok--Bachmann--Hopkins] \label{lem:etale-nilpotent:effective-Postnikov-refinement}
    Let $k$ be a perfect field and $X \in \Spc{k}_*$ a pointed nilpotent motivic space.
    Suppose moreover that $X$ is $q$-effective for some $q \ge 2$.
    Then $X$ admits a refined Postnikov tower where the layers are $q$-effective infinite loop spaces,
    i.e.\ there exists an $\N$-indexed inverse system $X_\bullet \colon \op{\N} \to \Spc{k}_{*,X/}$ of pointed motivic spaces $X_i$ under $X$,
    and fiber sequences $X_{i+1} \to X_i \to K_i$, such that 
    \begin{enumerate}[label=\textbf{(\alph*)}]
        \item $X_0 \cong *$,
        \item for every $i$, $X_i$ is nilpotent and $q$-effective,
        \item $X \cong \limil{i} X_i$,
        \item the connectivities of the $K_i$ tend to $\infty$ as $i \to \infty$, and
        \item for every $i$, there exists a motivic spectrum $E_i \in \SH{k}$ which is $2$-connective and $q$-effective,
        and an equivalence $K_i \cong \pLoopP E_i$. In particular $K_i$ is $2$-connective and $q$-effective.
    \end{enumerate}
\end{prop}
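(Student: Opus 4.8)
\emph{Plan.} The statement is a quotation from Asok--Bachmann--Hopkins \cite{asok2023freudenthal}; the plan is to assemble it from two of their inputs together with a nilpotent refinement. The two inputs I would invoke are: (1) the effective homotopy $t$-structure on $\SHone{k}(q)$ \cite[§6.1]{bachmannyakerson} together with the fact that, for $q\ge 2$, the functor $\omega^\infty\colon\SH{k}(q)^\heartsuit\to\SHone{k}(q)^\heartsuit$ is an equivalence of abelian categories and $\SHone{k}(q)^\heartsuit$ is closed under subobjects and quotients inside $\SHone{k}^\heartsuit$ (this is \cite{bachmannyakerson,bachmann2021zerothP1,feld2021milnor}, cf.\ \cite[Theorem 2.2.30(2)]{asok2023freudenthal}); and (2) the existence and convergence of the effective Postnikov tower of a $q$-effective motivic space: for $X$ as in the statement the tower $(\tau^{\mathrm{eff}}_{\le n}X)_n$ satisfies $\tau^{\mathrm{eff}}_{\le 0}X\cong *$, consists of $q$-effective spaces, converges to $X$, and has $n$-th layer $\pLoopP(\Sigma^n A_n)$ with $A_n\in\SH{k}(q)^\heartsuit$ --- and with the layers below degree $2$ vanishing because $q\ge 2$.

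\emph{Assembly.} Granting these, the tower $(\tau^{\mathrm{eff}}_{\le n}X)_n$ already realises (a), (c), (d) and gives (e) up to the failure of the maps $\tau^{\mathrm{eff}}_{\le n}X\to\tau^{\mathrm{eff}}_{\le n-1}X$ to be principal fibrations. I would repair this using the nilpotence of $X$: via \cref{lem:nilpotent:refined-Post-Moore-tower} (applied in $\ShvTop{\nis}{\Sm{k}}_*$, where all objects that occur stay $\AffSpc{1}$-invariant because over a perfect field the homotopy sheaves of an $\AffSpc{1}$-invariant space are strictly $\AffSpc{1}$-invariant by Morel), or directly from the refined tower of \cite{asok2023freudenthal}, one passes to a single $\N$-indexed tower $(X_i)_i$ of principal fibrations $X_{i+1}\to X_i\to K(B_i,m_i)$ with each $B_i$ a subquotient in $\SHone{k}^\heartsuit$ of one of the $A_n$ coming from a central filtration of the homotopy objects, $m_i\ge 2$, and $m_i\to\infty$. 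By input (1), $B_i\in\SHone{k}(q)^\heartsuit$, so $B_i=\omega^\infty(\tilde B_i)$ for a unique $\tilde B_i\in\SH{k}(q)^\heartsuit$; put $E_i\coloneqq\Sigma^{m_i}\tilde B_i$, which is $q$-effective and $2$-connective (as $q\ge 2$ and $m_i\ge 2$). Since $\omega^\infty$ is exact (a limit-preserving functor between stable $\infty$-categories) it commutes with $\Sigma$, so, using $\pLoopP=\pLoop\omega^\infty$, one gets $K(B_i,m_i)\cong\pLoop(\Sigma^{m_i}\omega^\infty\tilde B_i)\cong\pLoop\omega^\infty(\Sigma^{m_i}\tilde B_i)=\pLoopP(E_i)$, which is (e). Finally (b): each $X_i$ is an iterated principal fibration over $*$ with $q$-effective Eilenberg--MacLane fibers, hence nilpotent by \cite[Lemma A.12]{mattis2024unstable} applied inductively, and $q$-effective because the class of $q$-effective spaces is closed under the truncations and fibers occurring here \cite{asok2023freudenthal}.

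\emph{Main obstacle.} Everything genuinely hard is packaged in the two inputs, which is why this counts as ``a deep result''. Input (2) --- convergence of the effective Postnikov tower --- is a motivic Freudenthal-type statement and is where perfectness of $k$ is used; input (1) --- the equivalence of effective hearts --- rests on the computations of \cite{bachmannyakerson,bachmann2021zerothP1,feld2021milnor} and is precisely the reason one needs $q\ge 2$ (for $q=1$ it is not known to hold). By contrast, the nilpotent refinement and the connectivity/$q$-effectivity bookkeeping sketched above are formal.
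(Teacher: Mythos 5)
Your proposal is correct in spirit and, where it cites the paper's own sources, lands on the right results; but it reconstructs the content of the cited reference rather than invoking it. The paper's actual proof is a one-paragraph citation: \cite[Construction 4.1.7]{asok2023freudenthal} already produces a tower $(X_i)_i$ with principal fibers $X_{i+1}\to X_i\to K_i$, $K_i\cong\tau_{\ge(q+1,q)}K(A_i,n_i)$ with $A_i$ strictly $\AffSpc{1}$-invariant, $n_i\ge 2$, $n_i\to\infty$, satisfying \textbf{(a)}--\textbf{(d)}; and \textbf{(e)} is then \cite[Remark 4.1.13]{asok2023freudenthal}. You correctly identify that the two deep ingredients feeding into this construction are the convergence of the effective Postnikov tower and the equivalence $\omega^\infty\colon\SH{k}(q)^\heartsuit\simeq\SHone{k}(q)^\heartsuit$ for $q\ge 2$, and you do note the option of citing the refined tower directly.

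Where your reconstruction route has genuine gaps: (1) You invoke closure of $\SHone{k}(q)^\heartsuit$ under subobjects and quotients inside $\SHone{k}^\heartsuit$ and attribute it to the references establishing the equivalence of hearts, but those references do not state that closure property; it is a separate fact (it does hold, but it needs its own justification, e.g.\ via the description of the effective heart in terms of homotopy modules and effectivity being detectable on contractions). Without it, you cannot conclude $B_i\in\SHone{k}(q)^\heartsuit$ from $B_i$ being a subquotient of some $\omega^\infty A_n$, and then the lift $\tilde B_i$ does not exist. (2) Your claim in \textbf{(b)} that ``the class of $q$-effective spaces is closed under the truncations and fibers occurring here'' is exactly the bookkeeping that \cite[Construction 4.1.7]{asok2023freudenthal} is designed to control, via the bidegree truncations $\tau_{\ge(q+1,q)}$; it is not a formal closure property of $q$-effective spaces, and asserting it without that machinery leaves the effectivity of the intermediate $X_i$ unjustified. (3) The passage from the effective Postnikov tower to a single $\N$-indexed tower of principal fibrations requires interleaving Moore--Postnikov refinements of the individual stages; this is routine but is precisely the content of the cited construction. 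In short, your plan identifies the right inputs and would work if these three points were filled in, but those are exactly the points that \cite{asok2023freudenthal} proves, which is why the paper's proof cites that reference directly rather than reproving it.
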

\begin{proof}
    It follows from \cite[Construction 4.1.7]{asok2023freudenthal} 
    that there exist motivic spaces $X_i$ under $X$ and fiber sequences 
    $X_{i+1} \to X_i \to K_i$,
    satisfying \textbf{(a)}, \textbf{(b)}, \textbf{(c)} and \textbf{(d)},
    such that $K_i \cong \tau_{\ge(q+1, q)} K(A_i, n_i)$ with $A_i$ a strictly $\AffSpc{1}$-invariant sheaf 
    of abelian groups, $n_i \ge 2$ and $n_i \to \infty$ as $i \to \infty$.
    Here we use the notation $\tau_{\ge(q+1, q)}$ from \cite[Definition 4.1.2]{asok2023freudenthal}.
    Now the remaining claim \textbf{(e)} is \cite[Remark 4.1.13]{asok2023freudenthal}.
\end{proof}

\begin{rmk}
    In the sequel we will repeatedly use some kind of \emph{induction} on this tower.
    The main strategy will be the following:
    If a statement for motivic spaces is stable under limits 
    and holds for sheaves of the form $\pLoopP G$ for $G \in \SH{k}$,
    then it holds for any nilpotent $2$-effective motivic space.
\end{rmk}

This version of the Postnikov-tower behaves well with respect to rationalization,
as captured by the following lemma.
\begin{lem} \label{lem:etale-nilpotent:effective-Postnikov-refinement-rational}
    Let $k$ be a perfect field and $X \in \Spc{k}_*$ a pointed nilpotent motivic space.
    Suppose moreover that $X$ is $q$-effective, where $q \ge 2$.
    Let $R \subseteq \Q$ be a subring.
    If $X$ is $R$-local, then we may assume that also the $X_i$, $K_i$ and $E_i$ 
    from \cref{lem:etale-nilpotent:effective-Postnikov-refinement} are $R$-local.
\end{lem}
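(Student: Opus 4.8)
The plan is to rerun the construction of \cref{lem:etale-nilpotent:effective-Postnikov-refinement} inside the subcategory of $R$-local motivic spaces, using that $R$-localization is a smashing (hence limit-compatible) localization and that the relevant $\Pp^1$-infinite loop space functor interacts well with it. First I would recall that the tower $(X_i)_i$ is produced by \cite[Construction 4.1.7]{asok2023freudenthal} from the $\Pp^1$-Postnikov tower of $X$, whose layers are of the form $K_i \cong \tau_{\ge(q+1,q)} K(A_i,n_i)$ for strictly $\AffSpc{1}$-invariant sheaves $A_i$. Since $X$ is $R$-local and the $\Pp^1$-Postnikov truncations are functorial, the layers of the $\Pp^1$-Postnikov tower of an $R$-local space are again $R$-local; in particular one may take the $A_i$ to be $R$-modules (replace $A_i$ by $A_i \otimes R$, which changes nothing since $X$ is $R$-local, so its homotopy sheaves already are $R$-modules). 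Concretely, the homotopy sheaves of an $R$-local nilpotent space are $R$-modules, so every sheaf of abelian groups appearing as a $\Pp^1$-homotopy sheaf of $X$ is automatically an $R$-module, which forces $K_i$, and then $X_i$ as an iterated fiber, to be $R$-local.

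Next I would argue that the $E_i$ can be chosen $R$-local. Here I would invoke the equivalence $\omega^\infty \colon \SH{k}(q)^\heartsuit \xrightarrow{\simeq} \SHone{k}(q)^\heartsuit$ for $q \ge 2$ used in the cited proof, together with the observation that this equivalence is compatible with $R$-localization on both sides (rationalization, or localization at a set of primes, is exact and commutes with the formation of hearts and with $\omega^\infty$, since it is computed $t$-structure-wise on homotopy objects, which are being base-changed to $R$-modules). Thus if $K_i$ is $R$-local, the corresponding $E_i \in \SH{k}$ of \cite[Remark 4.1.13]{asok2023freudenthal} is the image under this equivalence of an $R$-local object, hence is itself $R$-local, and $K_i \cong \pLoopP E_i$ remains an equivalence of $R$-local sheaves. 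Finally, $X \cong \limil{i} X_i$ is preserved because $R$-localization is a left exact localization (more concretely, the inclusion of $R$-local objects preserves limits, so a limit of $R$-local spaces computed in $\Spc{k}_*$ is already $R$-local and agrees with the limit computed in the local category), so the statement $X \cong \limil{i} X_i$ of an $R$-local $X$ is an identity of $R$-local objects.

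The main obstacle I expect is the bookkeeping around whether the $\Pp^1$-effective Postnikov truncation functors of \cite{asok2023freudenthal} genuinely commute with $R$-localization on the nose, rather than just up to the usual fracture-square corrections; the key point to verify carefully is that for a nilpotent $R$-local space the $\Pp^1$-homotopy sheaves are already $R$-modules (so that no extra localization is needed when extracting the layers), and that the identification $K_i \cong \pLoopP E_i$ is natural enough to transport $R$-locality across. Once that compatibility is in hand, every item \textbf{(a)}--\textbf{(e)} of \cref{lem:etale-nilpotent:effective-Postnikov-refinement} is inherited verbatim, with ``$R$-local'' appended to each object, so no new argument beyond the cited construction is needed.
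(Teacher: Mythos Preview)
Your approach is genuinely different from the paper's, and the difference is worth spelling out. You try to show that the tower produced by \cite[Construction 4.1.7]{asok2023freudenthal} is \emph{already} $R$-local when the input $X$ is, by arguing that the homotopy sheaves $A_i$ are $R$-modules and propagating $R$-locality through fibers. The paper instead takes the tower $(X_i, K_i, E_i)$ as given, applies $L_R$ to every term, and then verifies \textbf{(a)}--\textbf{(e)} for $(L_R X_i, L_R K_i, L_R E_i)$ by citing a string of existing results: that $L_R$ preserves the fiber sequences (\cite[Theorem 4.3.11]{asok2022localization}), nilpotence and connectivity (\cite[Proposition 4.3.8]{asok2022localization}), $q$-effectiveness (\cite[Proposition 4.3.4, Lemma 3.2.7, Corollary 3.2.9]{asok2023freudenthal}), and commutes with $\pLoopP$ on connective objects (\cite[Lemma 3.18, Corollary 2.7]{mattis2024fracture}, \cite[Lemma 6.1]{bachmannyakerson}). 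For \textbf{(c)} the paper must also show $L_R \limil{i} X_i \cong \limil{i} L_R X_i$, which is nontrivial and uses \cite[Proposition 6.9]{mattis2024fracture}; your approach, if it worked, would get \textbf{(c)} for free since the original limit is already $X$.

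The gap in your argument is precisely the point you flag as an obstacle but do not resolve: you need $K_i = \tau_{\ge(q+1,q)} K(A_i,n_i)$ to be $R$-local, and knowing that $A_i$ is an $R$-module only gives you that $K(A_i,n_i)$ is $R$-local. You have not argued that the effective connective cover $\tau_{\ge(q+1,q)}$ preserves $R$-locality, and this is not formal---it is a truncation in a filtration orthogonal to the one detecting $R$-locality, so one really needs a statement like ``$L_R$ commutes with $\tau_{\ge(q+1,q)}$'' or ``the homotopy sheaves of $\tau_{\ge(q+1,q)} Y$ are subquotients of those of $Y$'', neither of which you supply. Similarly, your claim that the heart equivalence $\omega^\infty \colon \SH{k}(q)^\heartsuit \xrightarrow{\simeq} \SHone{k}(q)^\heartsuit$ is ``compatible with $R$-localization'' is asserted rather than proved. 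The paper's route avoids both issues by never asking whether the original $K_i$ or $E_i$ are $R$-local; it only needs that $L_R$ applied to them still has the right shape, and for that there are citable lemmas. Your strategy could likely be completed, but as written it defers the actual work to an unproved compatibility.
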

\begin{proof}
    Choose sheaves $X_i$, $K_i$ and $E_i$ for all $i$ as in \cref{lem:etale-nilpotent:effective-Postnikov-refinement}.
    We show that also the collection of $L_R X_i$, $L_R K_i$ and $L_R E_i$ 
    satisfies \textbf{(a)} to \textbf{(e)} of \loccitdot

    First, it follows from \cite[Theorem 4.3.11]{asok2022localization} that $L_R X_{i+1} \to L_R X_i \to L_R K_i$ 
    is still a fiber sequence. We prove the remaining points:

    \textbf{(a)}: It is clear that $L_R X_0 \cong L_R * \cong *$.

    \textbf{(b)}: That $L_R X_i$ is nilpotent was shown in \cite[Proposition 4.3.8]{asok2022localization}
        (they only show that $L_R X_i$ is weakly $\AffSpc{1}$-nilpotent, but under the assumption that 
        $X_i$ is nilpotent their proof in fact shows that $L_R X_i$ is nilpotent).
        That it is $q$-effective was shown in \cite[Proposition 4.3.4]{asok2023freudenthal}.

    \textbf{(c)}: We have $X \cong L_R X \cong L_R \limil{n} X_n \cong \limil{n} L_R X_n$ 
        by combining \cite[Proposition 4.3.8 and Theorem 4.3.9]{asok2022localization} 
        (which shows that the $R$-localization of a motivic space can be computed as 
        the $R$-localization of the underlying Nisnevich sheaf) 
        with \cite[Proposition 6.9]{mattis2024fracture}
        (which applies as $\ShvTop{\nis}{\Sm{k}}$ admits a locally finite dimensional cover,
        cf.\ \cite[Proposition A.3]{mattis2024fracture}).

    \textbf{(d)}: This follows from \cite[Proposition 4.3.8]{asok2022localization}.
    
    \textbf{(e)}: It was shown in \cite[Lemma 3.2.7 (1) and Corollary 3.2.9]{asok2023freudenthal} that $L_R E_i$ is still 
        $2$-connective and $q$-effective.
        Note that $L_R K_i \cong L_R \pLoopP E_i \cong \pLoop L_R E_i$.
        Indeed, we have $\pLoopP = \pLoop \omega^\infty$, 
        and $\pLoop$ commutes with $L_R$ on $1$-connective objects by \cite[Lemma 3.18]{mattis2024fracture}
        (or rather its analog for motivic spaces, which can be proven in exactly the same way),
        whereas $\omega^\infty$ commutes with $L_R$ by \cite[Corollary 2.7]{mattis2024fracture} as it preserves filtered colimits by \cite[Lemma 6.1]{bachmannyakerson}.
\end{proof}
               \label{sec:effective}
\subsection{\'Etale \texorpdfstring{$\infty$}{Infinity}-topoi}
In this section, we collect some results about the small-étale 
and smooth-étale $\infty$-topoi.
\begin{defn}
    Let $S$ be a qcqs scheme.
    \begin{itemize}
        \item We write $\smet{S}$ for the category of qcqs étale $S$-schemes,
            equipped with the étale topology, and $\ShvTopH{\et}{\smet{S}}$ for the $\infty$-topos of 
            étale hypersheaves on $\smet{S}$. 
        \item We write $\Sm{S}$ for the category of qcqs smooth $S$-schemes,
            equipped with the étale topology, and $\ShvTopH{\et}{\Sm{S}}$ for the $\infty$-topos of 
            étale hypersheaves on $\Sm{S}$. 
    \end{itemize}
\end{defn}

\begin{prop} \label{lem:etale:points}
    Let $S$ be a scheme.
    A conservative family of points of $\ShvTopH{\et}{\Sm{S}}$ is given by evaluating 
    at $\Spec(\mathcal O_{X, \overline{x}}^{sh})$,
    the spectrum of the strict henselization of the local ring of $X$ at $\overline{x}$,
    where $X$ is a smooth $S$-scheme and $\overline{x} \in X$ is a geometric point.

    Similarly, a conservative family of points of $\ShvTopH{\et}{\smet{S}}$ is given by evaluating at $\Spec(\mathcal O_{S, \overline{s}}^{sh})$,
    the spectrum of the strict henselization of the local ring of $S$ at $\overline{s}$,
    where $\overline{s} \in S$ is a geometric point.
\end{prop}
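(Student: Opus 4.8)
The plan is to reduce the claim to the classical fact that the small étale $1$-topos of a scheme has enough points, using hypercompleteness to pass from sheaves of sets to hypersheaves of anima, and then to bootstrap from the small site $\smet{S}$ to the big site $\Sm{S}$ by restriction.

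\emph{Step 1 (the small étale site).} First I would pin down the meaning of ``evaluation at $\Spec(\mathcal{O}_{S,\overline{s}}^{\mathrm{sh}})$''. Writing $\Spec(\mathcal{O}_{S,\overline{s}}^{\mathrm{sh}}) \simeq \limil{(U,\overline{u})} U$ as the cofiltered limit over the affine étale neighbourhoods of the geometric point $\overline{s}$, each of which lies in $\smet{S}$, I would invoke continuity of étale (hyper)cohomology along cofiltered limits of schemes with affine transition maps (see e.g.\ \cite{SGA4}, in its $\infty$-categorical form \cite{sag}) to identify the value of a hypersheaf $F$ at $\Spec(\mathcal{O}_{S,\overline{s}}^{\mathrm{sh}})$ with the usual stalk
\begin{equation*}
    F_{\overline{s}} \coloneqq \colimil{(U,\overline{u}) \to (S,\overline{s})} F(U).
\end{equation*}
This functor is a point: being a filtered colimit of evaluation functors it is left exact (filtered colimits commute with finite limits in $\An$, and finite limits of sheaves are computed objectwise), and it preserves all colimits because colimits in $\ShvTopH{\et}{\smet{S}}$ are computed by (hyper)sheafifying colimits of presheaves, and both sheafification and hypersheafification induce equivalences on stalks. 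For joint conservativity I would use that $\ShvTopH{\et}{\smet{S}}$ is hypercomplete, so that a morphism $f$ is an equivalence as soon as every homotopy sheaf $\pi_n(f)$, $n \ge 0$, is an isomorphism of discrete étale sheaves; and each such isomorphism is detected on the stalks $(-)_{\overline{s}}$ by the classical fact that étale sheaves of sets have enough points (\cite{SGA4}), using that $\pi_n$ commutes with the exact stalk functors.

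\emph{Step 2 (the big étale site).} For $X \in \Sm{S}$, restriction along $\smet{X} \hookrightarrow \Sm{S}$ defines $i_X^* \colon \ShvTopH{\et}{\Sm{S}} \to \ShvTopH{\et}{\smet{X}}$; this lands in hypersheaves (a hypercover in $\smet{X}$ is one in $\Sm{S}$) and is the inverse image of a geometric morphism, since the inclusion of sites is continuous and cocontinuous (an étale morphism to a scheme étale over $X$ is again étale over $X$). The family $\{i_X^*\}_{X \in \Sm{S}}$ is jointly conservative, because $X \in \smet{X}$ forces $(i_X^* F)(X) \simeq F(X)$, so if every $i_X^* f$ is an equivalence then $f$ is an equivalence on every object of $\Sm{S}$, hence an equivalence. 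Composing each $i_X^*$ with the conservative family of points of $\ShvTopH{\et}{\smet{X}}$ from Step 1 yields a conservative family of points of $\ShvTopH{\et}{\Sm{S}}$, and the point attached to a geometric point $\overline{x}$ of $X$ is exactly evaluation at $\Spec(\mathcal{O}_{X,\overline{x}}^{\mathrm{sh}})$.

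The step I expect to be the real work is Step 1: the bookkeeping around hypercompleteness --- upgrading the $1$-categorical ``enough points'' statement to hypersheaves of anima via the Whitehead/$\pi_n$-argument, which is available precisely because the topos has been hypercompleted by hand --- together with the fact that $\Spec(\mathcal{O}_{X,\overline{x}}^{\mathrm{sh}})$ is a cofiltered limit of objects of the site rather than an object of it, so that ``evaluation'' there must be read as the stalk colimit and then checked to be left exact and colimit-preserving. The reduction in Step 2 is purely formal.
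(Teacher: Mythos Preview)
Your proposal is correct and follows essentially the same route as the paper: use hypercompleteness to reduce conservativity to the level of homotopy sheaves and hence to the classical $1$-topos statement, then handle the big site by restricting to the small étale sites $\smet{X}$ for $X \in \Sm{S}$. The paper is terser, citing an external reference for the fact that the stalk functors are points of the hypercomplete topos (which encapsulates your claim that hypersheafification preserves étale stalks) and the Stacks Project for the $1$-categorical conservativity.
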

\begin{proof}
    That these functors are points is \cite[Example 4.32]{Clausen_2021}.
    We now prove that they are jointly conservative.
    Since we work with hypersheaves, we can check equivalences on homotopy sheaves 
    and are thus reduced to show that on the underlying $1$-topos 
    those points form a conservative family.
    This was proven in \cite[\href{https://stacks.math.columbia.edu/tag/03PU}{Theorem 03PU}]{stacks-project} for the small étale 
    site, the smooth case follows similarly (e.g.\ by pulling back to various small étale sites $\ShvTopH{\et}{\smet{X}}$
    for $X \in \Sm{S}$).
\end{proof}

\begin{defn}
    Let $S$ be a qcqs scheme, and $x \in S$.
    Write $\cd{}{x} \in \N \cup \{ \infty \}$
    for the étale cohomological dimension of the field $k(x)$,
    cf.\ \cite[\href{https://stacks.math.columbia.edu/tag/0F0Q}{Tag 0F0Q}]{stacks-project}.
    Similarly, write $\cd{}{S} \in \N \cup \{ \infty \}$ for the étale cohomological dimension of $S$.
\end{defn}

\begin{defn}
    Let $S$ be a qcqs scheme.
    We say that $S$ is \emph{étale bounded}
    if there is a global bound on the étale 
    cohomological dimension of the residue fields of $S$,
    i.e. $\sup_{x \in S} \cd{}{x} < \infty$.
\end{defn}

\begin{lem} \label{lem:etale:etale-bdd-finite-coh-dim}
    Let $S$ be a qcqs scheme of finite Krull-dimension that is étale bounded.
    Then also $\cd{}{S} < \infty$. 
    Moreover, if $\sup_{s \in S} \cd{}{s} \le N$, then 
    $\cd{}{S} \le N + \dim(S)$.
\end{lem}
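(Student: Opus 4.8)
The plan is to prove the quantitative statement, which subsumes the finiteness claim. Writing $N = \sup_{s \in S}\operatorname{cd}(s)$ and $d = \dim S$, I must show $H^q_{\et}(S, \mathcal F) = 0$ for every torsion abelian étale sheaf $\mathcal F$ on $S$ and every $q > N + d$. Since $S$ is quasi-compact and quasi-separated, étale cohomology commutes with filtered colimits and every torsion sheaf is a filtered colimit of constructible ones, so I may assume $\mathcal F$ constructible; and the case $d = 0$ is elementary, the étale cohomology of a $0$-dimensional scheme being controlled by the separably closed residue fields at its (closed) points.

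For $S$ Noetherian the assertion is the classical cohomological-dimension estimate of SGA 4, Exp.\ X (see also the étale cohomology chapter of the Stacks Project); I recall the argument, which is an induction on $d$. One runs the coniveau spectral sequence converging to $H^*(S,\mathcal F)$, whose $E_1$-page is the direct sum, over points $x$ of $S$ of codimension $p$, of the local cohomology at $x$; the crux is that this local cohomology is concentrated in degrees $\le N + p$. That follows from the inductive hypothesis: the local cohomology at a codimension-$p$ point $x$ is the fibre of $\mathcal F_{\bar x} \to R\Gamma\!\left(\operatorname{Spec}\mathcal O^{\mathrm{sh}}_{S,\bar x} \setminus \{\bar x\},\, \mathcal F\right)$, and the punctured strictly Henselian local ring has Krull dimension $p - 1 \le d - 1$ and residue fields of cohomological dimension $\le N$, hence cohomological dimension $\le N + p - 1$ by induction, so its $R\Gamma$ lies in degrees $\le N + p - 1$ and the fibre in degrees $\le N + p$. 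As codimensions in $S$ are bounded by $d$, the abutment $H^q(S,\mathcal F)$ vanishes for $q > N + d$.

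To pass to a general qcqs finite-dimensional étale-bounded $S$, I would write $S$ (affine-locally) as a cofiltered limit of finite-type $\mathbb Z$-schemes along affine transition maps, descend the constructible sheaf $\mathcal F$ to a finite level, and use that étale cohomology with constructible torsion coefficients commutes with such limits. The main obstacle — the point needing genuine care — is that the finite-level approximations need not have Krull dimension $\le d$, nor residue fields of cohomological dimension $\le N$, so the Noetherian bound cannot be fed in termwise; one must arrange the approximation so that these invariants stabilise to the values $d$ and $N$ realised on $S$ before invoking the Noetherian case. Everything else is formal or classical.
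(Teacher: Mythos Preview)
Your Noetherian argument via the coniveau spectral sequence is plausible, but the passage to general qcqs schemes via Noetherian approximation has exactly the obstacle you flag, and you do not resolve it. When $S \cong \varprojlim S_i$ with $S_i$ of finite type over $\Z$, neither $\dim S_i$ nor $\sup_{s \in S_i}\cd{}{s}$ need be bounded by the corresponding quantities for $S$ (think of a rank-$d$ valuation ring, which is not easily approximated by Noetherian rings of dimension $\le d$), so there is no termwise bound to feed into the colimit. Absent a concrete mechanism to stabilise these invariants along the approximation, the proof is incomplete as written.

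The paper simply cites the argument of Clausen--Mathew \cite[Corollary 3.29]{Clausen_2021}, which avoids Noetherian approximation altogether and is worth knowing. One uses the change-of-topology spectral sequence
\[
H^p_{\nis}\bigl(S,\,\underline{H}^q_{\et}(\mathcal F)\bigr) \;\Longrightarrow\; H^{p+q}_{\et}(S,\mathcal F).
\]
The Nisnevich cohomological dimension of a qcqs scheme of finite Krull dimension is bounded by its Krull dimension, with no Noetherian hypothesis needed. On the other hand, the Nisnevich stalks of $\underline{H}^q_{\et}(\mathcal F)$ are $H^q_{\et}(\mathcal O^h_{S,s},\mathcal F)$, and by Gabber's affine analogue of proper base change these agree with $H^q_{\et}(k(s),\mathcal F)$, hence vanish for $q > N$. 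Combining the two bounds in the spectral sequence gives $\cd{}{S} \le N + \dim(S)$ directly. This route sidesteps the approximation problem entirely; if you prefer to salvage your approach, you would need an independent argument that the relevant cohomology groups at the finite levels already vanish in the target range, which amounts to reproving something like the direct bound anyway.
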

\begin{proof}
    See e.g. the proof of \cite[Corollary 3.29]{Clausen_2021}.
\end{proof}

\begin{lem} \label{lem:etale:overcat-equiv}
    Let $S$ be a scheme, and $U \to S$ an étale $S$ scheme.
    The canonical functor 
    \begin{equation*}
        \ShvTopH{\et}{\smet{S}} \to \ShvTopH{\et}{\smet{U}}
    \end{equation*}
    induces an equivalence 
    \begin{equation*}
        \ShvTopH{\et}{\smet{S}}_{/U} \cong \ShvTopH{\et}{\smet{U}}
    \end{equation*}
\end{lem}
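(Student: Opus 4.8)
The plan is to combine a geometric input about étale morphisms of schemes with the standard description of a slice of an $\infty$-topos of sheaves, and then to check that hypercompletion is compatible with passing to that slice.

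First I would identify the underlying sites. The forgetful functor $(\smet{S})_{/U} \to \smet{U}$, $(V \to U \to S) \mapsto (V \to U)$, is an equivalence of categories; this uses that $U \to S$ is étale, so that an étale $S$-scheme over $U$ is the same as an étale $U$-scheme, by stability of étale morphisms under composition together with the cancellation property (if $g \circ f$ and $g$ are étale, then so is $f$); the qcqs hypotheses match since $U \to S$ is qcqs. Moreover, a family $\{ V_i \to V \}$ of $U$-morphisms is jointly surjective and étale over $U$ if and only if it is so over $S$, so the topology induced on $(\smet{S})_{/U}$ from the étale topology on $\smet{S}$ is exactly the étale topology on $\smet{U}$.

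Second, I would invoke the general fact that for a small $\infty$-category $\Cat C$ equipped with a Grothendieck topology $\tau$ and an object $U \in \Cat C$, the canonical functor $\ShvTop{\tau}{\Cat C}_{/y(U)} \to \ShvTop{\tau_U}{\Cat C_{/U}}$ (restriction along $\Cat C_{/U} \to \Cat C$, where $\tau_U$ is the induced topology) is an equivalence of $\infty$-topoi, compatibly with the analogous equivalence $\operatorname{PSh}(\Cat C)_{/y(U)} \simeq \operatorname{PSh}(\Cat C_{/U})$ of presheaf $\infty$-categories; this is part of the theory of étale morphisms of $\infty$-topoi developed in \cite[Section 6.3.5]{highertopoi}. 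Applied to $\Cat C = \smet{S}$ and combined with the first step, this yields $\ShvTop{\et}{\smet{S}}_{/y(U)} \simeq \ShvTop{\et}{\smet{U}}$.

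Third -- and this is the step I expect to require the most care -- I would check that hypercompletion commutes with slicing over $y(U)$, so that $(\ShvTop{\et}{\smet{S}}_{/y(U)})^{\wedge}$ agrees with $\ShvTopH{\et}{\smet{S}}_{/U}$. The key point is that the representable étale sheaf $y(U)$ on $\smet{S}$ is a sheaf of sets, hence $0$-truncated and therefore hypercomplete in $\topos X \coloneqq \ShvTop{\et}{\smet{S}}$. Using that a morphism in $\topos X_{/y(U)}$ is $\infty$-connective precisely when its underlying morphism in $\topos X$ is, and that for $\infty$-connective $e \colon P \to Q$ in $\topos X$ the map $\Map{\topos X}{Q}{y(U)} \to \Map{\topos X}{P}{y(U)}$ is an equivalence, a comparison of the fibres of $\Map{\topos X}{-}{A} \to \Map{\topos X}{-}{y(U)}$ over each point shows that an object $A \to y(U)$ is hypercomplete in $\topos X_{/y(U)}$ if and only if $A$ is hypercomplete in $\topos X$. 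Hence $(\topos X_{/y(U)})^{\wedge} = (\topos X^{\wedge})_{/y(U)} = \ShvTopH{\et}{\smet{S}}_{/U}$, and combining the three steps gives the desired equivalence $\ShvTopH{\et}{\smet{S}}_{/U} \simeq \ShvTopH{\et}{\smet{U}}$. It is induced by the canonical functor of the statement because that functor is precisely restriction along $(\smet{S})_{/U} \to \smet{S}$, i.e.\ base change along $U \to S$.
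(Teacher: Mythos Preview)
Your proposal is correct and follows essentially the same outline as the paper: establish the equivalence of sites $(\smet{S})_{/U} \simeq \smet{U}$, deduce the equivalence of sheaf $\infty$-topoi, and pass to hypercompletions. The main difference is in the second and third steps. The paper cites the classical $1$-topos statement \cite[Exposé III, Proposition 5.4]{SGA4} and then invokes that all the $\infty$-topoi in play are hypercompletions of $1$-localic topoi, so the $1$-topos equivalence lifts. You instead work directly at the $\infty$-level via \cite[§6.3.5]{highertopoi} and then give an explicit argument that hypercompletion commutes with slicing over the $0$-truncated object $y(U)$. Your treatment of the hypercompletion step is more detailed than the paper's (which leaves this compatibility implicit in the ``$1$-localic'' remark), and the fiberwise argument you sketch is indeed the right one; both directions go through because $y(U)$ is itself hypercomplete. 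Either route is fine; yours has the advantage of not relying on the $1$-localic bridge, while the paper's is shorter.
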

\begin{proof}
    Since morphisms between étale $S$-schemes are themselves étale
    \cite[\href{https://stacks.math.columbia.edu/tag/02GW}{Tag 02GW}]{stacks-project},
    we get an equivalence of sites $(\smet{S})_{/U} \cong \smet{U}$,
    where the slice category carries the canonical site structure.
    Then the result follows from \cite[Exposé III, Proposition 5.4]{SGA4},
    as all involved $\infty$-topoi are (hypercompletions of) $1$-localic topoi.
\end{proof}

\begin{lem} \label{lem:etale:finite-htyp-dim}
    Let $S$ be a qcqs
    scheme of finite Krull-dimension, such that $S$ is étale bounded.
    There exists an $N \ge 0$ such that $\ShvTopH{\et}{\smet{S}}$ is locally of homotopy dimension $\le N$.
    In particular, it is Postnikov-complete.
\end{lem}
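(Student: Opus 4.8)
The plan is to reduce the statement to a local computation on the small étale site, using the fact that "locally of homotopy dimension $\le N$" is a condition that can be checked on an open cover (or more precisely after pulling back along a conservative family of étale localizations). First I would recall that $\ShvTopH{\et}{\smet{S}}$ is $1$-localic, so it is the hypercompletion of the topos of sheaves on the $1$-site $\smet{S}$; hence by \cite[Corollary 7.2.1.12]{highertopoi} (Lurie's bound) its local homotopy dimension is controlled by the cohomological dimension of the site, provided this is finite. By \cref{lem:etale:etale-bdd-finite-coh-dim} we have $\cd{}{S} \le N_0 + \dim(S) =: N$ where $N_0 = \sup_{s\in S}\cd{}{s} < \infty$, using that $S$ is qcqs of finite Krull dimension and étale bounded.

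Next I would make "locally of homotopy dimension $\le N$" precise: it suffices to exhibit, for each object $U \in \smet{S}$, a cover of the final object of $\ShvTopH{\et}{\smet{S}}_{/U}$ by objects of homotopy dimension $\le N$. By \cref{lem:etale:overcat-equiv} we have $\ShvTopH{\et}{\smet{S}}_{/U} \cong \ShvTopH{\et}{\smet{U}}$, and since $U \to S$ is étale with $U$ again qcqs of finite Krull dimension (bounded by $\dim S$) and étale bounded (the residue fields of $U$ are finite separable extensions of residue fields of $S$, hence have the same cohomological dimension), it is enough to show that $\ShvTopH{\et}{\smet{S}}$ itself has homotopy dimension $\le N$. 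For this I would invoke the standard fact that a hypercomplete $\infty$-topos of sheaves on a $1$-site of cohomological dimension $\le N$ has homotopy dimension $\le N$: an $n$-connective hypersheaf $X$ has $H^i_{\et}(S, \pi_j X) $ vanishing for $i > N$, and the descent spectral sequence (which converges because $X$ is hypercomplete and the site has finite cohomological dimension) then shows $\pi_0 X(S) \ne \emptyset$; this is exactly \cite[Corollary 3.29]{Clausen_2021} or the argument cited in the proof of \cref{lem:etale:etale-bdd-finite-coh-dim}.

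The last sentence, that $\ShvTopH{\et}{\smet{S}}$ is Postnikov-complete, then follows formally: any $\infty$-topos that is locally of homotopy dimension $\le N$ for a fixed finite $N$ is Postnikov-complete by \cite[Proposition 7.2.1.10]{highertopoi}, or alternatively this is subsumed by the observation recalled in \cref{sec:prelim} that an $\infty$-topos admitting a locally finite dimensional cover is Postnikov-complete (here the cover is the trivial one-element cover). The main obstacle I anticipate is purely bookkeeping: verifying that the slice sites $\smet{U}$ for $U \in \smet{S}$ remain qcqs of bounded Krull dimension and étale bounded, so that the cohomological-dimension bound is genuinely \emph{uniform} in $U$ — this uniformity is what upgrades "homotopy dimension $\le N$" to "\emph{locally} of homotopy dimension $\le N$", and it is the only place where one must be slightly careful rather than just quoting a black box.
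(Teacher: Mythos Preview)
Your proposal is correct and follows essentially the same route as the paper: define $N = \dim(S) + \sup_{s}\cd{}{s}$, use \cref{lem:etale:overcat-equiv} to identify slices with small \'etale topoi over $U$, invoke \cite[Corollary 3.29]{Clausen_2021} for the uniform cohomological-dimension bound on all such $U$, and then pass to homotopy dimension (the paper cites \cite[Proposition 1.3.3.10]{sag} for this last step rather than sketching the descent spectral sequence, but the content is the same) before concluding Postnikov-completeness via \cite[Proposition 7.2.1.10]{highertopoi}. Your explicit remark about the uniformity of the bound across all $U \in \smet{S}$ is exactly the point the paper handles implicitly, and your justification (\'etale morphisms induce finite separable residue field extensions and do not raise Krull dimension) is correct.
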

\begin{proof}
    Let $N \coloneqq \dim(S) + \sup_{x \in S} \cd{}{x}$.
    This is finite since $S$ is étale bounded.
    By \cite[Corollary 3.29]{Clausen_2021} we see that 
    for every étale $S$-scheme $U \to S$ the cohomological dimension of $\ShvTopH{\et}{\smet{U}}$ is $\le N$.
    Note that $\ShvTopH{\et}{\smet{S}}_{/U} \cong \ShvTopH{\et}{\smet{U}}$ by \cref{lem:etale:overcat-equiv}.
    In particular, we see that $\ShvTopH{\et}{\smet{S}}$ is locally of cohomological dimension $\le N$.
    Since it is hypercomplete by definition,
    it follows that it is locally of homotopy dimension $\le N$, cf.\ \cite[Proposition 1.3.3.10]{sag}.
    It follows from \cite[Proposition 7.2.1.10]{highertopoi} that it is also Postnikov-complete.
\end{proof}

\begin{lem} \label{lem:etale:gabber-bounded-enhanced}
    Let $S$ be a qcqs scheme of finite Krull-dimension,
    which is moreover étale bounded.
    Let $p \colon X \to S$ be a smooth finite type $S$-scheme.
    Then $X$ is étale bounded.
\end{lem}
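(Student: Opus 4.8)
The plan is to reduce the statement to the classical bound on the étale cohomological dimension of finitely generated field extensions. First I would set $N \coloneqq \sup_{s \in S} \cd{}{s}$, which is finite since $S$ is étale bounded. As $p \colon X \to S$ is of finite type and $S$ is qcqs, the scheme $X$ is quasi-compact; covering it by finitely many affine opens $U_1, \dots, U_r$, each mapping into an affine open $\Spec(A_i) \subseteq S$ with $\mathcal O(U_i)$ generated as an $A_i$-algebra by $m_i$ elements, I would put $d \coloneqq \max_i m_i$. For a point $x \in X$ with image $s \coloneqq p(x)$, the point $x$ lies in some $U_i$ and hence defines a point of the fiber $(U_i)_s = \Spec(\mathcal O(U_i) \otimes_{A_i} k(s))$, a finite-type $k(s)$-scheme whose coordinate ring is generated over $k(s)$ by at most $d$ elements; consequently $k(x)$ is a finitely generated field extension of $k(s)$ of transcendence degree at most $d$.

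The second step is to invoke the standard inequality $\cd{}{x} \le \cd{}{s} + \operatorname{trdeg}\big(k(x)/k(s)\big)$ for finitely generated field extensions. Away from the residue characteristic this is the familiar estimate for function fields (Serre, \emph{Galois Cohomology}, II.4; see also \cite{stacks-project}), while at the residue characteristic $p$ of $k(s)$ the inequality is automatic, since every field of characteristic $p$ has $p$-cohomological dimension at most $1$ by Artin--Schreier. Combining this with the first step yields $\cd{}{x} \le N + d$ for every $x \in X$, hence $\sup_{x \in X} \cd{}{x} \le N + d < \infty$, i.e.\ $X$ is étale bounded.

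I do not expect a genuine obstacle: the argument is elementary once this input is isolated. The only point deserving a little care is verifying that the field-extension bound on cohomological dimension is applied under the correct hypotheses — in particular that it is harmless at the residue characteristic — which is exactly what the Artin--Schreier estimate above ensures. Note also that smoothness of $X$ over $S$ plays no role; finite type already suffices (one could instead use the smooth structure to present $X$ étale-locally over $\AffSpc{n}_S$ and combine with \cite[Corollary 3.29]{Clausen_2021}, but this gives nothing beyond the finite-type argument).
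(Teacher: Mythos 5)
Your proof is correct and follows essentially the same route as the paper's: set $N = \sup_{s\in S}\cd{}{s}$, uniformly bound $\operatorname{trdeg}(k(x)/k(p(x)))$ by a constant $d$ using finite type and quasi-compactness, and then apply the inequality $\cd{}{x}\le \cd{}{p(x)}+\operatorname{trdeg}(k(x)/k(p(x)))$. The paper compresses the transcendence-degree bound into citations of Stacks Tags 0A3V and 0A21 and the cohomological-dimension inequality into Tag 0F0T, while you spell out the affine-covering argument and separate the prime-to-$p$ case (Serre) from the residue-characteristic case (Artin--Schreier); this is a matter of presentation, not substance, and your closing remark that smoothness is superfluous matches the paper's actual use of the lemma.
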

\begin{proof}
    Let $K \coloneqq \sup_{s \in S} \cd{}{s}$, and let 
    $M \coloneqq \sup_{x \in X} \operatorname{trdeg}(k(x)/k(p(x)))$
    be the supremum 
    over the transcendence degrees.
    $K$ is finite by assumption, and by combining \cite[\href{https://stacks.math.columbia.edu/tag/0A3V}{Tag 0A3V}]{stacks-project} with 
    \cite[\href{https://stacks.math.columbia.edu/tag/0A21}{Tag 0A21}]{stacks-project}
    we see that also $M$ is finite.
    We claim that $\sup_{x \in X} \cd{}{x} \le K + M$.
    This follows from \cite[\href{https://stacks.math.columbia.edu/tag/0F0T}{Tag 0F0T}]{stacks-project}
    as for $x \in X$ we have $\cd{}{x} \le \cd{}{p(x)} + \operatorname{trdeg}(k(x)/k(p(x))) \le K + M$.
\end{proof}

\begin{prop} \label{lem:etale:cover}
    Let $S$ be a qcqs scheme of finite Krull-dimension, which is moreover étale bounded.
    Then $\ShvTopH{\et}{\Sm{S}}$ admits a locally finite dimensional cover 
    in the sense of \cite[Definition 5.1]{mattis2024fracture}. 
    In particular, it is Postnikov-complete.
\end{prop}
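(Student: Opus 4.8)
The plan is to take as the cover the small étale $\infty$-topoi of all smooth $S$-schemes. For each $X \in \Sm S$, composition with the structure map $X \to S$ defines a continuous functor $\smet X \to \Sm S$ (an étale $X$-scheme is smooth over $S$, and being quasicompact over the quasicompact $X$ it lies in $\Sm S$), and restriction of hyperpresheaves along it takes hypersheaves to hypersheaves (étale covers and hypercovers in $\smet X$ remain such in $\Sm S$), yielding a functor $p_X^* \colon \ShvTopH{\et}{\Sm S} \to \ShvTopH{\et}{\smet X}$. First I would check that each $p_X^*$ is a \emph{limit-preserving} geometric morphism: it is left exact, it admits a right adjoint, and — crucially — it also admits a \emph{left} adjoint, so that it preserves all small limits. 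Concretely $p_X^*$ factors as the slicing equivalence $\ShvTopH{\et}{\Sm S} \to \ShvTopH{\et}{\Sm S}_{/X} \cong \ShvTopH{\et}{\Sm X}$ followed by the small/big étale comparison $\ShvTopH{\et}{\Sm X} \to \ShvTopH{\et}{\smet X}$; each step sits in a triple of adjoints (the left adjoint of the second being the sheafified left Kan extension $\iota_!$), so the composite does as well.

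Next I would verify that the family $\{p_X^*\}_{X \in \Sm S}$ is jointly conservative. By \cref{lem:etale:points}, the points of $\ShvTopH{\et}{\Sm S}$ obtained by evaluating at the strict henselizations $\Spec(\mathcal{O}_{X,\overline{x}}^{sh})$, for $X \in \Sm S$ and $\overline{x}$ a geometric point of $X$, already form a conservative family. Since $\Spec(\mathcal{O}_{X,\overline{x}}^{sh})$ is a cofiltered limit of objects of $\smet X$, the point it defines factors as $p_X^*$ followed by the corresponding point of $\ShvTopH{\et}{\smet X}$. Hence any morphism inverted by all the $p_X^*$ is inverted by every point of $\ShvTopH{\et}{\Sm S}$, so it is an equivalence, and joint conservativity follows.

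Finally I would check that each target $\topos U_X \coloneqq \ShvTopH{\et}{\smet X}$ has enough points and is locally of bounded homotopy dimension. Enough points is \cref{lem:etale:points} applied to $X$ in place of $S$. For the homotopy dimension: $X$ is a qcqs scheme of finite Krull dimension (it is smooth of finite type over the finite-dimensional qcqs base $S$, with bounded relative dimension) and it is étale bounded by \cref{lem:etale:gabber-bounded-enhanced}, so \cref{lem:etale:finite-htyp-dim} (applied to $X$) produces an $N_X \ge 0$ with $\ShvTopH{\et}{\smet X}$ locally of homotopy dimension $\le N_X$. Combining the three steps, $\{p_X^* \colon \ShvTopH{\et}{\Sm S} \to \ShvTopH{\et}{\smet X}\}_{X \in \Sm S}$ is a locally finite dimensional cover in the sense of \cite[Definition 5.1]{mattis2024fracture}, and Postnikov-completeness then follows from \cite[Lemma 5.3]{mattis2024fracture}. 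The one point needing genuine care is the limit-preservation of $p_X^*$: mere left-exactness would not suffice, which is precisely why one routes through the small étale topoi rather than attempting to use the (only left exact) stalk functors of \cref{lem:etale:points} directly as the cover; everything else is an assembly of \cref{lem:etale:points,lem:etale:gabber-bounded-enhanced,lem:etale:finite-htyp-dim}.
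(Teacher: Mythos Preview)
Your overall approach matches the paper's exactly: both take the cover $\{p_X^* \colon \ShvTopH{\et}{\Sm{S}} \to \ShvTopH{\et}{\smet{X}}\}_{X \in \Sm{S}}$ and verify the conditions using \cref{lem:etale:points}, \cref{lem:etale:gabber-bounded-enhanced}, and \cref{lem:etale:finite-htyp-dim}. The paper simply delegates the construction to the Nisnevich analog \cite[Proposition A.3]{mattis2024fracture} and checks the étale-specific inputs; you have essentially unpacked that argument.

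There is one genuine gap. The claimed equivalence $\ShvTopH{\et}{\Sm{S}}_{/X} \cong \ShvTopH{\et}{\Sm{X}}$ is false in general: with $S = \Spec k$ and $X = \AffSpc{1}_k$, the closed point $\Spec k \hookrightarrow X$ is an object of $(\Sm{S})_{/X}$ admitting no étale cover by objects of $\Sm{X}$, and the corresponding point of the slice topos cannot arise as the strict henselization of any scheme smooth over $\AffSpc{1}_k$ (any such scheme has positive dimension at every point). So your factorization does not establish that $p_X^*$ has a left adjoint. The fix is simpler and is exactly what the paper records in \cref{def:etale:small-to-big} (stated there for $X=S$, but the argument is uniform in $X$): restriction along $\smet{X} \to \Sm{S}$ has a left adjoint given by sheafified left Kan extension, and a right adjoint because this inclusion of sites has the covering lifting property. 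With that correction your argument goes through unchanged.
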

\begin{proof}
    Arguing as in the proof of \cite[Proposition A.3]{mattis2024fracture}, one reduces 
    to the claim that for $U \in \Sm{S}$, 
    the $\infty$-topos $\ShvTopH{\et}{\smet{U}}$ is locally of homotopy dimension $\le N$ for some $N$ 
    and has enough points.
    The first statement was proven in \cref{lem:etale:finite-htyp-dim}, as $U$ is étale bounded, cf.\ \cref{lem:etale:gabber-bounded-enhanced}.
    The other statement is \cref{lem:etale:points}.
    That $\ShvTopH{\et}{\Sm{S}}$ is Postnikov-complete follows from \cite[Lemma 5.3]{mattis2024fracture}.
\end{proof}

\begin{prop} \label{lem:etale:morphism-of-covers}
    Let $S$ be a qcqs scheme of finite Krull-dimension, which is moreover étale bounded.
    The étale hypersheafification functor 
    $L_{\et} \colon \ShvTop{\nis}{\Sm{S}} \to \ShvTopH{\et}{\Sm{S}}$
    upgrades to a morphism of $\infty$-topoi with locally finite dimensional covers,
    where we equip the Nisnevich topos with the locally finite dimensional cover 
    from \cite[Proposition A.3]{mattis2024fracture},
    and the étale topos with the cover from \cref{lem:etale:cover}.
\end{prop}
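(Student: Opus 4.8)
The plan is to take both index sets to be the set of objects of $\Sm{S}$, with $\kappa = \operatorname{id}$. Recall that the cover of $\ShvTopH{\et}{\Sm{S}}$ from \cref{lem:etale:cover} is the one indexed by $U \in \Sm{S}$ with the restriction functors $q_U^* \colon \ShvTopH{\et}{\Sm{S}} \to \ShvTopH{\et}{\smet{U}}$, and similarly the cover of $\ShvTop{\nis}{\Sm{S}}$ from \cite[Proposition A.3]{mattis2024fracture} is indexed by $U \in \Sm{S}$ with the restriction functors $p_U^* \colon \ShvTop{\nis}{\Sm{S}} \to \ShvTop{\nis}{\smet{U}}$ to the small Nisnevich topoi. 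For the lower horizontal maps I would take $f_U^* \colon \ShvTop{\nis}{\smet{U}} \to \ShvTopH{\et}{\smet{U}}$ to be étale hypersheafification on the small site; this is a left exact Bousfield localization, hence the inverse image of a geometric morphism as the definition requires. What remains is to produce, for every $U \in \Sm{S}$, an equivalence $q_U^* \circ L_{\et} \simeq f_U^* \circ p_U^*$ filling the relevant square.

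The key observation is that the inclusion $\smet{U} \hookrightarrow \Sm{S}$ is a continuous and cocontinuous functor of sites, simultaneously for the Nisnevich and the étale topology. Indeed, if $W \in \smet{U}$ and $\{V_i \to W\}$ is an étale (resp.\ Nisnevich) covering family in $\Sm{S}$, then each $V_i \to W$ is étale and $W \to U$ is étale, so $V_i \to U$ is étale, whence $V_i \in \smet{U}$; thus a covering family of an object of $\smet{U}$ taken in $\Sm{S}$ automatically lies in $\smet{U}$, which gives both continuity and cocontinuity at once. It follows that restriction along $\smet{U} \hookrightarrow \Sm{S}$ carries $\tau$-local equivalences of presheaves to $\tau$-local equivalences for $\tau \in \{\nis, \et\}$ — this can be read off the plus-construction, each step of which only refers to covering families of objects of $\smet{U}$, and such families remain inside $\smet{U}$ — and hence commutes with $\tau$-sheafification (cf.\ \cite[Exposé III]{SGA4}). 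Writing $L_{\et}^{\operatorname{nh}}$ for the non-hypercomplete étale sheafification on $\Sm{S}$, we therefore obtain a canonical equivalence between $q_U^* \circ L_{\et}^{\operatorname{nh}}$ and the composite of $p_U^*$ with étale sheafification on $\smet{U}$. Moreover $\ShvTop{\et}{\smet{U}}$ is hypercomplete, being locally of finite homotopy dimension by \cref{lem:etale:finite-htyp-dim} applied to $U$, which is étale bounded by \cref{lem:etale:gabber-bounded-enhanced}; hence étale sheafification on $\smet{U}$ coincides with $f_U^*$.

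It remains to see that restriction to $\smet{U}$ does not notice the hypercompletion step in the factorization of $L_{\et}$ as the non-hypercomplete étale sheafification $L_{\et}^{\operatorname{nh}}$ followed by hypercompletion: the unit $L_{\et}^{\operatorname{nh}} F \to L_{\et} F$ is $\infty$-connective, the exact functor $q_U^*$ preserves $\infty$-connective morphisms, and $\infty$-connective morphisms in the hypercomplete $\infty$-topos $\ShvTopH{\et}{\smet{U}}$ are equivalences; so $q_U^*(L_{\et}^{\operatorname{nh}} F) \xrightarrow{\simeq} q_U^*(L_{\et} F)$. Combining this with the previous paragraph yields the desired equivalence $q_U^* \circ L_{\et} \simeq f_U^* \circ p_U^*$, coherently in $F$ since every piece of data is induced by the single functor of sites $\smet{U} \hookrightarrow \Sm{S}$. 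I expect this last point — that the genuine hypercompletion in $L_{\et}$ over $\Sm{S}$ cancels against its absence over the already hypercomplete small étale sites — to be the only real subtlety. Alternatively, one could verify the square stalkwise using the conservative family of points from \cref{lem:etale:points}, together with the fact that a representable étale hypersheaf takes its expected value on strictly henselian local schemes, so that both composites visibly have the same stalks.
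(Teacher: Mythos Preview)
Your proposal is correct and takes essentially the same approach as the paper: both covers are indexed by $U \in \Sm{S}$ with $\kappa = \operatorname{id}$, and the required square commutes because the underlying diagram of sites does. The paper compresses the whole argument into the single sentence ``this follows as already the corresponding diagram of sites commutes,'' whereas you unpack exactly the point that deserves care---that restriction to $\smet{U}$ absorbs the hypercompletion step because $\ShvTopH{\et}{\smet{U}}$ is already hypercomplete---so your write-up is strictly more detailed than what the paper records.
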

\begin{proof}
    For this, it suffices to note that 
    étale hypersheafification commutes with restriction to a small étale site, 
    i.e.\ that for every smooth $S$-scheme $U$ the following diagram commutes:
    \begin{center}
        \begin{tikzcd}
            \ShvTop{\nis}{\Sm{S}}\ar[r]\ar[d, "L_{\et}"]&\ShvTop{\nis}{\smet{U}}\ar[d, "L_{\et}"]\\
            \ShvTopH{\et}{\Sm{S}}\ar[r]&\ShvTopH{\et}{\smet{U}}\rlap{.}
        \end{tikzcd}
    \end{center}
    This follows as already the corresponding diagram of sites commutes.
\end{proof}

\begin{defn} \label{def:etale:small-to-big}
    We write $\iota \colon \smet{S} \to \Sm{S}$ for the inclusion of sites, and
    \begin{equation*}
        \iota_! \colon \ShvTopH{\et}{\smet{S}} \rightleftarrows \ShvTopH{\et}{\Sm{S}} \noloc \iota^*
    \end{equation*}
    for the induced geometric morphism on the associated hypercomplete $\infty$-topoi,
    i.e.\ the hypercompletion of the geometric morphism from \cite[Proposition A.11]{pstragowski2022syntheticspectracellularmotivic}.
    Here, $\iota^*$ is given by restriction along $\iota$,
    and $\iota_!$ is left Kan extension along $\iota$, followed by 
    hypersheafification.
    Note that $\iota^*$ also has a right adjoint $\iota_*$:
    It exists before the hypercompletion by \cite[Proposition A.13]{pstragowski2022syntheticspectracellularmotivic}
    because $\iota$ has the covering lifting property \cite[Definition A.12]{pstragowski2022syntheticspectracellularmotivic} .
\end{defn}

\begin{lem} \label{lem:rigidity:iota-fully-faithful}
    The functors $\iota_!$ and $\iota_*$ are fully faithful.
\end{lem}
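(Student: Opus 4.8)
The plan is to prove fully faithfulness of $\iota_!$ and $\iota_*$ separately, using in both cases the standard criterion that a functor admitting an adjoint is fully faithful iff the relevant unit (resp.\ counit) is an equivalence, and reducing everything to the underlying $1$-topoi via the conservative families of points from \cref{lem:etale:points}. First I would observe that since we are working with hypercomplete $\infty$-topoi with enough points, a morphism is an equivalence iff it induces an isomorphism on all homotopy sheaves after applying each point $s^*$, and since $s^*$ is a geometric morphism it commutes with $\iota^*$ and with $\iota_!$ (left adjoints of geometric morphisms compose) — though the commutation $s^*\iota_! \simeq \iota_! s^*$ is the more delicate one and should be extracted from the construction of $\iota_!$ in \cite[Proposition A.11]{pstragowski2022syntheticspectracellularmotivic} together with the compatibility of hypersheafification with pullback along points. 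This reduces the claim to a statement about the underlying $1$-topoi $\Shv_{\et}(\smet{S})^{\le 0}$ and $\Shv_{\et}(\Sm{S})^{\le 0}$, where it is classical.

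For $\iota_*$ fully faithful, equivalently $\iota^*\iota_* \simeq \id$: the right adjoint $\iota_*$ exists at the level of non-hypercomplete sheaves because $\iota$ has the covering lifting property (as noted in \cref{def:etale:small-to-big}), and there $\iota^*\iota_* \simeq \id$ holds because $\iota\colon \smet S \to \Sm S$ is a fully faithful, continuous, cocontinuous inclusion of sites — concretely, for an étale $S$-scheme $U$ and a sheaf $F$ on $\Sm S$ one has $(\iota_*F)(U) = F(U)$ essentially by definition of the right Kan extension along a fully faithful functor whose image is a sieve-closed subcategory, so $\iota^*\iota_* F = F$ on the nose. Passing to hypercompletions is harmless since hypercompletion is a localization and $\iota_*$ on hypersheaves is the restriction of $\iota_*$ on sheaves (it lands in hypersheaves because restriction along $\iota$ preserves hypercompleteness — an étale hypercover of an étale $S$-scheme is an étale hypercover in $\Sm S$). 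Hence $\iota^*\iota_* \simeq \id$ and $\iota_*$ is fully faithful.

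For $\iota_!$ fully faithful, equivalently $\iota^*\iota_! \simeq \id$: since $\iota_! = L_{\et}\circ (\text{left Kan extension along }\iota)$ and $\iota^*$ is restriction, we must show that for a hypersheaf $F$ on $\smet S$ the canonical map $F \to \iota^*\iota_! F$ is an equivalence. Again reduce to points and thus to the underlying $1$-topoi; there the statement is that $\iota\colon \smet S \to \Sm S$ is a \emph{cocontinuous} (in fact, by the covering lifting property, "cover-reflecting") fully faithful inclusion of sites, so that left Kan extension followed by sheafification, then restricted back along $\iota$, recovers the original sheaf. The key input is that $\iota$ is fully faithful and that the étale topology on $\smet S$ is the one induced from $\Sm S$ (i.e.\ a family in $\smet S$ is an étale cover there iff it is one in $\Sm S$), which is immediate. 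Concretely one checks that the left Kan extension of a sheaf $F$ along $\iota$ already satisfies the sheaf condition for covers of objects of $\smet S$, so sheafification does not change its value on $\smet S$, and for $U \in \smet S$ the left Kan extension evaluated at $U$ is just $F(U)$ because $U$ is terminal in $(\smet S)_{/U} = (\smet S \downarrow U)$ which is cofinal in the comma category indexing the Kan extension. I expect the main obstacle to be the careful bookkeeping around hypercompletion — specifically verifying that $\iota_!$ on hypersheaves (hypersheafification after left Kan extension) still satisfies $\iota^*\iota_! \simeq \id$, i.e.\ that hypersheafifying does not destroy the already-correct values on the small site — but this follows since for $U \in \smet S$ the $\infty$-topos $\ShvTopH{\et}{\smet S}_{/U} \simeq \ShvTopH{\et}{\smet U}$ by \cref{lem:etale:overcat-equiv}, so evaluation at $U$ is a geometric morphism and commutes with hypersheafification, reducing once more to the non-hypercomplete statement.
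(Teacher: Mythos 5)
Your proof takes a genuinely different route from the paper. The paper proves $\iota_!$ fully faithful by citing \cite[Lemma 6.1]{Bachmann2021etalerigidity}, and then deduces that $\iota_*$ is fully faithful by the purely formal observation that in a chain of adjunctions $\iota_! \dashv \iota^* \dashv \iota_*$, the double right adjoint $\iota_*$ of a fully faithful $\iota_!$ is automatically fully faithful (citing \cite[Corollary 2.7]{naumann2024symmetricmonoidalfracturesquare}). You instead attempt to prove both statements directly via Kan extension formulas and the observation that \'etale (hyper)covers of an object of $\smet{S}$ inside $\Sm{S}$ lie entirely within $\smet{S}$, so that (hyper)sheafification commutes with restriction along $\iota$. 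This is more self-contained --- it avoids the external citation --- at the cost of being longer and requiring the nontrivial verification that hypersheafification respects the restriction functor, which is effectively re-proving the content of Bachmann's lemma. You miss the slick formal argument that lets one deduce $\iota_*$ from $\iota_!$ for free.

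A few points of imprecision should be flagged. First, your repeated appeal to ``reducing to the underlying $1$-topoi via points'' is somewhat misleading and not really how the argument goes; what you actually use is the Kan extension formula evaluated at objects $U \in \smet{S}$ together with comparison of (hyper)sheafification, which does not pass through points or $1$-topoi. Second, in the $\iota_*$ paragraph you conflate $\iota^*$ and $\iota_*$: the parenthetical ``it lands in hypersheaves because restriction along $\iota$ preserves hypercompleteness'' describes $\iota^*$, not $\iota_*$ (which is the right adjoint, not restriction), and ``$(\iota_* F)(U) = F(U)$'' only makes sense with $F$ on $\smet{S}$, not on $\Sm{S}$ as you wrote. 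Third, the central claim that ``sheafification does not change its value on $\smet{S}$'' is correct because the entire \v{C}ech/hypercover apparatus for an object of $\smet{S}$ stays inside $\smet{S}$ (\'etale over \'etale is \'etale, and fiber products of \'etale $S$-schemes over \'etale $S$-schemes are \'etale), but you do not spell this out, and it is exactly the substantive step; a reader would need this expanded. None of these issues is fatal --- the underlying idea is sound --- but they mean the write-up falls short of a complete proof, whereas the paper's route is short precisely because it outsources the comparison lemma to a reference and gets $\iota_*$ for free from the formal adjunction argument.
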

\begin{proof}
    The functor $\iota_!$ is fully faithful by \cite[Lemma 6.1]{Bachmann2021etalerigidity},
    note that in the reference, $e^*$ is the name for $\iota_!$.
    Since $\iota_*$ is right adjoint to the right adjoint of $\iota_!$,
    it is formal that it is also fully faithful (cf.\ (the dual of) 
    \cite[Corollary 2.7]{naumann2024symmetricmonoidalfracturesquare} for a fun proof of this fact).
\end{proof}

\begin{defn} \label{defn:etale:pushpull}
    Let $f \colon T \to S$ be a morphism of schemes.
    Pullback along $f$ defines a morphism of sites $\Sm{S} \to \Sm{T}$.
    We write
    \begin{equation*}
        f^* \colon \ShvTopH{\et}{\Sm{S}} \rightleftarrows \ShvTopH{\et}{\Sm{T}} \noloc f_*
    \end{equation*}
    for the induced geometric morphism on the associated hypercomplete $\infty$-topoi,
    i.e.\ the hypercompletion of the geometric morphism from \cite[Proposition A.11]{pstragowski2022syntheticspectracellularmotivic}.
    Here, $f_*$ is given by restriction along the morphism of sites,
    and $f^*$ is left Kan extension along $f$, followed by 
    hypersheafification.

    Similarly, we have a geometric morphism
    \begin{equation*}
        f^* \colon \ShvTopH{\et}{\smet{S}} \rightleftarrows \ShvTopH{\et}{\smet{T}} \noloc f_*
    \end{equation*}
    using the small étale sites.
\end{defn}

\begin{defn} \label{defn:etale:morphism-to-local}
    Let $S$ be a scheme, and $\overline{s} \in S$ a geometric point.
    Write $\rho_{\overline{s}} \colon \shens{S}{s} \coloneqq \Spec{\mathcal O_{S, \overline{s}}^{\operatorname{sh}}} \to S$ for the canonical morphism 
    from the spectrum of the strict henselization of the local ring of $S$ at $\overline{s}$ to $S$.
    In particular, using \cref{defn:etale:pushpull} we have adjunctions 
    \begin{equation*}
        \rho_{\overline{s}}^* \colon \ShvTopH{\et}{\Sm{S}} \rightleftarrows \ShvTopH{\et}{\Sm{\shens{S}{s}}} \noloc \rho_{\overline{s},*}
    \end{equation*}
    and
    \begin{equation*}
        \rho_{\overline{s}}^* \colon \ShvTopH{\et}{\smet{S}} \rightleftarrows \ShvTopH{\et}{\smet{(\shens{S}{s})}} \noloc \rho_{\overline{s},*}.
    \end{equation*}
\end{defn}

\begin{lem} \label{lem:etale:hens-descent}
    Let $S$ be a scheme and $\overline{s} \in S$ a geometric point.
    Suppose that $T \in \Sm{\shens{S}{s}}$.
    Then there exists a cofinal filtered system $(\overline{s} \to U_i \to S)_i$
    of quasicompact étale neighborhoods of $\overline{s}$ in $S$ (i.e.\ $\shens{S}{s} \cong \limil{i} U_i$),
    and for every $i$ a smooth $U_i$-scheme $T_i$,
    together with transition morphisms $T_i \to T_j$ making the obvious diagram commute,
    such that $T \cong \limil{i} T_i$.
    If $T$ is étale over $\shens{S}{s}$, then we can arrange that all the $T_i \to U_i$ are étale.

    Similarly, we can descend any étale cover $T' \to T$ in $\Sm{\shens{S}{s}}$,
    or any disjoint union decomposition $T = \amalg_j T_j$.
\end{lem}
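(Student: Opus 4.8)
The plan is to deduce everything from the standard limit formalism for schemes (``spreading out''), so the first reduction I would make is to the case where $S$ is affine: the strict henselization $\shens{S}{s}$, the small étale site around $\overline s$, and the cofiltered system of quasicompact étale neighborhoods of $\overline s$ are all unaffected if we replace $S$ by an affine open containing the image point of $\overline s$. Assuming $S$ affine, I would then invoke the standard description of the strict henselization: the affine étale neighborhoods $(\overline s \to U_i \to S)$ of $\overline s$ form a filtered system with affine transition maps and $\shens{S}{s} \cong \limil{i} U_i$, cf.\ \cite{stacks-project}. Each $U_i$ is affine, hence quasicompact, so the $U_i$ are objects of $\smet{S}$, as required.

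For the descent of $T$ itself: $T$, being a quasicompact (and quasiseparated, as objects of $\Sm{-}$ are) smooth scheme over the affine scheme $\shens{S}{s}$, is of finite presentation over $\shens{S}{s} = \limil{i} U_i$. The limit theorems then provide an index $i_0$ and a finitely presented $U_{i_0}$-scheme $T_{i_0}$ with $T \cong T_{i_0} \times_{U_{i_0}} \shens{S}{s}$. I would restrict the indexing system to the cofinal subsystem of those $U_i$ that refine $U_{i_0}$, set $T_i \coloneqq T_{i_0} \times_{U_{i_0}} U_i$ with transition maps the base changes of $U_i \to U_j$, and observe $T \cong \limil{i} T_i$; the compatibility of all structure maps (``the obvious diagram commutes'') is automatic from functoriality of pullback. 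Finally, after possibly enlarging $i_0$, each $T_i \to U_i$ is smooth, because smoothness spreads out to and descends along cofiltered limits of schemes with affine transition maps; and if in addition $T \to \shens{S}{s}$ is étale, the identical argument with ``étale'' in place of ``smooth'' lets us take all $T_i \to U_i$ étale.

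The two variant statements go the same way. Given an étale cover $T' \to T$ in $\Sm{\shens{S}{s}}$, I would first descend $T'$ to a system $(T'_i)_i$ as above, then descend the morphism $T' \to T$ to a morphism $T'_i \to T_i$ over some $U_i$, and finally use that ``surjective'' and ``étale'' both descend to a finite level to get, after enlarging $i_0$, that every $T'_i \to T_i$ is a surjective étale morphism. For a disjoint-union decomposition $T = \amalg_j T_j$ (with finitely many summands, since $T$ is quasicompact), the corresponding complete orthogonal system of idempotents in $\mathcal O(T)$ is already defined over some $U_i$, and base-changing along $U_i \to U_j$ gives compatible decompositions $T_i = \amalg_j T_{i,j}$.

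The hard part here is not conceptual but organizational: each successive descent step forces a pass to a cofinal subsystem, and one must verify that these repeated restrictions leave the $U_i$ quasicompact étale over $S$ and still cofinal — which is immediate, since a subsystem $\{ i \ge i_0 \}$ of a filtered poset is cofinal. The genuine inputs, namely that finite presentation, smoothness, étaleness, and surjectivity all spread out to and descend along cofiltered limits of schemes with affine (indeed quasicompact) transition maps, are completely standard \cite{stacks-project}; everything else is formal bookkeeping.
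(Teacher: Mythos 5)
Your proof is correct and follows essentially the same route as the paper's: the paper simply cites the relevant Stacks Project tags on spreading out (\href{https://stacks.math.columbia.edu/tag/01ZM}{01ZM}, \href{https://stacks.math.columbia.edu/tag/0C0C}{0C0C}, \href{https://stacks.math.columbia.edu/tag/07RP}{07RP}, \href{https://stacks.math.columbia.edu/tag/07RR}{07RR}, \href{https://stacks.math.columbia.edu/tag/01ZP}{01ZP}, \href{https://stacks.math.columbia.edu/tag/0EUU}{0EUU}), while you unwind those citations into a self-contained argument. The added reduction to $S$ affine is harmless (the affine étale neighborhoods of $\overline s$ are cofinal among the quasicompact ones, and affine-to-affine transition maps are affine), and the observation that quasicompactness of $T$ forces the disjoint-union decomposition to be finite is exactly what makes the idempotent argument (or equivalently Tag 0EUU) applicable.
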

\begin{proof}
    Combine \cite[\href{https://stacks.math.columbia.edu/tag/01ZM}{Lemma 01ZM}]{stacks-project} with 
    \cite[\href{https://stacks.math.columbia.edu/tag/0C0C}{Lemma 0C0C}]{stacks-project}. For the étale statements,
    we instead use \cite[\href{https://stacks.math.columbia.edu/tag/07RP}{Lemma 07RP}]{stacks-project},
    where we additionally use descent for surjective morphisms \cite[\href{https://stacks.math.columbia.edu/tag/07RR}{Lemma 07RR}]{stacks-project} for the covers.
    For disjoint union decompositions, we can combine \cite[\href{https://stacks.math.columbia.edu/tag/01ZP}{Lemma 01ZP}]{stacks-project}
    with \cite[\href{https://stacks.math.columbia.edu/tag/0EUU}{Lemma 0EUU}]{stacks-project} and again the statement about surjective morphisms.
\end{proof}

\begin{lem} \label{lem:etale:conenctivity-of-maps}
    Let $\topos X$ be an $\infty$-topos, and $T \in \topos X$ an object of homotopy dimension $\le N$ for some $N \ge 0$.
    Let $X \in \topos X$ and $m \ge 0$. Write $f \colon X \to \tau_{\le N + m} X$ for the canonical map.
    Then $\Map{\topos X}{T}{f}$ is an $(m+1)$-connective map of anima.
\end{lem}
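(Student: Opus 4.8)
The plan is to reduce the statement about the mapping anima $\Map{\topos X}{T}{f}$ to a connectivity statement about the fiber of $f$ itself, and then exploit the hypothesis that $T$ has homotopy dimension $\le N$. First I would observe that $\Map{\topos X}{T}{-}$ preserves fiber sequences (it is a limit-preserving functor, being corepresented), so it suffices to understand $\Map{\topos X}{T}{F}$ where $F \coloneqq \Fib{f} = \Fib{X \to \tau_{\le N+m}X}$. Since truncation fits into a fiber sequence $F \to X \to \tau_{\le N+m}X$, the sheaf $F$ is $(N+m)$-connective: its homotopy sheaves $\pi_i(F)$ vanish for $i \le N+m$ (indeed $\pi_i(F) \cong \pi_{i+1}(\tau_{\le N+m}X)$ for $i \le N+m-1$, which vanishes, and a short exact sequence argument handles $i = N+m$ via $\pi_{N+m+1}(\tau_{\le N+m}X) = 0$). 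So the key input is: $F$ is $(N+m+1)$-connective — wait, let me recount — $F$ is $(N+m)$-connective in the convention where ``$n$-connective'' means $\pi_i = 0$ for $i < n$; in any case $\pi_i(F) = 0$ for $i \le N+m$.

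Next I would use that $T$ has homotopy dimension $\le N$. The relevant consequence is that for an $n$-connective object $Z \in \topos X$ (meaning $\pi_i(Z) = 0$ for $i \le n$, equivalently $Z$ is $(n+1)$-connective in the ``$\ge$'' indexing), the mapping anima $\Map{\topos X}{T}{Z}$ is $(n - N)$-connective as an anima. This is the standard bound: writing $T$ as a colimit over its (generalized) cells using the homotopy dimension bound, or more directly, using that $\Map{\topos X}{T}{-}$ applied to an $n$-connective object lands in $n-N$-connective anima — this is essentially the content of the definition of homotopy dimension $\le N$ combined with the fact that mapping out of $T$ into a highly connected object kills low-dimensional homotopy. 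Concretely, since $\pi_i(F) = 0$ for $i \le N+m$, the sheaf $F$ is ``$(N+m+1)$-connective'' in the $\ge$-convention, and mapping $T$ into it yields an anima whose homotopy groups vanish below degree $(N+m+1) - N - 1 = m$, i.e.\ $\Map{\topos X}{T}{F}$ is $m$-connective, hence $f_* \colon \Map{\topos X}{T}{X} \to \Map{\topos X}{T}{\tau_{\le N+m}X}$ has $(m+1)$-connective fiber, which is exactly the claim that it is an $(m+1)$-connective map.

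The one technical point to pin down carefully — and the main obstacle — is the precise connectivity bookkeeping relating homotopy dimension $\le N$ of $T$ to the connectivity shift on mapping anima, since the paper's indexing conventions for ``$n$-connective'' and ``homotopy dimension $\le N$'' must be matched exactly to land on $(m+1)$ rather than an off-by-one neighbor. I would handle this by recalling the characterization: $T$ has homotopy dimension $\le N$ iff for every $N$-connective (in the $\ge$-sense, i.e.\ $\pi_i = 0$ for $i < N$) sheaf $Z$, the anima $\Map{\topos X}{T}{Z}$ is nonempty and connected — more precisely $\Map{\topos X}{T}{-}$ carries $j$-connective sheaves to $(j-N)$-connective anima for all $j \ge N$ (this follows from the defining property by an induction up the Postnikov/Whitehead tower of $Z$, using that Eilenberg–MacLane sheaves $K(A, j)$ have $\Map{\topos X}{T}{K(A,j)}$ being $(j-N)$-connective, which is cohomological dimension $\le N$). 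Applying this with $j = N+m+1$ and $Z = F$ gives the result. Everything else is a formal consequence of $\Map{\topos X}{T}{-}$ being limit-preserving.
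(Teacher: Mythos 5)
Your overall strategy — relate the claim to the homotopy dimension bound on $T$, and use that ``mapping out of $T$'' drops connectivity by $N$ — is in the right spirit, and it is close to what the paper does, but there are two real problems with the execution.

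The first and more serious issue is the reduction to the fiber $F \coloneqq \Fib{f}$. The object $X$ is not pointed, so $\Fib{f}$ is not defined without choosing a global section of $\tau_{\le N+m}X$, which need not exist. Even granting a basepoint, knowing that $\Map{\topos X}{T}{F}$ is highly connective only controls the fiber of $\Map{\topos X}{T}{f}$ over the single basepoint of $\Map{\topos X}{T}{\tau_{\le N+m}X}$ coming from $T \to * \to \tau_{\le N+m}X$. To conclude that the map $\Map{\topos X}{T}{f}$ is $(m+1)$-connective you must control the fibers over \emph{all} points of $\Map{\topos X}{T}{\tau_{\le N+m}X}$, i.e.\ over all morphisms $g \colon T \to \tau_{\le N+m}X$, and these fibers are the section spaces of the pullbacks $g^*(f)$, which are in general not $\Map{\topos X}{T}{F}$. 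Second, in the middle of your argument the bookkeeping slips: you compute that $\Map{\topos X}{T}{F}$ is $m$-connective and then assert in the same sentence that the fiber of $f_*$ is $(m+1)$-connective; your final paragraph silently corrects this to $(m+1)$, but the two passages contradict each other.

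The paper avoids both problems by keeping everything at the level of morphisms rather than fibers. It views $f \times T \colon X \times T \to (\tau_{\le N+m}X) \times T$ as an $(N+m+1)$-connective \emph{morphism} of the slice $\infty$-topos $\topos X_{/T}$ (which is where the hypothesis ``$T$ has homotopy dimension $\le N$'' lives), and then directly invokes the statement that on a topos of homotopy dimension $\le N$ the global sections functor $\Gamma$ sends $k$-connective morphisms to $(k-N)$-connective morphisms of anima (Lurie, HTT Lemma 7.2.1.7). Since $\Gamma(f \times T) \cong \Map{\topos X}{T}{f}$, the claim follows at once. This morphism-level statement is exactly what quantifies over all fibers at once, and so is the tool you are missing: to repair your argument you would either need to cite that morphism-level result instead of the object-level one, or explicitly run the fiber argument over every $g \colon T \to \tau_{\le N+m}X$ using that each $g^*(f)$ is still $(N+m+1)$-connective.
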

\begin{proof}
    The global sections functor $\Gamma \colon \topos X_{/T} \to \An$ sends 
    $k$-connective morphisms to $(k - N)$-connective morphisms, cf.\ \cite[Lemma 7.2.1.7]{highertopoi}.
    The map $f$ is $(N+m+1)$-connective,
    and so is $f \times T \colon X \times T \to (\tau_{\le N + m} X) \times T \in \topos X_{/T}$.
    Note that $\Gamma(f \times T) \cong \Map{\topos X_{/T}}{T}{f \times T} \cong \Map{\topos X}{T}{f}$,
    hence we see that $\Map{\topos X}{T}{f}$ is an $(m+1)$-connective map of anima.
\end{proof}

\begin{lem} \label{lem:etale:global-to-local-formula}
    Let $S$ be a qcqs scheme of finite Krull dimension which is moreover étale bounded, and $\overline{s} \in S$ a geometric point.
    Let $\{ \overline{s} \to U_i \to S \}_{i \in I}$ be a cofinal system 
    of étale neighborhoods of $\overline{s}$, and $0 \in I$ be an initial object.
    Suppose that $T_0 \to U_0$ is a scheme over $U_0$ such that $T_0 \to U_0 \to S$
    is smooth. Let $T_i \coloneqq T_0 \times_{U_0} U_i$ and $T \coloneqq T_0 \times_{U_0} \shens{S}{s} \cong \limil{i} T_i$.
    Then for every $X \in \ShvTopH{\et}{\Sm{S}}$
    we have $\rho_{\overline{s}}^* X(T) \cong \colimil{i} X(T_i)$.
\end{lem}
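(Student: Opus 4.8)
The plan is to reduce the statement---via restriction to small étale sites---to the classical fact that étale cohomology commutes with cofiltered limits of qcqs schemes along affine transition maps, the finite étale cohomological dimension supplied by \cref{lem:etale:gabber-bounded-enhanced,lem:etale:etale-bdd-finite-coh-dim,lem:etale:finite-htyp-dim} being exactly what lets us upgrade from bounded to anima-valued coefficients. First I would make two harmless reductions. Since the affine étale neighbourhoods of $\overline{s}$ are cofinal among all étale neighbourhoods, we may pass to a cofinal subsystem and assume every $U_i$ is affine; then all transition maps $U_i \to U_j$ are affine, hence so are the base changes $T_i = T_0 \times_{U_0} U_i \to T_j$, and $T \cong \limil i T_i$ is a cofiltered limit of qcqs $T_0$-schemes along affine transition maps. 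Moreover each $T_i$ is a quasicompact smooth (hence finite type) $S$-scheme, and since the fibres of $T_i \to U_0$ are those of $T_0 \to U_0$ while $U_i \to S$ is étale, \cref{lem:etale:gabber-bounded-enhanced,lem:etale:etale-bdd-finite-coh-dim,lem:etale:finite-htyp-dim} supply a single $N \ge 0$ such that $\ShvTopH{\et}{\smet{T_i}}$ and $\ShvTopH{\et}{\smet T}$ are all locally of homotopy dimension $\le N$, in particular hypercomplete.

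Next I would rewrite both sides in terms of small étale sites. For any scheme $B$, any $F \in \ShvTopH{\et}{\Sm B}$ and any $W \in \Sm B$ with structure map $w \colon W \to B$, there is a natural equivalence $F(W) \cong \Gamma(\ShvTopH{\et}{\Sm W}, w^* F)$ (using the left adjoint of $w^*$, which exists since $w$ is smooth and sends the terminal object to $W$), and this in turn equals $\Gamma(\smet W, F|_{\smet W})$, where $F|_{\smet W}$ denotes the restriction of $w^* F$ to the small étale site of $W$, because restriction to the small étale site preserves global sections. Applying this with $F = \rho_{\overline{s}}^* X$ and $W = T$, and using that $T \to \shens S s \to S$ factors as $T \xrightarrow{h} T_0 \to U_0 \to S$, functoriality of pullback, and the fact that restriction to the small étale site intertwines big and small pullback along a morphism of schemes (a commuting-diagram-of-sites argument, as in the proof of \cref{lem:etale:morphism-of-covers}), we obtain
\[ \rho_{\overline{s}}^* X(T) \cong \Gamma(\smet T, h^* W_0), \qquad W_0 \coloneqq (X|_{\Sm{T_0}})|_{\smet{T_0}} \in \ShvTopH{\et}{\smet{T_0}}, \]
and likewise $X(T_i) \cong \Gamma(\smet{T_i}, h_i^* W_0)$ for $h_i \colon T_i \to T_0$. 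Since $h^* W_0 \cong (T \to T_i)^* h_i^* W_0$, the claim reduces to the continuity statement $\Gamma(\smet T, \pi^* W_0) \cong \colimil i \Gamma(\smet{T_i}, \pi_i^* W_0)$, with $\pi \colon T \to T_0$ and $\pi_i \colon T_i \to T_0$ the projections.

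Finally I would establish this continuity statement. Thanks to the uniform bound $N$, every hypersheaf in sight is the limit of its Postnikov tower, and the bound annihilates the higher inverse limits that would obstruct interchanging the filtered colimit over $i$ with this tower; one is thus reduced to the case $W_0 = K(A, m)$ for a sheaf of abelian groups $A$ on $\smet{T_0}$. There $\Gamma(\smet{T_i}, \pi_i^* K(A, m))$ assembles the étale cohomology groups $H^{m-\bullet}_{\et}(T_i, \pi_i^* A)$, and the desired identity is the classical commutation of étale cohomology with cofiltered limits of qcqs schemes along affine transition maps, cf.\ \cite[Exposé VII, 5.7]{SGA4}.

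The step I expect to be the main obstacle is this last one---lifting the classical bounded-coefficient continuity theorem to hypersheaves of anima. The uniform homotopy-dimension bound $N$ is precisely what makes it go through: it yields hypercompleteness, so Postnikov towers converge, and it forces the vanishing of the $\lim^1$-type obstructions to commuting the colimit over $i$ past the Postnikov limit. A secondary, more routine obstacle is the compatibility invoked in the second paragraph, namely that restriction to the small étale site commutes with big/small pullback along the (non-étale) morphism $h$; this is standard but the diagram-of-sites argument should be carried out with some care.
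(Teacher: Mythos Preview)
Your approach is correct and genuinely different from the paper's. The paper works entirely on the big site: for truncated $X$ it shows directly that the presheaf-level left Kan extension $f^*X$ along $\Sm{S}\to\Sm{\shens{S}{s}}$ is already an étale hypersheaf (by a cofinality argument plus a Čech-nerve computation), so $\rho_{\overline{s}}^*X = f^*X$ and the colimit formula drops out; the passage to arbitrary $X$ then uses the same homotopy-dimension bound you invoke, but phrased as a connectivity estimate on the map $X\to\tau_{\le N+m}X$. Your route---pulling everything down to the small étale sites of $T_0$, $T_i$, $T$ and quoting the classical continuity theorem from SGA4---is more modular and makes clearer what the ``hard'' input is, at the cost of importing that input rather than reproving it in situ.

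One point deserves more care than you indicate. The compatibility $\iota^* h^*_{\mathrm{big}} \cong h^*_{\mathrm{small}} \iota^*$ you need is \emph{not} the square that falls out of the commuting diagram of sites: that argument gives $\iota_! h^* \cong h^* \iota_!$ (compare right adjoints, which are all precomposition), whereas you need the other Beck--Chevalley square. For an arbitrary morphism $h$ this second square fails; what saves you here is that $h\colon T\to T_0$ is pro-étale, so every $V\in\smet T$ descends to some $V_j\in\smet{T_j}\subset\smet{T_0}$ and every map $V\to W$ with $W\in\Sm{T_0}$ factors through such a $V_j$ by Noetherian approximation (essentially \cref{lem:etale:hens-descent}). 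This is exactly the cofinality computation that appears as Step~(1) in the paper's own proof, so at this point the two arguments touch. A smaller quibble: the reduction ``to $K(A,m)$'' glosses over nonabelian $\pi_1$ and twisted coefficients, but the obstruction-theoretic dévissage (or the general bounded-coefficient case of SGA4~VII~5) handles this without difficulty.
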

\begin{proof}
    We first prove the lemma under the additional assumption that $X$ is $N$-truncated for some $N \ge 0$.
    Write $f^* \colon \Cat P(\Sm{S}) \rightleftarrows \Cat P(\Sm{\shens{S}{s}}) \noloc f_*$ 
    for the adjunction on presheaves where 
    $f_*$ is given by precomposition along the pullback functor $f \colon \Sm{S} \to \Sm{\shens{S}{s}}$,
    and $f^*$ is left Kan extension.
    In particular, $\rho_{\overline{s}}^* X \cong L_{\et} f^* X$.
    Hence, it suffices to show that (1) $f^* X(T) \cong \colimil{i} X(T_i)$ 
    and (2) $f^* X$ satisfies étale hyperdescent.

    \textbf{(1)}: By definition of left Kan extension we have an equivalence 
    \begin{equation*}
        f^* X(T) \cong \colim{U, T \to f(U)} X(U),
    \end{equation*}
    where the colimit runs over the comma category $(T \downarrow \Sm{S})$.
    By the universal property of the pullback, 
    this category is equivalent to ${(\Sm{S})}_{T/}$,
    where we view $T$ as an $S$-scheme.
    It is left to show that the $T_i$ are cofinal in this category.
    For this, let $T \to U$ any $S$-morphism with $U \in \Sm{S}$,
    and we have to show that there is an $i$ such that $T \to U$ 
    factors over $T \to T_i$.
    This is an immediate application of \cite[\href{https://stacks.math.columbia.edu/tag/01ZC}{Proposition 01ZC}]{stacks-project}.

    \textbf{(2)}:
    Since $f^*$ preserves truncated objects, and since any truncated sheaf is automatically 
    a hypersheaf, it suffices to show that $f^* X$ is an étale sheaf.
    Note that $f^* X$ is a $\Sigma$-sheaf (i.e.\ it sends finite coproducts to products):
    Indeed, let $(V_j)_j$ be a finite family of smooth schemes over $\shens{S}{s}$.
    By \cref{lem:etale:hens-descent} we may assume that 
    we get compatible families $(V_{j,i})_j$ over a cofinal system 
    of étale neighborhoods $\overline{s} \to U_i \to S$,
    such that $V_{j,i} \times_{U_i} \shens{S}{s} \cong V_i$.
    Now, by (1) we know that $f^* X(\amalg_j V_j) \cong \colim{i} X(\amalg_j V_{j,i})$.
    Since $X$ is a $\Sigma$-sheaf, and since finite limits commute with filtered colimits in $\An$,
    we conclude that also $f^* X$ is a $\Sigma$-sheaf.
    Hence, it now suffices to prove the sheaf condition for an étale cover $V \to U$
    consisting of a single morphism. Again, using \cref{lem:etale:hens-descent},
    we can find compatible étale covers $V_i \to U_i$.
    We argue as above, using (1) and the fact that on homotopy groups the totalization of the \v{C}ech nerve 
    behaves like a finite limit, and hence again commutes with filtered colimits.

    We end the proof by showing the general statement.
    So let $X \in \ShvTopH{\et}{\Sm{S}}$ be arbitrary.
    First note that $T_0$ is étale bounded, with some bound $K \ge 0$ 
    by \cref{lem:etale:gabber-bounded-enhanced}.
    Moreover, by the proof of this lemma, since any $T_i$ is an étale $T_0$-scheme,
    we see that it is again étale bounded with the same bound $K$,
    and similarly for the pro-étale $T_0$-scheme $T$.
    Hence, all the $T_i \in \ShvTopH{\et}{\Sm{S}}$ 
    and also $T \in \ShvTopH{\et}{\Sm{\shens{S}{s}}}$ have cohomological dimension $\le N$,
    where $N \coloneqq K + \dim(T_0)$,
    cf.\ \cref{lem:etale:etale-bdd-finite-coh-dim} (as $\dim(T) = \dim(T_i) = \dim(T_0)$).
    By the proof of \cite[Proposition 1.3.3.10]{highertopoi},
    using Postnikov-completeness, $T$ and the $T_i$ have in fact homotopy dimension $\le N$.
    We now have for every $m \ge 0$ a commutative diagram 
    \begin{center}
        \begin{tikzcd}
            (\rho_{\overline{s}}^* X)(T) = \Map{\shens{S}{s}}{T}{\rho_{\overline{s}}^* X} \ar[dd] \ar[r] &\Map{\shens{S}{s}}{T}{\tau_{\le N + m} \rho_{\overline{s}}^* X} \ar[d, "\cong"] \\
            &\Map{\shens{S}{s}}{T}{\rho_{\overline{s}}^* \tau_{\le N + m} X} \ar[d, "\cong"] \\
            \colimil{i} X(T_i) = \colimil{i} \Map{S}{T_i}{X} \ar[r] &\colimil{i} \Map{S}{T_i}{\tau_{\le N + m} X}
        \end{tikzcd}
    \end{center}
    Here, the top right vertical map is an equivalence since $\rho_{\overline{s}}^*$ is a geometric morphism,
    and the bottom right vertical map is an equivalence by the above special case where $X$ was assumed to be truncated.
    Our goal is to show that the left vertical map of anima is an equivalence,
    it suffices to show that it is $\infty$-connective.
    For this, we will show that the horizontal maps are both $m$-connective, 
    since the left vertical map is independent of $m$, this then proves the claim.
    First note that $m$-connective maps are stable under filtered colimits.
    Since $T$ and the $T_i$ have homotopy dimension $\le N$,
    the claim now follows from \cref{lem:etale:conenctivity-of-maps}.
\end{proof}

\begin{lem} \label{lem:etale:morphism-to-local-commutation}
    Let $S$ be a qcqs scheme of finite Krull dimension, that is moreover étale bounded, and $\overline{s} \in S$ a geometric point.
    The following squares are commutative:
    \begin{center}
        \begin{tikzcd}
            \ShvTopH{\et}{\smet{S}} \ar[r, "\rho_{\overline{s}}^*"] \ar[d, "\iota_!"] &\ShvTopH{\et}{\smet{(\shens{S}{s})}} \ar[d, "\iota_!"]  \\
            \ShvTopH{\et}{\Sm{S}} \ar[r, "\rho_{\overline{s}}^*"] &\ShvTopH{\et}{\Sm{\shens{S}{s}}}
        \end{tikzcd}
    \end{center}
    and
    \begin{center}
        \begin{tikzcd}
            \ShvTopH{\et}{\smet{S}} \ar[r, "\rho_{\overline{s}}^*"]  &\ShvTopH{\et}{\smet{(\shens{S}{s})}}  \\
            \ShvTopH{\et}{\Sm{S}} \ar[r, "\rho_{\overline{s}}^*"] \ar[u, "\iota^*"'] &\ShvTopH{\et}{\Sm{\shens{S}{s}}} \ar[u, "\iota^*"']\rlap{.}
        \end{tikzcd}
    \end{center}
\end{lem}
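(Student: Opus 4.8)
The plan is to deduce both squares from the strictly commutative square of sites
\begin{center}
\begin{tikzcd}
\smet{S} \ar[r] \ar[d, "\iota"'] & \smet{\shens{S}{s}} \ar[d, "\iota"] \\
\Sm{S} \ar[r] & \Sm{\shens{S}{s}}
\end{tikzcd}
\end{center}
whose horizontal arrows are base change along $\rho_{\overline{s}}$ and whose vertical arrows are the inclusions of sites; all four are continuous and cocontinuous, and after (hyper)sheafification they induce the functors appearing in the statement. The first square (of inverse image functors) comes out essentially formally; the second one requires the pro-étale evaluation formula of \cref{lem:etale:global-to-local-formula}, and this will be the main point.

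For the first square, both composites $\rho_{\overline{s}}^* \iota_!$ and $\iota_! \rho_{\overline{s}}^*$ preserve all colimits, being composites of left adjoints of geometric morphisms, and $\ShvTopH{\et}{\smet{S}}$ is generated under colimits by the hypersheaves represented by objects $W \in \smet{S}$; hence it suffices to compare the two composites on such a $W$. Since left Kan extension sends a representable presheaf to the representable presheaf on its image, and hypersheafification intertwines the left Kan extensions along the cocontinuous site maps, $\iota_!$ carries the hypersheaf represented by $W \in \smet{S}$ to the hypersheaf represented by $W \in \Sm{S}$, while $\rho_{\overline{s}}^*$ carries the hypersheaf represented by a scheme $X$ (smooth or étale over $S$) to the one represented by $X \times_S \shens{S}{s}$. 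Both composites therefore send $W$ to the hypersheaf represented by $W \times_S \shens{S}{s}$, which is étale over $\shens{S}{s}$ since $W$ is étale over $S$; one checks that the canonical $2$-cell filling the square (from pseudofunctoriality of left Kan extension applied to the square of sites) is this equivalence. This settles the first square.

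I would next record the small-site analogue of \cref{lem:etale:global-to-local-formula}: for $Z \in \ShvTopH{\et}{\smet{S}}$, a geometric point as in the statement with cofinal system of étale neighborhoods $\{ \overline{s} \to U_i \to S \}_i$, and $W' \in \smet{\shens{S}{s}}$ presented (via \cref{lem:etale:hens-descent}) as $W' \cong \limil{i} W'_i$ in the form used in \cref{lem:etale:global-to-local-formula} with each $W'_i$ étale over $U_i$ (hence in $\smet{S}$), one has $(\rho_{\overline{s}}^* Z)(W') \cong \colimil{i} Z(W'_i)$. Indeed, since $\iota_!$ is fully faithful (\cref{lem:rigidity:iota-fully-faithful}), using the first square we get $\rho_{\overline{s}}^* Z \cong \iota^* \iota_! \rho_{\overline{s}}^* Z \cong \iota^* \rho_{\overline{s}}^* \iota_! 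Z$; evaluating at the étale scheme $W'$ turns $\iota^*$ into restriction along $\iota$, so $(\rho_{\overline{s}}^* Z)(W') \cong (\rho_{\overline{s}}^* \iota_! Z)(W') \cong \colimil{i} (\iota_! Z)(W'_i) \cong \colimil{i} Z(W'_i)$, where the middle equivalence is \cref{lem:etale:global-to-local-formula} and the last one is again full faithfulness of $\iota_!$ (evaluation of $\iota_! Z$ at an étale scheme recovers $Z$).

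Finally, for the second square, note that $\iota^*$ is a left adjoint (to $\iota_*$), so both composites $\iota^* \rho_{\overline{s}}^*$ and $\rho_{\overline{s}}^* \iota^*$ preserve colimits; since $\ShvTopH{\et}{\Sm{S}}$ is generated under colimits by representables and hypersheaves on $\smet{\shens{S}{s}}$ are detected on the objects of that site, it suffices to compare the values of the two composites at an object $W' \in \smet{\shens{S}{s}}$ on the hypersheaf represented by some $X_0 \in \Sm{S}$. Presenting $W' \cong \limil{i} W'_i$ as above, one computes $(\iota^* \rho_{\overline{s}}^* X_0)(W') = (\rho_{\overline{s}}^* X_0)(W') \cong \colimil{i} \Map{\Sm{S}}{W'_i}{X_0}$ by \cref{lem:etale:global-to-local-formula}, and $(\rho_{\overline{s}}^* \iota^* X_0)(W') \cong \colimil{i} (\iota^* X_0)(W'_i) = \colimil{i} \Map{\Sm{S}}{W'_i}{X_0}$ by the formula of the previous paragraph applied to $Z = \iota^* X_0$ (using that $W'_i$ is étale over $S$). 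These identifications are natural in $W'$ and match the canonical $2$-cell, so the second square commutes. The only real obstacle is this second square: because $\rho_{\overline{s}}^*$ is not literally a restriction functor and the square of sites is not cartesian, one cannot invoke a Beck--Chevalley equivalence directly, and one has to unwind the pro-étale colimit presentation on both sides.
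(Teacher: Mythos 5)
Your proof is correct and rests on the same technical engine as the paper's, namely the pro-étale evaluation formula of \cref{lem:etale:global-to-local-formula} together with the full faithfulness of $\iota_!$. The differences are presentational rather than substantive. For the first square, the paper simply notes that the four right adjoints are all given by restriction along a strictly commuting square of sites, which is shorter than your reduction to representables and spares you from having to verify that the canonical $2$-cell agrees with your identification on generators. For the second square, the paper constructs the Beck--Chevalley transformation $\rho_{\overline{s}}^*\iota^* \to \iota^*\rho_{\overline{s}}^*$ explicitly, uses full faithfulness to dispatch the unit part, and then reduces to the counit $\iota_!\iota^*X \to X$ evaluated on the pro-system $(T_i)_i$, where the global-to-local formula applies directly; you instead first derive a small-site analogue of that formula and then compare both composites on representable generators. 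Both routes terminate in the same observation — $(\iota_!Z)(W'_i) \cong Z(W'_i)$ for $W'_i$ étale over $S$, hence the relevant colimits agree — so your argument should be regarded as a correct, somewhat more computational variant of the paper's.
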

\begin{proof}
    For the first square, is suffices to show that the associated square of right adjoints commutes.
    As all the right adjoints are given by restriction along morphisms of sites,
    this follows since they already commute on the level of sites.

    For the second square,
    note that there exists a natural transformation
    \begin{equation*}
        \rho_{\overline{s}}^* \iota^* \xrightarrow[\simeq]{\mathrm{unit}} \iota^* \iota_! \rho_{\overline{s}}^* \iota^* \cong \iota^* \rho_{\overline{s}}^* \iota_! \iota^* \xrightarrow{\mathrm{counit}} \iota^* \rho_{\overline{s}}^*
    \end{equation*}
    given by the Beck--Chevalley 
    transformation (using the commutativity of the first diagram).
    The morphism induced by the unit is an equivalence since $\iota_!$ is fully faithful, cf.\ \cref{lem:rigidity:iota-fully-faithful}.
    Hence, it suffices to show that also the morphism induced by the counit is an equivalence.
    So let $X \in \ShvTopH{\et}{\Sm{S}}$.
    Since $\iota^*$ is given by restriction,
    we have to show that for every $T \in \smet{\left(\shens{S}{s}\right)}$ the canonical map 
    \begin{equation*}
        (\rho_{\overline{s}}^* \iota_! \iota^* X)(T) \xrightarrow{\mathrm{counit}} (\rho_{\overline{s}}^* X)(T)
    \end{equation*}
    is an equivalence.
    By \cref{lem:etale:hens-descent} we may choose étale neighborhoods $\{\overline{s} \to U_i \to S\}_{i}$
    and for every $i$ an étale $U_i$-scheme $T_i$ 
    such that $T \cong \limil{i} T_i$.
    It now follows from \cref{lem:etale:global-to-local-formula}
    that this morphism is equivalent to the morphism 
    \begin{equation*}
        \colimil{i} (\iota_! \iota^* X)(T_i) \xrightarrow{\mathrm{counit}} \colimil{i} X(T_i).
    \end{equation*}
    Since $T_i \in \smet{S}$ and $\iota_!$ is given by left Kan extension 
    along the inclusion $\smet{S} \to \Sm{S}$, it follows that this morphism is an equivalence.
\end{proof}

\begin{lem} \label{lem:etale:morphism-to-local-conservative}
    Let $S$ be a qcqs scheme of finite Krull dimension, that is moreover étale bounded.
    The collection of functors $\{\rho_{\overline{s}}^*\}_{\overline{s} \in S}$ is jointly conservative 
    on both $\ShvTopH{\et}{\Sm{S}}$ and $\ShvTopH{\et}{\smet{S}}$ (where the $\overline{s}$ are the geometric points of $S$).
\end{lem}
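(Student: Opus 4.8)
The plan is to reduce everything to \cref{lem:etale:points} by showing that each point occurring in the conservative families exhibited there factors through one of the geometric morphisms $\rho_{\overline s}^*$. Joint conservativity of $\{\rho_{\overline s}^*\}_{\overline s \in S}$ then follows formally: if $\rho_{\overline s}^* f$ is an equivalence for every geometric point $\overline s$, then $p^*(f)$ is an equivalence for every $p^*$ in the conservative family, and hence $f$ is an equivalence.

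For the small site, \cref{lem:etale:points} says that the points $\overline s^* \colon \ShvTopH{\et}{\smet{S}} \to \An$ given by evaluation at $\shens{S}{s} = \Spec(\mathcal O_{S,\overline s}^{\operatorname{sh}})$ are jointly conservative. I claim that $\overline s^*$ is the composite of $\rho_{\overline s}^* \colon \ShvTopH{\et}{\smet{S}} \to \ShvTopH{\et}{\smet{(\shens{S}{s})}}$ with evaluation at the terminal object of $\smet{(\shens{S}{s})}$. Geometrically this corresponds to the factorization $\shens{S}{s} \xrightarrow{\operatorname{id}} \shens{S}{s} \xrightarrow{\rho_{\overline s}} S$; one verifies it by observing that $\rho_{\overline s}^*$ sends the sheaf represented by $U \in \smet{S}$ to the sheaf represented by $U \times_S \shens{S}{s}$, and then comparing the two resulting colimit-preserving functors on representables (both evaluate to the set of $S$-morphisms $\shens{S}{s} \to U$). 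Since every sheaf is a colimit of representables, the two functors agree, and so $\{\rho_{\overline s}^*\}$ is jointly conservative on $\ShvTopH{\et}{\smet{S}}$.

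For $\Sm{S}$ the argument is the same, with a little more bookkeeping. By \cref{lem:etale:points} it suffices to show that for every $X' \in \Sm{S}$ and every geometric point $\overline x$ of $X'$, evaluation at $\Spec(\mathcal O_{X',\overline x}^{\operatorname{sh}})$ factors through $\rho_{\overline s}^*$, where $\overline s$ is the geometric point of $S$ underlying $\overline x$. I would set $X'_{\overline s} \coloneqq X' \times_S \shens{S}{s}$, which is smooth over $\shens{S}{s}$, and let $\overline x'$ be the geometric point of $X'_{\overline s}$ induced by $\overline x$ together with the canonical geometric point of $\shens{S}{s}$. Since $X'_{\overline s} \to X'$ is pro-étale, strict henselization is unchanged, so there is an isomorphism $\Spec(\mathcal O_{X'_{\overline s},\overline x'}^{\operatorname{sh}}) \cong \Spec(\mathcal O_{X',\overline x}^{\operatorname{sh}})$ of $S$-schemes. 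By \cref{lem:etale:points} applied to the scheme $\shens{S}{s}$, evaluation at $\Spec(\mathcal O_{X'_{\overline s},\overline x'}^{\operatorname{sh}})$ is a point of $\ShvTopH{\et}{\Sm{\shens{S}{s}}}$, and composing it with $\rho_{\overline s}^*$ recovers evaluation at $\Spec(\mathcal O_{X',\overline x}^{\operatorname{sh}})$: both are colimit-preserving functors $\ShvTopH{\et}{\Sm{S}} \to \An$, they agree on the sheaf represented by any $V \in \Sm{S}$ (using $\rho_{\overline s}^*$ of it is the sheaf represented by $V \times_S \shens{S}{s}$, the base-change adjunction, and the isomorphism above), and such functors are determined by their values on representables. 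This yields joint conservativity of $\{\rho_{\overline s}^*\}$ on $\ShvTopH{\et}{\Sm{S}}$.

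The only non-formal ingredient is the invariance of strict henselization under the pro-étale base change $X' \times_S \shens{S}{s} \to X'$, which reduces to the standard étale case followed by passage to the cofiltered limit defining $\shens{S}{s}$; this is the step I expect to have to justify most carefully, the remainder being bookkeeping with points of hypercomplete topoi. (Alternatively, \cref{lem:etale:hens-descent} together with \cref{lem:etale:global-to-local-formula} computes $\rho_{\overline s}^* X$ on pro-smooth $\shens{S}{s}$-schemes as a filtered colimit of values of $X$, from which these factorizations can also be read off directly.)
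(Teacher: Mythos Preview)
Your argument is correct and follows a somewhat different organization than the paper's. You show directly that every point in the conservative family of \cref{lem:etale:points} factors through some $\rho_{\overline s}^*$, treating the small site first and then the big site; the key geometric input in your big-site argument is the invariance of strict henselization under the pro-\'etale base change $X'\times_S \shens{S}{s}\to X'$. The paper instead handles the big site first, but rather than invoking that invariance directly it routes through the auxiliary pullback $\beta^*\colon \ShvTopH{\et}{\Sm{S}}\to \ShvTopH{\et}{\Sm{T}}$ (for $T\in\Sm{S}$) and the already-proven colimit formula of \cref{lem:etale:global-to-local-formula}, and then deduces the small-site case from the big-site case via $\iota_!$ and \cref{lem:etale:morphism-to-local-commutation}. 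Your route is more self-contained and conceptually cleaner (one factorization instead of the paper's two-step square), at the cost of needing the pro-\'etale henselization fact as an external input; the paper's route has the advantage of reusing lemmas it has already set up. One small point worth making explicit in your write-up: ``evaluation at the terminal object'' of $\ShvTopH{\et}{\smet{(\shens{S}{s})}}$ is a \emph{point} (not merely global sections as a right adjoint) precisely because $\shens{S}{s}$ is strictly henselian, so that it coincides with the stalk at the closed geometric point---this is what makes the comparison on representables sufficient.
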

\begin{proof}
    We first show that the functors $\rho_{\overline{s}}^* \colon \ShvTopH{\et}{\Sm{S}} \to \ShvTopH{\et}{\Sm{\shens{S}{s}}}$
    are jointly conservative.
    So let $f \colon X \to Y$ be a morphism in $\ShvTopH{\et}{\Sm{S}}$ such that 
    $\rho_{\overline{s}}^* f$ is an equivalence for all geometric points $\overline{s}$.
    As for the étale topology strictly henselian local rings form the points of the topos,
    and we are working with hypersheaves, it suffices to show 
    that for every $T \in \Sm{S}$ and geometric point $\overline{t} \in T$
    the morphism $f \colon X(\shens{T}{t}) \to Y(\shens{T}{t})$ is an equivalence.
    Fix $T$ and $\overline{t}$. Hence, we have to show that 
    \begin{equation} \label{eq:etale:morphism-to-local-conservative:morphism-in-question}
        f \colon \colim{\overline{t} \to U \xrightarrow{\et} T} X(U) \to \colim{\overline{t} \to U \xrightarrow{\et} T} Y(U)
    \end{equation}
    is an equivalence.
    Write $\overline{s} \in S$ for the geometric point under $\overline{t} \in T$,
    so that there is a commutative diagram 
    \begin{center}
        \begin{tikzcd}
            \shens{T}{t} \ar[r, "\rho_{\overline{t}}"] \ar[d, "\alpha"] &T \ar[d, "\beta"] \\
            \shens{S}{s} \ar[r, "\rho_{\overline{s}}"] &S \rlap{.}
        \end{tikzcd}
    \end{center}
    In particular, $\rho_{\overline{t}}^* \beta^* f = \alpha^* \rho_{\overline{s}}^* f$ is an equivalence.
    By \cref{lem:etale:global-to-local-formula}, 
    the equivalence $\rho_{\overline{t}}^* \beta^* f \colon \rho_{\overline{t}}^* \beta^* X(\shens{T}{t}) \cong \rho_{\overline{t}}^* \beta^* Y(\shens{T}{t})$
    may be identified with the morphism 
    \begin{equation*}
        \beta^* f \colon \colim{\overline{t} \to U \xrightarrow{\et} T} \beta^* X(U) \xrightarrow{\simeq} \colim{\overline{t} \to U \xrightarrow{\et} T} \beta^* Y(U).
    \end{equation*}
    Since any étale $T$-scheme $U$ is in particular a smooth $S$-scheme, 
    we see that $\beta^* Z(U) \cong Z(U)$ for any $Z$.
    Hence, the above equivalence can be identified with the morphism
    \eqref{eq:etale:morphism-to-local-conservative:morphism-in-question},
    which proves the claim.

    We finish the proof by showing that also the functors $\rho_{\overline{s}}^* \colon \ShvTopH{\et}{\smet{S}} \to \ShvTopH{\et}{\smet{(\shens{S}{s})}}$
    are jointly conservative.
    For this, let $f \colon X \to Y \in \ShvTopH{\et}{\smet{S}}$ be a morphism such that 
    $\rho_{\overline{s}}^* f$ is an equivalence for all geometric points $\overline{s} \in S$.
    Then also $\iota_! \rho_{\overline{s}}^* f \cong \rho_{\overline{s}}^* \iota_! f$ is an equivalence for all $\overline{s} \in S$, using \cref{lem:etale:morphism-to-local-commutation}.
    Hence, it follows from the first part that $\iota_! f$ is an equivalence.
    Thus, we deduce that $f$ is already an equivalence since $\iota_!$ is fully faithful, cf.\ \cref{lem:rigidity:iota-fully-faithful}.
\end{proof}

\begin{lem} \label{lem:etale:small-topos-over-strictly-local}
    Let $k$ be a separably closed field.
    Then evaluation at $k$ induces 
    an equivalence $\ShvTopH{\et}{\smet{k}} \xrightarrow{\simeq} \An$. 
\end{lem}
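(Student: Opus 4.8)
The plan is to make the site $\smet{k}$ fully explicit and then compute its category of hypersheaves by hand. Since $k$ is separably closed, every qcqs étale $k$-scheme is a finite disjoint union of copies of $\Spec(k)$: being étale over a field means being a disjoint union of spectra of finite separable extensions, quasicompactness forces the union to be finite, and separable closedness forces each extension to be $k$ itself. A morphism between two such schemes over $k$ is determined by the (necessarily clopen) point of the target hit by each summand $\Spec(k)$ of the source, so sending $\coprod_{i\in I}\Spec(k)$ to the finite set $I$ gives an equivalence $\smet{k}\xrightarrow{\ \simeq\ }\mathrm{Fin}$ onto the category of finite sets, carrying the terminal object $\Spec(k)$ to the one-point set. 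Under this equivalence the étale topology becomes the topology on $\mathrm{Fin}$ whose covering families are the jointly surjective ones; this topology is generated by the finite coproduct decompositions $\{\,\ast\hookrightarrow\coprod_{i\in I}\ast\,\}_{i\in I}$ together with the single surjections of finite sets.

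Next I would show that $\ShvTopH{\et}{\smet{k}}$ identifies, under the above equivalence of sites, with the full subcategory $\Fun{\op{\mathrm{Fin}}}{\An}^{\times}$ of presheaves of anima on $\mathrm{Fin}$ sending finite coproducts to finite products. The coproduct-decomposition covers impose exactly this product-preservation (together with $F(\emptyset)\simeq\ast$). Conversely, a single surjection $p\colon S\twoheadrightarrow T$ of finite sets admits a section, so its \v{C}ech nerve is a split augmented simplicial object over $T$; applying any presheaf turns it into a split cosimplicial anima whose totalization is the augmentation, which is precisely the descent condition for $\{p\}$. Since these families generate the topology, a presheaf is an étale sheaf if and only if it preserves finite products; as the site is a $1$-category and the resulting $\infty$-topos is $\An$ (hence hypercomplete), sheaves and hypersheaves coincide, giving $\ShvTopH{\et}{\smet{k}}\simeq\Fun{\op{\mathrm{Fin}}}{\An}^{\times}$.

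Finally, $\mathrm{Fin}$ is the free $1$-category with finite coproducts on a single generator (the one-point set, i.e.\ $\Spec(k)$): for any $\infty$-category $\Cat D$ with finite products, restriction along $\ast\hookrightarrow\mathrm{Fin}$ is an equivalence $\Fun{\op{\mathrm{Fin}}}{\Cat D}^{\times}\simeq\Cat D$. Applying this with $\Cat D=\An$ shows that evaluation at the one-point set is an equivalence $\Fun{\op{\mathrm{Fin}}}{\An}^{\times}\xrightarrow{\ \simeq\ }\An$, and tracing through the identifications this is exactly evaluation at $\Spec(k)$, i.e.\ evaluation at $k$.

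I expect the main (if still routine) obstacle to be the second step: pinning down precisely that the étale topology on $\smet{k}$ corresponds to the jointly-surjective topology on $\mathrm{Fin}$ and verifying the descent condition for surjections. If one prefers to avoid this hands-on computation, an alternative is to invoke the standard identification of the small étale $\infty$-topos of $\Spec(k)$ with the $\infty$-topos of continuous actions of the profinite group $\mathrm{Gal}(k^{\mathrm s}/k)$ on anima, which is trivial when $k$ is separably closed; or to use \cref{lem:etale:points} — noting that $\mathcal O^{\mathrm{sh}}_{\Spec(k),\bar x}=k$ since $k$ is already strictly henselian — to get that evaluation at $k$ is conservative, and then upgrade conservativity to an equivalence using that it is the right adjoint of the (fully faithful, by the above site description) constant-sheaf functor.
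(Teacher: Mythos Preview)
Your proposal is correct and takes essentially the same approach as the paper: identify $\smet{k}$ with $\mathrm{Fin}$, identify the étale topology with the one generated by coproduct decompositions, deduce that (hyper)sheaves are exactly product-preserving presheaves $\mathcal{P}_\Sigma(\mathrm{Fin})$, and then use that $\mathrm{Fin}$ is freely generated under finite coproducts by a point to conclude $\mathcal{P}_\Sigma(\mathrm{Fin})\simeq\An$ via evaluation at $\Spec(k)$. The paper handles the last two steps by citation (to \cite{bachmann2020norms} and \cite{cesnavicius2023purityflatcohomology}) where you supply direct arguments, but the content is the same.
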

\begin{proof}
    Since $k$ is separably closed,
    the functor $\operatorname{Fin} \to \smet{k}$ from finite sets to $\smet{k}$
    given by sending a set $A$ to $\amalg_A \Spec(k)$ is an equivalence of categories.
    Under this equivalence, the étale topology corresponds to the topology 
    generated by finite coproduct decompositions.
    Hence, using \cite[Lemma 2.4]{bachmann2020norms},
    we see that $\ShvTopH{\et}{\smet{k}} \cong \mathcal{P}_{\Sigma}(\operatorname{Fin})$
    (the reference shows this holds before hypercompletion, and the right-hand side is hypercomplete by \cite[Lemma 2.6]{bachmann2020norms}),
    i.e.\ the $\infty$-category of product-preserving presheaves on finite sets,
    which is equivalent to the $\infty$-category of anima, cf.\ \cite[§5.1.4]{cesnavicius2023purityflatcohomology}.
    Unwinding the definitions, we see that this equivalence 
    is precisely given by evaluation at $k$.
\end{proof}

\begin{lem} \label{lem:rigidtiy:stalk-functor}
    Suppose that $k$ is a separably closed field.
    Then the functor 
    \begin{equation*}
        (-)(k) \colon \ShvTopH{\et}{\Sm{k}} \to \An
    \end{equation*}
    given by evaluation at $k$ has a left and a right adjoint.
    In particular, it preserves all limits and colimits and commutes with $p$-completion,
    i.e.\ $L_p (F(k)) \cong (L_p F)(k)$ for all $F \in \ShvTopH{\et}{\Sm{k}}$.
\end{lem}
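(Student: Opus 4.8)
The plan is to realise $(-)(k)$ as a composite of two functors each of which manifestly has adjoints and commutes with $p$-completion. Let $\iota \colon \smet{k} \hookrightarrow \Sm{k}$ be the inclusion of sites from \cref{def:etale:small-to-big}, so that $\iota^* \colon \ShvTopH{\et}{\Sm{k}} \to \ShvTopH{\et}{\smet{k}}$ is restriction. Since $k$ is separably closed, \cref{lem:etale:small-topos-over-strictly-local} provides an equivalence $\operatorname{ev}_k \colon \ShvTopH{\et}{\smet{k}} \xrightarrow{\simeq} \An$ which is evaluation at $\Spec(k)$; hence for $F \in \ShvTopH{\et}{\Sm{k}}$ one has $\operatorname{ev}_k(\iota^* F) = (\iota^* F)(\Spec k) = F(k)$, i.e.\ $(-)(k) \cong \operatorname{ev}_k \circ \iota^*$.

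First I would read off the adjoints. The functor $\operatorname{ev}_k$, being an equivalence, has both a left and a right adjoint (namely its inverse). The functor $\iota^*$ has a fully faithful left adjoint $\iota_!$ and a fully faithful right adjoint $\iota_*$ by \cref{def:etale:small-to-big} and \cref{lem:rigidity:iota-fully-faithful}. Composing, $(-)(k)$ acquires the left adjoint $\iota_! \circ \operatorname{ev}_k^{-1}$ and the right adjoint $\iota_* \circ \operatorname{ev}_k^{-1}$, and in particular it preserves all limits and all colimits.

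It remains to see that $(-)(k)$ commutes with $L_p$, and the key point is that $\iota^*$ does. Note that $\Spec(k)$ is qcqs of Krull dimension $0$ and, being separably closed, satisfies $\cd{}{k} = 0$, so $\Spec(k)$ is étale bounded and \cref{lem:etale:cover} applies: $\ShvTopH{\et}{\Sm{k}}$ carries a locally finite dimensional cover indexed by $U \in \Sm{k}$, and $\iota^*$ is precisely its constituent for $U = \Spec(k)$. Hence $\iota^*$ commutes with $L_p$ by \cite[Lemma 6.10]{mattis2024fracture}. On the other hand $\operatorname{ev}_k$ is an equivalence between Postnikov-complete $\infty$-topoi with enough points — $\ShvTopH{\et}{\smet{k}}$ is one by \cref{lem:etale:finite-htyp-dim} and \cref{lem:etale:points} — so it transports the universal property defining the $p$-completion functor, and therefore commutes with $L_p$ as well. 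Composing the two commutations yields $L_p(F(k)) \cong (L_p F)(k)$. Everything here is formal once the factorisation is in place; the only input that is not bookkeeping is the commutation of $\iota^*$ with $L_p$, which I would simply quote from \cite[Lemma 6.10]{mattis2024fracture} rather than reprove.
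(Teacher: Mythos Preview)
Your proof is correct and follows essentially the same approach as the paper: both factor $(-)(k)$ as $\operatorname{ev}_k \circ \iota^*$ and read off the adjoints from \cref{def:etale:small-to-big}. The only difference is in the justification that $(-)(k)$ commutes with $L_p$: the paper observes directly that a functor with both a left and a right adjoint preserves $p$-equivalences and $p$-complete objects, hence commutes with $L_p$ by \cite[Lemma 3.11]{mattis2024unstable}, whereas you identify $\iota^*$ as a constituent of the cover from \cref{lem:etale:cover} and invoke \cite[Lemma 6.10]{mattis2024fracture}; both routes are valid, but the paper's is slightly more economical since it uses only what was already established in the lemma.
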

\begin{proof}
    There is a commutative diagram
    \begin{center}
        \begin{tikzcd}
            \ShvTopH{\et}{\Sm{k}} \ar[dr, "(-)(k)"']\ar[r, "\iota^*"]&\ShvTopH{\et}{\smet{k}} \ar[d, "\cong"',"(-)(k)"]\\
            &\An\rlap{.}
        \end{tikzcd}
    \end{center}
    Since $k$ is separably closed, we have seen in \cref{lem:etale:small-topos-over-strictly-local} that the right vertical arrow is an equivalence.
    Moreover, $\iota^*$ has a left and a right adjoint, cf.\ \cref{def:etale:small-to-big}, hence the same is true for the diagonal functor.
    That evaluation at $k$ then also commutes with $p$-completion 
    follows formally from the fact that it preserves both $p$-equivalences and $p$-complete objects,
    cf.\ \cite[Lemma 3.11]{mattis2024unstable}.
\end{proof}
             \label{sec:etale-topos}
\subsection{The canonical resolution of a connective sheaf}
Let $\topos X$ be an $\infty$-topos.
In this section, we prove that for every $n$-connective sheaf $X \in \topos X_{*, \ge n}$
there is a canonical way of writing $X$ as a sifted colimit of sheaves of the form 
$\Sigma^n T$ with $T \in \topos X_*$.

\begin{prop} \label{lem:canonical:main-thm}
    Let $X \in \topos X_{*, \ge n}$.
    Then $X$ is the geometric realization of the simplicial diagram 
    given by $[k] \mapsto (\Sigma^n\Omega^n)^{k+1}(X)$.
    In particular, $X$ is a sifted colimit of sheaves of the form 
    $\Sigma^n T$ with $T \in \topos X_*$.
\end{prop}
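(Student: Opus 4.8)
The plan is to recognise the asserted simplicial diagram as the bar resolution attached to the adjunction $\Sigma^n \dashv \Omega^n$ and to deduce the statement from monadicity. Since the $n$-fold suspension raises connectivity, $\Sigma^n$ restricts to a functor $\topos X_* \to \topos X_{*, \ge n}$, and the usual adjunction $\Sigma^n \dashv \Omega^n$ on $\topos X_*$ restricts to
\begin{equation*}
    \Sigma^n \colon \topos X_* \rightleftarrows \topos X_{*, \ge n} \noloc \Omega^n .
\end{equation*}
The simplicial diagram $[k] \mapsto (\Sigma^n\Omega^n)^{k+1}(X)$, with faces and degeneracies built from the counit $\Sigma^n\Omega^n \to \operatorname{id}$ and the unit $\operatorname{id} \to \Omega^n\Sigma^n$, together with the counit augmentation $\Sigma^n\Omega^n X \to X$, is exactly the bar resolution $\operatorname{Bar}_\bullet(\Sigma^n, \Sigma^n\Omega^n, X)$; all its terms lie in the image of $\Sigma^n$, hence are $n$-connective, so it takes place in $\topos X_{*, \ge n}$. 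What must be shown is that this augmentation exhibits $X$ as the colimit.

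By the $\infty$-categorical Barr--Beck--Lurie theorem, together with the fact that in a monadic adjunction every object is the geometric realization of its bar resolution (see \cite[§4.7.3]{higheralgebra}), it suffices to verify that $\Omega^n$ is conservative on $\topos X_{*, \ge n}$ and that $\topos X_{*, \ge n}$ admits, and $\Omega^n$ preserves, geometric realizations of $\Omega^n$-split simplicial objects. For conservativity I would use the delooping equivalence $\Omega \colon \topos X_{*, \ge 1} \xrightarrow{\simeq} \Grp{\topos X}$ (cf.\ \cite[§7.2.2]{highertopoi}): it is in particular conservative, and since for $j \ge 1$ the loop functor $\Omega \colon \topos X_{*, \ge j} \to \topos X_{*, \ge j-1}$ is a restriction of it, each such $\Omega$ is conservative; composing $j = n, n-1, \dots, 1$ gives that $\Omega^n \colon \topos X_{*, \ge n} \to \topos X_*$ is conservative.

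For the colimit statement I would prove the stronger claim that $\Omega^n$ preserves all sifted colimits. First, $\topos X_{*, \ge m} \subseteq \topos X_*$ is closed under sifted colimits — truncation preserves colimits and a sifted-indexed colimit of the constant diagram at the terminal object is terminal — so these colimits agree with those computed in $\topos X_*$, and in particular the bar resolution really is a colimit diagram in $\topos X_{*, \ge n}$ precisely when it is one in $\topos X_*$. Next, under the equivalence $\topos X_{*, \ge 1} \simeq \Grp{\topos X}$ the functor $\Omega$ is identified with $\Grp{\topos X} \hookrightarrow \Fun{\op{\Delta}}{\topos X} \xrightarrow{\operatorname{ev}_{[1]}} \topos X_*$; the inclusion of group objects preserves sifted colimits (its defining conditions involve only finite products, which commute with sifted colimits) and evaluation preserves all colimits, so $\Omega$ preserves sifted colimits on $\topos X_{*, \ge 1}$. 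Composing the restrictions $\Omega \colon \topos X_{*, \ge j} \to \topos X_{*, \ge j-1}$ for $j = n, \dots, 1$ shows $\Omega^n$ preserves sifted colimits. The final ``in particular'' is then immediate, since $\op{\Delta}$ is sifted and $(\Sigma^n\Omega^n)^{k+1}(X) = \Sigma^n\bigl((\Omega^n\Sigma^n)^k\Omega^n X\bigr)$.

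The main obstacle is the colimit-preservation of $\Omega^n$: loop-space functors do not commute with colimits in general, and the two points that make it work are (a) restricting to $n$-connective objects, so that the geometric realization in the bar resolution may be computed in $\topos X_{*, \ge n}$ rather than in all of $\topos X_*$, and (b) the fact that on connected objects $\Omega$ becomes, via the group-object equivalence, essentially an evaluation functor, which does preserve sifted colimits. Everything else — the bar-resolution bookkeeping, conservativity, and the reduction to Barr--Beck--Lurie — is formal.
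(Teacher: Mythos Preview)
Your argument is correct and shares the paper's overall strategy: establish monadicity of $\Omega^n \colon \topos X_{*,\ge n} \to \topos X_*$ via Barr--Beck--Lurie and read off the bar resolution. The execution differs in one respect. The paper invokes the $\mathbb E_n$-recognition principle \cite[Theorem 5.2.6.15]{higheralgebra} in a single step, identifying $\topos X_{*,\ge n}$ with grouplike $\mathbb E_n$-monoids in $\topos X_*$, and then only needs to check that the fully faithful inclusion of grouplike objects into all $\mathbb E_n$-monoids preserves $\fgt{}$-split geometric realizations (the forgetful functor from $\mathbb E_n$-monoids being monadic for free); this last step uses universality of colimits to see that the grouplike condition is stable under such realizations. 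You instead iterate the $n=1$ delooping $\topos X_{*,\ge 1} \simeq \Grp{\topos X}$ and prove the stronger statement that $\Omega^n$ preserves \emph{all} sifted colimits on $n$-connective objects. Both routes ultimately rest on the same fact---finite products commute with sifted colimits in an $\infty$-topos---but yours trades the $\mathbb E_n$-formalism for a short induction and yields a mildly sharper intermediate conclusion.
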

\begin{proof}
    If we can show that the functor $\Omega^n \colon \topos X_{*, \ge n} \to \topos X_*$ 
    is monadic (i.e.\ is conservative and preserves geometric realizations of $\Omega^n$-split 
    simplicial diagrams), then this follows from (the proof of) \cite[Proposition 4.7.3.14]{higheralgebra}.
    Consider the following commutative diagram of right adjoints
    \begin{center}
        \begin{tikzcd}
            \operatorname{Mon}_{\mathbb E_n} (\topos X_*) \ar[dr, "\fgt{}"']&\operatorname{Mon}^{\operatorname{gp}}_{\mathbb E_n} (\topos X_{*}) \ar[l, hook] & \topos X_{*, \ge n} \ar[l, "\Omega^n", "\cong"'] \ar[dl, "\Omega^n"] \\
            &\topos X_{*}\rlap{,}
        \end{tikzcd}
    \end{center}
    where the right horizontal morphism is an equivalence by \cite[Theorem 5.2.6.15]{higheralgebra}.
    Since $\fgt{} \colon \operatorname{Mon}_{\mathbb E_n} (\topos X_*) \to \topos X_*$ 
    is monadic, it suffices to show that the forgetful functor 
    $\operatorname{Mon}^{\operatorname{gp}}_{\mathbb E_n} (\topos X_{*}) \hookrightarrow \operatorname{Mon}_{\mathbb E_n} (\topos X_{*})$
    is conservative and preserves geometric realizations that are $\fgt{}$-split.
    The first claim is clear.
    For the second claim, we have to show that a $\fgt{}$-split geometric realization (in $\operatorname{Mon}_{\mathbb E_n} (\topos X_{*})$) of grouplike $\mathbb E_n$-monoids is still grouplike.
    Grouplike objects are characterized by certain maps $G \times G \to G \times G$ being equivalences, 
    cf.\ \cite[Definition 5.2.6.2]{higheralgebra}. 
    Since geometric realizations that are $\fgt{}$-split are computed underlying, and since colimits are universal in $\topos X$,
    the result follows.
\end{proof}               \label{sec:canonical}

\section{\texorpdfstring{$\AffSpc{1}$}{A1}-invariance and étale sheaves}

In this section, we will define $\AffSpc{1}$-invariant étale sheaves,
and discuss ($p$-completely) étale $\AffSpc{1}$-nilpotent sheaves,
a variant of nilpotence in an $\infty$-topos, where the layers of the refined Postnikov tower 
are given by ($p$-completely) $\AffSpc{1}$-invariant infinite loop sheaves.

\begin{defn}
    Let $S$ be a scheme.
    Write $\SpcEt{S} \subset \ShvTopH{\et}{\Sm{S}}$ for the $\infty$-category 
    of $\AffSpc{1}$-invariant étale hypersheaves on $\Sm{S}$.
    As in the Nisnevich case, there is a Bousfield localization
    \begin{equation*}
        L_{\et, \AffSpc{1}} \colon \ShvTopH{\et}{\Sm{S}} \rightleftarrows \SpcEt{S} \noloc \iota_{\et, \AffSpc{1}}
    \end{equation*}
    at the projections $X \times \AffSpc{1} \to X$ for $X \in \Sm{S}$.
    We call objects of this $\infty$-category \emph{étale motivic spaces} over $S$.
    We write $\SHoneet{S} \coloneqq \Sp(\SpcEt{S})$
    for the stabilization.
\end{defn}

Recall the following lemma:
\begin{lem}[{e.g.\ \cite[Lemma A.1]{mattis2024unstable}}] \label{lem:etale-mot:SHone-adjunction}
    Let $S$ be a scheme. There is an adjunction
    \begin{equation*}
        L_{\et, \AffSpc{1}} \colon \Sp(\ShvTopH{\et}{\Sm{S}}) \rightleftarrows \SHoneet{S} \noloc \iota_{\et, \AffSpc{1}},
    \end{equation*}
    such that the following two diagrams are commutative:
    \begin{center}
        \begin{tikzcd}
            \Sp(\ShvTopH{\et}{\Sm{S}}) \ar[r, "L_{\et, \AffSpc{1}}"] &\SHoneet{S} &\Sp(\ShvTopH{\et}{\Sm{S}}) \ar[d, "\pLoop"] &\SHoneet{S} \ar[l, "\iota_{\et, \AffSpc{1}}"] \ar[d, "\pLoop"] \\
            \ShvTopH{\et}{\Sm{S}}_* \ar[u, "\Sus"] \ar[r, "L_{\et, \AffSpc{1}}"] &\SpcEt{S}_* \ar[u, "\Sus"] &\ShvTopH{\et}{\Sm{S}}_* &\SpcEt{S}_* \ar[l, "\iota_{\et, \AffSpc{1}}"] \rlap{.}
        \end{tikzcd}
    \end{center}
\end{lem}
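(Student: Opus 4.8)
The plan is to treat this as an instance of a formal principle: stabilizing an accessible Bousfield localization of presentable $\infty$-categories again produces an accessible Bousfield localization, compatibly with $\Sus$ and $\pLoop$. Applied to the localization $L_{\et,\AffSpc{1}}\colon \ShvTopH{\et}{\Sm{S}} \rightleftarrows \SpcEt{S}$ of the preceding definition — which is a Bousfield localization at the \emph{set} of maps $X\times\AffSpc{1}\to X$, $X\in\Sm{S}$, since $\Sm{S}$ is essentially small — this yields exactly the asserted adjunction together with the two squares, and one may simply invoke \cite[Lemma A.1]{mattis2024unstable}. For completeness I would recall the argument as follows.

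First I would pass to pointed objects, getting $L_{\et,\AffSpc{1}}\colon \ShvTopH{\et}{\Sm{S}}_* \rightleftarrows \SpcEt{S}_*$ (using that the right adjoint $\iota_{\et,\AffSpc{1}}$ preserves the terminal object and that the point is $\AffSpc{1}$-invariant, so $L_{\et,\AffSpc{1}}$ preserves the zero object). To stabilize I would note that $\iota_{\et,\AffSpc{1}}$, being a limit-preserving accessible functor, commutes with $\Omega$; writing $\Sp(-)=\lim(\cdots\xrightarrow{\Omega}(-)_*\xrightarrow{\Omega}(-)_*)$ as a limit of presentable $\infty$-categories and applying it levelwise then produces a limit-preserving, accessible functor $\SHoneet{S}\to\Sp(\ShvTopH{\et}{\Sm{S}})$. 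This functor is fully faithful — a map of spectrum objects is just a coherent family of maps between the underlying pointed sheaves, where $\iota_{\et,\AffSpc{1}}$ is already fully faithful — and it admits a left adjoint by the adjoint functor theorem; I will call the resulting adjunction $L_{\et,\AffSpc{1}}\dashv\iota_{\et,\AffSpc{1}}$, so that $\SHoneet{S}$ is a reflective localization of $\Sp(\ShvTopH{\et}{\Sm{S}})$. (The slicker reformulation is that $\Sp(-)\simeq(-)\otimes\Sp$ in $\PrL$ and that tensoring, being a left adjoint on $\PrL$, carries the colimit presentation of the localization $\SpcEt{S}$ to that of $\SHoneet{S}$; I would mention this as the conceptual reason.)

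Finally I would identify $\Sus$ and $\pLoop$. The square involving $\pLoop$ is immediate from the tower description: $\pLoop$ is evaluation at level $0$ and the stabilized $\iota_{\et,\AffSpc{1}}$ is levelwise $\iota_{\et,\AffSpc{1}}$, so $\pLoop\circ\iota_{\et,\AffSpc{1}}\simeq\iota_{\et,\AffSpc{1}}\circ\pLoop$. The square involving $\Sus$ I would then obtain by passing to left adjoints: the right adjoint of $\Sus\circ L_{\et,\AffSpc{1}}$ is $\iota_{\et,\AffSpc{1}}\circ\pLoop$, the right adjoint of $L_{\et,\AffSpc{1}}\circ\Sus$ is $\pLoop\circ\iota_{\et,\AffSpc{1}}$, and these agree by the previous step; since a functor is determined by its right adjoint, the two composites agree. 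I do not anticipate a genuine obstacle: everything is formal, the only mild points being the mate calculus relating the two squares and the bookkeeping that $\Sm{S}$ is essentially small so that the localization is at a set — both handled in \cite[Lemma A.1]{mattis2024unstable}.
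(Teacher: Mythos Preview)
Your proposal is correct and matches the paper's approach: the paper does not give a proof at all but simply states the lemma with the citation \cite[Lemma A.1]{mattis2024unstable}, which is exactly what you invoke. Your additional sketch of the argument (stabilizing the Bousfield localization, the mate calculus for the two squares) is a reasonable expansion of that reference and contains no gaps.
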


\begin{lem}
    Let $S$ be a scheme.
    The functor $\iota_{\et, \AffSpc{1}} \colon \SHoneet{S} \to \Sp(\ShvTopH{\et}{\Sm{S}})$
    is fully faithful, with essential image those étale hypersheaves of spectra that are $\AffSpc{1}$-invariant.
    In particular, $\SHoneet{S}$ is equivalent to the $\infty$-category with the same name from \cite{Bachmann2021etalerigidity,bachmann2021remarksetalemotivicstable}.
\end{lem}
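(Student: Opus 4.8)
The plan is to deduce the lemma from the general interaction between stabilisation and reflective localisations. By definition $\SHoneet{S} = \Sp(\SpcEt{S})$, and $\SpcEt{S} \hookrightarrow \ShvTopH{\et}{\Sm{S}}$ is the accessible reflective localisation at the maps $\{X \times \AffSpc{1} \to X\}_{X \in \Sm{S}}$. For full faithfulness: by \cref{lem:etale-mot:SHone-adjunction} the functor $\iota_{\et, \AffSpc{1}} \colon \SHoneet{S} \to \Sp(\ShvTopH{\et}{\Sm{S}})$ is the right adjoint of $\Sp(L_{\et, \AffSpc{1}})$, obtained by applying $\Sp(-) \colon \PrL \to \PrL$ to the unstable adjunction $L_{\et, \AffSpc{1}} \dashv \iota_{\et, \AffSpc{1}}$. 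Since $\Sp(-)$ carries adjunction data to adjunction data, the counit of $\Sp(L_{\et, \AffSpc{1}}) \dashv \iota_{\et, \AffSpc{1}}$ is $\Sp(-)$ applied to the counit of the unstable adjunction; the latter is an equivalence because $\iota_{\et, \AffSpc{1}}$ is fully faithful unstably (it is the inclusion of a Bousfield localisation), hence so is the former, and therefore $\iota_{\et, \AffSpc{1}}$ is fully faithful on spectra.

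For the essential image, recall that a spectrum object $E$ of $\ShvTopH{\et}{\Sm{S}}$ lies in the essential image of $\Sp(\SpcEt{S})$ if and only if each of its structure sheaves $\Loop \Sigma^n E$, $n \ge 0$, lies in $\SpcEt{S}_* \subseteq \ShvTopH{\et}{\Sm{S}}_*$: indeed $\SpcEt{S}_*$ is closed under $\Omega$ inside $\ShvTopH{\et}{\Sm{S}}_*$, so this condition precisely says that $E$ is (equivalent to the image of) a spectrum object valued in $\SpcEt{S}$. It therefore remains to identify this with $\AffSpc{1}$-invariance of $E$ viewed as an étale hypersheaf of spectra. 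Since $\Loop \colon \Sp(\ShvTopH{\et}{\Sm{S}}) \to \ShvTopH{\et}{\Sm{S}}_*$ preserves limits, it is computed sectionwise, i.e.\ $(\Loop \Sigma^n E)(X) \cong \Loop(\Sigma^n(E(X)))$ for every $X \in \Sm{S}$; and a map of spectra is an equivalence as soon as all of its non-negative infinite-loop-space shifts are equivalences of anima. Hence $E(X) \to E(X \times \AffSpc{1})$ is an equivalence for all $X$ if and only if $\Loop \Sigma^n E$ is an $\AffSpc{1}$-invariant pointed sheaf for all $n \ge 0$, which is exactly the condition above. This shows that the essential image of $\iota_{\et, \AffSpc{1}}$ is the full subcategory of $\AffSpc{1}$-invariant étale hypersheaves of spectra on $\Sm{S}$.

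Finally, the category denoted $\SHoneet{S}$ in \cite{Bachmann2021etalerigidity, bachmann2021remarksetalemotivicstable} is defined as the localisation of $\SHone{S}$ at the étale-local equivalences, equivalently as the $\infty$-category of $\AffSpc{1}$-invariant étale hypersheaves of spectra on $\Sm{S}$; by the description of the essential image just obtained, this agrees with $\Sp(\SpcEt{S})$.

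The step I expect to require the most care is the identification of the essential image: one has to match the condition ``all structure sheaves are $\AffSpc{1}$-invariant'' coming from stabilisation with the intended stable notion of $\AffSpc{1}$-invariance (being local against $\Sus_+$ of the $\AffSpc{1}$-projections and their desuspensions), and, for the last sentence, reconcile this with the definition used by Bachmann, where one must keep track of hypercompleteness conventions.
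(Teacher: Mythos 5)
Your proof is correct and follows essentially the same route as the paper: full faithfulness from stabilising the unstable reflective localisation, and identification of the essential image by checking $\AffSpc{1}$-invariance on all $\Loop \Sigma^n E$, using that $\pLoop$ commutes with $\iota_{\et, \AffSpc{1}}$. The paper delegates the first step to a citation (\cite[Lemma A.2]{mattis2024unstable}) rather than unwinding the stabilisation. One place to be slightly careful: the phrase ``$\Sp(-)$ carries adjunction data to adjunction data'' requires justification, since the right adjoint $\iota_{\et, \AffSpc{1}}$ need not be a morphism in $\PrL$, so one cannot literally apply the functor $\Sp(-) \colon \PrL \to \PrL$ to it. What saves the argument is that right adjoints are left exact, so they induce functors on spectrum objects levelwise (using the limit description $\Sp(\Cat C) \cong \lim(\cdots \to \Cat C_* \xrightarrow{\Omega} \Cat C_*)$), and the resulting functor is indeed right adjoint to the induced left adjoint with counit computed levelwise; full faithfulness then follows since mapping anima in the limit are limits of mapping anima. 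This is standard but worth spelling out, since as stated the step reads like a black box.
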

\begin{proof}
    The functor is fully faithful by \cite[Lemma A.2]{mattis2024unstable}.
    Everything in the essential image is $\AffSpc{1}$-invariant.
    Indeed, this can be checked after applying the functors $\pLoop \Sigma^n$ for varying $n$, 
    but now note that $\pLoop \iota_{\et, \AffSpc{1}} \cong \iota_{\et, \AffSpc{1}} \pLoop$ by \cref{lem:etale-mot:SHone-adjunction}.
    On the other hand, suppose that $E \in \Sp(\ShvTopH{\et}{\Sm{S}})$ is $\AffSpc{1}$-invariant.
    In particular, the $\pLoop \Sigma^n E$ are all $\AffSpc{1}$-invariant and thus define an object of $\SHoneet{S}$. 
\end{proof}

\begin{lem} \label{lem:etale:coherent-compact}
    Let $\topos X$ be a Postnikov-complete $\infty$-topos, and $T \in \topos X$ a coherent object of 
    cohomological dimension $\le N$ for some $N$.
    Then $T \in \topos X$ and $\pSus T \in \spectra{X}$ are both compact.
\end{lem}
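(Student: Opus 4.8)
The plan is to prove compactness of $T$ directly and then deduce compactness of $\pSus T$ formally, using that $\Loop$ preserves filtered colimits of sheaves of spectra. For $T$, fix a filtered diagram $(Y_i)_{i\in I}$ in $\topos X$ with colimit $Y$; the goal is to check that $\colimil{i}\Map{\topos X}{T}{Y_i}\to\Map{\topos X}{T}{Y}$ is an equivalence, and I would assemble three ingredients. First, since $T$ is coherent the slice $\topos X_{/T}$ is a coherent $\infty$-topos, so by the theory of coherent $\infty$-topoi (Lurie, \emph{SAG}, §A.2) each truncation $\tau_{\le m}(\topos X_{/T})$ is compactly generated with compact final object; via the identification $\Map{\topos X}{T}{Z}\simeq\Gamma_{\topos X_{/T}}(Z\times T)$, together with the facts that $-\times T$ and the functors $\tau_{\le m}$ commute with filtered colimits, this shows that $\Map{\topos X}{T}{-}$ preserves filtered colimits of $m$-truncated sheaves, for every $m$. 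Second, since $T$ has cohomological dimension $\le N$, for any sheaf $Y'$ the fiber of $\Map{\topos X}{T}{\tau_{\le m+1}Y'}\to\Map{\topos X}{T}{\tau_{\le m}Y'}$ is, over any basepoint, the $\topos X_{/T}$-global sections of the pulled-back Eilenberg--MacLane sheaf $K(\pi_{m+1}(Y'),m+1)|_T$, hence $(m-N)$-connective; thus $(\Map{\topos X}{T}{\tau_{\le m}Y'})_m$ is a highly connected tower whose transition-map connectivities are bounded below by $m-N$, a bound not depending on $Y'$ (one can also invoke \cref{lem:etale:conenctivity-of-maps} for a bound of this shape). Third, $\topos X$ is Postnikov-complete, so $Y'\simeq\limil{m}\tau_{\le m}Y'$ and $\Map{\topos X}{T}{-}$ commutes with this limit.

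Putting these together I would compute
\begin{equation*}
\Map{\topos X}{T}{Y}\simeq\limil{m}\Map{\topos X}{T}{\tau_{\le m}Y}\simeq\limil{m}\colimil{i}\Map{\topos X}{T}{\tau_{\le m}Y_i}\simeq\colimil{i}\limil{m}\Map{\topos X}{T}{\tau_{\le m}Y_i}\simeq\colimil{i}\Map{\topos X}{T}{Y_i},
\end{equation*}
where the outer equivalences are Postnikov-completeness, the second uses ingredient one together with $\tau_{\le m}Y\simeq\colimil{i}\tau_{\le m}Y_i$, and the third is the statement that the limit of a tower of anima whose $m$-th transition map is $(m-N)$-connective commutes with filtered colimits --- which holds precisely because the connectivity bound $m-N$ is \emph{uniform in $i$}, so a single cofinal stage $m_0(n)$ computes $\pi_n$ of all the towers at once (compare the argument of \cref{lem:highly-connected:tower-limit-htpy-groups}, applied here in $\An$).

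For $\pSus T$: filtered colimits in $\spectra{X}$ are computed levelwise, since the loop functors in the tower presenting $\spectra{X}$ are finite limits and hence commute with filtered colimits in the $\infty$-topos $\topos X$; consequently $\Loop\colon\spectra{X}\to\topos X$ preserves filtered colimits. For a filtered diagram $(E_i)_i$ in $\spectra{X}$ this gives
\begin{equation*}
\Map{\spectra{X}}{\pSus T}{\colimil{i}E_i}\simeq\Map{\topos X}{T}{\Loop\colimil{i}E_i}\simeq\Map{\topos X}{T}{\colimil{i}\Loop E_i}\simeq\colimil{i}\Map{\topos X}{T}{\Loop E_i}\simeq\colimil{i}\Map{\spectra{X}}{\pSus T}{E_i},
\end{equation*}
using the $(\pSus\dashv\Loop)$-adjunction at the two ends and the compactness of $T$ just proved in the middle.

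The main obstacle I anticipate is ingredient one: extracting from the theory of coherent $\infty$-topoi the precise statement that a coherent object becomes a compact object after $m$-truncation, and doing the bookkeeping of the equivalence $\Map{\topos X}{T}{-}\simeq\Gamma_{\topos X_{/T}}((-)\times T)$ against truncation and slicing. The other delicate point is the uniformity in $i$ in ingredient two, which is exactly what licenses interchanging the Postnikov limit with the filtered colimit in the displayed chain of equivalences.
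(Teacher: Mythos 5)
Your proposal is correct and rests on the same ingredients as the paper's proof: coherence of $T$ gives compactness of $\tau_{\le m}T$ in the $m$-truncated topos (via [SAG, Cor.~A.2.3.2]); the cohomological-dimension hypothesis controls the connectivity of $\Map{\topos X}{T}{-}$ applied to Postnikov truncations, i.e.\ \cref{lem:etale:conenctivity-of-maps}; Postnikov completeness recovers $Y$ from its truncations; and compactness of $\pSus T$ follows formally because $\Loop$ preserves filtered colimits. The one place you genuinely diverge from the paper is the final assembly. You perform an explicit interchange $\limil{m}\colimil{i}\simeq\colimil{i}\limil{m}$, justified by the uniformity in $i$ of the connectivity bound on the transition maps --- a correct but slightly delicate step, as you note. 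The paper avoids the interchange entirely: it embeds the comparison map $\Map{\topos X}{T}{\colimil{i}Y_i}\to\colimil{i}\Map{\topos X}{T}{Y_i}$ into a commutative square whose lower horizontal is the corresponding comparison at truncation level $N+m$ (an equivalence by compactness of $\tau_{\le N+m}T$) and whose verticals are $(m+1)$-connective by \cref{lem:etale:conenctivity-of-maps}, concluding that the top horizontal is $(m+1)$-connective for every $m$, hence an equivalence. That direct connectivity argument is a bit tighter, since one never needs to move a filtered colimit past a tower limit and never needs the auxiliary statement about interchanging; your version is also valid, provided the uniformity you flag is spelled out as in \cref{lem:highly-connected:tower-limit-htpy-groups}. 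One minor slip: \cref{lem:etale:conenctivity-of-maps} gives transition-map connectivity $m-N+1$ rather than your stated $m-N$, an off-by-one that does not affect the conclusion.
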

\begin{proof}
    The second claim follows from the first one since the right adjoint $\Loop$ 
    of $\pSus$ preserves filtered colimits (recall that in any $\infty$-topos,
    filtered colimits commute with finite limits).

    Now, $\tau_{\le n} T$ is compact in $\ShvTopH{\et}{\Sm{S}}_{\le n}$ for every $n$ by \cite[Corollary A.2.3.2]{sag}.
    Let $X_i$ be a filtered system in $\topos X$.
    For every $n \ge 0$ we have the following commutative diagram:
    \begin{center}
        \begin{tikzcd}
            \Map{\topos X}{T}{\colimil{i} X_i} \ar[d] \ar[r] &\colimil{i} \Map{\topos X}{T}{X_i} \ar[d]\\
            \Map{\topos X}{T}{\tau_{\le N + m} \colimil{i} X_i} \ar[d, "\cong"] \ar[r] &\colimil{i} \Map{\topos X}{T}{\tau_{\le N + m} X_i} \ar[d, "\cong"]\\
            \Map{\topos X_{\le N + m}}{\tau_{\le N + m} T}{\tau_{\le N + m} \colimil{i} X_i} \ar[r, "\cong"] &\colimil{i} \Map{\topos X_{\le N + m}}{\tau_{\le N + m} T}{\tau_{\le N + m} X_i}
        \end{tikzcd}
    \end{center}
    Here, the bottom vertical maps are equivalences by adjunction,
    and the bottom horizontal map is an equivalence 
    since $\tau_{\le N +m} T$ is compact in $\topos X_{\le N + m}$,
    and $\tau_{\le N + m} \colon \topos X \to \topos X_{\le N + m}$ preserves colimits.
    Hence, also the middle horizontal map is an equivalence.
    By the proof of \cite[Proposition 1.3.3.10]{sag},
    using Postnikov-completeness of $\topos X$, the object $T$ has homotopy dimension $\le N$.
    Hence, both top vertical maps are $(m + 1)$-connective by \cref{lem:etale:conenctivity-of-maps}.
    Hence, also the top horizontal map is $(m+1)$-connective.
    Since $m$ was arbitrary, we conclude that the top horizontal map 
    is $\infty$-connective, hence an equivalence of anima.
\end{proof}

\begin{lem} \label{lem:etale-mot:A1-stable-under-filtered}
    Let $S$ be a qcqs scheme of finite Krull-dimension, that is moreover étale bounded.
    The functor $\iota_{\et, \AffSpc{1}} \colon \SpcEt{S} \to \ShvTopH{\et}{\Sm{S}}$ 
    commutes with filtered colimits,
    and the functor $\iota_{\et, \AffSpc{1}} \colon \SHoneet{S} \to \Sp(\ShvTopH{\et}{\Sm{S}})$ 
    commutes with all colimits.
\end{lem}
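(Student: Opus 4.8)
The plan is to reduce the whole statement to the single fact that for every $U \in \Sm{S}$ the representable hypersheaf $h_U \in \ShvTopH{\et}{\Sm{S}}$ is a compact object (and hence so is $\pSus h_U$ in the stabilization). To establish this I would invoke \cref{lem:etale:coherent-compact}: since $U$ is quasi-compact and quasi-separated, $h_U$ is a coherent object of the étale $\infty$-topos; since $U$ is smooth over the étale bounded finite-dimensional scheme $S$, it is itself étale bounded by \cref{lem:etale:gabber-bounded-enhanced}, so $\cd{}{U} < \infty$ by \cref{lem:etale:etale-bdd-finite-coh-dim}; and $\cd{}{U}$ bounds the cohomological dimension of the \emph{object} $h_U$, because global sections over $h_U$ factor through restriction to the small étale site of $U$. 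As $\ShvTopH{\et}{\Sm{S}}$ is Postnikov-complete by \cref{lem:etale:cover}, \cref{lem:etale:coherent-compact} then yields the desired compactness.

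Granting this, the unstable statement becomes formal. Recall that $X \in \ShvTopH{\et}{\Sm{S}}$ is $\AffSpc{1}$-invariant precisely when $X(U) \to X(U \times \AffSpc{1})$ is an equivalence for all $U \in \Sm{S}$. I would first check that $\SpcEt{S}$ is closed under filtered colimits inside $\ShvTopH{\et}{\Sm{S}}$: given a filtered diagram $(X_j)_j$ of $\AffSpc{1}$-invariant sheaves, compactness of $h_U$ and $h_{U \times \AffSpc{1}}$ identifies $(\colimil{j} X_j)(U) \to (\colimil{j} X_j)(U \times \AffSpc{1})$ with the filtered colimit of the equivalences $X_j(U) \to X_j(U \times \AffSpc{1})$, hence an equivalence. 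It then follows from the standard localization formalism that $\iota_{\et, \AffSpc{1}}$ preserves filtered colimits: for a filtered diagram $(Y_j)_j$ in $\SpcEt{S}$, the colimit $\colimil{j} \iota_{\et, \AffSpc{1}} Y_j$ taken in $\ShvTopH{\et}{\Sm{S}}$ is already $\AffSpc{1}$-invariant, hence fixed by $L_{\et, \AffSpc{1}}$, so it computes $\iota_{\et, \AffSpc{1}} (\colimil{j} Y_j)$.

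For the stabilized statement I would run the same argument but upgrade ``filtered colimits'' to ``all colimits''. Here $E \in \Sp(\ShvTopH{\et}{\Sm{S}})$ is $\AffSpc{1}$-invariant if and only if the mapping spectrum out of $K_U \coloneqq \Cofib{\pSus h_{U \times \AffSpc{1}} \to \pSus h_U}$ vanishes for every $U \in \Sm{S}$; each $K_U$ is compact, being a finite colimit of the compact objects $\pSus h_U$, so $\SHoneet{S}$ is exactly the right orthogonal complement of the set $\{K_U\}_{U \in \Sm{S}}$. Since the spectrum-valued mapping functor out of a compact object is exact and preserves filtered colimits, this orthogonal complement is closed under \emph{all} colimits in $\Sp(\ShvTopH{\et}{\Sm{S}})$, and then the same localization bookkeeping as above (now applied to arbitrary colimit diagrams) shows that $\iota_{\et, \AffSpc{1}} \colon \SHoneet{S} \to \Sp(\ShvTopH{\et}{\Sm{S}})$ preserves all colimits.

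I expect the only real obstacle to be pinning down the compactness of $h_U$, i.e.\ checking the hypotheses of \cref{lem:etale:coherent-compact}; in particular one has to be slightly careful that, even though the slice topos $\ShvTopH{\et}{\Sm{S}}_{/h_U}$ is a ``big'' étale topos and therefore not globally of bounded cohomological dimension, the cohomological dimension of $h_U$ as an object is still finite because global sections over $h_U$ only see the small étale site of $U$. Everything past that point is routine manipulation of the Bousfield localization $L_{\et, \AffSpc{1}}$.
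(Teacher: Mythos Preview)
Your proposal is correct and follows essentially the same route as the paper: both reduce to the compactness of the representables $h_U$ (and $\pSus h_U$) in $\ShvTopH{\et}{\Sm{S}}$, obtained by combining coherence (from finitariness of the étale topology) with finite étale cohomological dimension of $U$ (via \cref{lem:etale:gabber-bounded-enhanced} and \cref{lem:etale:etale-bdd-finite-coh-dim}) and then invoking \cref{lem:etale:coherent-compact}. The only cosmetic difference is that for the stable claim the paper first observes exactness to handle finite colimits and then treats filtered colimits, whereas you argue all colimits at once via the right-orthogonal description; both are equivalent.
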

\begin{proof}
    Since $\iota_{\et, \AffSpc{1}} \colon \SHoneet{S} \to \Sp(\ShvTopH{\et}{\Sm{S}})$ is exact,
    it commutes with finite colimits.
    Therefore, we only have to show that it commutes with filtered colimits.

    As $\AffSpc{1}$-invariant objects are the local objects 
    with respect to the morphisms $T \times \AffSpc{1} \to T$ (resp.\ $\pSus (T \times \AffSpc{1}) \to \pSus T$),
    it suffices to show that $T$ (resp.\ $\pSus T$) is compact for every $T \in \Sm{S}$.

    Any $T \in \Sm{S}$ has finite cohomological dimension by \cref{lem:etale:gabber-bounded-enhanced,lem:etale:etale-bdd-finite-coh-dim}.
    As the étale topology is finitary, we see from \cite[Proposition A.3.1.3]{sag}
    that $T \in \ShvTopH{\et}{\Sm{S}}$ is coherent.
    Hence, both $T$ and $\pSus T$ are compact by \cref{lem:etale:coherent-compact},
    which immediately implies the lemma.
\end{proof}

\begin{prop} \label{lem:etale-motives:preserves-connectedness}
    Let $S$ be a qcqs scheme of finite Krull-dimension, that is moreover étale bounded.
    The $\AffSpc{1}$-localization functor
    \begin{equation*}
        L_{\et,\AffSpc{1}} \colon \ShvTopH{\et}{\Sm{S}} \to \ShvTopH{\et}{\Sm{S}}
    \end{equation*}
    preserves connected objects.
\end{prop}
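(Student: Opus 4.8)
The plan is to reduce everything to the following claim: \emph{for every $X \in \ShvTopH{\et}{\Sm S}$ the unit map $X \to L_{\et,\AffSpc1}X$ is an effective epimorphism}. Granting this, the proposition is immediate. Suppose $X$ is connected, i.e.\ $\tau_{\le 0}X \simeq *$. The truncation functor $\tau_{\le 0}$ is a left adjoint, hence carries effective epimorphisms to effective epimorphisms, so applying it to the unit produces an effective epimorphism $* \simeq \tau_{\le 0}X \to \tau_{\le 0}L_{\et,\AffSpc1}X$ of discrete sheaves. An effective epimorphism out of the terminal sheaf of sets is an equivalence, so $\tau_{\le 0}L_{\et,\AffSpc1}X \simeq *$, i.e.\ $L_{\et,\AffSpc1}X$ is connected.

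To prove the claim, recall (as indicated in the introduction) that $L_{\et,\AffSpc1}$ is computed by a countable iteration alternating the étale hypersheafification functor $L_{\et}$ with the singular functor $\operatorname{Sing}^{\AffSpc1}$, where $\operatorname{Sing}^{\AffSpc1}Y$ is the geometric realization of the simplicial object $[n] \mapsto \iHom(\AffSpc{n}, Y)$ (with $\AffSpc{n}$ the $n$-th algebraic simplex, $\iHom$ the internal mapping object). Thus there is a tower
\begin{equation*}
    X = Z_0 \to Z_1 \to Z_2 \to \cdots, \qquad L_{\et,\AffSpc1}X \simeq \colimil{n} Z_n,
\end{equation*}
in which each transition map $Z_n \to Z_{n+1}$ is obtained either as the unit $Z \to L_{\et}Z$ of hypersheafification, or, by applying the (left exact, colimit-preserving) functor $L_{\et}$ to the canonical map $Y = \iHom(\AffSpc{0}, Y) \to \operatorname{Sing}^{\AffSpc1}Y$ from the $0$-simplices of a simplicial object to its geometric realization.

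Now observe that every transition map in this tower is an effective epimorphism: the unit of (étale hyper-)sheafification is an effective epimorphism, and the map from the $0$-simplices of any simplicial object of an $\infty$-topos to its geometric realization is an effective epimorphism (and $L_{\et}$, preserving colimits, preserves this property). Since effective epimorphisms are exactly the $0$-connective morphisms, and the class of $n$-connective morphisms is closed under composition and under filtered colimits in the arrow category, the composite $X = Z_0 \to \colimil{n} Z_n \simeq L_{\et,\AffSpc1}X$ is again an effective epimorphism. This proves the claim, and with it the proposition.

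The only non-formal ingredient is the explicit model of $L_{\et,\AffSpc1}$ used in the second paragraph — in particular, that one ``round'' of $\AffSpc1$-localization is realized by the $\operatorname{Sing}^{\AffSpc1}$-construction, so that its unit is precisely the effective epimorphism from the $0$-simplices to the realization. It does not seem possible to bypass this and argue purely from the saturated class of $\AffSpc1$-local equivalences, since the class of effective epimorphisms fails the $2$-out-of-$3$ property; note also that one should \emph{not} expect each individual $\iHom(\AffSpc{n}, Y)$ to be connected when $Y$ is, so it is essential to argue with the realization rather than termwise. Once the model is granted, however, the argument is entirely formal, resting on the two observations that hypersheafification and $\operatorname{Sing}^{\AffSpc1}$ have effective-epimorphism units and that effective epimorphisms are stable under sequential colimits.
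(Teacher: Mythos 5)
Your approach is the same as the paper's. The paper proves this proposition by observing that, because the étale topology is finitary and the base is étale bounded, étale hypersheaves are stable under filtered colimits in presheaves, and then simply citing the Nisnevich analogue (Bachmann, \emph{Strongly $\AffSpc{1}$-invariant sheaves}, Lemma 1.2) with ``Nisnevich sheafification replaced by étale hypersheafification''; your write-up just unpacks that cited argument.

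One small imprecision is worth flagging. The unit $Z \to L_{\et}Z$ of hypersheafification is \emph{not} in general an effective epimorphism of presheaves: sheafification can produce new connected components of sections (e.g.\ a presheaf with empty value on some $U$ that becomes nonempty after gluing along an étale cover). So of the two ``observations'' in your final paragraph, only the one about $\operatorname{Sing}^{\AffSpc{1}}$ is literally correct. What saves the argument is that this unit becomes an equivalence after applying $L_{\et}$ again, and connectedness is detected on the sheafified $\pi_0$. The cleanest fix is to collapse the two operations and take the transitions to be $Z_n \to Z_{n+1} = L_{\et}\operatorname{Sing}^{\AffSpc{1}}Z_n$ viewed entirely inside $\ShvTopH{\et}{\Sm{S}}$: since $\iHom(\AffSpc{\bullet},Z_n)$ is already a simplicial hypersheaf and $L_{\et}$ preserves colimits, $Z_{n+1}$ is the geometric realization of that simplicial object \emph{in the topos}, and the $0$-simplex inclusion is an effective epimorphism there for exactly the reason you give. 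Combined with closure of effective epimorphisms under sequential colimits (reduce via $\tau_{\le 0}$, which is colimit-preserving, to the $1$-categorical statement) this repairs the step, and with that adjustment the proof is correct.
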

\begin{proof}
    As the étale topology is finitary, étale hypersheaves are stable under filtered colimits 
    in the $\infty$-category of presheaves.
    Hence, the same proof as for the Nisnevich topology applies (but with Nisnevich sheafification 
    replaced by étale hypersheafification),
    see e.g.\ \cite[Lemma 1.2]{bachmann2024stronglya1}.
\end{proof}

\begin{prop} \label{lem:etale-motives:connected-cover-invariant}
    Let $S$ be a qcqs scheme of finite Krull-dimension, that is moreover étale bounded.
    Let $X \in \ShvTopH{\et}{\Sm{S}}_*$.
    If $X$ is $\AffSpc{1}$-invariant,
    then also $\tau_{\ge 1} X$ is $\AffSpc{1}$-invariant.
\end{prop}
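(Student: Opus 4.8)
The plan is to show directly that the $\AffSpc{1}$-localization unit
$\eta \colon \tau_{\ge 1} X \to L_{\et, \AffSpc{1}} \tau_{\ge 1} X$ is an equivalence, by exhibiting $\tau_{\ge 1}X$ as a retract of its localization. First I would record the only property of the connected cover I need: $\tau_{\ge 1} X = \Fib{X \to \tau_{\le 0} X}$, and for every \emph{connected} pointed sheaf $W$ the canonical map $j \colon \tau_{\ge 1} X \to X$ induces an equivalence $\Map{}{W}{\tau_{\ge 1} X} \xrightarrow{\simeq} \Map{}{W}{X}$, since $\Map{}{W}{\tau_{\le 0} X} \cong *$ because $W$ is connected and $\tau_{\le 0} X$ is $0$-truncated.

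Next I would feed in the key input that $L_{\et, \AffSpc{1}}$ preserves connected objects, i.e.\ \cref{lem:etale-motives:preserves-connectedness} — this is the one place the finiteness hypotheses on $S$ (via finitariness of the étale topology) are used. Thus $Z \coloneqq L_{\et, \AffSpc{1}} \tau_{\ge 1} X$ is again a pointed connected sheaf. Since $X$ is $\AffSpc{1}$-invariant, applying $L_{\et, \AffSpc{1}}$ to $j$ produces a morphism $Z \to X$, and by the universal property of the connected cover applied to the connected sheaf $Z$ this factors essentially uniquely as $Z \xrightarrow{r} \tau_{\ge 1} X \xrightarrow{j} X$. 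Now the standard retract argument runs: from $j \circ r \circ \eta = j$ together with the fact that postcomposition with $j$ is an equivalence $\Map{}{\tau_{\ge 1}X}{\tau_{\ge 1}X} \xrightarrow{\simeq} \Map{}{\tau_{\ge 1}X}{X}$ (universal property with $W = \tau_{\ge 1}X$), I get $r \circ \eta = \id{\tau_{\ge 1}X}$; and from $\eta \circ r \circ \eta = \eta$ together with the fact that precomposition with the $\AffSpc{1}$-equivalence $\eta$ is an equivalence $\Map{}{Z}{Z} \xrightarrow{\simeq} \Map{}{\tau_{\ge 1}X}{Z}$ (as $Z$ is $\AffSpc{1}$-invariant), I get $\eta \circ r = \id{Z}$. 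Hence $\eta$ is an equivalence and $\tau_{\ge 1}X \cong Z$ is $\AffSpc{1}$-invariant.

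I do not expect a genuine obstacle here: once \cref{lem:etale-motives:preserves-connectedness} is available, the rest is formal manipulation with the universal properties of a Bousfield localization and of the connected cover. The main thing to get right is the \emph{choice} of strategy. The naïve approach — noting that $\SpcEt{S} \subseteq \ShvTopH{\et}{\Sm{S}}$ is closed under limits and trying to reduce the claim to $\AffSpc{1}$-invariance of $\tau_{\le 0}X = \Fib{X \to \tau_{\le 0}X}$'s quotient, i.e.\ of $\tau_{\le 0} X$ itself — looks unpromising, because although the presheaf $U \mapsto \pi_0(X(U))$ is $\AffSpc{1}$-invariant, étale sheafification does not in general preserve $\AffSpc{1}$-invariance, so this would not obviously pass to $\tau_{\le 0}X$. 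The retract argument above circumvents this entirely by never touching $\tau_{\le 0}X$ except through the (trivial) vanishing of maps out of connected sheaves into it.
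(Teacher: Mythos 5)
Your proof is correct and takes essentially the same approach as the paper: both factor the counit $j\colon\tau_{\ge 1}X\to X$ through $\eta\colon\tau_{\ge 1}X\to L_{\et,\AffSpc{1}}\tau_{\ge 1}X$ using $\AffSpc{1}$-invariance of $X$, then use \cref{lem:etale-motives:preserves-connectedness} to factor the resulting map $L_{\et,\AffSpc{1}}\tau_{\ge 1}X\to X$ back through $j$, yielding a retraction $r\circ\eta\simeq\id{}$. The only (cosmetic) difference is the final step: the paper stops at the retract and invokes closure of local objects under retracts, whereas you additionally verify $\eta\circ r\simeq\id{}$ to conclude $\eta$ is an equivalence outright; both conclusions are immediate once the retract is in hand.
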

\begin{proof}
    Consider the counit $\tau_{\ge 1} X \to X$.
    Since $X$ is $\AffSpc{1}$-invariant by assumption,
    this map factors over the canonical map $\tau_{\ge 1} X \to L_{\et, \AffSpc{1}} \tau_{\ge 1} X$.
    As $L_{\et, \AffSpc{1}} \tau_{\ge 1} X$ is connected by \cref{lem:etale-motives:preserves-connectedness},
    it follows that $L_{\et, \AffSpc{1}} \tau_{\ge 1} X \to X$ factors over $\tau_{\ge 1} X \to X$.
    Hence, we have the following commutative diagram:
    \begin{center}
        \begin{tikzcd}
            \tau_{\ge 1} X \ar[r] \ar[dr] &L_{\et, \AffSpc{1}} \tau_{\ge 1} X \ar[d] \ar[r] &\tau_{\ge 1} X \ar[dl]\\
            &X\rlap{,}
        \end{tikzcd}
    \end{center}
    where the top composition is the identity on $\tau_{\ge 1} X$.
    As $\AffSpc{1}$-invariant objects are stable under retracts (this holds for the local 
    objects of any Bousfield localization),
    the proposition follows.
\end{proof}

\begin{lem} \label{lem:etale-motives:morphism-to-local-preserves-A1}
    Let $S$ be a qcqs scheme of finite Krull dimension, that is moreover étale bounded, and $\overline{s} \in S$ a geometric point.
    The functor
    \begin{equation*}
        \rho_{\overline{s}}^* \colon \ShvTopH{\et}{\Sm{S}} \to \ShvTopH{\et}{\Sm{\shens{S}{s}}}
    \end{equation*}
    from \cref{defn:etale:morphism-to-local} preserves $\AffSpc{1}$-invariant sheaves.
\end{lem}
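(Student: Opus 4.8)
The plan is to verify the $\AffSpc{1}$-invariance of $\rho_{\overline{s}}^* X$ directly on sections over representables and to reduce to the $\AffSpc{1}$-invariance of $X$ via the ``stalk formula'' of \cref{lem:etale:global-to-local-formula}. Since $L_{\et,\AffSpc{1}}$ is by definition the Bousfield localization at the projections $T \times \AffSpc{1} \to T$ for $T \in \Sm{\shens{S}{s}}$, and these are morphisms between representable sheaves, a sheaf $F \in \ShvTopH{\et}{\Sm{\shens{S}{s}}}$ is $\AffSpc{1}$-invariant if and only if $F(T) \to F(T \times \AffSpc{1})$ is an equivalence for every $T \in \Sm{\shens{S}{s}}$. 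So fix an $\AffSpc{1}$-invariant $X \in \ShvTopH{\et}{\Sm{S}}$ and a $T \in \Sm{\shens{S}{s}}$; I must show that the projection $q \colon T \times \AffSpc{1} \to T$ induces an equivalence $(\rho_{\overline{s}}^* X)(T) \to (\rho_{\overline{s}}^* X)(T \times \AffSpc{1})$.

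First I would apply \cref{lem:etale:hens-descent} to write $T$ as a cofiltered limit over a cofinal system $\{\overline{s} \to U_i \to S\}_i$ of quasicompact étale neighbourhoods of $\overline{s}$; after the usual reindexing we may assume that there is a fixed scheme $T_0$, smooth over some $U_0$ in this system (hence smooth over $S$), with $T_i \cong T_0 \times_{U_0} U_i$ and $T \cong \limil{i} T_i$. Then $q$ is the base change to $\shens{S}{s}$ of the projection $q_0 \colon T_0 \times \AffSpc{1} \to T_0$; the scheme $T_0 \times \AffSpc{1}$ is again smooth over $U_0$ (and over $S$), we have $T_i \times \AffSpc{1} \cong (T_0 \times \AffSpc{1}) \times_{U_0} U_i$, and $T \times \AffSpc{1} \cong \limil{i} (T_i \times \AffSpc{1})$. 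Hence \cref{lem:etale:global-to-local-formula} applies to both $T$ and $T \times \AffSpc{1}$, and by naturality of the equivalence produced there in the scheme being evaluated it yields a commutative square
\begin{center}
    \begin{tikzcd}
        (\rho_{\overline{s}}^* X)(T) \ar[d, "q^*"'] \ar[r, "\simeq"] &\colimil{i} X(T_i) \ar[d] \\
        (\rho_{\overline{s}}^* X)(T \times \AffSpc{1}) \ar[r, "\simeq"] &\colimil{i} X(T_i \times \AffSpc{1})
    \end{tikzcd}
\end{center}
in which the right-hand vertical map is the filtered colimit over $i$ of the maps $X(T_i) \to X(T_i \times \AffSpc{1})$ induced by the projections $T_i \times \AffSpc{1} \to T_i$.

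Finally, each $T_i$ lies in $\Sm{S}$, so the $\AffSpc{1}$-invariance of $X$ makes every map $X(T_i) \to X(T_i \times \AffSpc{1})$ an equivalence; a filtered colimit of equivalences is an equivalence, so the right-hand vertical map above is an equivalence, and therefore so is $q^*$. Since $T \in \Sm{\shens{S}{s}}$ was arbitrary, $\rho_{\overline{s}}^* X$ is $\AffSpc{1}$-invariant. The one genuinely delicate point I expect is the naturality statement used above: the equivalence $(\rho_{\overline{s}}^* X)(T) \simeq \colimil{i} X(T_i)$ of \cref{lem:etale:global-to-local-formula} comes from identifying $(\rho_{\overline{s}}^* X)(T)$ with a colimit over a comma category and restricting to the cofinal subsystem $\{T_i\}$, and one has to check this is compatible with restriction along $q$ and $q_0$. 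This is a routine unwinding of the proof of that lemma; alternatively, one can run the argument first for $N$-truncated $X$ (where this naturality is transparent and no connectivity estimates intervene) and then bootstrap to general $X$ exactly as in the second half of the proof of \cref{lem:etale:global-to-local-formula}.
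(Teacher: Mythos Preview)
Your proof is correct and follows essentially the same approach as the paper: descend $T$ to a system $(T_i)_i$ via \cref{lem:etale:hens-descent}, apply the stalk formula \cref{lem:etale:global-to-local-formula} to both $T$ and $T \times \AffSpc{1}$, and conclude from the termwise $\AffSpc{1}$-invariance of $X$. The paper's version is terser and does not spell out the naturality point you flag, but the argument is the same.
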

\begin{proof}
    Let $X \in \ShvTopH{\et}{\Sm{S}}$ be $\AffSpc{1}$-invariant.
    By definition, the strict henselization 
    \begin{equation*}
        \shens{S}{s} \cong \lim{\overline{s} \to U \xrightarrow{\et} S} U
    \end{equation*}
    is the limit over all étale neighborhoods $U$ of $\overline{s}$ in $S$.
    Let $T \in \Sm{\shens{S}{s}}$, so that by \cref{lem:etale:hens-descent} 
    we may choose a cofinal system of étale neighborhoods $\overline{s} \to U_i \to S$
    and for every $i$ a smooth $U_i$-scheme $T_i$ 
    such that $T \cong \limil{i} T_i$.
    By \cref{lem:etale:global-to-local-formula} we have that 
    $\rho_{\overline{s}}^*(X)(T) \cong \colimil{i} X(T_i)$.
    The result follows from $\AffSpc{1}$-invariance of $X$ 
    and the fact that $\limil{i} (T_i \times \AffSpc{1}) \cong T \times \AffSpc{1}$.
\end{proof}

\begin{lem} \label{lem:etale-motives:morphism-to-local-detects-A1}
    Let $S$ be a scheme of finite Krull dimension that is moreover étale bounded.
    The collection of functors
    \begin{equation*}
        \rho_{\overline{s}}^* \colon \ShvTopH{\et}{\Sm{S}} \to \ShvTopH{\et}{\Sm{\shens{S}{s}}}
    \end{equation*}
    from \cref{defn:etale:morphism-to-local} where $\overline{s}$ ranges over the collection 
    of geometric points of $S$ jointly detect $\AffSpc{1}$-invariant objects,
    i.e.\ if $\rho_{\overline{s}}^* X$ is $\AffSpc{1}$-invariant for all $\overline{s}$,
    then $X$ is $\AffSpc{1}$-invariant.
\end{lem}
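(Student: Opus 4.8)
The plan is to reformulate $\AffSpc{1}$-invariance of $X$ in terms of the unit map of the localization functor $L_{\et,\AffSpc{1}}$ and then to pull everything back along the jointly conservative family $\{\rho_{\overline{s}}^*\}_{\overline{s}}$. Recall that $X$ is $\AffSpc{1}$-invariant if and only if the unit $u \colon X \to \iota_{\et,\AffSpc{1}} L_{\et,\AffSpc{1}} X$ is an equivalence (being $\AffSpc{1}$-invariant just means lying in the essential image of $\iota_{\et,\AffSpc{1}}$). Since the functors $\rho_{\overline{s}}^* \colon \ShvTopH{\et}{\Sm{S}} \to \ShvTopH{\et}{\Sm{\shens{S}{s}}}$ are jointly conservative by \cref{lem:etale:morphism-to-local-conservative}, it suffices to show that $\rho_{\overline{s}}^* u$ is an equivalence for every geometric point $\overline{s} \in S$.

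To identify $\rho_{\overline{s}}^* u$, I would first record that $\rho_{\overline{s}}^*$ sends $\AffSpc{1}$-local equivalences to $\AffSpc{1}$-local equivalences. Indeed, $\rho_{\overline{s}}^*$ is a left adjoint, hence colimit-preserving, and being (the hypersheafification of) left Kan extension along the base-change functor $\Sm{S} \to \Sm{\shens{S}{s}}$ it carries the sheaf represented by $T \in \Sm{S}$ to the one represented by $T \times_S \shens{S}{s}$; in particular it carries each generating projection $T \times_S \AffSpc{1} \to T$ to the generating projection $(T \times_S \shens{S}{s}) \times_{\shens{S}{s}} \AffSpc{1} \to T \times_S \shens{S}{s}$, so it preserves the strongly saturated class these maps generate. (Alternatively, this can be checked directly from the colimit formula of \cref{lem:etale:global-to-local-formula} together with \cref{lem:etale:hens-descent}.) It follows that $\rho_{\overline{s}}^* u$ is an $\AffSpc{1}$-local equivalence whose target $\rho_{\overline{s}}^* \iota_{\et,\AffSpc{1}} L_{\et,\AffSpc{1}} X$ is $\AffSpc{1}$-invariant by \cref{lem:etale-motives:morphism-to-local-preserves-A1}. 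An $\AffSpc{1}$-local equivalence with $\AffSpc{1}$-invariant target exhibits that target as the $\AffSpc{1}$-localization of its source, so $\rho_{\overline{s}}^* u$ is identified with the localization unit $\rho_{\overline{s}}^* X \to L_{\et,\AffSpc{1}} \rho_{\overline{s}}^* X$.

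Finally, by hypothesis $\rho_{\overline{s}}^* X$ is already $\AffSpc{1}$-invariant, so this unit is an equivalence; hence $\rho_{\overline{s}}^* u$ is an equivalence for every $\overline{s}$, and therefore $u$ is an equivalence, i.e.\ $X$ is $\AffSpc{1}$-invariant. The one slightly delicate point is the compatibility used in the middle step, namely that $\rho_{\overline{s}}^*$ genuinely preserves $\AffSpc{1}$-equivalences and takes representables to the expected base changes; once that is in place, the statement is a formal consequence of the two facts already established, \cref{lem:etale:morphism-to-local-conservative} and \cref{lem:etale-motives:morphism-to-local-preserves-A1}.
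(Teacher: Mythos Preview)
Your argument is correct and follows essentially the same route as the paper: both proofs consider the unit $X \to L_{\et,\AffSpc{1}} X$, observe that $\rho_{\overline{s}}^*$ preserves $\AffSpc{1}$-equivalences (because it sends the generating projections to generating projections) and $\AffSpc{1}$-invariant objects (\cref{lem:etale-motives:morphism-to-local-preserves-A1}), and then conclude via the joint conservativity of \cref{lem:etale:morphism-to-local-conservative}. The only cosmetic difference is that the paper phrases the key step as ``an $\AffSpc{1}$-equivalence between $\AffSpc{1}$-invariant sheaves is an equivalence'', whereas you go through identifying $\rho_{\overline{s}}^* u$ with the localization unit over $\shens{S}{s}$; these are the same observation.
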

\begin{proof}
    Let $X \in \ShvTopH{\et}{\Sm{S}}$,
    such that $\rho_{\overline{s}}^* X$ is $\AffSpc{1}$-invariant for all $\overline{s}$.
    Consider the canonical map $f \colon X \to L_{\et,\AffSpc{1}} X$.
    We have seen in \cref{lem:etale-motives:morphism-to-local-preserves-A1} 
    that $\rho_{\overline{s}}^* L_{\et,\AffSpc{1}} X$ is $\AffSpc{1}$-invariant.
    Moreover, since $\rho_{\overline{s}}^* (\AffSpc{1}_S \times_S T) = \AffSpc{1}_{\shens{S}{s}} \times_{\shens{S}{s}} \rho_{\overline{s}}^* T$
    for every smooth $S$-scheme $T$, 
    it follows immediately that $\rho_{\overline{s}}^*$ preserves $\AffSpc{1}$-equivalences.
    In particular, we see that $\rho_{\overline{s}}^* f$ 
    is an $\AffSpc{1}$-equivalence between $\AffSpc{1}$-invariant sheaves, hence an equivalence.
    This allows us to conclude from \cref{lem:etale:morphism-to-local-conservative} 
    that already $f$ is an equivalence, i.e.\ $X$ is $\AffSpc{1}$-invariant.
\end{proof}

\begin{defn} \label{def:etale-nilpotent:etale-nilpotent}
    Let $S$ be a scheme.
    Let $\Cat L \colon \Stab{\ShvTopH{\et}{\Sm{S}}} \to \Stab{\ShvTopH{\et}{\Sm{S}}}$ be a left Bousfield localization.
    Let $X \in \ShvTopH{\et}{\Sm{S}}_*$.
    We say that $X$ is \emph{$\Cat L$-locally étale $\AffSpc{1}$-nilpotent} if
    there exists a highly connected tower $(X_n)_n$ under $X$ such that the following holds:
    \begin{enumerate}
        \item $X_0 \cong *$.
        \item $X \cong \limil{n} X_n$.
        \item Each of the morphisms $X_{n+1} \to X_n$ is part of a fiber sequence $X_{n+1} \to X_n \to K_n$
            where $K_n \cong \pLoop E_n$ is a connected infinite loop sheaf
            for some $E_n \in \Stab{\ShvTopH{\et}{\Sm{S}}}_{\ge 1}$.
        \item For every $n$ the object $\Cat L E_n$ is $\AffSpc{1}$-invariant.
    \end{enumerate}
    In the sequel, we will consider the following three special cases:
    \begin{itemize}
        \item If $\Cat L = \id{}$ is the identity functor, we will say that $X$ is \emph{étale $\AffSpc{1}$-nilpotent}.
        \item If $\Cat L = L_p$ is $p$-completion for some prime $p$, we will say that $X$ is \emph{$p$-completely étale $\AffSpc{1}$-nilpotent}.
        \item If $\Cat L = L_\Q$ is rationalization, we will say that $X$ is \emph{rationally étale $\AffSpc{1}$-nilpotent}.
    \end{itemize}
\end{defn}

\begin{lem} \label{lem:etale-nilpotent:etale-nilpotent-implies-p-comp-rationally}
    Let $S$ be a scheme.
    Let $X \in \ShvTopH{\et}{\Sm{S}}_*$ be étale $\AffSpc{1}$-nilpotent.
    Then $X$ is $p$-completely étale $\AffSpc{1}$-nilpotent for every prime $p$,
    and moreover rationally étale $\AffSpc{1}$-nilpotent.
\end{lem}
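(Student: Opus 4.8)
The plan is to keep the very tower that witnesses étale $\AffSpc{1}$-nilpotence of $X$ and only re-inspect the last condition of \cref{def:etale-nilpotent:etale-nilpotent}. So fix a highly connected tower $(X_n)_n$ under $X$ with $X_0 \cong *$, $X \cong \limil{n} X_n$, and fiber sequences $X_{n+1} \to X_n \to K_n$ with $K_n \cong \pLoop E_n$ for $E_n \in \Stab{\ShvTopH{\et}{\Sm{S}}}_{\ge 1}$, where each $E_n$ is $\AffSpc{1}$-invariant. Conditions (1), (2) and (3) of \cref{def:etale-nilpotent:etale-nilpotent} make no reference to $\Cat L$, so they hold verbatim for $\Cat L = L_p$ and $\Cat L = L_\Q$; the only remaining point is that $L_p E_n$, respectively $L_\Q E_n$, is again $\AffSpc{1}$-invariant.

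For the $p$-complete statement, recall $L_p E_n \cong \limil{k} E_n \sslash p^k$, where $E_n \sslash p^k = \Cofib{E_n \xrightarrow{p^k} E_n}$ is a finite colimit built from $E_n$. The inclusion $\iota_{\et, \AffSpc{1}} \colon \SHoneet{S} \to \Stab{\ShvTopH{\et}{\Sm{S}}}$ is fully faithful with essential image exactly the $\AffSpc{1}$-invariant sheaves of spectra; as a right adjoint it preserves all limits, and a limit-preserving functor between stable $\infty$-categories is automatically exact, hence also preserves finite colimits. Thus the $\AffSpc{1}$-invariant sheaves of spectra are stable under finite colimits and under arbitrary limits. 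Applying this twice shows that each $E_n \sslash p^k$ is $\AffSpc{1}$-invariant, and hence so is their limit $L_p E_n$. Therefore the same tower exhibits $X$ as $p$-completely étale $\AffSpc{1}$-nilpotent.

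For the rational statement the argument is identical in spirit, except that $L_\Q E_n$ is now a filtered colimit of copies of $E_n$ rather than a limit of finite colimits, so what is needed is that the $\AffSpc{1}$-invariant sheaves of spectra are stable under filtered colimits. This is exactly \cref{lem:etale-mot:A1-stable-under-filtered}. Hence $L_\Q E_n$ is $\AffSpc{1}$-invariant and the same tower once more does the job.

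The only genuine content is the (co)limit-stability of the subcategory of $\AffSpc{1}$-invariant sheaves of spectra. Stability under finite colimits and arbitrary limits is formal from the adjunction together with stability of the categories involved, so the $p$-complete assertion is essentially bookkeeping on the fixed tower. Stability under filtered colimits, needed for the rational assertion, is the only delicate input: it rests on compactness of the representables $\pSus T$, i.e.\ on finite étale cohomological dimension, as packaged in \cref{lem:etale-mot:A1-stable-under-filtered} and \cref{lem:etale:coherent-compact}. I expect this — and in particular keeping track of which finiteness hypotheses on $S$ it really requires — to be the main (and essentially the only) obstacle.
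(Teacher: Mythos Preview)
Your argument is essentially identical to the paper's: both keep the given tower and show that $L_p E_n$ is $\AffSpc{1}$-invariant as a limit of the cofibers $E_n \sslash p^k$, and that $L_\Q E_n$ is $\AffSpc{1}$-invariant as a filtered colimit of copies of $E_n$ via \cref{lem:etale-mot:A1-stable-under-filtered}. Your closing remark about the finiteness hypotheses on $S$ needed for \cref{lem:etale-mot:A1-stable-under-filtered} is well taken---the paper's proof invokes the same lemma without restating those hypotheses, so this is a gap shared with the original (harmless in practice, since the lemma is only applied under those assumptions).
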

\begin{proof}
    Let $E \in \Sp(\ShvTopH{\et}{\Sm{S}})$ be $\AffSpc{1}$-invariant.
    Then $L_p E \cong \limil{n} E \sslash p^n$ by \cite[Lemma 2.5]{mattis2024unstable},
    and hence is $\AffSpc{1}$-invariant as a limit of $\AffSpc{1}$-invariant sheaves.
    Similarly, $L_\Q E$ can be written as a filtered colimit of copies of $E$ 
    by \cite[Lemma 2.6]{mattis2024fracture}.
    Hence, it is $\AffSpc{1}$-invariant since 
    $\AffSpc{1}$-invariant sheaves of spectra are stable under colimits by \cref{lem:etale-mot:A1-stable-under-filtered}.
    These two observations immediately imply the lemma.
\end{proof}

\begin{lem} \label{lem:etale-nilpotent:etale-nilpotent-Kn-A1}
    Let $S$ be a scheme.
    Let $X \in \ShvTopH{\et}{\Sm{S}}_*$ be étale $\AffSpc{1}$-nilpotent
    (resp.\ $p$-completely étale $\AffSpc{1}$-nilpotent, resp.\ rationally étale $\AffSpc{1}$-nilpotent),
    with sheaves $(X_n, K_n, E_n)$ as in \cref{def:etale-nilpotent:etale-nilpotent}.
    Then $K_n$ (resp.\ $L_p K_n$, resp.\ $L_\Q K_n$) is $\AffSpc{1}$-invariant.
\end{lem}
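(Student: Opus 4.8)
The plan is to reduce all three cases to a single observation: the functor $\pLoop\colon\Stab{\ShvTopH{\et}{\Sm{S}}}\to\ShvTopH{\et}{\Sm{S}}_*$ carries $\AffSpc{1}$-invariant sheaves of spectra to $\AffSpc{1}$-invariant pointed sheaves. This follows from \cref{lem:etale-mot:SHone-adjunction}, which provides a natural equivalence $\pLoop\,\iota_{\et,\AffSpc{1}}\simeq\iota_{\et,\AffSpc{1}}\,\pLoop$, together with the fact (the lemma immediately following \cref{lem:etale-mot:SHone-adjunction}) that the $\AffSpc{1}$-invariant sheaves of spectra are precisely the essential image of $\iota_{\et,\AffSpc{1}}\colon\SHoneet{S}\to\Stab{\ShvTopH{\et}{\Sm{S}}}$. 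In every case we have $K_n\cong\pLoop E_n$ with $E_n\in\Stab{\ShvTopH{\et}{\Sm{S}}}_{\ge1}$, and condition (4) of \cref{def:etale-nilpotent:etale-nilpotent} asserts that $\Cat L E_n$ is $\AffSpc{1}$-invariant; the task is to show that $\Cat L K_n=\Cat L\pLoop E_n$ is $\AffSpc{1}$-invariant.

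For $\Cat L=\id{}$ there is nothing further to do: $E_n$ is $\AffSpc{1}$-invariant by (4), hence so is $K_n=\pLoop E_n$. For $\Cat L=L_\Q$ I would use that $\pLoop$ commutes with rationalization on $1$-connective sheaves of spectra (the étale-sheaf analogue of \cite[Lemma 3.18]{mattis2024fracture}, as invoked in the proof of \cref{lem:etale-nilpotent:effective-Postnikov-refinement-rational}), so that $L_\Q K_n=L_\Q\pLoop E_n\simeq\pLoop L_\Q E_n$; since $L_\Q E_n$ is $\AffSpc{1}$-invariant by (4), the right-hand side is $\AffSpc{1}$-invariant by the first paragraph.

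The only case needing an extra input is $\Cat L=L_p$. Here stable $p$-completion does not commute with $\pLoop$ on the nose; instead \cite[Lemma 3.17]{mattis2024unstable} (as used in the proof of \cref{lem:p-comp:loop-detects-peq}) gives $L_p\pLoop E_n\simeq\pLoop\tau_{\ge1}L_p E_n$, where $L_p$ on the right denotes the stable $p$-completion. Comparing homotopy sheaves one has $\pLoop\tau_{\ge1}(-)\simeq\tau_{\ge1}\pLoop(-)$, so that $L_p K_n\simeq\tau_{\ge1}\pLoop L_p E_n$. Now $L_p E_n$ is $\AffSpc{1}$-invariant by (4), hence $\pLoop L_p E_n$ is $\AffSpc{1}$-invariant by the first paragraph, and finally its $1$-connective cover $\tau_{\ge1}\pLoop L_p E_n$ is $\AffSpc{1}$-invariant by \cref{lem:etale-motives:connected-cover-invariant}. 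The main (and only real) obstacle is exactly this last point: one cannot avoid interposing $\tau_{\ge1}$, since $L_p E_n$ need not remain $1$-connective in a topos of positive local homotopy dimension, so the argument genuinely relies on connective covers of $\AffSpc{1}$-invariant pointed sheaves being again $\AffSpc{1}$-invariant; everything else is formal bookkeeping with adjunctions and the $t$-structure.
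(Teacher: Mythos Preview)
Your proof is correct and follows essentially the same approach as the paper: in each case you identify $\Cat L K_n$ with $\pLoop$ (or $\tau_{\ge 1}\pLoop$) of the $\AffSpc{1}$-invariant sheaf of spectra $\Cat L E_n$, using \cite[Lemma 3.18]{mattis2024fracture} for $\Cat L=L_\Q$ and \cite[Lemma 3.17]{mattis2024unstable} for $\Cat L=L_p$, and then conclude via \cref{lem:etale-mot:SHone-adjunction} together with \cref{lem:etale-motives:connected-cover-invariant} for the $p$-complete case. The paper's proof is identical in structure and in the lemmas invoked.
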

\begin{proof}
    If $X$ is étale $\AffSpc{1}$-nilpotent,
    then $E_n$ is $\AffSpc{1}$-invariant,
    and hence so is $K_n \cong \pLoop E_n$ by \cref{lem:etale-mot:SHone-adjunction}.

    If $X$ is $p$-completely $\AffSpc{1}$-nilpotent,
    then $L_p E_n$ is $\AffSpc{1}$-invariant.
    We have $L_p K_n \cong L_p \pLoop E_n \cong \tau_{\ge 1} \pLoop L_p E_n$ 
    by \cite[Lemma 3.17]{mattis2024unstable} (using that $E_n$ is $1$-connective).
    Hence, $L_p K_n$ is $\AffSpc{1}$-invariant by \cref{lem:etale-mot:SHone-adjunction,lem:etale-motives:connected-cover-invariant}.

    Similarly, if $X$ is rationally $\AffSpc{1}$-nilpotent,
    then $L_\Q E_n$ is $\AffSpc{1}$-invariant.
    We have $L_\Q K_n \cong L_\Q \pLoop E_n \cong \pLoop L_\Q E_n$ 
    by \cite[Lemma 3.18]{mattis2024fracture} (using that $E_n$ is $1$-connective).
    Hence, $L_\Q K_n$ is $\AffSpc{1}$-invariant by \cref{lem:etale-mot:SHone-adjunction}.
\end{proof}

\begin{lem} \label{lem:etale-nilpotent:nilpotent-A1-inv}
    Let $S$ be a qcqs scheme of finite Krull-dimension, that is moreover étale bounded.
    Let $X \in \ShvTopH{\et}{\Sm{S}}_*$ be étale $\AffSpc{1}$-nilpotent 
    (resp.\ $p$-completely étale $\AffSpc{1}$-nilpotent, resp.\ rationally étale $\AffSpc{1}$-nilpotent).
    Then $X$ (resp.\ $L_p X$, resp.\ $L_\Q X$) is $\AffSpc{1}$-invariant.
\end{lem}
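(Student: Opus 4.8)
The plan is to argue directly from the witnessing data. Write $\Cat L$ for the relevant localization, so that $\Cat L = \id{}$ in the first case, $\Cat L = L_p$ in the second, and $\Cat L = L_\Q$ in the third, and fix a highly connected tower $(X_n)_n$ under $X$ together with fiber sequences $X_{n+1} \to X_n \to K_n$, $K_n \cong \pLoop E_n$, as in \cref{def:etale-nilpotent:etale-nilpotent}. By \cref{lem:etale-nilpotent:etale-nilpotent-Kn-A1} we already know that $\Cat L K_n$ is $\AffSpc{1}$-invariant for every $n$. The strategy is then: (1) show that every $X_n$ is nilpotent; (2) show that $\Cat L$ commutes with the limit $\limil{n} X_n$; (3) show by induction on $n$ that $\Cat L X_n$ is $\AffSpc{1}$-invariant. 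Granting this, we conclude that $\Cat L X \cong \Cat L \limil{n} X_n \cong \limil{n} \Cat L X_n$ is $\AffSpc{1}$-invariant, since $\AffSpc{1}$-invariant sheaves, being the local objects of the Bousfield localization $L_{\et,\AffSpc{1}}$, are closed under limits.

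For step (1): each $K_n \cong \pLoop E_n$ is a connected infinite loop sheaf, hence nilpotent, and since it is an infinite loop sheaf its Postnikov tower is a tower of principal fibrations with Eilenberg–MacLane layers whose connectivity tends to infinity. Pulling this tower back along $X_n \to K_n$ refines the single fiber sequence $X_{n+1} \to X_n \to K_n$ into a highly connected principal refinement of $X_{n+1}$ over $X_n$ with Eilenberg–MacLane layers; starting from $X_0 \cong *$ and applying \cite[Lemma A.12]{mattis2024unstable} inductively — exactly as in the proof of \cref{lem:p-comp:connectivity} — one deduces that all the $X_n$ are nilpotent. For step (2), in the cases $\Cat L = L_p$ and $\Cat L = L_\Q$ I would use that $\ShvTopH{\et}{\Sm{S}}$ admits a locally finite dimensional cover (\cref{lem:etale:cover}) and apply \cref{lem:p-comp:p-comp-tower-limit}, respectively its rational counterpart (proven in the same way, cf.\ \cite[Proposition 6.9]{mattis2024fracture}), to the highly connected tower $(X_n)_n$ of nilpotent sheaves; for $\Cat L = \id{}$ there is nothing to do.

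For step (3), the base case $\Cat L X_0 \cong \Cat L * \cong *$ is clear, so assume $\Cat L X_n$ is $\AffSpc{1}$-invariant. If $\Cat L = \id{}$, then $X_{n+1} = \Fib{X_n \to K_n}$ is a finite limit of $\AffSpc{1}$-invariant sheaves, hence $\AffSpc{1}$-invariant. If $\Cat L = L_p$, apply the Bousfield–Kan fiber lemma \cref{lem:fib-lem:main-thm} to the map $X_n \to K_n$ of pointed nilpotent sheaves: it yields $L_p \tau_{\ge 1} \Fib{X_n \to K_n} \cong \tau_{\ge 1} \Fib{L_p X_n \to L_p K_n}$. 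Since $X_{n+1} = \Fib{X_n \to K_n}$ is connected (the tower is highly connected), the left-hand side is just $L_p X_{n+1}$, while the right-hand side is the connected cover of a pullback of the $\AffSpc{1}$-invariant sheaves $L_p X_n$ and $L_p K_n$, hence is $\AffSpc{1}$-invariant by \cref{lem:etale-motives:connected-cover-invariant}. The case $\Cat L = L_\Q$ is identical, using the rational analogue of the Bousfield–Kan fiber lemma. This closes the induction, and the chain of equivalences from step~(2) finishes the proof.

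The genuinely content-bearing inputs — the commutation of $\Cat L$ with the inverse limit $\limil{n} X_n$ and with the defining fiber sequences — are already available as \cref{lem:p-comp:p-comp-tower-limit} and \cref{lem:fib-lem:main-thm} (and their rational versions), so the part of this argument that needs care is step~(1): one must set up the Postnikov refinement of the layers $K_n$ carefully enough that the nilpotence hypotheses of those results are actually met. A secondary subtlety is the spurious $\pi_0$ that $\Cat L$ can introduce through the fiber lemma, which is absorbed using that each $X_{n+1}$ is connected together with \cref{lem:etale-motives:connected-cover-invariant}.
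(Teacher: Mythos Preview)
Your proposal is correct and follows essentially the same route as the paper's proof: establish nilpotence of the $X_n$, commute $\Cat L$ past the limit via \cref{lem:p-comp:p-comp-tower-limit} (resp.\ \cite[Proposition 6.9]{mattis2024fracture}), and induct using the Bousfield--Kan fiber lemma together with \cref{lem:etale-motives:connected-cover-invariant}. The only difference is cosmetic: in step~(1) the paper applies \cite[Lemma A.12]{mattis2024unstable} directly to the fiber sequence $X_{n+1} \to X_n \to K_n$ (using that $K_n$ is nilpotent by \cite[Lemma A.11]{mattis2024unstable}), rather than first refining via the Postnikov tower of $K_n$ --- your extra refinement is harmless but unnecessary.
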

\begin{proof}
    Let $\Cat L$ be one of $\id{}$, $L_p$ or $L_\Q$, 
    and suppose that $X$ is $\Cat L$-locally étale $\AffSpc{1}$-nilpotent.
    Choose sheaves $(X_n)_n$, $(K_n)_n$ and sheaves of spectra $(E_n)_n$ as in \cref{def:etale-nilpotent:etale-nilpotent}.
    All the sheaves $X_n$ are nilpotent:
    As $X_0 = *$ is nilpotent, this follows inductively,
    using the fiber sequence $X_{n+1} \to X_n \to K_n$ and
    \cite[Lemma A.12]{mattis2024unstable}, and the fact that 
    $K_n$ as a connected infinite loop sheaf is also nilpotent,
    cf.\ \cite[Lemma A.11]{mattis2024unstable}.

    Note that $\Cat L X \cong \limil{n} \Cat L X_n$:
    for $\Cat L = \id{}$ this is by definition, for $\Cat L = L_p$ 
    we use \cref{lem:p-comp:p-comp-tower-limit}, whereas for $\Cat L = L_\Q$
    we use \cite[Proposition 6.9]{mattis2024fracture} (for the latter two cases 
    we need that $\ShvTopH{\et}{\Sm{S}}$ admits a locally finite dimensional cover,
    which was shown in \cref{lem:etale:cover}).
    As $\AffSpc{1}$-invariant sheaves are stable under limits,
    it is sufficient to inductively show that $\Cat L X_n$ is $\AffSpc{1}$-invariant.
    For $n = 0$ this is clear as $\Cat L X_0 = \Cat L * \cong *$.
    For $n \ge 0$ consider the fiber sequence 
    $X_{n+1} \to X_n \to K_n$.
    Applying $\Cat L$ we get
    \begin{equation*}
        \Cat L X_{n+1} \cong \tau_{\ge 1} \Fib{\Cat L X_n \to \Cat L K_n},
    \end{equation*}
    this is clear if $\Cat L = \id{}$, and follows from 
    \cref{lem:fib-lem:main-thm} in the case of $\Cat L = L_p$,
    and from \cite[Lemma 3.13]{mattis2024fracture} in the case of $\Cat L = L_\Q$.
    As $\Cat L X_n$ is $\AffSpc{1}$-invariant by the induction hypothesis,
    and $\Cat L K_n$ is $\AffSpc{1}$-invariant by \cref{lem:etale-nilpotent:etale-nilpotent-Kn-A1},
    we get that also $\Cat L X_{n+1}$ is $\AffSpc{1}$-invariant (using again \cref{lem:etale-motives:connected-cover-invariant}).
\end{proof}
      \label{sec:etale-A1-inv}
\section{Rational unstable étale motives}
In this section we prove that over a perfect field of finite cohomological dimension 
any nilpotent and $2$-effective motivic space satisfies étale hyperdescent.
\begin{prop} \label{lem:rational:transfers-etale-descent}
    Let $k$ be a perfect field, with $\cd{}{k} < \infty$.
    Let $E \in \SH{k}$ be rational.
    Then the Nisnevich sheaf of spectra $\omega^\infty E \in \SHone{k}$ 
    satisfies étale hyperdescent.
\end{prop}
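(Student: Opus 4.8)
The plan is to deduce this directly from the stable étale hyperdescent of rational motivic spectra. By \cite[§16]{cisinski2019triangulated}, since $\cd{}{k} < \infty$, the rational motivic spectrum $E$ satisfies étale hyperdescent; the only remaining task is to transport this property along $\omega^\infty$.

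The key point is that the presheaf of spectra on $\Sm{k}$ underlying $\omega^\infty E \in \SHone{k}$ is exactly the presheaf of sections of $E$. Concretely, for $U \in \Sm{k}$ the Yoneda lemma together with the $\AffSpc{1}$-invariance of $\omega^\infty E$ gives $(\omega^\infty E)(U) \cong \Map{\SHone{k}}{\Sus U_+}{\omega^\infty E}$, and the adjunction $\sigma^\infty \colon \SHone{k} \rightleftarrows \SH{k} \noloc \omega^\infty$ combined with the defining identity $\SusP U_+ \cong \sigma^\infty \Sus U_+$ identifies the right-hand side with $\Map{\SH{k}}{\SusP U_+}{E}$, naturally in $U$; that is,
\[
(\omega^\infty E)(U) \;\cong\; \Map{\SH{k}}{\SusP U_+}{E} .
\]
Thus the presheaf underlying the Nisnevich sheaf of spectra $\omega^\infty E$ is $U \mapsto \Map{\SH{k}}{\SusP U_+}{E}$.

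Now ``$E$ satisfies étale hyperdescent'' says precisely that this presheaf is an étale hypersheaf on $\Sm{k}$; combined with the previous paragraph, the Nisnevich sheaf of spectra $\omega^\infty E$ is therefore already an étale hypersheaf, which is exactly the claim. I do not expect a genuine obstacle here: once one recalls that $\omega^\infty$ is right adjoint to $\sigma^\infty$ and that $\SusP = \sigma^\infty \Sus$, the argument is formal, the sole nontrivial input being the cited Cisinski--Déglise theorem; the only thing requiring care is matching up the meaning of ``satisfies étale hyperdescent'' on the $\SH{k}$-side and on the $\SHone{k}$-side, which is done by the displayed computation. (Equivalently, one could argue that $\Gm$-stabilization commutes with étale hypersheafification, so that $\omega^\infty$ carries the essential image of $\SHet{k} \hookrightarrow \SH{k}$ into that of $\SHoneet{k} \hookrightarrow \SHone{k}$; this yields the same conclusion.)
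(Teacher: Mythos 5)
Your proposal is correct and takes essentially the same route as the paper: both rest entirely on the Cisinski--Déglise results from \cite[§16]{cisinski2019triangulated}, and both observe that passing between the $\SH{k}$-side and the $\SHone{k}$-side is formal because $(\omega^\infty E)(U)$ is the mapping spectrum out of $\SusP U_+$ into $E$. The paper is just more explicit about how the finiteness of $\cd{}{k}$ is used: it invokes the decomposition $E \cong E_+ \oplus E_-$ from §16.2.1, cites Corollary 16.2.14 to conclude $E_- \cong 0$, and then applies Theorem 16.2.18 which directly says that $\omega^\infty E_+$ satisfies étale hyperdescent — so the paper never even needs to phrase ``$E$ satisfies étale hyperdescent'' as a statement about objects of $\SH{k}$, since the CD theorem is already formulated on the $\SHone{k}$-side. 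Your blanket citation of ``\cite[§16]{cisinski2019triangulated}'' hides exactly this decomposition step (which is where $\cd{}{k} < \infty$ enters), and you should make it explicit when writing this up; otherwise the argument is fine, though note that the $\Map$'s in your display should be mapping \emph{spectra} rather than mapping anima.
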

\begin{proof}
    First note that there is an equivalence $\SH{S}_\Q \cong \mathcal D_{\AffSpc{1}}(S, \Q)$ 
    between the $\infty$-category of rational motivic spectra and Morel's $\AffSpc{1}$-derived category with rational coefficients,
    cf.\ \cite[§5.3.35]{cisinski2019triangulated}; we will use this in the following without mention.
    Note that there is a decomposition $E \cong E_+ \oplus E_-$ by \cite[§16.2.1]{cisinski2019triangulated}.
    It follows from \cite[Corollary 16.2.14]{cisinski2019triangulated} that $E_- \cong 0$ (as $\cd{}{k} < \infty$),
    and thus from \cite[Theorem 16.2.18]{cisinski2019triangulated} that $\omega^\infty E \cong \omega^\infty E_+$ satisfies étale hyperdescent
    (note that in the reference they use the word \emph{descent} for the word \emph{hyperdescent}).
\end{proof}

\begin{thm} \label{lem:rational:main-thm}
    Let $k$ be a perfect field, with $\cd{}{k} < \infty$.
    Let $X \in \Spc{k}_*$ be nilpotent, $2$-effective and rational.
    Then $X$ satisfies étale hyperdescent.
\end{thm}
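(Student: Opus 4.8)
The plan is to run the induction principle of the remark following \cref{lem:etale-nilpotent:effective-Postnikov-refinement}, in its rational refinement \cref{lem:etale-nilpotent:effective-Postnikov-refinement-rational}. The first step is the observation that ``satisfying étale hyperdescent'' is a limit-stable property: a motivic space (or, more generally, a Nisnevich sheaf) $Y$ satisfies étale hyperdescent exactly when its underlying object of $\Cat P(\Sm{k})$ lies in the full subcategory $\ShvTopH{\et}{\Sm{k}}$, and this subcategory, being the category of local objects of the étale-hypersheafification localization, is closed under all limits in $\Cat P(\Sm{k})$. Since the forgetful functors $\Spc{k}_* \hookrightarrow \ShvTop{\nis}{\Sm{k}}_* \hookrightarrow \Cat P(\Sm{k})_*$ preserve limits, it therefore suffices to exhibit $X$ as a limit in $\Spc{k}_*$ of motivic spaces each of which satisfies étale hyperdescent.

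The second step is the key special case: for every \emph{rational} motivic spectrum $E \in \SH{k}$, the motivic space $\pLoopP E$ satisfies étale hyperdescent. Indeed $\pLoopP E \cong \pLoop \omega^\infty E$, and the Nisnevich sheaf of spectra $\omega^\infty E \in \SHone{k}$ satisfies étale hyperdescent by \cref{lem:rational:transfers-etale-descent}; as $\pLoop = \Omega^\infty_*$ is computed sectionwise and preserves limits, it carries étale hypersheaves of spectra to étale hypersheaves of pointed anima, so $\pLoop \omega^\infty E$ is again an étale hypersheaf.

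Finally, since $X$ is nilpotent, $2$-effective and rational, \cref{lem:etale-nilpotent:effective-Postnikov-refinement} together with \cref{lem:etale-nilpotent:effective-Postnikov-refinement-rational} (applied with $R = \Q$) furnishes an $\N$-indexed tower $(X_i)_i$ under $X$ with $X_0 \cong *$ and $X \cong \limil{i} X_i$, together with fiber sequences $X_{i+1} \to X_i \to K_i$ in which $K_i \cong \pLoopP E_i$ for rational motivic spectra $E_i \in \SH{k}$. One now shows by induction on $i$ that every $X_i$ satisfies étale hyperdescent: $X_0 \cong *$ trivially does; and if $X_i$ does, then, since $K_i$ does by the previous step, $X_{i+1} \cong \Fib{X_i \to K_i}$ is a limit of étale hypersheaves and hence an étale hypersheaf. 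By the first step, the limit $X \cong \limil{i} X_i$ is then also an étale hypersheaf, i.e.\ $X$ satisfies étale hyperdescent.

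The proof is thus essentially an assembly of the cited inputs, and I expect no serious obstacle. The finiteness assumption $\cd{}{k} < \infty$ and rationality are what make \cref{lem:rational:transfers-etale-descent} applicable to the layers, while nilpotence and $2$-effectivity are what allow \cref{lem:etale-nilpotent:effective-Postnikov-refinement,lem:etale-nilpotent:effective-Postnikov-refinement-rational} to produce the tower. The only point that needs genuine care is the bookkeeping of the first step: one must check that the limits coming out of the $\Pp^1$-Postnikov tower really compute the underlying presheaf of $X$ on $\Sm{k}$, and that étale hyperdescent is a limit-stable property in $\Cat P(\Sm{k})$ — but this is routine once the ambient categories are pinned down.
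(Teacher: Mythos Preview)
Your argument is correct and follows essentially the same route as the paper: both proofs use \cref{lem:etale-nilpotent:effective-Postnikov-refinement} together with its rational refinement \cref{lem:etale-nilpotent:effective-Postnikov-refinement-rational} to produce the tower, invoke \cref{lem:rational:transfers-etale-descent} to handle the layers $\pLoopP E_i$, and conclude by stability of étale hypersheaves under limits. Your write-up is slightly more explicit about why the induction and the passage through limits are legitimate, but there is no substantive difference in strategy.
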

\begin{proof}
    We use the version of the Postnikov-tower from \cref{lem:etale-nilpotent:effective-Postnikov-refinement},
    i.e.\ we have a tower of pointed motivic spaces $X_n \in \Spc{k}_*$ under $X$ with $X_0 = *$,
    and $2$-effective motivic spectra $E_n \in \SH{k}$,
    together with fiber sequences $X_{n+1} \to X_n \to \pLoopP E_n$,
    and an equivalence $X \cong \limil{n} X_n$.
    By \cref{lem:etale-nilpotent:effective-Postnikov-refinement-rational} we may further assume that all the $X_n$ and $E_n$ are rational.
    Thus, as étale hypersheaves are stable under limits, by induction it suffices to show 
    that $\pLoopP E_n$ satisfies étale hyperdescent for all $n$.
    This follows, as already $\omega^\infty E_n$ satisfies étale hyperdescent,
    cf.\ \cref{lem:rational:transfers-etale-descent}.
\end{proof}                \label{sec:rational}
\section{Unstable étale motives at the characteristic}
In this section, we prove some vanishing results for $\AffSpc{1}$-invariant 
sheaves at the characteristic.

\begin{prop} \label{lem:at-char:p-completely-null}
    Let $p$ be a prime and $S$ be an $\finfld{p}$-scheme of finite Krull dimension which is moreover étale bounded.
    If $X \in \ShvTopH{\et}{\Sm{S}}$ 
    is $p$-completely étale $\AffSpc{1}$-nilpotent,
    then $L_p X \cong *$.
\end{prop}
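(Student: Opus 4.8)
The plan is to reduce the statement to the stable étale vanishing theorem of Bachmann--Hoyois, which asserts that $\complete{\SHoneet{S}} \cong 0$ whenever $S$ is an $\finfld{p}$-scheme (and by the lemma identifying $\SHoneet{S}$ with the category of the same name in \cite{Bachmann2021etalerigidity,bachmann2021remarksetalemotivicstable}, this applies verbatim in our setting), and then to propagate this vanishing along the nilpotent tower.

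First I would unwind \cref{def:etale-nilpotent:etale-nilpotent}: fix a highly connected tower $(X_n)_n$ under $X$ with $X_0 \cong *$, $X \cong \limil{n} X_n$, fiber sequences $X_{n+1} \to X_n \to K_n$ with $K_n \cong \pLoop E_n$ for $1$-connective $E_n \in \Stab{\ShvTopH{\et}{\Sm{S}}}$, and with every $L_p E_n$ being $\AffSpc{1}$-invariant. Exactly as in the proof of \cref{lem:etale-nilpotent:nilpotent-A1-inv}, all the $X_n$ are nilpotent (induct along the fiber sequences, using that a connected infinite loop sheaf is nilpotent). Since $S$ is étale bounded of finite Krull dimension, $\ShvTopH{\et}{\Sm{S}}$ admits a locally finite dimensional cover by \cref{lem:etale:cover}, and therefore \cref{lem:p-comp:p-comp-tower-limit} identifies $L_p X \cong \limil{n} L_p X_n$. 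It thus suffices to show $L_p X_n \cong *$ for all $n$.

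The heart of the argument is that every layer becomes trivial after $p$-completion: $L_p E_n$ is $p$-complete by construction and $\AffSpc{1}$-invariant by hypothesis, hence an object of $\complete{\SHoneet{S}} \cong 0$, so $L_p E_n \cong 0$; as $E_n$ is $1$-connective, \cite[Lemma 3.17]{mattis2024unstable} then gives $L_p K_n \cong \tau_{\ge 1}\pLoop L_p E_n \cong *$. Now one inducts on $n$: $L_p X_0 \cong L_p * \cong *$, and if $L_p X_n \cong *$, then the Bousfield--Kan fiber lemma \cref{lem:fib-lem:main-thm}, applied to the fiber sequence of nilpotent sheaves $X_{n+1} \to X_n \to K_n$, yields $L_p X_{n+1} \cong \tau_{\ge 1}\Fib{L_p X_n \to L_p K_n} \cong *$. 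Passing to the limit gives $L_p X \cong \limil{n} L_p X_n \cong *$.

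The step I expect to require the most care is the interaction of $p$-completion with the tower limit and with the layer fiber sequences: neither commutation is formal, and both are exactly what \cref{lem:p-comp:p-comp-tower-limit} and \cref{lem:fib-lem:main-thm} are built to supply. Their hypotheses are what compel us to verify that every $X_n$ is nilpotent and to invoke that $S$ is étale bounded of finite Krull dimension. Once these are in place, the proof is a formal descent along the tower starting from the Bachmann--Hoyois stable vanishing, and no further input is needed.
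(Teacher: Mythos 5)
Your proposal is correct and follows essentially the same route as the paper's proof: unwind the definition, kill the layers via the Bachmann--Hoyois stable vanishing (the paper cites Theorem A.1 of \cite{bachmann2021remarksetalemotivicstable} directly, which is what you invoke via $\complete{\SHoneet{S}} \cong 0$), pass the vanishing to $L_p K_n$ via \cite[Lemma 3.17]{mattis2024unstable}, induct up the tower with \cref{lem:fib-lem:main-thm}, and conclude with \cref{lem:p-comp:p-comp-tower-limit}. You are slightly more careful than the paper in spelling out that the $X_n$ are nilpotent (needed for the fiber lemma and the tower-limit lemma), which the paper asserts without argument.
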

\begin{proof}
    By definition of étale $\AffSpc{1}$-nilpotent 
    there is a sequence of nilpotent sheaves $X_n$ under $X$,
    with $X \cong \limil{n} X_n$,
    and fiber sequences $X_{n+1} \to X_n \to K_n$.
    Moreover, $K_n \cong \pLoop E_n$, where $E_n$ is a $1$-connective étale sheaf of spectra,
    such that $L_p E_n$ is $\AffSpc{1}$-invariant.
    It follows from \cite[Theorem A.1]{bachmann2021remarksetalemotivicstable} 
    that $L_p E_n \cong 0$, and hence $L_p K_n \cong \tau_{\ge 1} \pLoop L_p E_n \cong *$,
    where we used \cite[Lemma 3.17]{mattis2024unstable}.
    Using \cref{lem:fib-lem:main-thm} we see inductively
    \begin{equation*}
        L_p X_{n+1} \cong \tau_{\ge 1} \Fib{L_p X_n \to L_p K_n} \cong \tau_{\ge 1} \Fib{* \to *} \cong *.
    \end{equation*}
    Hence, using \cref{lem:p-comp:p-comp-tower-limit}, we also get $L_p X \cong \limil{n} \tau_{\le n} L_p X_n \cong *$,
    which proves the proposition.
\end{proof}

\begin{prop} \label{lem:at-char:HFp-module-case}
    Let $p$ be a prime and $k$ be a perfect field of characteristic $p$.
    Let $M \in \Mod{H\finfld{p}}{\ShvTopH{\et}{\Sm{S}, \Sp}}_{\ge 2}$ be a $2$-connective sheaf of $H\finfld{p}$-modules.
    Suppose that $\pLoop M$ is $\AffSpc{1}$-invariant.
    Then $\pLoop M \cong *$ and $M \cong 0$.
\end{prop}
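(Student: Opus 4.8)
The plan is to turn the Artin--Schreier phenomenon into a statement about $\AffSpc1$-localization of $H\finfld p$-modules, and then to upgrade this to the desired vanishing using the hypothesis on $\pLoop M$ and a dévissage along the Postnikov tower. First I would record that $M$ is automatically $p$-complete: multiplication by $p$ is already $0$ in $\pi_0$ of the endomorphism spectrum of $H\finfld p$ (which is $\finfld p$), hence nullhomotopic on every $H\finfld p$-module, so $M$ has no $\Z[1/p]$-part and $L_p M \cong M$; since $M$ is $1$-connective this also gives $\pLoop M \cong L_p \pLoop M$ by \cite[Lemma 3.17]{mattis2024unstable}. Thus everything in sight is $p$-complete and one may use the results of \cref{sec:p-comp} and the vanishing theorem \cite[Theorem A.1]{bachmann2021remarksetalemotivicstable} freely.

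\emph{$\AffSpc1$-acyclicity.} The key input is that over an $\finfld p$-scheme the sheaf of spectra $H\finfld p$ is $\AffSpc1$-acyclic. Indeed, since $\Char{S} = p$ the Artin--Schreier sequence $0 \to \underline{\finfld p} \to \Ga \xrightarrow{F-1} \Ga \to 0$ of étale sheaves of abelian groups exhibits $H\finfld p$ as $\Fib{H\Ga \xrightarrow{F-1} H\Ga}$, and $\Ga \cong \AffSpc1$ is $\AffSpc1$-contractible as an abelian sheaf via the scaling homotopy, so $L_{\et,\AffSpc1} H\Ga \cong 0$ in $\SHoneet{S}$ and hence $L_{\et,\AffSpc1} H\finfld p \cong 0$. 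Since $L_{\et,\AffSpc1}$ is exact and symmetric monoidal and $M \cong H\finfld p \otimes_{H\finfld p} M$ is a module over the zero ring $L_{\et,\AffSpc1} H\finfld p$, we get $L_{\et,\AffSpc1} M \cong 0$. Concretely, tensoring the Artin--Schreier sequence with $M$ over $H\finfld p$ gives a fiber sequence $M \to N \xrightarrow{F-1} N$ with $N \coloneqq H\Ga \otimes_{H\finfld p} M$ an $H\Ga$-module; scalar multiplication on an $H\Ga$-module provides, at the level of sheaves, an $\AffSpc1$-homotopy between $\id N$ and $0$, and applying the right adjoint $\pLoop$ (which commutes with cotensoring by $\AffSpc1$) shows $\pLoop N$ is $\AffSpc1$-contractible, so that $\pLoop M \cong \Fib{\pLoop N \to \pLoop N}$ with $\pLoop N$ $\AffSpc1$-contractible.

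\emph{Conclusion via the Postnikov tower.} Because $L_{\et,\AffSpc1} M \cong 0$, the sheaf $M$ is $\AffSpc1$-acyclic, so it suffices to show $M$ is $\AffSpc1$-invariant: then $M \cong \iota_{\et,\AffSpc1} L_{\et,\AffSpc1} M \cong 0$, and $\pLoop M \cong *$ follows since $M$ is connective. This is where the hypothesis on $\pLoop M$ enters. Writing $M \cong \limil{n} \tau_{\le n} M$ (\cref{lem:etale:cover}) and using that each $\pi_j M$ is a sheaf of $\underline{\finfld p}$-modules, one runs a descending induction along the Postnikov tower: the layers are Eilenberg--MacLane sheaves $K(\pi_j M, j)$ with $j \ge 2$, and for any $\underline{\finfld p}$-module sheaf $A$ the Artin--Schreier sequence tensored over the sheaf of fields $\underline{\finfld p}$ (hence flat) gives $0 \to A \to \Ga_A \xrightarrow{F-1} \Ga_A \to 0$ with $\Ga_A \coloneqq \Ga \otimes_{\underline{\finfld p}} A$ still $\AffSpc1$-contractible; looping this down yields a fiber sequence $K(\Ga_A, j-1) \xrightarrow{F-1} K(\Ga_A, j-1) \to K(A, j)$ whose first two terms are $\AffSpc1$-contractible (the scaling contraction deloops). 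Using that $L_{\et,\AffSpc1}$ preserves geometric realizations, a principal fibration with $\AffSpc1$-contractible fiber and total space has $\AffSpc1$-contractible base; so an $\AffSpc1$-invariant $K(A,j)$ is contractible, i.e.\ $A = 0$. Feeding this through the tower — the relevant truncations $\tau_{\le n} \pLoop M \cong \pLoop \tau_{\le n} M$ remaining $\AffSpc1$-invariant — forces $\pLoop M \cong *$ and $M \cong 0$.

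The hard part is exactly this last step: controlling the Postnikov truncations of $\pLoop M$ starting only from the $\AffSpc1$-invariance of $\pLoop M$ itself. The obstacle is that $L_{\et,\AffSpc1}$ preserves colimits but \emph{not} finite limits — it does not commute with forming fibers, nor with Postnikov truncation — so the dévissage has to be organised so that $L_{\et,\AffSpc1}$ is only ever applied to (geometric realizations of) diagrams built out of $\AffSpc1$-invariant sheaves, with \cref{lem:etale-motives:connected-cover-invariant} and \cref{lem:fib-lem:main-thm} doing the bookkeeping that keeps the induction consistent.
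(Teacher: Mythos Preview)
Your Postnikov d\'evissage has a genuine gap. You assert that the truncations $\tau_{\le n}\pLoop M$ remain $\AffSpc{1}$-invariant, but truncation does not in general preserve $\AffSpc{1}$-invariance of \'etale sheaves --- establishing when it does is essentially the subject of \cref{sec:application:strict} and is close to the open \cref{question:intro:etale-nilpotent}. Your final paragraph concedes this is ``the hard part'' but only gestures at \cref{lem:etale-motives:connected-cover-invariant} and \cref{lem:fib-lem:main-thm}, neither of which says anything about $\tau_{\le n}$. Without this, your implication ``$\AffSpc{1}$-invariant $K(A,j)\Rightarrow A=0$'' never gets to fire on the homotopy sheaves of $M$.

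You are, however, one rotation away from a complete argument that bypasses the d\'evissage entirely. Your own principle --- a principal fibration with $\AffSpc{1}$-contractible fiber and total space has $\AffSpc{1}$-contractible base, because $\AffSpc{1}$-equivalences are stable under finite products and colimits --- applies not to the Postnikov layers but directly to the \emph{rotated} Artin--Schreier sequence
\[
\pLoop(\Sigma^{-1}N)\xrightarrow{\,F-1\,}\pLoop(\Sigma^{-1}N)\longrightarrow\pLoop M,
\]
in which $\pLoop M$ sits as the \emph{base} rather than the fiber. Here $\Sigma^{-1}N\cong\Omega M\otimes_{H\finfld{p}}H\Ga$ is still a $1$-connective $H\Ga$-module, so fiber and total space are $\AffSpc{1}$-contractible by your scaling argument, and the map to $\pLoop M$ is an effective epimorphism as $\pLoop M$ is connected. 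Your principle then yields $L_{\et,\AffSpc{1}}\pLoop M\simeq *$; since $\pLoop M$ is $\AffSpc{1}$-invariant by hypothesis, $\pLoop M\simeq *$. In your write-up you placed $\pLoop M$ as the fiber of $\pLoop N\to\pLoop N$, which is the wrong orientation for the bar construction to bite.

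The paper uses exactly this rotated fiber sequence but argues differently: rather than the bar construction, it shows that $L_{\AffSpc{1},\nis}$ preserves the fiber sequence by invoking \cite[Theorem 2.3.3]{Asok2017simplicialsuspension}. That criterion requires verifying $\pi_0^{\nis}(\pLoop M)=*$ (via a descent spectral sequence together with \cite[Th\'eor\`eme X.5.1]{SGA4} on $p$-torsion \'etale cohomology of affines in characteristic $p$) and that $\pi_1^{\nis}(\pLoop M)$ is strongly $\AffSpc{1}$-invariant (via Morel's theorem); this is where the $2$-connectivity hypothesis on $M$ does real work.
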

\begin{proof}
    Since $M$ is connective, it is clear that $\pLoop M \cong *$ implies $M \cong 0$.
    Hence, it suffices to prove the first claim.
    Consider the Artin--Schreier sequence 
    \begin{equation*}
        H\finfld{p} \to H\Ga \xrightarrow{1 - F} H\Ga
    \end{equation*}
    as a fiber sequence in $\ShvTopH{\et}{\Sm{S}, \Sp}$.
    Here, we view $\Ga \coloneqq \AffSpc{1}$ as an étale sheaf of $\finfld{p}$-vectorspaces.
    Tensoring with $M$ over $H\finfld{p}$ gives a fiber sequence 
    \begin{equation*}
        M \to M \otimes_{H\finfld{p}} H\Ga \to M \otimes_{H\finfld{p}} H\Ga,
    \end{equation*}
    and by rotating twice and applying $\pLoop$ also a fiber sequence 
    \begin{equation*}
        \pLoop(\Omega M \otimes_{H\finfld{p}} H\Ga) \to \pLoop(\Omega M \otimes_{H\finfld{p}} H\Ga) \to \pLoop M.
    \end{equation*}
    We claim that applying $L_{\AffSpc{1},\nis}$ preserves this fiber sequence,
    i.e.\ that we get a fiber sequence 
    \begin{equation*}
        L_{\AffSpc{1},\nis} \pLoop(\Omega M \otimes_{H\finfld{p}} H\Ga) \to L_{\AffSpc{1},\nis} \pLoop(\Omega M \otimes_{H\finfld{p}} H\Ga) \to L_{\AffSpc{1},\nis} \pLoop M.
    \end{equation*}
    This follows from \cite[Theorem 2.3.3]{Asok2017simplicialsuspension} if we can show that $\pi_0^{\nis}(\pLoop M) \cong *$, and that $\pi_1^{\nis}(\pLoop M)$ 
    is strongly $\AffSpc{1}$-invariant. The second claim follows from a theorem of Morel \cite[Corollary 1.8]{bachmann2024stronglya1} 
    since $\pLoop M$ is assumed to be $\AffSpc{1}$-invariant and is even an étale sheaf.
    For the first claim, note that 
    $\pi_0^{\nis}(\pLoop M) \cong \pi_0^{\nis}(M) \cong L_{\nis} (U \mapsto \pi_0(M(U)))$.
    By Zariski descent it therefore suffices to show that $\pi_0(M(U)) \cong 0$ 
    for all \emph{affine} $U \in \Sm{k}$.
    There is a spectral sequence $E^{p,q}_2 = H^{p}_{\et}(U, \pi_{q}(M)) \implies \pi_{q-p}(M(U))$. Since $M$ is $2$-connective by assumption,
    it therefore suffices to show that $H^{p}_{\et}(U, \pi_q(M)) = 0$ for all $p \ge 2$.
    This is an immediate application of \cite[Théorème X.5.1]{SGA4} since $\pi_q(M)$ 
    is a sheaf of $\finfld{p}$-vectorspaces by assumption and $U$ was assumed to be affine.

    Now, since $\pLoop M$ is assumed to be $\AffSpc{1}$-invariant,
    and is moreover even an étale sheaf,
    we see that $\pLoop M \cong L_{\AffSpc{1},\nis} \pLoop M$.
    We claim that it suffices to show that $L_{\AffSpc{1},\nis} \pLoop(\Omega M \otimes_{H\finfld{p}} H\Ga) \cong *$.
    Indeed, we have seen above that $\pLoop M$ is connected as a Nisnevich sheaf,
    and hence the long exact sequence in Nisnevich homotopy sheaves 
    shows that $\pLoop M \cong *$.
    Since $\Ga$ admits the structure of a sheaf of $\finfld{p}$-algebras,
    we see that $H\Ga$ is an $\mathbb{E}_\infty$-algebra in $\Mod{H\finfld{p}}{\ShvTopH{\et}{\Sm{S},\Sp}}$.
    In particular, $\Omega M \otimes_{H\finfld{p}} H\Ga$ is canonically an $H\Ga$-module.
    Since $\pLoop$ and $L_{\AffSpc{1},\nis}$ are both lax symmetric monoidal,
    we see that $L_{\AffSpc{1},\nis} \pLoop(\Omega M \otimes_{H\finfld{p}} H\Ga)$ acquires the structure 
    of a module over the commutative monoid 
    $L_{\AffSpc{1},\nis} \pLoop(H\Ga) \cong L_{\AffSpc{1},\nis} \Ga \cong *$ 
    (using that the underlying sheaf of $\Ga$ is just $\AffSpc{1}$).
    Hence, $L_{\AffSpc{1},\nis} \pLoop(\Omega M \otimes_{H\finfld{p}} H\Ga) \cong *$.
\end{proof}

\begin{cor} \label{lem:at-char:HZ-module-case}
    Let $p$ be a prime and $k$ be a perfect field of characteristic $p$.
    Let $M \in \Mod{H\Z}{\ShvTopH{\et}{\Sm{S}, \Sp}}_{\ge 3}$ be a $3$-connective sheaf of $H\Z$-modules.
    Suppose that $L_p \pLoop M$ is $\AffSpc{1}$-invariant.
    Then $L_p \pLoop M \cong *$ and $L_p M \cong 0$.
\end{cor}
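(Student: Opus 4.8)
The plan is to reduce the statement to the $H\finfld{p}$-module case, \cref{lem:at-char:HFp-module-case}, applied to the mod-$p$ reduction of $M$. Concretely, I would set $\overline{M} \coloneqq M \otimes_{H\Z} H\finfld{p} \cong \Cofib{p \colon M \to M}$ and $N \coloneqq \overline{M}[-1] = \Fib{p \colon M \to M}$, a sheaf of $H\finfld{p}$-modules. Since $M$ is $3$-connective, so is $\overline{M}$, and hence $N$ is $2$-connective; moreover $N$ is a bounded-below sheaf of $H\finfld{p}$-modules, so all its homotopy sheaves are sheaves of $\finfld{p}$-vector spaces and $N$ is therefore $p$-complete, i.e.\ $L_p N \cong N$.

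Next I would relate $\pLoop N$ to $L_p \pLoop M$. The stable $p$-completion functor on $\Stab{\ShvTopH{\et}{\Sm{S}}}$ is exact, so applying it to the fiber sequence $N \to M \xrightarrow{p} M$ and using $L_p N \cong N$ identifies $N \cong \Fib{p \colon L_p M \to L_p M}$; applying $\pLoop$, which preserves fiber sequences, gives $\pLoop N \cong \Fib{p \colon \pLoop L_p M \to \pLoop L_p M}$. Since $M$ is $1$-connective its $p$-completion $L_p M$ is again $1$-connective ($p$-completion does not decrease the connectivity of a sheaf of spectra), so $\pLoop L_p M$ is connected and \cite[Lemma 3.17]{mattis2024unstable} yields $\pLoop L_p M \cong \tau_{\ge 1} \pLoop L_p M \cong L_p \pLoop M$. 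By hypothesis $L_p \pLoop M$ is $\AffSpc{1}$-invariant, and $\AffSpc{1}$-invariant sheaves are closed under limits, so $\pLoop N$ is $\AffSpc{1}$-invariant. Thus \cref{lem:at-char:HFp-module-case} applies to the $2$-connective sheaf of $H\finfld{p}$-modules $N$ and gives $\pLoop N \cong *$ and $N \cong 0$.

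It then remains to unwind this: $N \cong 0$ forces $\overline{M} \cong N[1] \cong 0$, i.e.\ $p \colon M \to M$ is an equivalence, whence $M \sslash p^n \cong 0$ for every $n$ and $L_p M \cong \limil{n} M \sslash p^n \cong 0$ by \cite[Lemma 2.5]{mattis2024unstable}; consequently $L_p \pLoop M \cong \tau_{\ge 1} \pLoop L_p M \cong \tau_{\ge 1} \pLoop 0 \cong *$ by \cite[Lemma 3.17]{mattis2024unstable} again. I expect the main subtlety to be the one in the second paragraph: unstable $p$-completion does not commute with $\pLoop$, so it is crucial to form the fiber sequence at the level of sheaves of spectra, where $L_p$ is exact, and equally crucial that $N$ is already $p$-complete, so that feeding $\pLoop N$ into \cref{lem:at-char:HFp-module-case} introduces no completion gap; the connectivity count — $M$ being $3$-connective so that $N$ is $2$-connective — is precisely what makes that proposition applicable, and is the reason for the hypothesis on $M$.
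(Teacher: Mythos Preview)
Your proof is correct and essentially identical to the paper's: both pass to $N = \Omega(M \sslash p)$, observe it is a $2$-connective $p$-complete $H\finfld{p}$-module, show $\pLoop N$ is $\AffSpc{1}$-invariant from the fiber sequence $N \to M \xrightarrow{p} M$, and invoke \cref{lem:at-char:HFp-module-case}. The only difference is that the paper applies $\pLoop$ first and then the unstable fiber lemma \cref{lem:fib-lem:main-thm} (picking up a $\tau_{\ge 1}$ handled via \cref{lem:etale-motives:connected-cover-invariant}), whereas you $p$-complete stably first and then apply $\pLoop$; your claim that stable $p$-completion never lowers connectivity is slightly imprecise in a general topos, but since $M$ is $3$-connective the required $1$-connectivity of $L_p M$ is unproblematic here.
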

\begin{proof}
    Note that $M \sslash p$ is an $H\finfld{p}$-module (since we assume $M$ to be an $H\Z$-module).
    There is a fiber sequence $\Omega M \sslash p \to M \to M$, which is preserved by $\pLoop$.
    Applying \cref{lem:fib-lem:main-thm} we thus get an equivalence 
    \begin{equation*}
        L_p \pLoop \Omega M \sslash p \cong \tau_{\ge 1} \Fib{L_p \pLoop M \to L_p \pLoop M}.
    \end{equation*}
    Since $M \sslash p$ is an $H\finfld{p}$-module,
    we see that $\pLoop \Omega M \sslash p$ is already $p$-complete. Hence,
    \begin{equation*}
        \pLoop \Omega M \sslash p \cong \tau_{\ge 1} \Fib{L_p \pLoop M \to L_p \pLoop M}.
    \end{equation*}
    Since $L_p \pLoop M$ is assumed to be $\AffSpc{1}$-invariant,
    and $\AffSpc{1}$-invariant sheaves are stable under limits and connected covers (see \cref{lem:etale-motives:connected-cover-invariant} for the latter),
    we conclude that $\pLoop \Omega M \sslash p$ is $\AffSpc{1}$-invariant.
    Therefore, it follows from \cref{lem:at-char:HFp-module-case} that 
    $M \sslash p \cong 0$, and hence $L_p M \cong 0$.
    In particular, also $L_p \pLoop M \cong \tau_{\ge 1} \pLoop L_p M \cong *$,
    where we used \cite[Corollary 3.18]{mattis2024unstable}.
\end{proof}

\begin{cor} \label{lem:at-char:EM-vanishing}
    Let $p$ be a prime and $k$ be a perfect field of characteristic $p$.
    Suppose that $A \in \ShvTop{\et}{\Sm{k}, \Ab}$ is 
    a sheaf of abelian groups and $i \ge 3$ such that $L_p K(A, i)$ 
    is $\AffSpc{1}$-invariant.
    Then $L_p K(A, n) \cong *$ for all $n \ge 1$ and $L_p HA \cong 0$.
\end{cor}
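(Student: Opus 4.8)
The plan is to recognize $K(A,i)$ as the infinite loop sheaf of a connective $H\Z$-module and then to invoke \cref{lem:at-char:HZ-module-case}. Regard $HA$ as a connective sheaf of spectra in $\Stab{\ShvTopH{\et}{\Sm{k}}}$. Since $A$ is a sheaf of abelian groups, $HA$ is canonically a module over $H\Z$, hence so is the shift $M \coloneqq \Sigma^i HA$. Because $i \ge 3$, the module $M$ is $3$-connective, and by construction $\pLoop M \cong K(A,i)$; thus the hypothesis that $L_p K(A,i)$ is $\AffSpc{1}$-invariant is precisely the statement that $L_p \pLoop M$ is $\AffSpc{1}$-invariant. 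Applying \cref{lem:at-char:HZ-module-case} therefore yields both $L_p K(A,i) = L_p \pLoop M \cong *$ and $L_p M \cong 0$.

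Next I would deduce $L_p HA \cong 0$. Since the stable $p$-completion functor on sheaves of spectra is exact, it commutes with suspension, so $L_p M = L_p \Sigma^i HA \cong \Sigma^i L_p HA$; as $\Sigma^i$ is invertible on $\Stab{\ShvTopH{\et}{\Sm{k}}}$, the vanishing $L_p M \cong 0$ forces $L_p HA \cong 0$. This is the second assertion of the corollary.

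Finally, for arbitrary $n \ge 1$ we have $K(A,n) \cong \pLoop \Sigma^n HA$ with $\Sigma^n HA$ being $1$-connective, so by \cite[Lemma 3.17]{mattis2024unstable} --- applied exactly as in the proof of \cref{lem:etale-nilpotent:etale-nilpotent-Kn-A1} --- one gets $L_p K(A,n) \cong \tau_{\ge 1} \pLoop L_p \Sigma^n HA \cong \tau_{\ge 1} \pLoop \Sigma^n L_p HA \cong \tau_{\ge 1} \pLoop(0) \cong *$, using $L_p HA \cong 0$ from the previous paragraph. This proves the remaining claim.

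I expect no genuine obstacle here beyond bookkeeping: the only points that require care are that $HA$ and its suspensions genuinely define $H\Z$-module objects of the hypercomplete stabilization $\Stab{\ShvTopH{\et}{\Sm{k}}}$, so that \cref{lem:at-char:HZ-module-case} applies verbatim, and that the unstable $p$-completion of $K(A,n)$ is correctly compared with the stable $p$-completion of $\Sigma^n HA$ via \cite[Lemma 3.17]{mattis2024unstable}; both are routine given the results already assembled, in particular \cref{lem:fib-lem:main-thm}, which underlies \cref{lem:at-char:HZ-module-case}.
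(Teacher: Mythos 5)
Your proof is correct and follows essentially the same route as the paper's: identify $K(A,i) \cong \pLoop \Sigma^i HA$ with $\Sigma^i HA$ a $3$-connective $H\Z$-module, apply \cref{lem:at-char:HZ-module-case}, then use the resulting vanishing of $L_p HA$ together with \cite[Lemma 3.17 / Corollary 3.18]{mattis2024unstable} to kill $L_p K(A,n)$ for all $n \ge 1$. The only difference is that you spell out the desuspension step (recovering $L_p HA \cong 0$ from $L_p \Sigma^i HA \cong 0$ by exactness of $L_p$) which the paper leaves implicit.
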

\begin{proof}
    Note that $\Sigma^k HA$ is in the (shift of the) heart, and hence admits an $H\Z$-module structure.
    Moreover, $K(A, i) \cong \pLoop \Sigma^k HA$.
    Hence, we see from \cref{lem:at-char:HZ-module-case} that $L_p HA \cong 0$.
    Now, for any $n \ge 1$ we have 
    \begin{equation*}
        L_p K(A, n) \cong \tau_{\ge 1} \pLoop \Sigma^n L_p HA \cong *,
    \end{equation*}
    using \cite[Corollary 3.18]{mattis2024unstable}.
\end{proof}

\begin{rmk}
    Note that the \cref{lem:at-char:HFp-module-case} does not immediately follow from \cite[Theorem A.1]{bachmann2021remarksetalemotivicstable},
    as it is not clear that the $\AffSpc{1}$-invariance of $\pLoop M$ implies 
    the $\AffSpc{1}$-invariance of $M$, not even in the case that $M$ is $2$-connective 
    (\emph{a posteriori} it does as $M \cong 0$ in this case).
    A similar remark applies to \cref{lem:at-char:HZ-module-case,lem:at-char:EM-vanishing}.
\end{rmk}
                 \label{sec:at-char}
\section{Proof of the rigidity theorem}
In this section we prove our main result, the unstable rigidity theorem.
Let $S$ be a qcqs scheme with finite Krull-dimension,
such that $S$ is étale bounded.
Moreover, let $p$ be a prime invertible on $S$.
\begin{lem} \label{lem:rigidity:iota-shriek-nilpotent}
    The functor $\iota_! \colon \ShvTopH{\et}{\smet{S}}_* \to \ShvTopH{\et}{\Sm{S}}_*$ from \cref{def:etale:small-to-big} 
    preserves nilpotent objects.
\end{lem}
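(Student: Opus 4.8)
The plan is to reduce the statement to the fact that $\iota_!$ preserves Eilenberg–MacLane sheaves and commutes with the relevant truncation and fiber sequences. Recall that a pointed connected sheaf $Y$ in an $\infty$-topos is nilpotent precisely when it admits a principal refinement of its Postnikov tower, i.e.\ a tower $(Y_n)_n$ with $Y_0 \cong *$, $Y \cong \limil{n} Y_n$, and fiber sequences $Y_{n+1} \to Y_n \to K(A_n, k_n)$ with $A_n$ a sheaf of abelian groups and $k_n \to \infty$. So let $X \in \ShvTopH{\et}{\smet{S}}_*$ be nilpotent, and choose such a refinement $(X_n)_n$ of its Postnikov tower in $\ShvTopH{\et}{\smet{S}}_*$. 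I would then apply $\iota_!$ to this entire tower and argue that $(\iota_! X_n)_n$ is a principal refinement of the Postnikov tower of $\iota_! X$.

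First I would check that $\iota_!$ commutes with the formation of the tower. Since $\iota_!$ is a left adjoint in a geometric morphism, it preserves finite products, the terminal object, and colimits; in particular $\iota_! X_0 \cong *$. The subtle point is preservation of the limit $X \cong \limil{n} X_n$ and of the fiber sequences $X_{n+1} \to X_n \to K(A_n, k_n)$. For the limit, note that $(X_n)_n$ is a highly connected tower (the transition maps $X_{n+1} \to X_n$ have connectivity $k_n \to \infty$), and $\ShvTopH{\et}{\smet{S}}$ is locally of homotopy dimension $\le N$ for some $N$ by \cref{lem:etale:finite-htyp-dim} (using that $S$ is étale bounded), while $\ShvTopH{\et}{\Sm{S}}$ admits a locally finite dimensional cover by \cref{lem:etale:cover}; hence \cref{lem:highly-connected:main-thm} applies and gives $\iota_! \limil{n} X_n \cong \limil{n} \iota_! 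X_n$. For the fiber sequences: a priori $\iota_!$ need not be left exact, but by \cite[Lemma A.12]{mattis2024unstable} applied inductively (or directly, since $\iota_!$ is the left adjoint of a geometric morphism and $X_n$, $K(A_n,k_n)$ are the relevant objects) one sees that $\iota_!$ carries the principal fibration $X_{n+1} \to X_n \to K(A_n, k_n)$ to a principal fibration $\iota_! X_{n+1} \to \iota_! X_n \to \iota_! K(A_n, k_n)$; more precisely, $\iota_!$ preserves $n$-connective sheaves and $n$-truncated sheaves (again by adjunction properties), so it commutes with the connected-cover and truncation functors used to build the tower, and therefore sends a refined Postnikov tower to a refined Postnikov tower.

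The remaining — and I expect main — obstacle is to identify $\iota_! K(A, k)$ as again an Eilenberg–MacLane sheaf, i.e.\ of the form $K(B, k)$ for some sheaf of abelian groups $B$ on $\Sm{S}$. This is where one genuinely uses that $\iota_!$ comes from an inclusion of sites and is reasonably well-behaved: by \cref{lem:etale:morphism-to-local-conservative} one can check this stalkwise, and on stalks $\iota^* \rho_{\overline s}^* \iota_! K(A,k) \cong \rho_{\overline s}^* K(A,k)(\shens{S}{s})$ reduces to a computation over a strictly henselian local ring, where $\iota_!$ is simply the constant-sheaf-type functor $\ShvTopH{\et}{\smet{(\shens{S}{s})}} \cong \An \to \ShvTopH{\et}{\Sm{(\shens{S}{s})}}$ and clearly preserves Eilenberg–MacLane anima. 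Since $\iota_! K(A,k)$ is then $k$-connective, $k$-truncated, and has a single nontrivial homotopy sheaf $\pi_k$, it is $K(\pi_k \iota_! K(A,k), k)$ as desired. Assembling: $(\iota_! X_n)_n$ is a highly connected tower under $\iota_! X$ with $\iota_! X_0 \cong *$, $\iota_! X \cong \limil{n} \iota_! X_n$, and principal fibration sequences with Eilenberg–MacLane fibers, so $\iota_! X$ is nilpotent. One should also record that $\iota_! X$ is connected when $X$ is, which is immediate since $\iota_!$ preserves connectivity.
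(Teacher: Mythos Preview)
Your approach is far more elaborate than necessary, and the complication stems from a misconception. You write ``a priori $\iota_!$ need not be left exact'', but this is false: by \cref{def:etale:small-to-big}, the pair $\iota_! \dashv \iota^*$ is a geometric morphism, which by definition means that the left adjoint $\iota_!$ \emph{is} left exact. Once you know this, everything you labor over becomes immediate: $\iota_!$ preserves fiber sequences, preserves $n$-truncated and $n$-connective objects, and hence commutes with homotopy sheaves (cf.\ \cite[Remark 6.5.1.4]{highertopoi}); in particular $\iota_! K(A,k) \cong K(\iota_! A, k)$ without any stalkwise argument. The paper's proof is therefore a single sentence: nilpotence is a condition on the action of $\pi_1$ on the $\pi_n$, and any geometric morphism preserves these homotopy objects together with their group structures and actions, so it preserves nilpotent objects.

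Your Postnikov-tower route can be made to work once the left-exactness is acknowledged, but it is then strictly more complicated than needed and, worse, invokes \cref{lem:highly-connected:main-thm} and hence the finiteness hypotheses on $S$ (\cref{lem:etale:finite-htyp-dim}, \cref{lem:etale:cover}) to commute $\iota_!$ past the limit of the tower---hypotheses the paper's one-line argument does not require. Also, your stalkwise argument in the last paragraph is garbled: the displayed equivalence mixes a sheaf with an anima, and the claim that $\ShvTopH{\et}{\smet{(\shens{S}{s})}} \cong \An$ is only established in the paper for separably closed \emph{fields} (\cref{lem:etale:small-topos-over-strictly-local}), not for general strictly henselian local rings.
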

\begin{proof}
    The statement is true for any geometric morphism,
    since they commute with taking homotopy objects \cite[Remark 6.5.1.4]{highertopoi}.
\end{proof}

\begin{lem} \label{lem:rigidity:p-complete-on-p-complete-obejcts}
    The adjunctions $\iota_! \dashv \iota^* \dashv \iota_*$ induce adjunctions 
    \begin{equation*}
        \iota_!^p \colon \complete{\ShvTopH{\et}{\smet{S}}_{*,}} \rightleftarrows \complete{\ShvTopH{\et}{\Sm{S}}_{*,}} \noloc \iota^*_p
    \end{equation*}
    and 
    \begin{equation*}
        \iota^*_p \colon \complete{\ShvTopH{\et}{\Sm{S}}_{*,}} \rightleftarrows \complete{\ShvTopH{\et}{\smet{S}}_{*,}} \noloc \iota_*^p.
    \end{equation*}
    Here, $\iota^*_p$ and $\iota_*^p$ are just given by restriction of $\iota^*$ and $\iota_*$, respectively.
\end{lem}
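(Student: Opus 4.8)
The plan is to reduce the statement to a single observation: each of $\iota_!$, $\iota^*$ and $\iota_*$ sends $p$-complete (pointed) sheaves to $p$-complete (pointed) sheaves. Granting this, the two asserted adjunctions are obtained simply by restricting $\iota_! \dashv \iota^* \dashv \iota_*$ to the full subcategories of $p$-complete objects, which is purely formal: a hom-adjunction restricts to any pair of full subcategories that the two functors respect.

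First I would recall from \cref{def:etale:small-to-big} that $(\iota_!, \iota^*)$ and $(\iota^*, \iota_*)$ are the (inverse image, direct image) pairs of two geometric morphisms relating $\ShvTopH{\et}{\smet{S}}$ and $\ShvTopH{\et}{\Sm{S}}$. Both of these $\infty$-topoi are Postnikov-complete and have enough points by \cref{lem:etale:finite-htyp-dim}, \cref{lem:etale:cover} and \cref{lem:etale:points}, so $L_p$ is defined on them and on their pointed variants. Since inverse image functors of geometric morphisms between such topoi commute with $L_p$ — this is exactly the reasoning already used in \cref{lem:rigidtiy:stalk-functor}, via \cite[Lemma 6.10]{mattis2024fracture} and \cite[Lemma 3.11]{mattis2024unstable} — both $\iota_!$ and $\iota^*$ preserve $p$-equivalences and $p$-complete objects. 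That $\iota_*$ preserves $p$-complete objects is then formal: it is the right adjoint of $\iota^*$, and the right adjoint of a functor that preserves $p$-equivalences automatically preserves $p$-complete objects (the $p$-complete objects being precisely the local objects of the localization $L_p$).

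Consequently $\iota^*$ and $\iota_*$ restrict to functors $\iota^*_p$ and $\iota_*^p$ on the $p$-complete subcategories, and $\iota_!$ restricts as well, so one may take $\iota_!^p = L_p\iota_!$ (which agrees with $\iota_!$ on $p$-complete objects). The adjunction $\iota^*_p \dashv \iota_*^p$ follows immediately from $\iota^* \dashv \iota_*$ and fullness of the inclusions of $p$-complete objects. For $\iota_!^p \dashv \iota^*_p$ one uses in addition that for $p$-complete $Z$ the $p$-equivalence $\iota_! X \to L_p\iota_! X$ induces an equivalence on mapping anima into $Z$, giving
\begin{equation*}
    \operatorname{Map}(L_p\iota_! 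X, Z) \cong \operatorname{Map}(\iota_! X, Z) \cong \operatorname{Map}(X, \iota^* Z)
\end{equation*}
naturally in $X$ and $Z$. I do not expect a genuine obstacle here: the only real input is the fact that inverse-image functors commute with $p$-completion (already used several times in this section), and the only point needing mild care is verifying that the ambient topoi are Postnikov-complete with enough points so that the cited results on $L_p$ apply.
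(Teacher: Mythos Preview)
Your approach matches the paper's: show that $\iota^*$ and $\iota_*$ preserve $p$-complete objects (each being the right adjoint of a functor preserving $p$-equivalences), set $\iota_!^p \coloneqq L_p \circ \iota_!$, and observe that the adjunctions are then formal.

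There is one inaccuracy worth flagging. You assert that $\iota_!$ itself preserves $p$-complete objects, so that $L_p\iota_!$ agrees with $\iota_!$ on $p$-complete inputs. Your justification --- that inverse images of geometric morphisms commute with $L_p$ via \cite[Lemma 6.10]{mattis2024fracture} and the reasoning of \cref{lem:rigidtiy:stalk-functor} --- works for $\iota^*$ because $\iota^*$ has both a left and a right adjoint (hence preserves limits and colimits), but $\iota_!$ has no left adjoint and there is no reason for it to preserve $p$-complete objects. The paper accordingly just defines $\iota_!^p$ as $L_p \circ \iota_!$ without this extra claim. Fortunately the claim is unnecessary for your argument: the displayed hom-computation at the end establishes $\iota_!^p \dashv \iota^*_p$ directly from $\iota_! \dashv \iota^*$ and the fact that the target of $\iota^*$ on $p$-complete objects is $p$-complete, which is exactly what the paper does.
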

\begin{proof}
    Note that $\iota^*$ restricts to a functor 
    $\iota^*_p \colon \complete{\ShvTopH{\et}{\Sm{S}}_{*,}} \to \complete{\ShvTopH{\et}{\smet{S}}_{*,}}$,
    since it preserves $p$-complete objects as a right adjoint (this follows formally 
    from \cite[Lemma 3.11]{mattis2024unstable}).
    Similarly, $\iota_*$ restricts to a functor 
    $\iota_*^p \colon \complete{\ShvTopH{\et}{\smet{S}}_{*,}} \to \complete{\ShvTopH{\et}{\Sm{S}}_{*,}}$.
    Write $\iota_!^p$ for the composition $L_p \circ \iota_!|_{\complete{\ShvTopH{\et}{\smet{S}}_{*,}}}$.
    It is formal that this gives adjunctions $\iota_!^p \dashv \iota^*_p \dashv \iota_*^p$.
\end{proof}

\begin{lem} \label{lem:rigidity:iota-shriek-p-fully-faithful}
    The functors $\iota_!^p$ and $\iota_*^p$ are fully faithful.
\end{lem}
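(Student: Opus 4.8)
The plan is to treat $\iota_*^p$ and $\iota_!^p$ separately. The functor $\iota_*^p$ costs nothing: by \cref{lem:rigidity:p-complete-on-p-complete-obejcts} it is merely the restriction of $\iota_*$ to the full subcategory $\complete{\ShvTopH{\et}{\smet{S}}_{*,}} \subseteq \ShvTopH{\et}{\smet{S}}_*$, with essential image inside the full subcategory $\complete{\ShvTopH{\et}{\Sm{S}}_{*,}} \subseteq \ShvTopH{\et}{\Sm{S}}_*$. Since $\iota_*$ is fully faithful by \cref{lem:rigidity:iota-fully-faithful}, and mapping anima in a full subcategory agree with those in the ambient $\infty$-category, $\iota_*^p$ is fully faithful; no further work is needed there.

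For $\iota_!^p$ I would use that $\iota_!^p \dashv \iota^*_p$ (\cref{lem:rigidity:p-complete-on-p-complete-obejcts}), so that it suffices to show the unit $X \to \iota^*_p \iota_!^p X$ is an equivalence for every $X \in \complete{\ShvTopH{\et}{\smet{S}}_{*,}}$. Unwinding the composite adjunction, this unit is the composite
\begin{equation*}
    X \xrightarrow{\eta} \iota^* \iota_! X \xrightarrow{\iota^*(\lambda)} \iota^* L_p \iota_! X = \iota^*_p \iota_!^p X,
\end{equation*}
where $\eta$ is the unit of $\iota_! \dashv \iota^*$ and $\lambda \colon \iota_! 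X \to L_p \iota_! X$ is the $p$-completion map. The map $\eta$ is an equivalence because $\iota_!$ is fully faithful (\cref{lem:rigidity:iota-fully-faithful}), so it remains to see that $\iota^*(\lambda)$ is an equivalence. The key point is that $\iota^*$ commutes with $p$-completion: it preserves all limits (being the right adjoint of $\iota_!$), hence is in particular left exact, and it is also a left adjoint (of $\iota_*$), so it is a limit-preserving geometric morphism; its source $\ShvTopH{\et}{\Sm{S}}$ admits a locally finite dimensional cover by \cref{lem:etale:cover}, and its target $\ShvTopH{\et}{\smet{S}}$ has enough points and is locally of homotopy dimension $\le N$ for some $N$ by \cref{lem:etale:points} and \cref{lem:etale:finite-htyp-dim}, so \cite[Lemma 6.10]{mattis2024fracture} applies and $\iota^*$ commutes with $L_p$; in particular it preserves $p$-equivalences. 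Thus $\iota^*(\lambda)$ is a $p$-equivalence between the sheaves $\iota^* \iota_! X \cong X$ and $\iota^* L_p \iota_! X$, both of which are $p$-complete — the former by hypothesis, the latter since $\iota^*$ preserves $p$-complete objects (\cref{lem:rigidity:p-complete-on-p-complete-obejcts}). A $p$-equivalence between $p$-complete sheaves is an equivalence, so $\iota^*(\lambda)$, and hence the unit, is an equivalence, and $\iota_!^p$ is fully faithful.

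The only non-formal ingredient is the claim that $\iota^*$ commutes with $p$-completion, and this is the step I expect to require the most care: one must confirm that $\iota^*$ genuinely satisfies the hypotheses of \cite[Lemma 6.10]{mattis2024fracture} (a limit-preserving geometric morphism whose source carries a locally finite dimensional cover and whose target is locally of finite homotopy dimension with enough points), which it does by the results of \cref{sec:etale-topos}. If one prefers to avoid invoking that lemma, an alternative is to unwind the Postnikov-tower description of $L_p$ from \cite{mattis2024unstable}, using that $\iota^*$ commutes with truncations and with the construction $A \mapsto K(A,n)$ and is exact on sheaves of abelian groups, hence commutes with the derived $p$-completion of the Postnikov layers; either route yields that $\iota^*$ preserves $p$-equivalences, which is all the argument above actually needs.
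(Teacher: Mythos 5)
Your proof is correct, and for $\iota_*^p$ it coincides with the paper's. For $\iota_!^p$ you take a genuinely different route. The paper invokes the formal fact (the dual of \cite[Corollary 2.7]{naumann2024symmetricmonoidalfracturesquare}) that in an adjoint triple $F \dashv G \dashv H$ the outer two functors are fully faithful simultaneously; once $\iota_*^p$ is shown fully faithful, the statement for $\iota_!^p$ follows for free. You instead verify directly that the unit of $\iota_!^p \dashv \iota^*_p$ is an equivalence, decomposing it as the unit of $\iota_! \dashv \iota^*$ (an equivalence since $\iota_!$ is fully faithful) followed by $\iota^*$ applied to the $p$-completion map $\iota_! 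X \to L_p \iota_! X$. That decomposition is right, and your argument closes correctly.

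The one place I would flag is how you justify the second leg. You invoke \cite[Lemma 6.10]{mattis2024fracture} to show $\iota^*$ commutes with $L_p$, which requires checking topos-theoretic hypotheses about locally finite dimensional covers and homotopy dimension. But the same conclusion — that $\iota^*$ preserves $p$-equivalences and $p$-complete objects, hence commutes with $L_p$ — is available much more cheaply from the adjunction chain $\iota_! \dashv \iota^* \dashv \iota_*$ together with \cite[Lemma 3.11]{mattis2024unstable} (the paper uses exactly this mechanism when constructing the adjunctions in \cref{lem:rigidity:p-complete-on-p-complete-obejcts}, and when it later remarks that $\iota_!$ and $\iota^*$ preserve $p$-equivalences as left adjoints). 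So the heavy geometric-morphism lemma is not wrong, but it is more machinery than the step actually needs. What the paper's formal route buys is avoiding this discussion entirely: full faithfulness of $\iota_!^p$ is a two-line consequence of full faithfulness of $\iota_*^p$. What your direct route buys is a more explicit picture of what the unit of $\iota_!^p \dashv \iota^*_p$ looks like, which is perhaps instructive but costs the extra non-formal input.
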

\begin{proof}
    It suffices to show that the double right adjoint $\iota_*^p$ is fully faithful (cf.\ (the dual of) 
    \cite[Corollary 2.7]{naumann2024symmetricmonoidalfracturesquare}).
    This is clear since $\iota_*^p$ is just the restriction of the fully 
    faithful functor $\iota_*$, cf.\ \cref{lem:rigidity:iota-fully-faithful}.
\end{proof}

\begin{prop} \label{lem:rigidity:factors-through}
    Let $X \in \ShvTopH{\et}{\smet{S}}_*$ be nilpotent.
    Then $\iota_! X$ is $p$-completely étale $\AffSpc{1}$-nilpotent
    (and in particular $L_p \iota_! X$ is $\AffSpc{1}$-invariant).
\end{prop}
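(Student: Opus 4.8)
The plan is to exhibit, for a nilpotent $X$, an explicit highly connected tower realizing \cref{def:etale-nilpotent:etale-nilpotent} for $\iota_! X$, obtained by pushing a principal Postnikov refinement of $X$ forward along $\iota_!$.

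First I would use nilpotence of $X$: by \cref{lem:nilpotent:morphism-of-object} the morphism $X \to *$ is nilpotent, so \cref{lem:nilpotent:refined-Post-Moore-tower} provides a highly connected tower $(Y_n)_n$ under $X$ in $\ShvTopH{\et}{\smet{S}}_*$ with $Y_0 \cong *$, $X \cong \limil{n} Y_n$, and fiber sequences $Y_{n+1} \to Y_n \to K(A_n, k_n)$ where $A_n \in \AbObj{\Disc{\ShvTopH{\et}{\smet{S}}}}$ and $k_n \ge 2$ with $k_n \to \infty$. I would then set $X_n \coloneqq \iota_! Y_n$ and $K_n \coloneqq \iota_! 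K(A_n, k_n)$. Since $(\iota_!, \iota^*)$ is a geometric morphism (\cref{def:etale:small-to-big}), $\iota_!$ preserves finite limits (hence the terminal object and fiber sequences), colimits, $n$-truncations, connected objects, and connectivities of morphisms; so $X_0 \cong *$, the $X_{n+1} \to X_n \to K_n$ are fiber sequences, and $(X_n)_n$ is again a highly connected tower. Being simultaneously left exact and a left adjoint, $\iota_!$ takes the bar resolution of a sheaf of abelian groups to that of its image and hence commutes with iterated deloopings, so $K_n \cong K(\iota_! A_n, k_n) \cong \pLoop E_n$ with $\iota_! A_n$ again a discrete sheaf of abelian groups and $E_n \coloneqq \Sigma^{k_n} H(\iota_! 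A_n) \in \Stab{\ShvTopH{\et}{\Sm{S}}}_{\ge 1}$ (using $k_n \ge 2 \ge 1$); here $E_n$ is the image of $\Sigma^{k_n} H A_n$ under the (exact) extension of $\iota_!$ to sheaves of spectra.

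It remains to verify conditions (2) and (4) of \cref{def:etale-nilpotent:etale-nilpotent}. For condition (2) I would invoke \cref{lem:highly-connected:main-thm} applied to the geometric morphism $\iota_!$ and the highly connected tower $(Y_n)_n$: this gives $\iota_! X = \iota_! \limil{n} Y_n \cong \limil{n} \iota_! Y_n = \limil{n} X_n$, using that $\ShvTopH{\et}{\smet{S}}$ is locally of homotopy dimension $\le N$ for some $N$ by \cref{lem:etale:finite-htyp-dim} (valid since $S$ is qcqs, of finite Krull dimension and étale bounded) and that $\ShvTopH{\et}{\Sm{S}}$ admits a locally finite dimensional cover by \cref{lem:etale:cover}. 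For condition (4), that $L_p E_n$ is $\AffSpc{1}$-invariant: since $E_n$ is the image under $\iota_!$ of the étale Eilenberg--MacLane spectrum $\Sigma^{k_n} H A_n$ on $\smet{S}$, Bachmann's stable rigidity theorem \cite{Bachmann2021etalerigidity, bachmann2021remarksetalemotivicstable} identifies $L_p E_n$ with an object of $\complete{\SHoneet{S}}$, which is in particular an $\AffSpc{1}$-invariant sheaf of spectra. This checks all of (1)--(4), so $\iota_! X$ is $p$-completely étale $\AffSpc{1}$-nilpotent, and the parenthetical $\AffSpc{1}$-invariance of $L_p \iota_! X$ is then \cref{lem:etale-nilpotent:nilpotent-A1-inv}.

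The step I expect to be the main obstacle is condition (2): $\iota_!$ is only a left adjoint and does not preserve the countable limit $\limil{n} Y_n$ on the nose, so establishing this interchange genuinely needs the highly connected tower machinery of \cref{sec:highly-connected} together with the finiteness hypotheses on $S$. A secondary technical point is the identification $\iota_! K(A_n, k_n) \cong \pLoop E_n$ with $E_n$ a \emph{$1$-connective} sheaf of spectra, which relies on $\iota_!$ being compatible with deloopings and $t$-exact on sheaves of spectra --- both consequences of $\iota_!$ being simultaneously left exact and colimit-preserving.
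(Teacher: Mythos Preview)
Your proposal is correct and follows essentially the same route as the paper: push a principal Postnikov refinement of $X$ forward along $\iota_!$, use \cref{lem:highly-connected:main-thm} together with \cref{lem:etale:finite-htyp-dim,lem:etale:cover} for the limit interchange, and invoke Bachmann's stable rigidity to see that $L_p$ of the pushed-forward layer spectra is $\AffSpc{1}$-invariant. The only cosmetic difference is that the paper records the layers as $\iota_! K_n \cong \pLoop \iota_! E_n$ with $E_n$ the small-site spectrum (citing \cite[Lemma A.15]{mattis2024unstable} and \cite[Corollary 6.2]{Bachmann2021etalerigidity}), whereas you spell out $E_n \cong \Sigma^{k_n} H(\iota_! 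A_n)$ explicitly.
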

\begin{proof}
    Since $X$ is nilpotent, we can choose a principal refinement $(X_n, K_n)_n$ of the Postnikov-tower of $X$
    with $K_n \cong \pLoop E_n$ for some $2$-connective sheaf of spectra $E_n$ \cite[Lemma A.15]{mattis2024unstable}.
    Then $(\iota_! X_n, \iota_! K_n)_n$ is a refinement of the Postnikov tower of $\iota_! X$.
    Indeed, it is clear that this gives a refinement of the Postnikov tower of $\limil{n} \iota_! X_n$.
    Moreover, $\iota_!$ commutes with this limit by \cref{lem:highly-connected:main-thm},
    as $\ShvTopH{\et}{\smet{S}}$ is locally of homotopy dimension $\le N$ for some $N$,
    cf.\ \cref{lem:etale:finite-htyp-dim},
    and $\ShvTopH{\et}{\Sm{S}}$ has a locally finite dimensional cover, cf.\ \cref{lem:etale:cover}.
    Note that $\iota_! K_n \cong \pLoop \iota_! E_n$.
    It therefore suffices to show that $L_p \iota_! E_n$
    is $\AffSpc{1}$-invariant,
    which follows from the proof of \cite[Corollary 6.2]{Bachmann2021etalerigidity},
    since $L_p \iota_! E_n \cong L_p \iota_! L_p E_n$,
    as $\iota^*$ preserve $p$-equivalences.

    That $L_p \iota_! X$ is $\AffSpc{1}$-invariant now follows from \cref{lem:etale-nilpotent:nilpotent-A1-inv}.
\end{proof}

\begin{defn}
    Let $X \in \ShvTopH{\et}{\Sm{S}}_*$.
    We say that $X$ is \emph{$p$-completely small} 
    if the counit $\iota_! \iota^* X \to X$ is a $p$-equivalence,
    i.e.\ if and only if $L_p \iota_! \iota^* X \to L_p X$ is an equivalence.
    In particular, in this case $L_p X$ is in the essential image of $\iota_!^p$.
\end{defn}

\begin{thm}[Essential Image] \label{lem:rigidity:ess-img}
    Let $W \in \ShvTopH{\et}{\Sm{S}}_*$ be $p$-completely étale $\AffSpc{1}$-nilpotent.
    Then $W$ is $p$-completely small.
    In particular, $L_p W$ is in the essential image of $\iota_!^p$, restricted to 
    the subcategory of $p$-complete nilpotent objects.    
\end{thm}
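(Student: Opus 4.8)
The plan is to establish the real content, namely that the counit $\varepsilon \colon \iota_! \iota^* W \to W$ is a $p$-equivalence (this is exactly the statement that $W$ is $p$-completely small); the ``in particular'' is then formal, because $L_p W \cong L_p \iota_! \iota^* W \cong \iota_!^p(\iota^*_p L_p W)$ by \cref{lem:rigidity:p-complete-on-p-complete-obejcts} (using that $\iota^*$ and $\iota_!$ each commute with $L_p$, being inverse images of geometric morphisms, cf.\ \cref{def:etale:small-to-big}), and the preimage $\iota^*_p L_p W \cong L_p \iota^* W$ is a $p$-complete \emph{nilpotent} sheaf since $\iota^* W$ is a limit of a highly connected tower whose layers are connected infinite loop sheaves --- a shape preserved by $L_p$ (\cref{lem:p-comp:p-comp-tower-limit} and \cite[Lemma 3.17]{mattis2024unstable}).

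First I would fix a witnessing tower $(W_n, K_n, E_n)_n$ as in \cref{def:etale-nilpotent:etale-nilpotent}: $W_0 \cong *$, $W \cong \limil{n} W_n$, fiber sequences $W_{n+1} \to W_n \to K_n$ with $K_n \cong \pLoop E_n$, $E_n \in \Stab{\ShvTopH{\et}{\Sm{S}}}_{\ge 1}$, and $L_p E_n$ $\AffSpc{1}$-invariant. As in the proof of \cref{lem:etale-nilpotent:nilpotent-A1-inv} each $W_n$ is nilpotent, and hence so are $\iota^* W_n$ and $\iota_! \iota^* W_n$ (the latter by \cref{lem:rigidity:iota-shriek-nilpotent}); the towers $(W_n)_n$, $(\iota^* W_n)_n$ and $(\iota_! \iota^* W_n)_n$ are all highly connected since $\iota^*$ and $\iota_!$ preserve connectivity. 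Now $\iota^*$ preserves all limits (being a right adjoint, namely to $\iota_!$), $\iota_!$ commutes with limits of highly connected towers by \cref{lem:highly-connected:main-thm} (using that $\ShvTopH{\et}{\smet{S}}$ is locally of homotopy dimension $\le N$ for some $N$ by \cref{lem:etale:finite-htyp-dim} and that $\ShvTopH{\et}{\Sm{S}}$ has a locally finite dimensional cover by \cref{lem:etale:cover}), and $L_p$ commutes with limits of highly connected towers of nilpotent sheaves by \cref{lem:p-comp:p-comp-tower-limit}. Combining these with the naturality of $\varepsilon$ reduces the problem to showing that each $\varepsilon_n \colon \iota_! \iota^* W_n \to W_n$ is a $p$-equivalence, which I would prove by induction on $n$.

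The base case $W_0 \cong *$ is trivial, and the only new input for the inductive step is that each layer is already $p$-completely small. To see that $\iota_! \iota^* K_n \to K_n$ is a $p$-equivalence, note that since $L_p E_n$ is $\AffSpc{1}$-invariant, Bachmann's stable rigidity theorem \cite{Bachmann2021etalerigidity,bachmann2021remarksetalemotivicstable} --- under which the $p$-complete $\AffSpc{1}$-invariant étale sheaves of spectra on $\Sm{S}$ are precisely the $p$-completions of sheaves of spectra pulled back along $\iota_!$ from $\smet{S}$ --- shows that the stable counit $\iota_! \iota^* E_n \to E_n$ in $\Stab{\ShvTopH{\et}{\Sm{S}}}$ is a $p$-equivalence; applying $\pLoop$, which commutes with $\iota_!$ and $\iota^*$ and takes $p$-equivalences between $1$-connective sheaves of spectra to $p$-equivalences (\cite[Lemma 3.16]{mattis2024unstable}, cf.\ \cref{lem:p-comp:loop-detects-peq}), yields the claim. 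For the step itself: since $\iota^*$ and $\iota_!$ are left exact, $\iota_! \iota^* W_{n+1} \cong \Fib{\iota_! \iota^* W_n \to \iota_! \iota^* K_n}$, and $\varepsilon$ gives a morphism of fiber sequences onto $W_{n+1} \to W_n \to K_n$ with $\varepsilon_{n+1}$, $\varepsilon_n$, and $\iota_! \iota^* K_n \to K_n$ on the three columns; as all six terms are pointed nilpotent, the Bousfield--Kan fiber lemma \cref{lem:fib-lem:main-thm} applies to both rows and, by naturality, identifies $L_p \tau_{\ge 1} \varepsilon_{n+1}$ with the map induced by the equivalences $L_p \varepsilon_n$ and $L_p(\iota_! \iota^* K_n \to K_n)$, hence an equivalence; since $W_{n+1}$ and $\iota_! \iota^* W_{n+1}$ are connected and $L_p$ preserves connectedness (\cite[Lemma 3.12]{mattis2024unstable}), the truncations drop out and $\varepsilon_{n+1}$ is a $p$-equivalence.

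The main obstacle is this passage from the \emph{stable} rigidity theorem to an unstable statement one Postnikov layer at a time: it is precisely the hypothesis that $W$ be $p$-completely étale $\AffSpc{1}$-nilpotent --- and not merely étale nilpotent --- that makes the spectral layers $\AffSpc{1}$-invariant after $p$-completion, after which one must propagate ``$p$-completely small'' along the fiber-sequence steps of the tower, where the left-exactness of $\iota_!$ and the Bousfield--Kan fiber lemma are indispensable, and finally reassemble the limit, which is where the qcqs, finite-Krull-dimension and étale-boundedness hypotheses on $S$ (guaranteeing the applicability of \cref{lem:highly-connected:main-thm}, \cref{lem:p-comp:p-comp-tower-limit} and \cref{lem:fib-lem:main-thm}) enter.
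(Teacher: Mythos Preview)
Your argument is correct and follows essentially the same route as the paper: reduce to the layers via the highly connected tower (using \cref{lem:highly-connected:main-thm} for $\iota_!$ and \cref{lem:p-comp:p-comp-tower-limit} for $L_p$), invoke Bachmann's stable rigidity to see that each $\iota_!\iota^* E_n \to E_n$ is a $p$-equivalence and transport this along $\pLoop$, and then propagate ``$p$-completely small'' inductively up the tower using left exactness of $\iota_!$, $\iota^*$ and the Bousfield--Kan fiber lemma \cref{lem:fib-lem:main-thm}. The only cosmetic difference is that you pass to the limit before doing the induction, whereas the paper does the induction first and then assembles the limit.
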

\begin{proof}
    The ``in particular'' part follows as both $\iota_!$ and $\iota^*$ 
    preserve $p$-equivalences (since they are left adjoints),
    and hence $L_p \iota_! \iota^* \cong \iota_!^p \iota^*_p L_p$.
    By assumption there is an $\N$-indexed system $(W_n)_n$ under $W$
    with $\limil{n} W_n \cong W$, $W_0 \cong *$ and fiber sequences $W_{n+1} \to W_n \to K_n$
    for some sheaves $K_n$ 
    such that $K_n \cong \pLoop E_n$,
    where $E_n \in \Stab{\ShvTopH{\et}{\Sm{S}}}_{\ge 1}$ is a $1$-connective sheaf of spectra,
    such that $L_p E_n$ is $\AffSpc{1}$-invariant.
    We proceed in several steps:
    
    \underbar{\textbf{Step 1:}} We prove that $L_p K_n \cong L_p \iota_! \iota^* K_n$.
    Since $E_n$ is $1$-connective, it follows from \cite[Lemma 3.17]{mattis2024unstable} that $L_p K_n \cong \tau_{\ge 1} \pLoop L_p E_n$.
    Moreover, it follows from \cite[Theorem 6.6]{Bachmann2021etalerigidity} 
    that $L_p E_n \cong L_p \iota_! \iota^* E_n$ (this is the stable rigidity theorem,
    which needs our assumptions on $S$. In the reference it is shown that $\iota_!$ induces an equivalence 
    on $p$-complete objects, and hence the above claim follows since both $\iota_!$ and its right adjoint $\iota^*$ preserve 
    $p$-equivalences).
    Therefore, we get 
    \begin{align*}
        L_p K_n 
        &\cong \tau_{\ge 1} \pLoop L_p E_n \\
        &\cong \tau_{\ge 1} \pLoop L_p \iota_! \iota^* E_n \\
        &\cong L_p \pLoop \iota_! \iota^* E_n \\
        &\cong L_p \iota_! \iota^* \pLoop E_n \\
        &\cong L_p \iota_! \iota^* K_n,
    \end{align*}
    where we used again \cite[Lemma 3.17]{mattis2024unstable} in the third equivalence (as also $\iota_! \iota^* E_n$ 
    is $1$-connective as both functors are t-exact, cf.\ \cite[Remark 1.3.2.8]{sag}),
    and e.g.\ \cite[Lemmas A.1 and A.3]{mattis2024unstable} in the fourth equivalence.
    
    \underbar{\textbf{Step 2:}} We prove that $L_p W_n \cong L_p \iota_! \iota^* W_n$.
    We prove this by induction on $n$, using that for $W_0 = *$ the statement is true 
    since all involved functors preserve the terminal object.
    Suppose now that $L_p W_n \cong L_p \iota_! \iota^* W_n$.
    We have 
    \begin{align*}
        L_p \iota_! \iota^* W_{n+1} 
        &\cong L_p \iota_! \iota^* \Fib{W_n \to K_n} \\
        &\cong L_p \Fib{\iota_! \iota^* W_n \to \iota_! \iota^* K_n} \\
        &\cong \tau_{\ge 1} \Fib{L_p \iota_! \iota^* W_n \to L_p \iota_! \iota^* K_n},
    \end{align*}
    using in the second equivalence that both $\iota_!$ and $\iota^*$ preserve finite limits,
    as both are (the left adjoints of) geometric morphisms.
    The last equivalence holds by \cref{lem:fib-lem:main-thm}, since $\Fib{\iota_! \iota^* W_n \to \iota_! \iota^* K_n} \cong \iota_! \iota^* W_{n+1}$
    is connected.
    Analogously, one shows $L_p W_{n+1} \cong \tau_{\ge 1} \Fib{L_p W_n \to L_p K_n}$.
    Thus, we conclude using the induction hypothesis and the first step.

    \underbar{\textbf{Step 3:}} We prove that $L_p W \cong L_p \iota_! \iota^* W$.
    We calculate 
    \begin{align*}
        L_p \iota_! \iota^* W 
        &\cong L_p \iota_! \iota^* \limil{n} W_n \\
        &\cong L_p \iota_! \limil{n} \iota^* W_n \\
        &\cong L_p \limil{n} \iota_! \iota^* W_n \\
        &\cong \limil{n} L_p \iota_! \iota^* W_n \\
        &\cong \limil{n} L_p W_n \\
        &\cong L_p W \rlap{.}
    \end{align*}
    The first equivalence holds by definition of $W$.
    The functor $\iota^*$ commutes with the limit since it is a right adjoint,
    whereas $\iota_!$ preserves the limit by \cref{lem:highly-connected:main-thm},
    using that $(W_n)_n$ and hence also $(\iota^* W_n)_n$
    are highly connected towers.
    The fourth and last equivalence are applications of \cref{lem:p-comp:p-comp-tower-limit},
    using that $\ShvTopH{\et}{\Sm{S}}$ admits a locally finite-dimensional cover by 
    \cref{lem:etale:cover}.
    The fifth equivalence is given by the second step.
\end{proof}

Combining the above results we get the unstable rigidity theorem.
\begin{cor}[Étale rigidity for motivic spaces] \label{lem:rigidity:main-thm}
    The functor 
    \begin{equation*}
        \iota_!^p \colon \compnil{\ShvTopH{\et}{\smet{S}}_{*,}} \to \complete{\ShvTopH{\et}{\Sm{S}}_{*,}}
    \end{equation*}
    is fully faithful,
    with essential image those $p$-complete $\AffSpc{1}$-invariant sheaves
    that are the $p$-completion of a $p$-completely étale $\AffSpc{1}$-nilpotent
    sheaf.
\end{cor}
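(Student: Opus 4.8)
The plan is to read the corollary off from the three immediately preceding results, with essentially no new argument required: \cref{lem:rigidity:iota-shriek-p-fully-faithful} supplies full faithfulness, \cref{lem:rigidity:factors-through} gives one containment of essential images, and \cref{lem:rigidity:ess-img} gives the other. Concretely, for full faithfulness I would simply observe that \cref{lem:rigidity:iota-shriek-p-fully-faithful} proves $\iota_!^p$ is fully faithful on the whole of $\complete{\ShvTopH{\et}{\smet{S}}_{*,}}$, hence a fortiori on the full subcategory $\compnil{\ShvTopH{\et}{\smet{S}}_{*,}}$ of $p$-completions of nilpotent pointed sheaves.

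For the containment of the essential image in the asserted class, I would take $Y \cong L_p X$ with $X \in \ShvTopH{\et}{\smet{S}}_*$ nilpotent. Since $\iota_!$ is a left adjoint it preserves $p$-equivalences, so $\iota_!^p Y \cong L_p \iota_! L_p X \cong L_p \iota_! X$. By \cref{lem:rigidity:factors-through} the sheaf $\iota_! X$ is $p$-completely étale $\AffSpc{1}$-nilpotent and $L_p \iota_! X$ is $\AffSpc{1}$-invariant; it is of course also $p$-complete. Hence $\iota_!^p Y$ is a $p$-complete, $\AffSpc{1}$-invariant sheaf that is the $p$-completion of a $p$-completely étale $\AffSpc{1}$-nilpotent sheaf, as required.

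For the reverse containment, I would take $Z \cong L_p W$ with $W \in \ShvTopH{\et}{\Sm{S}}_*$ $p$-completely étale $\AffSpc{1}$-nilpotent (such a $Z$ is automatically $\AffSpc{1}$-invariant and $p$-complete, the former by \cref{lem:etale-nilpotent:nilpotent-A1-inv}). Then \cref{lem:rigidity:ess-img} tells us $W$ is $p$-completely small, and its ``in particular'' clause says precisely that $Z \cong L_p W$ lies in the essential image of $\iota_!^p$ restricted to $\compnil{\ShvTopH{\et}{\smet{S}}_{*,}}$, with explicit preimage $\iota^*_p Z \cong L_p \iota^* W$. Combining the two containments identifies the essential image exactly.

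I do not expect a serious obstacle here, since the real work has been done in the cited results; the only point that wants a moment's thought is that the preimage $\iota^*_p Z \cong L_p \iota^* W$ genuinely lies in $\compnil{\ShvTopH{\et}{\smet{S}}_{*,}}$, i.e.\ is the $p$-completion of a nilpotent sheaf. This is already part of \cref{lem:rigidity:ess-img}, so in the write-up I would just quote it; if one wanted to re-derive it, the mild subtlety is that a limit of nilpotent sheaves need not be nilpotent, so one uses that applying the left-exact, $t$-exact functor $\iota^*$ to a witnessing tower for $W$ yields a highly connected tower $(\iota^* W_n)_n$ whose layers $\iota^* K_n \cong \pLoop \iota^* E_n$ are connected infinite loop sheaves — whence each $\iota^* W_n$ is nilpotent — together with Postnikov-completeness of $\ShvTopH{\et}{\smet{S}}$ (\cref{lem:etale:finite-htyp-dim}), which forces $\tau_{\le k}(\iota^* W) \cong \tau_{\le k}(\iota^* W_n)$ for $n \gg 0$ and hence the nilpotence of every truncation of $\iota^* W$.
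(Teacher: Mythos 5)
Your proposal matches the paper's own proof essentially verbatim: full faithfulness is quoted from \cref{lem:rigidity:iota-shriek-p-fully-faithful}, the forward containment is \cref{lem:rigidity:factors-through}, and the reverse containment is \cref{lem:rigidity:ess-img}. Your final remark — that one must still know $\iota^* W$ is nilpotent so that the preimage $L_p \iota^* W$ actually lands in $\compnil{\ShvTopH{\et}{\smet{S}}_*}$, and that this follows from the highly connected tower $(\iota^* W_n)_n$ together with Postnikov-completeness of $\ShvTopH{\et}{\smet{S}}$ — is a genuine subtlety that the paper's proof of \cref{lem:rigidity:ess-img} addresses only implicitly, and your sketch of how to supply it is correct.
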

\begin{proof}
    We have seen in \cref{lem:rigidity:iota-shriek-p-fully-faithful} that the functor $\iota_!^p$ is fully faithful.
    Note that by \cref{lem:rigidity:factors-through} it factors through the proclaimed essential image.
    Hence, the corollary follows from \cref{lem:rigidity:ess-img}.
\end{proof}
                \label{sec:rigidity}
\section{A retract of projective space}
In this section let $k$ be an algebraically closed field and $p \neq \operatorname{char}(k)$
a prime number.
We prove that in étale $\AffSpc{1}$-homotopy theory 
over $k$ there is a retract diagram $L_p S^2 \to L_p \Pp^1 \to L_p S^2$.
Choose a compatible system of primitive roots of unity $(\zeta_{p^n})_n$,
i.e.\ a sequence with $\zeta_{p^n} \in k^\times$ a primitive $p^n$-th root of unity
such that moreover $(\zeta_{p^{n+1}})^p = \zeta_{p^n}$ for all $n$.
In particular, $(\zeta_{p^n})_n$
defines an element of $\Z_p(1)(k) = \limil{n} k^\times[p^n]$.

Recall that in \cite[Section 3]{Bachmann2021etalerigidity} 
Bachmann constructed (the desuspension of) a map $\sigma \colon L_p L_{\et, \AffSpc{1}} \Pp^1 \to L_p S^2$,
and its stabilization $\sigma_{st} \colon L_p L_{\et, \AffSpc{1}} \Sus \Pp^1 \to L_p \S^2 \in \SHoneet{k}$
(since $k$ is algebraically closed, we can choose a trivialization of the twisting 
space, cf.\ \cite[Theorem 3.6 (3)]{Bachmann2021etalerigidity}).

Our first goal is the construction of the other map of the retract.
For this, we will work in the Nisnevich topology.
We write $L_{\nis, \AffSpc{1}} \colon \Cat P(\Sm{S}) \to \Spc{k}$ 
for the motivic localization functor, and 
\begin{equation*}
    H_{\nis} \colon \ShvTop{\nis}{\Sm{k}, \Ab} \cong \Stab{\ShvTop{\nis}{\Sm{k}}}^{\heartsuit} \to \Stab{\ShvTop{\nis}{\Sm{k}}}
\end{equation*}
for the embedding of the heart.
Similarly, for every $n \ge 1$ we write 
\begin{equation*}
    K_{\nis}(-, n) \colon \ShvTop{\nis}{\Sm{k}, \Ab} \to \ShvTop{\nis}{\Sm{k}}
\end{equation*}
for the Nisnevich-local Eilenberg-MacLane sheaves.
We begin by constructing an element $\tau_0 \in \pi_2(L_p K_{\nis}(\Gm, 1))(k)$:
\begin{constr}
    Write $A[p^n] \coloneqq \ker(A \xrightarrow{p^n} A)$ for the $p^n$-torsion of 
    a sheaf of abelian groups $A$.
    Using \cite[Corollary 3.18 and Lemma 2.24]{mattis2024unstable}, we have 
    \begin{equation*}
        \pi_2 (L_p K_{\nis}(\Gm, 1)) \cong \pi_2 (L_p \Sigma^1 H_{\nis}\Gm) \cong \limil{n} \Gm[p^n] = \limil{n} \mu_{p^n} = \Z_p(1),
    \end{equation*}
    and therefore 
    \begin{equation*}
        \pi_2(L_p K_{\nis}(\Gm, 1))(k) \cong \Z_p(1)(k) = \limil{n} \mu_{p^n}(k).
    \end{equation*}
    Under this equivalence, we let $\tau_0 \in \pi_2(L_p K_{\nis}(\Gm, 1))(k)$ be the element given by 
    the system of roots of unity $(\zeta_{p^n})_n$.
\end{constr}

\begin{lem}
    There is a map $\psi \colon \pi_2 L_p L_{\nis,\AffSpc{1}}\Pp^1 \to \pi_2 L_p K_{\nis}(\Gm, 1)$
    such that $\tau_0$ is in the image of this map.
\end{lem}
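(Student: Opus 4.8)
The plan is to construct $\psi$ as the map on $\pi_2$ induced by a morphism $L_p L_{\nis,\AffSpc{1}} \Pp^1 \to L_p K_{\nis}(\Gm, 1)$ of pointed motivic spaces, and then to identify $\tau_0$ as coming from the standard generator of $\pi_2(L_{\nis,\AffSpc{1}}\Pp^1)$. First I would recall the classical computation of the first non-vanishing motivic homotopy sheaf of $\Pp^1$: one has $\Pp^1 \simeq S^1 \wedge \Gm$ in $\Spc{k}_*$, and by Morel's connectivity theorem and the computation of $\pi_1^{\AffSpc{1}}$ of a simplicial suspension (Morel's $\pi_1^{\AffSpc{1}}(S^1 \wedge \Gm) \cong \mathbf{K}^{\mathrm{MW}}_1$ / $\Gm$-story, or more elementarily the fact that $\pi_2^{\AffSpc{1}}(\Pp^1)$ receives a canonical map from $\Gm$), there is a canonical map $\Pp^1 \to K_{\nis}(\Gm, 1)$ obtained as (a model for) the second Postnikov truncation $\tau_{\le 1}$ of $\Pp^1$ post-composed with the identification of the bottom homotopy sheaf. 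Concretely, $\Sigma^\infty$-freely, the counit $\Pp^1 \cong S^1 \wedge \Gm \to S^1 \wedge H_{\nis}\Gm \cong K_{\nis}(\Gm,1)$ is the natural candidate, since $\Gm \to H_{\nis}\Gm$ is the canonical map to the Eilenberg--MacLane sheaf on $\pi_0$.

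**Applying $L_p$ and computing on $\pi_2$.** Next I would apply $L_p$ to this map $\Pp^1 \to K_{\nis}(\Gm,1)$, using that $L_p$ is functorial, to obtain $L_p \Pp^1 \to L_p K_{\nis}(\Gm, 1)$; since $L_{\nis,\AffSpc{1}}$ and $L_p$ compose appropriately (and $L_p$ on a connective space only depends on the connective cover, cf.\ the cited \cite[Lemma 3.17]{mattis2024unstable}), this yields $\psi$ on $\pi_2$. To see that $\tau_0$ is in the image, observe that the displayed computation in the Construction identifies $\pi_2(L_p K_{\nis}(\Gm,1)) \cong \Z_p(1)$ via $L_p \Sigma^1 H_{\nis}\Gm$ and the $p^n$-torsion $\Gm[p^n] = \mu_{p^n}$. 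On the other side, $\pi_2(L_p \Sigma^1 H_{\nis}\Gm)$ is computed the same way and the map $\psi$ on $\pi_2$ is induced by $\Gm \to H_{\nis}\Gm$, which is an isomorphism on $p$-power torsion sheaves $\Gm[p^n] \xrightarrow{\cong} (H_{\nis}\Gm)_{[p^n]}$ (both are $\mu_{p^n}$, since $H_{\nis}\Gm$ has $\pi_0 = \Gm$). Hence, after evaluating at $k$, $\psi$ is surjective onto $\Z_p(1)(k)$; in particular the element $\tau_0$ corresponding to $(\zeta_{p^n})_n$ lies in the image. Actually for the statement we only need $\tau_0$ itself to lift, which already follows from surjectivity.

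**The main obstacle.** The delicate point is verifying that the map $\psi$ on $\pi_2$, after $p$-completion, really is (an isomorphism or at least) surjective onto $\Z_p(1)$ — i.e.\ that passing from $\Pp^1$ to its second Postnikov layer $K_{\nis}(\Gm,1)$ does not lose the relevant class after $p$-completion. This requires controlling the effect of $L_p$ on the map of homotopy sheaves $\pi_2^{\AffSpc{1}}(\Pp^1) \to \Gm$; the cleanest route is to factor through the stable picture where $\Sigma^\infty \Pp^1$ and its first Postnikov layer $\Sigma H_{\nis}\Gm$ agree in a range, use that $L_p$ of the connective cover of $\Sigma H_{\nis}\Gm$ has $\pi_2 = \Z_p(1)$ by the quoted lemmas, and that the unstable-to-stable comparison is an isomorphism on $\pi_2$ by Freudenthal (the relevant connectivity is fine here since $\Pp^1$ is $1$-connected). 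I expect the bookkeeping with the $\lim^1$-term in $\limil{n}\mu_{p^n}$ and the precise form of $L_p$ on $K_{\nis}(\Gm,1)$ (using \cite[Corollary 3.18 and Lemma 2.24]{mattis2024unstable} as in the Construction) to be the part requiring the most care, but it is exactly parallel to the computation already carried out for $K_{\nis}(\Gm,1)$ in the Construction.
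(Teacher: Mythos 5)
Your construction of the map $\psi$ is essentially the same as the paper's: the paper also takes the map $L_{\nis,\AffSpc{1}}\Pp^1 \to K_{\nis}(\Gm,1)$ induced by truncation to $\pi_1$ followed by the quotient $\pi_1(L_{\nis,\AffSpc{1}}\Pp^1) \twoheadrightarrow \Gm$, and passes to $\pi_2$ after $p$-completion. So the first two paragraphs are fine modulo polish.

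The gap is exactly the point you flag as ``the main obstacle,'' and the route you propose to close it does not work. You want to argue that $\Sus \Pp^1$ and $\Sigma H_{\nis}\Gm$ ``agree in a range'' and then invoke Freudenthal. But they do not agree in the relevant range as sheaves: by Morel, $\pi_1(L_{\nis,\AffSpc{1}}\Sus\Pp^1) \cong \mathbf{K}^{\mathrm{MW}}_1 \not\cong \Gm$, and unstably $\pi_1(L_{\nis,\AffSpc{1}}\Pp^1)$ is an extension of $\Gm$ by $\mathbf{K}^{\mathrm{MW}}_2$. So the map to $K_{\nis}(\Gm,1)$ is genuinely a proper quotient on $\pi_1$, and there is no reason a priori that $\psi$ surjects onto $\pi_2(L_p K_{\nis}(\Gm,1))(k) \cong \Z_p(1)(k)$ after $p$-completion: the obstruction sits in the $p$-completed homotopy of the fiber, controlled by $\mathbf{K}^{\mathrm{MW}}_2$.

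The arithmetic input that actually closes the gap never appears in your argument: one needs that $\mathbf{K}^{\mathrm{MW}}_2(k)$ is uniquely $p$-divisible when $k$ is algebraically closed. The paper sets $F := \Fib{L_{\nis,\AffSpc{1}}\Pp^1 \to K_{\nis}(\Gm,1)}$, identifies $\pi_1(F) \cong \mathbf{K}^{\mathrm{MW}}_2$, applies the Bousfield--Kan fiber lemma (\cref{lem:fib-lem:main-thm}) to get $L_pF \cong \tau_{\ge 1}\Fib{L_pL_{\nis,\AffSpc{1}}\Pp^1 \to L_pK_{\nis}(\Gm,1)}$, reads off the exact sequence $\pi_2 L_pL_{\nis,\AffSpc{1}}\Pp^1 \xrightarrow{\psi} \pi_2 L_pK_{\nis}(\Gm,1) \xrightarrow{\phi} \pi_1 L_pF$, and then shows $\phi(\tau_0)=0$ because $(\pi_1 L_pF)(k)$ is the (zeroth derived) $p$-completion of the uniquely $p$-divisible group $\mathbf{K}^{\mathrm{MW}}_2(k)$, hence vanishes. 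You should replace your third paragraph with this fiber-sequence argument; as it stands, the claim that ``the bookkeeping is exactly parallel to the computation already carried out for $K_{\nis}(\Gm,1)$'' is not substantiated and rests on the false premise about Postnikov layers above.
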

\begin{proof}
    We will show that there exists a Nisnevich sheaf $F \in \ShvTop{\nis}{\Sm{k}}_*$ 
    that fits into an exact sequence of Nisnevich sheaves of groups 
    \begin{equation*}
        \pi_2 L_p L_{\nis,\AffSpc{1}}\Pp^1 \xrightarrow{\psi} \pi_2 L_p K_{\nis}(\Gm, 1) \xrightarrow{\phi} \pi_1 L_p F
    \end{equation*}
    such that $\phi(\tau_0) = 0$.
    Recall from the discussion before \cite[Theorem 6.29]{morel2012a1} that there is a short exact sequence of Nisnevich sheaves of groups
    \begin{equation*}
        0 \to \mathbf{K}^{\mathrm{MW}}_2 \to \pi_1(L_{\nis,\AffSpc{1}} \Pp^1) \to \Gm \to 0.
    \end{equation*}
    Write $F \coloneqq \Fib{L_{\nis,\AffSpc{1}} \Pp^1 \to K_{\nis}(\Gm, 1)}$ for the fiber of 
    the canonical map $L_{\nis,\AffSpc{1}} \Pp^1 \to K_{\nis}(\pi_1(L_{\nis,\AffSpc{1}} \Pp^1), 1) \to K_{\nis}(\Gm, 1)$ induced by the right map 
    in the above short exact sequence.
    Note that $F$ is connected as $\pi_1(L_{\nis,\AffSpc{1}} \Pp^1) \to \Gm$ is surjective and $L_{\nis,\AffSpc{1}} \Pp^1 \cong L_{\nis,\AffSpc{1}} \Sigma \Gm$ is connected.
    As $\pi_2 K_{\nis}(\Gm, 1) = 0$, we see that $\pi_1(F) \cong \mathbf{K}^{\mathrm{MW}}_2$.

    Since $L_{\nis,\AffSpc{1}} \Pp^1$ and $K_{\nis}(\Gm, 1)$ are nilpotent by \cite[Remark 3.4.9]{asok2022localization} and \cite[Lemma A.11]{mattis2024unstable},
    it follows from \cref{lem:fib-lem:main-thm} that after $p$-completing, we still have
    \begin{equation*}
        L_p F \cong \tau_{\ge 1} \Fib{L_p L_{\nis,\AffSpc{1}} \Pp^1 \to L_p K_{\nis}(\Gm, 1)}.
    \end{equation*}
    Thus, the long exact sequence in homotopy yields the wanted exact sequence.

    It is left to show that $\phi(\tau_0) = 0$.
    As $\phi(\tau_0) \in (\pi_1 L_p F)(k)$, it clearly suffices to show 
    that this group is $0$.
    We have 
    \begin{equation*}
        (\pi_1 L_p F)(k) \cong \pi_1 L_p (F(k)) \cong \mathbb{L}_0 \pi_1 (F(k)) \cong \mathbb{L}_0 ((\pi_1 F)(k))
    \end{equation*}
    by \cref{lem:etale:small-topos-over-strictly-local} and \cite[Theorem 11.1.2]{MCAT},
    here $\mathbb{L}_0$ denotes the zeroth derived $p$-completion functor 
    on the category of groups as defined e.g.\ in \cite[Chapter 10.4]{MCAT} (where it is called $\mathbb{E}_p$).
    Now note that $(\pi_1 F)(k) = \mathbf{K}_2^{\mathrm{MW}}(k)$ is uniquely $p$-divisible 
    as $k$ is algebraically closed, for this combine e.g.\ \cite[Example 2.3.11]{deglise2023notes} with \cite[Chapter I, Corollary 1.3]{bass2006milnor}.
    This immediately implies that $\mathbb{L}_0 (\pi_1 F)(k) \cong 0$,
    cf.\ \cite[Proposition 10.4.7 (iii)]{MCAT}, which is what we wanted to show.
\end{proof}

\begin{constr}
    Hence, we can choose an element $\tau \in (\pi_2 L_p L_{\nis,\AffSpc{1}} \Pp^1)(k)$ such that
    $\psi(\tau) = \tau_0$.
    Therefore, $\tau$ defines a map $S^2 \to L_p L_{\nis,\AffSpc{1}} \Pp^1$,
    and thus also a map 
    \begin{equation*}
        \tau \colon L_p S^2 \to L_p L_{\nis,\AffSpc{1}} \Pp^1,
    \end{equation*}
    which we will also denote by $\tau$.
\end{constr}
Bachmann constructed in \cite[Lemma 6.4]{Bachmann2021etalerigidity} (the desuspension of) a map $\tau_{st} \colon L_p \Sigma^2 \S \to L_p L_{\nis,\AffSpc{1}} \Sus \Pp^1$
in $\SHone{k}$. His construction depends on a variety of choices, but he then shows that independently of these choices,
$\tau_{st}$ is an equivalence.
We will show that our map $\tau$ stabilizes to one of these maps $\tau_{st}$.
For this, let us make the following definition.
\begin{defn}
    Let $f \colon L_p \Sigma^2 \S \to L_p L_{\nis,\AffSpc{1}} \Sus \Pp^1 \in \SHone{k}$ be a map,
    or equivalently by adjunction (and up to homotopy), $f \in \pi_2(L_p L_{\nis,\AffSpc{1}} \Sus \Pp^1)(k)$ be a homotopy class.
    We say that $f$ is \emph{of stable $\tau$-type} 
    if the image of $f$ under the chain of maps 
    \begin{equation} \label{eq:retracts:map-to-MW}
        \pi_2(L_p L_{\nis,\AffSpc{1}} \Sus \Pp^1)(k) \to \pi_2((L_{\nis,\AffSpc{1}} \Sus \Pp^1) \sslash p)(k) \to \pi_1(L_{\nis,\AffSpc{1}} \Sus \Pp^1)(k) \cong \mathbf{K}^{\mathrm{MW}}_1(k)
    \end{equation}
    is the element $[\zeta_p]$ corresponding to our chosen primitive $p$-th root of unity.
    Here, the first map is the projection out of the limit,
    the second map is the boundary map of the cofiber sequence 
    $L_{\nis,\AffSpc{1}} \Sus \Pp^1 \xrightarrow{p} L_{\nis,\AffSpc{1}} \Sus \Pp^1 \to (L_{\nis,\AffSpc{1}} \Sus \Pp^1) \sslash p$, and the last isomorphism is \cite[Corollary 25]{morel2012a1}.
\end{defn}
We chose this definition because of the following result.
\begin{prop} \label{lem:retract:tau-stable}
    We have that $L_p \Sus \tau \colon L_p \S^2 \to L_p L_{\nis,\AffSpc{1}} \Sus \Pp^1 \in \SHone{k}$ 
    is of stable $\tau$-type.
\end{prop}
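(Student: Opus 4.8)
The plan is to transport the defining relation $\psi(\tau)=\tau_0$ to the $S^1$-stable world along the loop--suspension adjunction. Set $E\coloneqq L_{\nis,\AffSpc{1}}\Sus\Pp^1\in\SHone{k}$, so that $L_p\Sus\tau$ is a morphism $L_p\S^2\to L_p E$ and hence an element of $\pi_2(L_p E)(k)$ (throughout everything is evaluated at $\Spec k$; recall that by \cref{lem:etale:small-topos-over-strictly-local}, and its Nisnevich analogue via \cite[Theorem 11.1.2]{MCAT}, evaluation at $\Spec k$ commutes with $L_p$ and with taking homotopy groups). Let $g\colon E\to\Sigma H_{\nis}\Gm$ be the morphism adjoint to the truncation map $h\colon L_{\nis,\AffSpc{1}}\Pp^1\to K_{\nis}(\Gm,1)=\pLoop\Sigma H_{\nis}\Gm$ used to define $\psi$ (using that $\Sigma H_{\nis}\Gm$ is $1$-connective). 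Since $h=(\pLoop g)\circ u$ with $u$ the stabilization unit, $\psi$ factors as $\pi_2 L_p L_{\nis,\AffSpc{1}}\Pp^1(k)\xrightarrow{u_*}\pi_2 L_p E(k)\xrightarrow{g_*}\pi_2 L_p\Sigma H_{\nis}\Gm(k)$, where I use \cite[Lemma 3.17]{mattis2024unstable} (as in the Construction above) to identify $\pi_2 L_p\pLoop F\cong\pi_2 L_p F$ for $1$-connective $F$, applied to $F=E$ and $F=\Sigma H_{\nis}\Gm$; in particular $\pi_2 L_p\Sigma H_{\nis}\Gm\cong\Z_p(1)$. As $\Sus\tau$ corresponds to $u_*(\tau)$ under these identifications, and the isomorphism $\pi_2 L_p K_{\nis}(\Gm,1)(k)\cong\Z_p(1)(k)$ sends $\tau_0$ to the system $(\zeta_{p^n})_n$, the equality $\psi(\tau)=\tau_0$ gives $g_*(\Sus\tau)=(\zeta_{p^n})_n\in\Z_p(1)(k)=\limil{n}\mu_{p^n}(k)$.

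Next I would compute the ``stable $\tau$-type'' composite $\pi_2 L_p(-)(k)\to\pi_2((-)\sslash p)(k)\xrightarrow{\partial}\pi_1(-)(k)$, which is natural in sheaves of spectra, on the target $\Sigma H_{\nis}\Gm$. The long exact sequences of the cofibre sequences $\Sigma H_{\nis}\Gm\xrightarrow{p^n}\Sigma H_{\nis}\Gm\to(\Sigma H_{\nis}\Gm)\sslash p^n$ identify $\pi_2((\Sigma H_{\nis}\Gm)\sslash p^n)\cong\Gm[p^n]=\mu_{p^n}$ with $\partial$ the inclusion $\mu_{p^n}\hookrightarrow\Gm$, and a diagram chase with the induced maps of these cofibre sequences shows the transition maps in the tower are the $p$-th power maps $\mu_{p^n}\to\mu_{p^{n-1}}$. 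Hence $\pi_2 L_p\Sigma H_{\nis}\Gm=\limil{n}\mu_{p^n}=\Z_p(1)$, the projection onto $\pi_2((\Sigma H_{\nis}\Gm)\sslash p)=\mu_p$ is $(\zeta_{p^n})_n\mapsto\zeta_p$, and the composite therefore sends $(\zeta_{p^n})_n$ to $\zeta_p\in\Gm(k)=k^\times$.

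Now I would conclude. By naturality and the previous two steps, the map $\pi_1(g)(k)\colon\pi_1(E)(k)\to\pi_1(\Sigma H_{\nis}\Gm)(k)=k^\times$ sends the stable-$\tau$-type class of $\Sus\tau$ to $\zeta_p$. The morphism $g$ factors through $\tau_{\le1}E\cong\Sigma H_{\nis}\pi_1(E)$, and by \cite[Corollary 25]{morel2012a1} one has $\pi_1(E)\cong\mathbf{K}^{\mathrm{MW}}_1$; unwinding the definition of $h$ (via the short exact sequence recalled before \cite[Theorem 6.29]{morel2012a1}), $\pi_1(g)$ is the canonical quotient $\mathbf{K}^{\mathrm{MW}}_1\to\mathbf{K}^{\mathrm{M}}_1=\Gm$, which sends the symbol $[\zeta_p]$ to $\zeta_p$. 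Since $k$ is algebraically closed, the Witt ring is $W(k)=\Z/2$, so the fundamental ideal $I(k)$ vanishes and hence $\ker(\mathbf{K}^{\mathrm{MW}}_1(k)\to\mathbf{K}^{\mathrm{M}}_1(k))=I^2(k)=0$; thus $\pi_1(g)(k)$ is an isomorphism. It follows that the stable-$\tau$-type class of $\Sus\tau$ is $[\zeta_p]\in\mathbf{K}^{\mathrm{MW}}_1(k)$, which is exactly the assertion that $L_p\Sus\tau$ is of stable $\tau$-type.

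The main obstacle is the comparison in the first paragraph between the unstable truncation $h$ and its stable adjoint $g$, together with the identification of $\pi_1(g)$ in the last paragraph with the canonical quotient $\mathbf{K}^{\mathrm{MW}}_1\to\mathbf{K}^{\mathrm{M}}_1$: this rests on Morel's computation of $\pi_1^{\AffSpc{1}}(\Pp^1)$ and of its abelianization. Once these are in place, the remaining input (the identifications $L_p\pLoop F\simeq\pLoop\tau_{\ge1}L_p F$ for $1$-connective $F$, and the naturality of $\partial$ and of $(-)\sslash p$) is routine bookkeeping.
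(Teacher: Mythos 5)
Your proof is correct and follows essentially the same route as the paper: transport the defining relation $\psi(\tau)=\tau_0$ across the loop--suspension adjunction, map everything along the truncation to $\Sigma H_{\nis}\Gm$, chase the boundary maps of the mod-$p$ cofibre sequences, and then use that $\mathbf{K}^{\mathrm{MW}}_1(k)\to\Gm(k)$ is an isomorphism because $I(k)=0$ over an algebraically closed field. The paper organizes this as one large commutative diagram where the blue path computes $\psi(\tau)=\zeta_p$ and the red path computes the stable $\tau$-type invariant, whereas you factor $\psi$ explicitly through the stable adjoint $g$ and argue by naturality --- a presentational difference rather than a different argument.
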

\begin{proof}
    By adjunction, $L_p \Sus \tau$ corresponds to a 
    $\tilde{\tau} \in \pi_2(L_p L_{\nis,\AffSpc{1}} \Sus \Pp^1)(k)$.
    We have to show that the image of $\tilde{\tau}$ 
    in $\mathbf{K}^{\mathrm{MW}}_1(k)$ under the map \eqref{eq:retracts:map-to-MW}
    is $[\zeta_p]$. For this, consider the following commutative diagram:
    \begin{center}
        \begin{tikzcd}
            \pi_2(L_p L_{\nis,\AffSpc{1}}\Pp^1)(k) \ar[r, blue] \ar[d, red] &\pi_2 (L_pK_{\nis}(\Gm, 1))(k) \ar[rd, bend left=20, blue, "\cong"] \ar[d, "\cong"] \\
            \pi_2(L_p L_{\nis,\AffSpc{1}}\Sus \Pp^1)(k) \ar[r] \ar[d, red] &\pi_2 (L_p\Sigma H_{\nis}\Gm)(k) \ar[r, "\cong"] \ar[d] &\Z(1)(k) \ar[d, blue] \\
            \pi_2(L_{\nis,\AffSpc{1}}\Sus \Pp^1 \sslash p)(k) \ar[r] \ar[d, red] &\pi_2(\Sigma H_{\nis}\Gm \sslash p)(k) \ar[r, "\cong"] \ar[d] &\mu_p(k) \ar[ddl, blue, bend left] \\
            \pi_1(L_{\nis,\AffSpc{1}}\Sus \Pp^1)(k) \ar[r] \ar[d, red, "\cong"] &\pi_1(\Sigma H_{\nis}\Gm)(k) \ar[d, "\cong"] & \\
            \mathbf{K}^{\mathrm{MW}}_1(k) \ar[r, red, "\cong"] &\Gm(k)\rlap{.}
        \end{tikzcd}
    \end{center}
    Here, the black horizontal maps on the left are induced by the map 
    \begin{equation*}
        L_{\nis,\AffSpc{1}} \Sus \Pp^1 \to \Sus K_{\nis}(\Gm, 1) \to \tau_{\le 1} \Sus K_{\nis}(\Gm, 1) \cong \Sigma H_{\nis}\Gm,
    \end{equation*}
    where the last equivalence follows from the Freudenthal suspension theorem for $\infty$-topoi,
    cf.\ \cite[Corollary 4.16]{DevalapurkarHaine}.
    Note that the bottom \textcolor{red}{red} arrow is an equivalence 
    as $k$ is algebraically closed (see e.g.\ the short exact sequence in \cite[Corollary 2.3.10]{deglise2023notes}
    and the fact that the fundamental ideal $I(k) = \ker(\mathrm{GW}(k) \to \Z)$ is zero if $k$ is algebraically closed,
    as then $\mathrm{GW}(k) \cong \Z$).
    By definition of $\tau$ the \textcolor{blue}{blue} composition maps $\tau$ to $\zeta_p$,
    which is exactly what we wanted to show for the \textcolor{red}{red} composition.
\end{proof}

Now, in view of the construction of Bachmann's $\tau_{st}$ \cite[Lemma 6.4]{Bachmann2021etalerigidity}
and the proof of \cite[Theorem 6.5]{Bachmann2021etalerigidity},
we have the following result.
\begin{thm}[Bachmann] \label{lem:retract:stable-tau-type}
    Let $f \colon L_p \Sigma^2 \S \to L_p L_{\nis,\AffSpc{1}} \Sus \Pp^1 \in \SHone{k}$ be a map of stable $\tau$-type.
    Then $L_p L_{\et,\AffSpc{1}} (\sigma_{st} f)$ is an automorphism.
\end{thm}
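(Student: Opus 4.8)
The plan is to use stable étale rigidity over the algebraically closed field $k$ to turn the statement into an assertion about a single $p$-adic integer, and then to pin that integer down modulo $p$ from the hypothesis that $f$ is of stable $\tau$-type. First I would observe that, after post-composing $f$ with the canonical étale-localization map $L_p L_{\nis,\AffSpc{1}}\Sus\Pp^1 \to L_p L_{\et,\AffSpc{1}}\Sus\Pp^1$, the composite $\sigma_{st} f$ is a morphism $L_p \Sigma^2 \S \to L_p \S^2$ in $\SHone{k}$, so that $L_p L_{\et,\AffSpc{1}}(\sigma_{st} f)$ is a self-map of $L_p \S^2$ in $\SHoneet{k}$ (the outer $L_{\et,\AffSpc{1}}$ only serves to make the source étale-local, since the target already is, and $L_p L_{\et,\AffSpc{1}}\Sigma^2\S \simeq L_p\S^2$ there). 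Then, using Bachmann's stable étale rigidity \cite[Theorem 6.6]{Bachmann2021etalerigidity} composed with \cref{lem:etale:small-topos-over-strictly-local} (and \cref{lem:rigidtiy:stalk-functor} for the compatibility with $p$-completion), one identifies $\complete{\SHoneet{k}}$ with $\complete{\Sp}$, under which $L_p\S^2$ goes to $\Sigma^2\complete{\mathbb S}$. Hence $\pi_0$ of the endomorphisms of $L_p\S^2$ in $\SHoneet{k}$ is $\pi_0\complete{\mathbb S}\cong\Z_p$, and $L_p L_{\et,\AffSpc{1}}(\sigma_{st} f)$ corresponds to some $a\in\Z_p$ which is invertible — equivalently, the map is an automorphism — if and only if $a\not\equiv 0\pmod p$.

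Next I would compute $a\bmod p$ by reducing the whole composite modulo $p$ and reading off the effect on bottom cells, i.e.\ identifying $a\bmod p$ with a class in $\pi_2\bigl((\Sigma^2\complete{\mathbb S})\sslash p\bigr)\cong\Z/p$. Chasing $f\sslash p$ through the relevant square, its class in $\pi_2\bigl((L_{\nis,\AffSpc{1}}\Sus\Pp^1)\sslash p\bigr)(k)$ has nonzero image $[\zeta_p]\in\mathbf{K}^{\mathrm{MW}}_1(k)$ under the boundary map in \eqref{eq:retracts:map-to-MW} by the definition of stable $\tau$-type; and the mod-$p$ reduction of $\sigma_{st}$ carries this class to a generator of $\Z/p$, because $\sigma_{st}$ is constructed (cf.\ \cite[Section 3]{Bachmann2021etalerigidity}) precisely as a splitting, up to a $p$-adic unit, of a map of the same type, so that the chain $\pi_2((L_{\nis,\AffSpc{1}}\Sus\Pp^1)\sslash p)(k)\to\mathbf{K}^{\mathrm{MW}}_1(k)\to\Z/p$ and $\sigma_{st}\sslash p$ agree up to a unit. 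This forces $a\not\equiv 0\pmod p$, and the theorem follows.

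The step I expect to be the main obstacle is this last compatibility — that the mod-$p$ reduction of $\sigma_{st}$ really sends $[\zeta_p]$ to a generator rather than to $0$ — because it requires matching the conventions in Bachmann's construction of $\sigma_{st}$, and of his auxiliary map $\tau_{st}$ from \cite[Lemma 6.4]{Bachmann2021etalerigidity}, with the definition of stable $\tau$-type. In practice I would bypass it by reducing to \cite[Theorem 6.5]{Bachmann2021etalerigidity}: Bachmann proves exactly this statement for his explicit $\tau_{st}$, his argument uses nothing about $\tau_{st}$ beyond the fact that it is of stable $\tau$-type (being built from a compatible system of $p$-power roots of unity, it is), and for a second map $f$ of stable $\tau$-type the difference $f-\tau_{st}$ lies in the kernel of the homomorphism \eqref{eq:retracts:map-to-MW}. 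It then remains only to note that $L_p L_{\et,\AffSpc{1}}\sigma_{st}$ sends this kernel into $p\Z_p\subset\pi_0\operatorname{End}(L_p\S^2)$ — the same mod-$p$ computation as above — so that $L_p L_{\et,\AffSpc{1}}(\sigma_{st} f)=L_p L_{\et,\AffSpc{1}}(\sigma_{st}\tau_{st})+p\cdot(\text{something})$ remains a $p$-adic unit, hence an automorphism.
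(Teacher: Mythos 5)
The paper offers no proof of this statement; the sentence immediately preceding it reads ``in view of the construction of Bachmann's $\tau_{st}$ \cite[Lemma 6.4]{Bachmann2021etalerigidity} and the proof of \cite[Theorem 6.5]{Bachmann2021etalerigidity}, we have the following result,'' and stops there. The observation the paper is implicitly making---and which you state correctly, but only as a parenthetical aside---is that Bachmann's proof of his Theorem 6.5 uses nothing about $\tau_{st}$ beyond the fact that it is of stable $\tau$-type, so the same argument applies verbatim to an arbitrary $f$ of stable $\tau$-type. That single sentence is the whole proof. Your $\Z_p$-framework (using stable rigidity over the algebraically closed $k$, via \cref{lem:etale:small-topos-over-strictly-local} and \cref{lem:rigidtiy:stalk-functor}, to identify $\pi_0\operatorname{End}(L_p\S^2)\cong\Z_p$ so that ``automorphism'' means ``$p$-adic unit'') is a correct and useful way to package what must be shown, and it matches how the paper later uses the theorem in \cref{lem:retract:pi_2_S_2,lem:retract:main-thm}.

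Where the proposal wobbles is the ``bypass'' via $f - \tau_{st}$. You flag the compatibility of $\sigma_{st}\sslash p$ with the chain \eqref{eq:retracts:map-to-MW} as the main obstacle, and introduce $f - \tau_{st} \in \ker\eqref{eq:retracts:map-to-MW}$ to avoid a direct comparison. But the last step of the bypass---that $L_p L_{\et,\AffSpc{1}}\sigma_{st}$ carries $\ker\eqref{eq:retracts:map-to-MW}$ into $p\Z_p$---is, as you note yourself, ``the same mod-$p$ computation as above,'' i.e.\ precisely the compatibility you were trying to circumvent. So the bypass does not remove the obstacle, it only relabels it: if you grant that compatibility, you can apply it directly to $f$ and never need $\tau_{st}$ or the difference at all; if you do not, the reduction to Bachmann's theorem buys you nothing, since Bachmann's theorem alone controls only the single coset representative $\tau_{st}$ and says nothing about how $\sigma_{st}$ behaves on the kernel. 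The clean route, and the one the paper actually invokes, is simply to read Bachmann's proof and record that every step goes through under the stable $\tau$-type hypothesis alone.
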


\begin{prop} \label{lem:retract:pi_2_S_2}
    The ($p$-completed) suspension spectrum functor 
    \begin{equation*}
        L_p \Sus \colon \complete{\ShvTopH{\et}{\Sm{S}}} \to \complete{\Stab{\ShvTopH{\et}{\Sm{S}}}}
    \end{equation*}
    induces an isomorphism of rings 
    \begin{equation*}
        \pi_0 \Map{}{L_p S^2}{L_p S^2} \to \pi_0 \Map{}{L_p \Sigma^2 \S}{L_p \Sigma^2 \S}.
    \end{equation*}
\end{prop}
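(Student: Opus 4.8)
The plan is to reduce the assertion to the purely topological statement that the unstable suspension functor, after $p$-completion, carries the endomorphism ring of $S^2$ isomorphically onto that of $\Sigma^2\S$, both being $\Z_p$; the point that makes this possible is that over the algebraically closed (hence separably closed) base $k = S$ the small étale $\infty$-topos is just $\An$. First I would set up the reduction. By \cref{lem:rigidtiy:stalk-functor} (which applies since $k$ is separably closed) evaluation at $k$, which equals the global sections functor $\Gamma\colon\ShvTopH{\et}{\Sm{k}}\to\An$ because $\Spec k$ is terminal in $\Sm{k}$, is both the inverse image $\pi^*$ of the geometric morphism to the point \emph{and} a left adjoint; write $c\colon\An\to\ShvTopH{\et}{\Sm{k}}$ for its left adjoint, the constant-sheaf functor. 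As in the proof of \cref{lem:rigidtiy:stalk-functor}, $c$ factors as $\iota_!$ precomposed with the inverse of the equivalence $\ShvTopH{\et}{\smet{k}}\xrightarrow{\simeq}\An$ of \cref{lem:etale:small-topos-over-strictly-local}, so $c$ is fully faithful by \cref{lem:rigidity:iota-fully-faithful}; it is also left exact and colimit-preserving (being $\pi^*$), and it commutes with $L_p$ (it preserves $p$-equivalences as a left adjoint, and $p$-complete objects because $\Gamma$ does). The same holds, after stabilization, for $c^{st}\colon\Sp\to\Stab{\ShvTopH{\et}{\Sm{k}}}$, which moreover — being the stabilization of a left-exact colimit-preserving functor — satisfies $\Sus\circ c\simeq c^{st}\circ\Sus$ on pointed objects. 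Since $S^2 = c(S^2)$ and $\Sigma^2\S = c^{st}(\Sigma^2\S)$ with the right-hand objects living in $\An_*$ and $\Sp$, the fully faithful functors $c$ and $c^{st}$ identify, compatibly with $\Sus$ and $L_p$, the map of the proposition with the ring map
\[
\pi_0\Map{\complete{\An_*}}{L_pS^2}{L_pS^2}\longrightarrow\pi_0\Map{\complete{\Sp}}{L_p\Sigma^2\S}{L_p\Sigma^2\S}
\]
induced by $L_p\Sus$.

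Next I would compute both endomorphism rings. Restricting along the $p$-equivalence $u\colon S^2\to L_pS^2$, whose target is $p$-complete, gives $\Map{\complete{\An_*}}{L_pS^2}{L_pS^2}\simeq\Map{\An_*}{S^2}{L_pS^2}$, with $\pi_0$ equal to $\pi_2(L_pS^2)\cong\mathbb{L}_0\pi_2(S^2)=\Z_p$ (using that $S^2$ is simply connected, together with \cite[Corollary 3.18 and Lemma 2.24]{mattis2024unstable}). Tracking the composition product through the bijection $f\mapsto f_*(u)$, where $f_*$ denotes the induced endomorphism of $\pi_2(L_pS^2)$, identifies this ring with $(\Z_p,\cdot)$, the identity corresponding to $u$ (here one uses that $f_*$ is automatically $\Z_p$-linear and that $u$ is a topological generator). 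Likewise the stable side equals $\pi_0\Map{\complete{\Sp}}{L_p\Sigma^2\S}{L_p\Sigma^2\S}=\pi_0(L_p\S)\cong\Z_p$ as a ring. Since $L_p\Sus$ preserves identities, composition, and the additive structure coming from $S^2$, resp.\ $\Sigma^2\S$, being suspensions, the displayed map is a homomorphism of rings.

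The proof then concludes with the elementary remark that every ring endomorphism $\phi$ of $\Z_p$ is the identity: $\phi$ fixes $\Z$, and given $a\in\Z_p$ with $a\equiv a_n\pmod{p^n}$, $a_n\in\Z$, one has $\phi(a)-a_n=\phi(a-a_n)\in p^n\Z_p$, hence $\phi(a)=a$. Therefore the displayed map, being a ring endomorphism of $\Z_p$ under the identifications above, is an isomorphism, and so is the original map. I expect the only genuine work to lie in the first paragraph — verifying that the descent to the small étale site over the separably closed base is compatible with the suspension spectrum functor and with $p$-completion — after which the result is just the classical identification $\pi_0\operatorname{End}(L_pS^2)\cong\Z_p\cong\pi_0\operatorname{End}(L_p\Sigma^2\S)$ realized by stabilization.
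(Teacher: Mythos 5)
Your overall strategy coincides with the paper's: compute both endomorphism rings as $\Z_p$ by descending to anima over the algebraically closed base, note that $L_p\Sus$ induces a ring map, and conclude from the rigidity of $\Z_p$ (the paper phrases this as ``the group $\Z_p$ has a unique ring structure with given unit''; you phrase it as ``every ring endomorphism of $\Z_p$ is the identity''; these are interchangeable). The one genuine gap is in your first paragraph. You assert that $c$ commutes with $L_p$, with the justification that $c$ ``preserves $p$-complete objects because $\Gamma$ does.'' This does not follow: $\Gamma$ preserving $p$-complete objects is a statement about the \emph{right} adjoint of $c$, and gives no control over whether $c$ lands in $p$-complete sheaves. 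In fact the paper is deliberately careful \emph{not} to assume $\iota_!\simeq c$ preserves $p$-complete objects — that is exactly why \cref{lem:rigidity:p-complete-on-p-complete-obejcts} introduces the modified functor $\iota_!^p=L_p\circ\iota_!$, and why \cref{lem:rigidity:ess-img} requires a genuine argument.

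The repair is cheap, and is what the paper actually does: instead of moving $L_p$ across $c$, move the \emph{source} of the mapping space across the adjunction $c\dashv\Gamma$. Concretely,
\begin{align*}
\Map{}{L_p S^2}{L_p S^2}
&\simeq \Map{}{S^2}{L_p S^2}\\
&\simeq \Map{\An_*}{S^2_{\mathrm{top}}}{\Gamma(L_p S^2)}\\
&\simeq \Map{\An_*}{S^2_{\mathrm{top}}}{L_p\bigl(\Gamma S^2\bigr)}\\
&\simeq \Map{\An_*}{L_p S^2_{\mathrm{top}}}{L_p S^2_{\mathrm{top}}},
\end{align*}
where the first step uses that $S^2\to L_pS^2$ is a $p$-equivalence with $p$-complete target, the second is the adjunction $c\dashv\Gamma$ together with $S^2=c(S^2_{\mathrm{top}})$, and the third is \cref{lem:rigidtiy:stalk-functor} (which says precisely that $\Gamma=(-)(k)$ commutes with $L_p$, a statement about $\Gamma$, not about $c$). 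The stable computation is identical, and the chain of identifications is visibly compatible with $\Sus$ since $\Sus\circ c\simeq c^{\mathrm{st}}\circ\Sus$ and $\Gamma$ commutes with $\Loop$. With this substitution the rest of your argument goes through; the ``$\Z_p$-linear'' and ``topological generator'' remarks in your second paragraph are not needed once you have the ring-endomorphism argument of your third, and are harmless.
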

\begin{proof}
    We first show that both sides are equivalent to $\Z_p$ (as groups).
    Note that by adjunction, we have 
    $\pi_0 \Map{}{L_p S^2}{L_p S^2} \cong \pi_0 \Map{}{S^2}{L_p S^2} \cong \pi_2(L_p S^2)(k)$.
    Now, from \cref{lem:rigidtiy:stalk-functor} we see that evaluation at $k$ is a limit-preserving geometric morphism,
    and in particular commutes with homotopy objects and $p$-completion.
    Hence, the above is equivalent to $\pi_2L_p (S^2(k)) = \pi_2 (L_p S_{\mathrm{top}}^2) \cong \Z_p$.

    A similar calculation shows that the right-hand side is also $\Z_p$.

    Since both abelian groups are endowed with ring structures (via composition),
    and since the group $\Z_p$ has a unique ring structure (after a choice of unit $1 \in \Z_p$ which is given by the identity morphism),
    it follows that both groups are equivalent to $\Z_p$ as rings.
    Since the map is induced by a functor, it is clear that it is a map of rings.
    But now the only ring map $\Z_p \to \Z_p$ is the identity.
\end{proof}

From now on we will also write $\tau \colon L_p S^2 \to L_p L_{\et,\AffSpc{1}} \Pp^1$
for $L_{\et, \AffSpc{1}} \tau$.
\begin{thm} \label{lem:retract:main-thm}
    The maps $\tau \colon L_p S^2 \to L_p L_{\et,\AffSpc{1}} \Pp^1$
    and $\sigma \colon L_p L_{\et,\AffSpc{1}} \Pp^1 \to L_p S^2$
    form a retract (up to precomposing $\tau$ with an automorphism of $L_p S^2$), i.e.\ there is a homotopy $\sigma \tau \cong \id{L_p S^2}$.
\end{thm}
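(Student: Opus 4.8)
The plan is to reduce the statement to the stable computations already carried out above. First note that asking for a homotopy $\sigma\tau\cong\id{L_p S^2}$ after precomposing $\tau$ with a suitable automorphism of $L_p S^2$ is, by definition, the same as asking that $\sigma\tau$ be an automorphism of $L_p S^2$: if $u\coloneqq\sigma\tau$ is invertible then $\sigma\circ(\tau\circ u^{-1})\cong\id{L_p S^2}$ and $\tau\circ u^{-1}$ is $\tau$ precomposed with the automorphism $u^{-1}$, while conversely any such homotopy forces $\sigma\tau$ to be invertible (recall $\pi_0\Map{}{L_p S^2}{L_p S^2}$ is commutative, so left and right inverses coincide). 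Hence it suffices to show that $\sigma\tau\in\pi_0\Map{}{L_p S^2}{L_p S^2}$ is a unit.

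By \cref{lem:retract:pi_2_S_2} the functor $L_p\Sus$ induces a ring isomorphism $\pi_0\Map{}{L_p S^2}{L_p S^2}\xrightarrow{\cong}\pi_0\Map{}{L_p\Sigma^2\S}{L_p\Sigma^2\S}$, so $\sigma\tau$ is a unit if and only if $L_p\Sus(\sigma\tau)=L_p\Sus(\sigma)\circ L_p\Sus(\tau)$ is an automorphism of $L_p\Sigma^2\S$; since all objects in sight are $\AffSpc{1}$-invariant, this mapping monoid may be computed equally well in $\SHoneet{k}$ via the fully faithful inclusion. Write $\tau^{\nis}\colon L_p S^2\to L_p L_{\nis,\AffSpc{1}}\Pp^1$ for the Nisnevich-local map constructed above, so that $\tau=L_{\et,\AffSpc{1}}\tau^{\nis}$, and recall that $\sigma_{st}$ is the stabilization of the étale map $\sigma$, i.e.\ $\sigma_{st}\simeq L_p\Sus\sigma$. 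Using that $\Sus$ commutes with $L_{\et,\AffSpc{1}}$ (\cref{lem:etale-mot:SHone-adjunction}) and that $\Sus$ and $L_{\et,\AffSpc{1}}$ preserve $p$-equivalences (being left adjoints), one gets $L_pL_{\et,\AffSpc{1}}(L_p\Sus\tau^{\nis})\simeq L_p\Sus(L_{\et,\AffSpc{1}}\tau^{\nis})=L_p\Sus\tau$, and since $\sigma_{st}$ is already étale-$\AffSpc{1}$-local,
\begin{equation*}
    L_pL_{\et,\AffSpc{1}}\bigl(\sigma_{st}\circ L_p\Sus\tau^{\nis}\bigr)\simeq\sigma_{st}\circ L_p\Sus\tau\simeq L_p\Sus(\sigma)\circ L_p\Sus(\tau)=L_p\Sus(\sigma\tau).
\end{equation*}

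By \cref{lem:retract:tau-stable} the map $f\coloneqq L_p\Sus\tau^{\nis}$ is of stable $\tau$-type, so \cref{lem:retract:stable-tau-type} shows that $L_pL_{\et,\AffSpc{1}}(\sigma_{st}f)$ is an automorphism of $L_p\S^2$; by the displayed equivalence $L_p\Sus(\sigma\tau)$ is therefore an automorphism, and hence so is $\sigma\tau$ by \cref{lem:retract:pi_2_S_2}. By the first paragraph this produces the desired retract. The step I expect to demand the most care is the identification in the second paragraph: one must keep track of the various incarnations of the second suspension (the constant sheaf $S^2$, its suspension spectrum $L_p\Sigma^2\S$ in $\Stab{\ShvTopH{\et}{\Sm{k}}}$, and the sphere $L_p\S^2$ in $\SHoneet{k}$), and check that $L_p$, $\Sus$ and $L_{\et,\AffSpc{1}}$ commute in the relevant order, so that Bachmann's stabilized map $\sigma_{st}$ and the naive stabilization of $\sigma$ genuinely agree and the composite really is $L_p\Sus(\sigma\tau)$. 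None of these verifications is deep, but each must be spelled out precisely.
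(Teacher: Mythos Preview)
Your proof is correct and follows essentially the same route as the paper: reduce to showing $\sigma\tau$ is a unit, pass to the stable world via the ring isomorphism of \cref{lem:retract:pi_2_S_2}, and then invoke \cref{lem:retract:tau-stable} and \cref{lem:retract:stable-tau-type}. You are in fact more careful than the paper about distinguishing the Nisnevich map $\tau^{\nis}$ from its étale localization $\tau=L_{\et,\AffSpc{1}}\tau^{\nis}$ and tracking the commutation of $L_p$, $\Sus$, and $L_{\et,\AffSpc{1}}$; the paper hides this under the blanket convention ``from now on we will also write $\tau$ for $L_{\et,\AffSpc{1}}\tau$'' and the terse phrase ``by definition $L_p\Sus\sigma\cong\sigma_{st}$''.
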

\begin{proof}
    By \cref{lem:retract:pi_2_S_2}, the suspension spectrum functor induces an isomorphism of rings 
    \begin{equation*}
        \pi_0 \Map{}{L_p S^2}{L_p S^2} \to \pi_0 \Map{}{L_p \Sigma^2 \S}{L_p \Sigma^2 \S}.
    \end{equation*} 
    As by definition $L_p \Sus \sigma \cong \sigma_{st}$, and $L_p \Sus \tau$ 
    is of stable $\tau$-type by \cref{lem:retract:tau-stable},
    we see that $L_p \Sus (\sigma \tau)$ is an automorphism, cf.\ \cref{lem:retract:stable-tau-type}. 
    Hence, in view of the ring isomorphism above, we see that also $\sigma \tau$ is an automorphism.
    Therefore, up to replacing $\tau$ by $\tau (\sigma \tau)^{-1}$,
    we see that $\tau$ and $\sigma$ form a retract.
\end{proof}

Actually, the retract constructed above is an equivalence after two suspensions.
For this, we need the following simple lemma:
\begin{lem} \label{lem:retract:swap-identity}
    The swap endomorphism 
    \begin{equation*}
        \operatorname{swap} \colon L_{\nis,\AffSpc{1}} \Pp^1 \wedge L_{\nis,\AffSpc{1}} \Pp^1 \to L_{\nis,\AffSpc{1}} \Pp^1 \wedge L_{\nis,\AffSpc{1}} \Pp^1
    \end{equation*}
    is homotopic to the identity.
\end{lem}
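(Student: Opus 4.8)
The plan is to pass to the Morel--Voevodsky description of $L_{\nis,\AffSpc{1}}\Pp^1 \wedge L_{\nis,\AffSpc{1}}\Pp^1$ as a motivic sphere and see what the swap becomes there. In $\Spc{k}_*$ one has $\Pp^1 \simeq \AffSpc{1}/(\AffSpc{1}\setminus 0) \simeq \Sigma\Gm$ (the homotopy cofiber of $\AffSpc{1}\setminus 0 \hookrightarrow \AffSpc{1}$, using $\AffSpc{1}\simeq *$), and since smashing commutes with the symmetric monoidal localization $L_{\nis,\AffSpc{1}}$, together with the colimit formula $(\AffSpc{1}/\Gm)\wedge(\AffSpc{1}/\Gm) = \AffSpc{2}/(\AffSpc{2}\setminus 0)$ for pointed presheaves, this gives an equivalence
\begin{equation*}
    L_{\nis,\AffSpc{1}}\Pp^1 \wedge L_{\nis,\AffSpc{1}}\Pp^1 \;\simeq\; L_{\nis,\AffSpc{1}}\bigl(\AffSpc{2}/(\AffSpc{2}\setminus 0)\bigr),
\end{equation*}
in which the two $\Pp^1$-factors on the left correspond to the two coordinate axes of $\AffSpc{2}$ on the right. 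Under this identification the swap of the two smash factors becomes the self-map of $L_{\nis,\AffSpc{1}}\bigl(\AffSpc{2}/(\AffSpc{2}\setminus 0)\bigr)$ induced by the coordinate transposition $T = \left(\begin{smallmatrix} 0 & 1 \\ 1 & 0 \end{smallmatrix}\right) \in \mathrm{GL}_2(k)$ acting on the pair $(\AffSpc{2}, \AffSpc{2}\setminus 0)$; so it suffices to show that the action of $T$ is $\AffSpc{1}$-homotopic to the identity.

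For this I would invoke Morel's computation of motivic Brouwer degrees \cite{morel2012a1}: the action of a matrix $M \in \mathrm{GL}_n(k)$ (for $n \ge 2$) on $\AffSpc{n}/(\AffSpc{n}\setminus 0) \simeq (\Pp^1)^{\wedge n}$ in $\Spc{k}_*$ depends only on the class $\langle \det M\rangle \in \mathrm{GW}(k)$ of its determinant. Since $k$ is algebraically closed every unit is a square, so $\mathrm{GW}(k) \cong \Z$ and $\langle u\rangle = \langle 1\rangle$ for all $u \in k^\times$ — the same input as the vanishing of $I(k)$ used in the proof of \cref{lem:retract:tau-stable}. As $\det T = -1$, the action of $T$ is therefore $\langle -1\rangle = \langle 1\rangle = \operatorname{id}$, which is precisely the claim. (The $\mathrm{SL}_2$-part is harmless in any case: over a field $\mathrm{SL}_2$ is generated by elementary matrices, hence $\AffSpc{1}$-chain connected, so factoring $T = A\cdot(i^{-1}I_2)$ with $i^2 = -1$ and $A = \left(\begin{smallmatrix} 0 & i \\ i & 0 \end{smallmatrix}\right) \in \mathrm{SL}_2(k)$ reduces matters immediately to the scalar matrix $i^{-1}I_2$, for which the same degree computation applies.)

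The one genuinely non-formal point — and the main obstacle — is the degree statement identifying the action of $T$ with $\langle -1\rangle$. This really uses unstable motivic homotopy theory; it cannot be reduced to an elementary $\AffSpc{1}$-homotopy inside $\mathrm{GL}_2$, because $T$ is \emph{not} $\AffSpc{1}$-connected to the identity in $\mathrm{GL}_2$ (its image under $\det$ in $\pi_0^{\AffSpc{1}}(\mathrm{GL}_2) \cong \Gm$ is $-1 \ne 1$). The remaining ingredients — the Morel--Voevodsky identification and $\mathrm{GW}(k)\cong\Z$ over an algebraically closed field — are standard.
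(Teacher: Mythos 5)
Your argument is correct and lands on the same key fact as the paper's proof — that the relevant endomorphism has degree $\langle -1\rangle \in \mathrm{GW}(k)$, which equals $\langle 1\rangle$ over an algebraically closed field — but it gets there by a genuinely different route. The paper applies the symmetric monoidal $\Pp^1$-stabilization $\SusP \colon \Spc{k}_* \to \SH{k}$, observes that the induced map on endomorphism sets $[\Pp^1\wedge\Pp^1,\,\Pp^1\wedge\Pp^1] \to [\SusP\Pp^1\otimes\SusP\Pp^1,\,\SusP\Pp^1\otimes\SusP\Pp^1]$ is an isomorphism (both sides are $\mathrm{GW}(k)$ by Morel, and stabilization is compatible), and then cites the \emph{stable} identification of the swap with $\langle -1\rangle$ from Morel's Trieste notes; conservativity of $\SusP$ on this endomorphism monoid then finishes. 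You instead stay entirely unstable: you identify $\Pp^1\wedge\Pp^1$ with $\AffSpc{2}/(\AffSpc{2}\setminus 0)$, recognize the swap as the coordinate transposition $T \in \mathrm{GL}_2(k)$, and invoke Morel's unstable $\AffSpc{1}$-Brouwer degree theory for linear maps (the degree of $M$ is $\langle\det M\rangle$, valid for $n\ge 2$). Both are valid; your version is arguably slightly more geometric and avoids passing through $\SH{k}$, while the paper's version piggybacks on a stable computation that is explicitly on record in \cite{morel2003trieste}. Your remark at the end — that $T$ is \emph{not} $\AffSpc{1}$-connected to the identity in $\mathrm{GL}_2$, so one cannot get away with an elementary chain-homotopy inside the group and genuinely needs the degree/$\mathrm{GW}$ input — is a good observation and correctly identifies the non-formal step; the factorization $T = A\cdot(i^{-1}I_2)$ with $A \in \mathrm{SL}_2(k)$ that you offer as a shortcut is fine but, as you note, ultimately relies on the same Morel degree input for the scalar matrix.
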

\begin{proof}
    For readability, we will exclude the functor $L_{\nis,\AffSpc{1}}$ from the notation.
    The symmetric monoidal suspension spectrum functor $\SusP \colon \Spc{k}_* \to \SH{k}$
    induces a morphism on homotopy classes of maps 
    \begin{equation*}
        \Phi \colon [\Pp^1 \wedge \Pp^1, \Pp^1 \wedge \Pp^1] \to [\SusP \Pp^1 \otimes \SusP \Pp^1, \SusP \Pp^1 \otimes \SusP \Pp^1]
    \end{equation*}
    which maps the swap morphism to the swap morphism.
    It therefore suffices to show that $\Phi$ is injective,
    and the stable swap morphism is homotopic to the identity.
    Recall the computations of Morel of $[(\Pp^1)^{\wedge n}, (\Pp^1)^{\wedge n}] \cong \mathbf{K}_0^{\mathrm{MW}}(k) = \operatorname{GW}(k)$ for all $n \ge 2$,
    cf.\ \cite[Corollary 24]{morel2012a1}.
    Here, $\operatorname{GW}(k)$ is the Grothendieck-Witt ring of $k$.
    This then stabilizes to show that $\Phi$ is in fact an isomorphism.
    In order to prove the second claim, we proceed as follows:
    under the equivalence
    \begin{equation*}
        [\SusP \Pp^1 \otimes \SusP \Pp^1, \SusP \Pp^1 \otimes \SusP \Pp^1] \cong \operatorname{GW}(k),
    \end{equation*}
    the swap morphism corresponds to the element $\langle-1\rangle \in \operatorname{GW}(k)$,
    for this combine the proof of \cite[Lemma 6.1.1 (2)]{morel2003trieste} 
    with \cite[Remark 6.3.5]{morel2003trieste}.
    As $k$ is algebraically closed, in particular $-1 \in k^{\times}$ is a square,
    and we see that $\langle-1\rangle = 1 \in \operatorname{GW}(k)$. 
\end{proof}

\begin{thm} \label{lem:retract:double-suspension}
    The map 
    \begin{equation*}
        L_p L_{\et,\AffSpc{1}} \Sigma^2 \tau \colon L_p L_{\et,\AffSpc{1}} S^4 \cong L_p L_{\et,\AffSpc{1}} \Sigma^2 S^2 \to L_p L_{\et,\AffSpc{1}} \Sigma^2 \Pp^1
    \end{equation*}
    is an equivalence.
\end{thm}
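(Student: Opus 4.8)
The plan is to prove that $g \coloneqq L_p L_{\et,\AffSpc{1}}\Sigma^2\tau$ is an equivalence by first checking that it becomes an equivalence after one application of the suspension-spectrum functor — here Bachmann's stable rigidity theorem enters, together with \cref{lem:retract:swap-identity,lem:retract:stable-tau-type,lem:retract:tau-stable,lem:retract:pi_2_S_2} for the symmetric-monoidal bookkeeping — and then promoting this stable statement to the unstable one. The promotion is the delicate step, and it will rest on two facts: that $\AffSpc{1}$-localization is \emph{smashing} on étale sheaves of spectra (so that mod-$p$ homology is already $\AffSpc{1}$-invariant, and the localization can be removed again), and that a $p$-complete acyclic anima is contractible.

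First I would set up and reduce. Applying the functor $L_p L_{\et,\AffSpc{1}}\Sigma^2(-)$ to the retract of \cref{lem:retract:main-thm}, and using that the constant sheaf $S^4$ is $\AffSpc{1}$-invariant, one obtains a retract diagram $L_p S^4 \xrightarrow{\,g\,} L_p L_{\et,\AffSpc{1}}\Sigma^2\Pp^1 \xrightarrow{\,r\,} L_p S^4$ with $rg\simeq\operatorname{id}$; in particular $g$ is a split monomorphism of connected $p$-complete sheaves, and $L_p S^4$ is simply connected (over the algebraically closed $k$ it is the constant sheaf on the topological $p$-complete $S^4$, which one checks on the conservative family of points of \cref{lem:etale:points}). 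It therefore suffices to show $\Cofib{g}\simeq *$: given that, van Kampen for the pushout defining $\Cofib{g}$ and the simple connectivity of $L_p S^4$ force $L_p L_{\et,\AffSpc{1}}\Sigma^2\Pp^1$ to be simply connected, whence $g$ is a simply connected map with contractible cofiber, hence an integral homology equivalence, hence an equivalence by Whitehead (checked on stalks).

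Next I would show the cofiber is stably trivial. The functor $\Sus$ is a left adjoint carrying $p$-equivalences to $p$-equivalences and $\AffSpc{1}$-equivalences to $\AffSpc{1}$-equivalences, and it commutes with $\Sigma^2$ and with $L_{\et,\AffSpc{1}}$ by \cref{lem:etale-mot:SHone-adjunction}; hence $L_p L_{\et,\AffSpc{1}}\Sus \Cofib{g} \simeq \Cofib{\Sigma^2\, L_p L_{\et,\AffSpc{1}}\Sus\tau}$, and it remains to see that $L_p L_{\et,\AffSpc{1}}\Sus\tau \colon L_p\S^2 \to L_p L_{\et,\AffSpc{1}}\Sus\Pp^1$ is an equivalence in $\SHoneet{k}$. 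By \cref{lem:retract:main-thm} this map is a section of $\sigma_{st}=L_p L_{\et,\AffSpc{1}}\Sus\sigma$, so it is enough to show $\sigma_{st}$ is an equivalence. Bachmann's stable rigidity theorem gives an abstract equivalence $L_p L_{\et,\AffSpc{1}}\Sus\Pp^1\simeq L_p\S^2$ in $\SHoneet{k}$ (there is no twist, as $k$ is algebraically closed), so $\pi_0\Map{\SHoneet{k}}{L_p L_{\et,\AffSpc{1}}\Sus\Pp^1}{L_p L_{\et,\AffSpc{1}}\Sus\Pp^1}\cong\Z_p$; since $\sigma_{st}$ is a split epimorphism onto a target abstractly equivalent to its source and $\Z_p$ has no nontrivial idempotents, the idempotent $(\text{section})\circ\sigma_{st}$ must be the identity, so $\sigma_{st}$ is an equivalence. (Identifying $L_p L_{\et,\AffSpc{1}}\Sus\Sigma^2\Pp^1$ with $L_p\S^4$ and recognising $L_p L_{\et,\AffSpc{1}}\Sus\Sigma^2\tau$ as this equivalence requires trivialising the permutations of smash factors relating $\Sigma^2\Pp^1$ and $(\Pp^1)^{\wedge 2}$, which is the content of \cref{lem:retract:swap-identity} over $k$, where $\langle-1\rangle=1$; compare also \cref{lem:retract:stable-tau-type,lem:retract:tau-stable,lem:retract:pi_2_S_2}.) Thus $L_p L_{\et,\AffSpc{1}}\Sus \Cofib{g}\simeq 0$.

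Finally I would carry out the promotion, which is the main obstacle: a retract whose cofiber is only \emph{stably} trivial need not be an equivalence, and since $\Cofib{g}$ is not an $\AffSpc{1}$-invariant sheaf one cannot naively remove the $\AffSpc{1}$-localization from the vanishing just proved. By \cref{lem:etale-mot:A1-stable-under-filtered} the inclusion $\SHoneet{k}\hookrightarrow\Stab{\ShvTopH{\et}{\Sm{k}}}$ preserves colimits, so $\AffSpc{1}$-localization is smashing on étale sheaves of spectra; consequently $\Sus \Cofib{g}\sslash p$, being the smash of the arbitrary spectrum $\Sus \Cofib{g}$ with the $\AffSpc{1}$-invariant constant sheaf $\S\sslash p$, is itself $\AffSpc{1}$-invariant. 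From $L_p L_{\et,\AffSpc{1}}\Sus \Cofib{g}\simeq 0$ we get $L_{\et,\AffSpc{1}}(\Sus \Cofib{g}\sslash p)\simeq 0$, hence $\Sus \Cofib{g}\sslash p\simeq 0$, i.e.\ $\Cofib{g}$ is mod-$p$ acyclic. Evaluating at each point $s^*$ of \cref{lem:etale:points}, which commutes with $\Sus$, $L_p$ and $(-)\sslash p$, shows that $s^*\Cofib{g}$ is a $p$-complete mod-$p$ acyclic anima, hence contractible by the standard properties of $p$-completion \cite{MCAT}; since the points are jointly conservative, $\Cofib{g}\simeq *$, and the reduction above then finishes the proof.
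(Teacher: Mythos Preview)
Your promotion step has a genuine gap: you assert that $s^*\Cofib{g}$ is a $p$-complete anima, but in the unstable setting $p$-complete objects are \emph{not} closed under cofibers (e.g.\ $S^0$ and $*$ are $p$-complete, yet $\Cofib{S^0\to *}\simeq S^1$ is not). So even granting that $s^*$ commutes with $L_p$ and that the source and target of $g$ are $p$-complete, there is no reason for $\Cofib{s^*g}$ to be $p$-complete, and the implication ``mod-$p$ acyclic and $p$-complete $\Rightarrow$ contractible'' does not apply. The fix is immediate and actually shortens the argument: once $\Sus\Cofib{g}\sslash p\simeq 0$, the map $g$ is a $p$-equivalence, and since its source and target are already $p$-complete, $g$ is an equivalence---the whole detour through $\Cofib{g}\simeq *$, van Kampen, and Whitehead is unnecessary. (Two smaller lacunae: ``local objects closed under colimits $\Rightarrow$ smashing'' is not automatic---you also need, and have, that the $\AffSpc{1}$-acyclics form a $\otimes$-ideal; and the $\AffSpc{1}$-invariance of the constant sheaf $\S\sslash p$ is itself a consequence of Bachmann's stable rigidity, which deserves a citation.)

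For comparison, the paper's proof is a short symmetric-monoidal diagram chase that never leaves the unstable category. One writes down the evident commutative diagram built from $\tau\wedge(-)$, $(-)\wedge\sigma$ and the swap maps; since the swap on $S^2\wedge S^2$ is trivially the identity and the swap on $\Pp^1\wedge\Pp^1$ is homotopic to the identity by \cref{lem:retract:swap-identity} (using $\langle-1\rangle=1$ in $\operatorname{GW}(k)$ for $k$ algebraically closed), the relation $(S^2\wedge\tau)\circ(S^2\wedge\sigma)=\id{}$ falls out directly, exhibiting $\Sigma^2\sigma$ as a two-sided inverse of $\Sigma^2\tau$. Your route through smashing localizations and stable rigidity is a valid alternative once patched, but it is considerably heavier machinery for what is, at bottom, an elementary identity.
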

\begin{proof}
    Consider the following diagram where we omit $p$-completions and étale-$\AffSpc{1}$-localizations:
    \begin{center}
        \begin{tikzcd}
            S^2 \wedge \Pp^1 \ar[r, red, "\tau \wedge \Pp^1"] \ar[d, blue, "S^2 \wedge \sigma"] & \Pp^1 \wedge \Pp^1 \ar[d, "\Pp^1 \wedge \sigma"] \ar[r, red, "\operatorname{swap}"] &\Pp^1 \wedge \Pp^1 \ar[d, red, "\sigma \wedge \Pp^1"]\\
            S^2 \wedge S^2 \ar[r, "\tau \wedge S^2"] \ar[d, blue, "\operatorname{swap}"] & \Pp^1 \wedge S^2 \ar[d, "\operatorname{swap}"] \ar[r, "\operatorname{swap}"] & S^2 \wedge \Pp^1 \\
            S^2 \wedge S^2 \ar[r, blue, "S^2 \wedge \tau"] & S^2 \wedge \Pp^1 \ar[ur, red, equal] \rlap{.}
        \end{tikzcd}
    \end{center}
    Here, the swap morphisms $\operatorname{swap} \colon S^2 \wedge S^2 \to S^2 \wedge S^2$ and $\operatorname{swap} \colon \Pp^1 \wedge \Pp^1 \to \Pp^1 \wedge \Pp^1$
    are homotopic to the identity,
    for the first claim this holds since
    \begin{equation*}
        \det \left(\begin{matrix}
            0 &0 &1 &0 \\
            0 &0 &0 &1 \\
            1 &0 &0 &0 \\
            0 &1 &0 &0 \\
        \end{matrix}\right) = 1
    \end{equation*}
    and for the second claim see \cref{lem:retract:swap-identity}
    (using that $L_{\et, \AffSpc{1}}$ and $L_p$ are symmetric monoidal).
    Note that the diagram commutes: 
    the top left square by functoriality of $\wedge$, the bottom left and top right square since $\wedge$ is symmetric monoidal,
    and the bottom right triangle is trivially commutative.
    Since $\sigma \tau = \id{}$ by \cref{lem:retract:main-thm},
    it follows that the \textcolor{red}{red} composition on the top and right is the identity.
    Hence, also the \textcolor{blue}{blue} composition on the left and bottom is the identity,
    proving that $S^2 \wedge \sigma$ is a two-sided inverse of $S^2 \wedge \tau$.
\end{proof} 

\begin{rmk}
    It is unknown to the author if already $\tau$ is an equivalence.
    On the other hand, the retract only exists on $S^2$, i.e.\ 
    there is no retract $L_p S^1 \to L_p \Gm \to L_p S^1$.
    Indeed, as $S^1$ is connected, so is $L_p S^1$, and in fact one can show that 
    $L_p S^1 \cong K(\Z_p, 1)$.
    On the other hand, since $\Gm$ is $0$-truncated,
    we get that $L_p \Gm = \Gm$, cf.\ \cite[Lemma 3.13]{mattis2024unstable}.
    In particular, $L_p \Gm$ is $0$-truncated and hence $L_p S^1$ cannot be a retract of it.
\end{rmk}
                 \label{sec:retract}
\section{Examples of étale \texorpdfstring{$\AffSpc{1}$}{A1}-nilpotent sheaves}
In this section, we will give examples of $p$-completely étale $\AffSpc{1}$-nilpotent sheaves.
We use those to prove our main theorem, \cref{lem:intro:main-thm-cor}, in \cref{lem:etale-nilpotent:4-connective-main-thm}.

\begin{lem} \label{lem:etale-nilpotent:check-mod-p}
    Let $S$ be a scheme and $E \in \Sp(\ShvTopH{\et}{\Sm{S}})$.
    If $L_\Q E$ is $\AffSpc{1}$-invariant,
    and $E \sslash p$ is $\AffSpc{1}$-invariant for all primes $p$,
    then also $E$ is $\AffSpc{1}$-invariant.
\end{lem}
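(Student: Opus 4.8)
The plan is to prove that the unit $\eta\colon E \to L_{\et,\AffSpc{1}}E$ of the étale-$\AffSpc{1}$-localization is an equivalence, equivalently that $F \coloneqq \Fib{\eta} \in \Sp(\ShvTopH{\et}{\Sm{S}})$ vanishes. Since $L_{\et,\AffSpc{1}}$ is a left Bousfield localization of the stable $\infty$-category $\Sp(\ShvTopH{\et}{\Sm{S}})$, the localization endofunctor is exact; in particular it commutes with $\sslash p = \Cofib{p}$, so that $F \sslash p \cong \Fib{E \sslash p \to L_{\et,\AffSpc{1}}(E \sslash p)}$ (the right-hand map being the localization unit of $E\sslash p$, by naturality). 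As $E \sslash p$ is $\AffSpc{1}$-invariant by hypothesis, this map is an equivalence, and hence $F \sslash p \cong 0$ for every prime $p$.

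From this I would first deduce that $F$ is rational: since $\Cofib{p\colon F \to F} = F\sslash p \cong 0$, multiplication by $p$ is an equivalence on $F$ for all $p$, and $L_\Q F$ is the filtered colimit of copies of $F$ along multiplication by the positive integers (\cite[Lemma 2.6]{mattis2024fracture}), hence a colimit of equivalences; so the canonical map $F \xrightarrow{\simeq} L_\Q F$ is an equivalence. Applying the exact functor $L_\Q$ to the fiber sequence $F \to E \xrightarrow{\eta} L_{\et,\AffSpc{1}}E$ and using $L_\Q F \cong F$, we obtain a fiber sequence
\begin{equation*}
    F \longrightarrow L_\Q E \xrightarrow{\ g\ } L_\Q L_{\et,\AffSpc{1}}E, \qquad g \coloneqq L_\Q \eta .
\end{equation*}

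It then remains to show that $g$ is an equivalence, which forces $F \cong \Fib{g} \cong 0$ and finishes the proof. First, $g$ is an $\AffSpc{1}$-equivalence: $\eta$ is one (being a localization unit), and $L_\Q$ preserves $\AffSpc{1}$-equivalences, since $\Cofib{g} \cong L_\Q \Cofib{\eta}$ is a filtered colimit of copies of the $\AffSpc{1}$-acyclic sheaf $\Cofib{\eta}$, and the $\AffSpc{1}$-acyclic sheaves of spectra form a localizing subcategory. Second, the source $L_\Q E$ of $g$ is $\AffSpc{1}$-invariant by hypothesis, and the target $L_\Q L_{\et,\AffSpc{1}}E$ is $\AffSpc{1}$-invariant because $L_{\et,\AffSpc{1}}E$ is and $L_\Q$ preserves $\AffSpc{1}$-invariance of sheaves of spectra — this is exactly the filtered-colimit argument appearing in the proof of \cref{lem:etale-nilpotent:etale-nilpotent-implies-p-comp-rationally}, combining \cite[Lemma 2.6]{mattis2024fracture} with the stability of $\AffSpc{1}$-invariant sheaves of spectra under colimits from \cref{lem:etale-mot:A1-stable-under-filtered}. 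An $\AffSpc{1}$-equivalence whose target is $\AffSpc{1}$-invariant exhibits that target as the étale-$\AffSpc{1}$-localization of its source; since $L_\Q E$ is already $\AffSpc{1}$-invariant, $g$ must be an equivalence.

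The only point requiring more than formal manipulation of the exact localizations $L_{\et,\AffSpc{1}}$ and $L_\Q$ and of the fiber sequence $F \to E \to L_{\et,\AffSpc{1}}E$ is the $\AffSpc{1}$-invariance of $L_\Q L_{\et,\AffSpc{1}}E$, i.e. that rationalization does not destroy $\AffSpc{1}$-invariance; that is where the presentation of $L_\Q$ as a filtered colimit and the closure of $\AffSpc{1}$-invariant sheaves of spectra under filtered colimits genuinely enter.
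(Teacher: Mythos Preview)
Your proof is correct and genuinely different from the paper's. The paper argues \emph{externally}: it first upgrades the hypothesis on $E\sslash p$ to $\AffSpc{1}$-invariance of each $L_pE$ (via the tower $E\sslash p^n$ and closure under limits), then invokes the arithmetic fracture square
\[
E \;\cong\; L_\Q E \times_{L_\Q\prod_p L_pE} \prod_p L_pE
\]
and checks that each of the other three corners is $\AffSpc{1}$-invariant. You instead argue \emph{internally} with the fiber $F=\Fib{\eta}$ of the localization unit: exactness of $L_{\et,\AffSpc{1}}$ gives $F\sslash p\cong 0$, so $F$ is rational, and then $F\cong\Fib{L_\Q E\to L_\Q L_{\et,\AffSpc{1}}E}$ is simultaneously $\AffSpc{1}$-acyclic and (as a fiber of a map between $\AffSpc{1}$-invariant objects) $\AffSpc{1}$-invariant, hence zero.

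What each buys: your route is shorter and avoids both the fracture square and the auxiliary computation of $L_pE$; it uses only that $L_{\et,\AffSpc{1}}$ is an exact localization of a stable category together with closure of invariants under filtered colimits. The paper's route, while longer, yields the intermediate statement that each $L_pE$ is $\AffSpc{1}$-invariant, which is of independent interest. Both arguments ultimately rely on \cref{lem:etale-mot:A1-stable-under-filtered} (closure of $\AffSpc{1}$-invariant sheaves of spectra under filtered colimits) at exactly one point, namely to see that rationalization preserves $\AffSpc{1}$-invariance; note that this lemma carries mild hypotheses on $S$ which are implicit in the paper's proof as well.
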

\begin{proof}
    We first show that for every prime $p$, the $p$-completion $L_p E$ is $\AffSpc{1}$-invariant.
    Since $L_p E \cong \limil{n} E \sslash p^n$ and $\AffSpc{1}$-invariant sheaves are stable under limits,
    it suffices to show that $E \sslash p^n$ is $\AffSpc{1}$-invariant for all $n$.
    We assumed this to be the case for $n = 1$.
    For $n > 1$, recall that there is a fiber sequence 
    \begin{equation*}
        E \sslash p^n \to E \sslash p^{n+1} \to E \sslash p,
    \end{equation*}
    and we conclude by induction.
    Now consider the cartesian fracture square 
    \begin{center}
        \begin{tikzcd}
            E \cartsymb \ar[d] \ar[r] &\prod_p L_p E \ar[d] \\
            L_\Q E \ar[r] &L_\Q \prod_p L_p E \rlap{,}
        \end{tikzcd}
    \end{center}
    see e.g.\ \cite[Corollary 7.3]{mattis2024fracture}.
    By the above, all the $L_p E$ are $\AffSpc{1}$-invariant.
    Since $\AffSpc{1}$-invariant objects are stable under limits,
    this holds also for the product.
    Moreover, the rationalization of the $\AffSpc{1}$-invariant sheaf of spectra $F \coloneqq \prod_p L_p E$
    is again $\AffSpc{1}$-invariant: this holds since $L_\Q F$ is given by a filtered colimit of a diagram involving only $F$
    (cf.\ \cite[Lemma 2.6]{mattis2024fracture})
    and $\AffSpc{1}$-invariant sheaves of spectra are closed under colimits, cf.\ \cref{lem:etale-mot:A1-stable-under-filtered}.
    On the other hand, $L_\Q E$ is $\AffSpc{1}$-invariant by assumption.
    Hence, so is $E$ as a pullback of $\AffSpc{1}$-invariant sheaves of spectra.
\end{proof}

\begin{lem} \label{lem:etale-nilpotent:iota-shriek-evaluation}
    Let $k$ be an algebraically closed field, $X \in \Sm{k}$ and $x \in X$ a point.
    Let $F \in \ShvTopH{\et}{\smet{k}}$ or $F \in \Sp(\ShvTopH{\et}{\smet{k}})$.
    Then $(\iota_! F)(k) \cong (\iota_! F)(\mathcal{O}_{X, x}^{\sh})$.
\end{lem}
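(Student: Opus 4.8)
The plan is to reduce the statement to the fact that the functor $\iota_!$ commutes with passing to a strictly henselian local ring, i.e.\ to the commutation of $\iota_!$ with the pullback $\rho_{\overline{x}}^*$ established in \cref{lem:etale:morphism-to-local-commutation}. The key observation is that since $k$ is algebraically closed, for any $X \in \Sm{k}$ and any point $x \in X$ (with associated geometric point $\overline{x}$ lying over the unique geometric point $\overline{s}$ of $\Spec(k)$), the strict henselization $\shens{X}{x} = \Spec(\mathcal{O}_{X,x}^{\sh})$ is a scheme over $\shens{(\Spec k)}{s} = \Spec(k)$ itself (the strict henselization of a point of a $k$-scheme over an algebraically closed field sits over $k$, with the residue field of $k$ unchanged). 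So evaluating $\iota_! F$ at $\mathcal{O}_{X,x}^{\sh}$ means evaluating its pullback along $\rho_{\overline{x}} \colon \shens{X}{x} \to \Spec(k)$ at the structure point, and I want to compare this with evaluation of $\iota_! F$ at $k$ directly.

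First I would make precise that evaluation at $\mathcal{O}_{X,x}^{\sh}$ of a hypersheaf $G \in \ShvTopH{\et}{\Sm{k}}$ equals $(\rho_{\overline{x}}^* G)(\mathcal{O}_{X,x}^{\sh})$ — more precisely, by \cref{lem:etale:global-to-local-formula} applied with base $S = \Spec k$, writing $\shens{X}{x}$ as a cofiltered limit of étale neighborhoods, we have $G(\mathcal{O}_{X,x}^{\sh}) \cong \colimil{i} G(U_i)$, and this colimit is exactly $(\rho_{\overline{x}}^* G)$ evaluated at $\mathcal{O}_{X,x}^{\sh} \in \Sm{k}$ viewed over $\Spec k$. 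Now take $G = \iota_! F$. Since $k$ is separably closed (being algebraically closed), \cref{lem:etale:small-topos-over-strictly-local} identifies $\ShvTopH{\et}{\smet{k}} \cong \An$ via evaluation at $k$, so $\rho_{\overline{x}}^*$ on the small site is (an equivalence to) an equivalence of categories $\An \xrightarrow{\simeq} \An$, in particular $\rho_{\overline{x}}^* F \cong F$ canonically (both are just the anima $F(k)$). By \cref{lem:etale:morphism-to-local-commutation}, $\rho_{\overline{x}}^* \iota_! F \cong \iota_! \rho_{\overline{x}}^* F \cong \iota_! F$, where the last $\iota_!$ is now over the base $\Spec k$ again (since $\shens{(\Spec k)}{s} = \Spec k$). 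Evaluating both sides at $\mathcal{O}_{X,x}^{\sh}$ gives $(\iota_! F)(\mathcal{O}_{X,x}^{\sh}) \cong (\rho_{\overline{x}}^* \iota_! F)(\mathcal{O}_{X,x}^{\sh}) \cong (\iota_! F)(\mathcal{O}_{X,x}^{\sh})$ — this is circular as stated, so the real content is the next step.

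The point I would actually argue is that $(\iota_! F)(\mathcal{O}_{X,x}^{\sh})$ is computed by the stalk functor and this stalk, being at a strictly henselian local ring over an algebraically closed field, factors through $(-)(k)$. Concretely, by \cref{lem:rigidtiy:stalk-functor} evaluation at $k$ on $\ShvTopH{\et}{\Sm{k}}$ is a limit- and colimit-preserving functor with both adjoints; I claim that $(-)(\mathcal{O}_{X,x}^{\sh}) \colon \ShvTopH{\et}{\Sm{k}} \to \An$ is another such point, and on the subcategory of sheaves in the image of $\iota_!$ these two points agree. For the latter: both points, restricted along $\iota_!$, become points of $\ShvTopH{\et}{\smet{k}} \cong \An$, namely evaluation of $F$ at the small étale $k$-scheme obtained by pulling back along $\Spec k \to \Spec k$ (for the $(-)(k)$ point, trivially $F(k)$) and along $\shens{X}{x} \to \Spec k$ (for the other); but by \cref{lem:etale:hens-descent}/\cref{lem:etale:global-to-local-formula} the latter is $\colimil{i} F(U_i)$ where $U_i$ ranges over étale neighborhoods of $\overline{x}$ in $\Spec k$ — but $\Spec k$ is already strictly local, so its only connected étale neighborhood is $\Spec k$ itself, and the colimit collapses to $F(k)$. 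Hence $(\iota_! F)(\mathcal{O}_{X,x}^{\sh}) \cong F(k) \cong (\iota_! F)(k)$, which is exactly the claim; the same argument works verbatim with $F$ replaced by a sheaf of spectra, using the spectral versions of the cited lemmas.

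\textbf{Main obstacle.} The delicate point is the claim that the strict henselization $\mathcal{O}_{X,x}^{\sh}$ of a point of a smooth $k$-scheme, when viewed as a pro-(smooth $k$-scheme) and pulled back to the small étale site of $\Spec k$, sees only trivial étale neighborhoods — i.e.\ that the geometric point $\overline{x}$ lies over the (unique) geometric point of $\Spec k$ and that $\Spec k$ is its own strict henselization because $k$ is separably closed. This is where algebraic closedness of $k$ is essential (over a general field the residue field $k(x)$ and its separable closure would enter, and $(\iota_! F)(\mathcal{O}_{X,x}^{\sh})$ would instead compute $F$ evaluated at a nontrivial étale $k$-scheme). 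Once this is pinned down, the rest is a formal chase through \cref{lem:etale:global-to-local-formula,lem:etale:morphism-to-local-commutation,lem:etale:small-topos-over-strictly-local,lem:rigidtiy:stalk-functor}.
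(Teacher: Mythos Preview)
Your second attempt contains the right idea, but you never actually invoke the fact that makes it work, and the explicit computations you layer on top misapply the cited lemmas.

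You correctly observe that both $(-)(k) \circ \iota_!$ and $(-)(\mathcal{O}_{X,x}^{\sh}) \circ \iota_!$ are points of $\ShvTopH{\et}{\smet{k}} \cong \An$. The paper's proof stops essentially here: since $\An$ is the initial $\infty$-topos, $\iota_!$ must be the constant sheaf functor, and for any point $s^*$ of the big topos one has $s^* \circ \cst \cong \id{\An}$ (equivalently, $\An$ has a unique point). Hence both evaluations return the anima $F(k)$. This is a two-line argument, and it is the only place where algebraic closedness of $k$ enters.

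Your route through \cref{lem:etale:global-to-local-formula} and \cref{lem:etale:morphism-to-local-commutation} does not work as written. Both lemmas concern $\rho_{\overline{s}}^*$ for a geometric point $\overline{s}$ of the \emph{base} $S$; here $S = \Spec k$ is already strictly henselian, so $\shens{S}{s} = \Spec k$, the functor $\rho_{\overline{s}}^*$ is the identity, and both lemmas become tautologies. There is no $\rho_{\overline{x}}^*$ defined in the paper for a point $\overline{x}$ of a smooth $S$-scheme $X$, and your phrase ``étale neighborhoods of $\overline{x}$ in $\Spec k$'' conflates the étale site of $X$ (where the relevant neighborhoods of $\overline{x}$ actually live) with the trivial étale site of $\Spec k$. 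The conclusion $(\iota_! F)(\mathcal{O}_{X,x}^{\sh}) \cong F(k)$ is correct, but the chain of references you give does not establish it; what does is the uniqueness of points of $\An$, which you almost state but then abandon in favour of the faulty explicit computation.
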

\begin{proof}
    If $k$ is algebraically closed, then $\ShvTopH{\et}{\smet{k}} \cong \An$ via evaluation at $k$
    by \cref{lem:etale:small-topos-over-strictly-local}.
    In particular, we see that $\iota_!$ is equivalent to the constant sheaf functor (as 
    $\An$ is the initial $\infty$-topos).
    In particular, $s^* \iota_! \cong \id{\An}$ for every point $s^* \colon \ShvTopH{\et}{\smet{k}} \to \An$.
    Since the points of the étale topos are given by strictly henselian local rings, cf.\ \cref{lem:etale:points}, the result follows.
\end{proof}

\begin{lem} \label{lem:etale-nilpotent:transfers}
    Let $k$ be a perfect field with $\cd{}{k} < \infty$ and write $e$ for the exponential characteristic of $k$.
    Let $E \in \SH{k}$ be a motivic spectrum such that multiplication by $e$ is invertible on $E$.
    Then $L_{\et} \omega^\infty E \in \Sp(\ShvTopH{\et}{\Sm{k}})$ is $\AffSpc{1}$-invariant.
\end{lem}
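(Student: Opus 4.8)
The plan is to apply \cref{lem:etale-nilpotent:check-mod-p} to the étale sheaf of spectra $E' \coloneqq L_{\et}\omega^\infty E \in \Sp(\ShvTopH{\et}{\Sm k})$. Thus it suffices to prove (a) that $L_\Q E'$ is $\AffSpc{1}$-invariant and (b) that $E'\sslash p$ is $\AffSpc{1}$-invariant for every prime $p$.

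For (a): the functor $\omega^\infty$ is exact and a right adjoint, so it commutes with $L_\Q$ (cf.\ the proof of \cref{lem:etale-nilpotent:effective-Postnikov-refinement-rational}), and étale hypersheafification of sheaves of spectra over $\Sm k$ commutes with filtered colimits — the étale topology is finitary and $\ShvTopH{\et}{\Sm k}$ admits a locally finite dimensional cover by \cref{lem:etale:cover} — hence $L_{\et}$ commutes with $L_\Q$. Therefore $L_\Q E' \cong L_{\et}\omega^\infty L_\Q E$. Now $L_\Q E$ is a rational motivic spectrum, so by \cref{lem:rational:transfers-etale-descent} the Nisnevich sheaf $\omega^\infty L_\Q E$ already satisfies étale hyperdescent; consequently $L_\Q E' \cong \omega^\infty L_\Q E$, which lies in $\SHone k$ and is therefore $\AffSpc{1}$-invariant (this step does not even use that $e$ is invertible on $E$).

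For (b): since $L_{\et}$ and $\omega^\infty$ are exact, $E'\sslash p \cong L_{\et}\omega^\infty(E\sslash p)$. If $p = e$, then $E\sslash p \cong 0$ because $e$ is invertible on $E$, so $E'\sslash p \cong 0$ is trivially $\AffSpc{1}$-invariant. If $p \neq e$, then $p$ is invertible on $k$ and we invoke stable rigidity. Write $M \coloneqq E'\sslash p$; its homotopy sheaves are $p$-power torsion (those of $\omega^\infty(E\sslash p)$ are annihilated by $p^2$, and étale sheafification preserves this), so, the standard $t$-structure on $\Sp(\ShvTopH{\et}{\Sm k})$ being left separated and the topos being Postnikov-complete by \cref{lem:etale:cover}, the object $M$ is $p$-complete. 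By Bachmann's stable rigidity theorem \cite[Theorem 6.6]{Bachmann2021etalerigidity} — which asserts that $\iota_!$ induces an equivalence on $p$-complete sheaves of spectra, as used in Step 1 of the proof of \cref{lem:rigidity:ess-img} — we get $M \cong L_p M \cong L_p\iota_!\iota^* M$. Finally, $L_p\iota_! G$ is $\AffSpc{1}$-invariant for every small étale sheaf of spectra $G \in \Sp(\ShvTopH{\et}{\smet k})$ by \cite[Corollary 6.2]{Bachmann2021etalerigidity} (cf.\ the proof of \cref{lem:rigidity:factors-through}); applied to $G = \iota^* M$ this shows $M$ is $\AffSpc{1}$-invariant. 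Combining (a) and (b), \cref{lem:etale-nilpotent:check-mod-p} gives that $E' = L_{\et}\omega^\infty E$ is $\AffSpc{1}$-invariant.

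The main obstacle is case (b) with $p \neq e$: one needs the precise form of Bachmann's stable rigidity (that $p$-complete big étale sheaves of spectra lie in the essential image of $\iota_!$) together with the fact that $\iota_!$ of a small étale sheaf of spectra becomes $\AffSpc{1}$-invariant after $p$-completion; both are inputs from \cite{Bachmann2021etalerigidity} already exploited in \cref{sec:rigidity}, and the only real work is identifying $E'\sslash p$ with the étale sheafification of $\omega^\infty(E\sslash p)$ and checking it is $p$-complete. A secondary, routine point is the commutation of $L_{\et}$ with rationalization on sheaves of spectra, which rests on the finitary nature of the étale topology and the locally finite dimensional cover of \cref{lem:etale:cover}.
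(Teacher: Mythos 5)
Your part (a) and your $p = e$ case of part (b) match the paper's argument and are fine. The problem is the case $p \neq e$ of (b), which is where the whole content of the lemma lies, and where your argument has a genuine gap.

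You claim that, since $M \coloneqq E'\sslash p$ is $p$-complete, Bachmann's stable rigidity theorem gives $M \cong L_p\iota_!\iota^*M$. But \cite[Theorem 6.6]{Bachmann2021etalerigidity} does \emph{not} say that $\iota_!^p$ is an equivalence onto all of $\complete{\Sp(\ShvTopH{\et}{\Sm k})}$; it says $\iota_!^p$ is an equivalence onto $\complete{\SHoneet{k}}$, i.e.\ onto the $p$-complete $\AffSpc{1}$-\emph{invariant} sheaves of spectra. In Step 1 of \cref{lem:rigidity:ess-img} the paper can invoke this because $L_pE_n$ is $\AffSpc{1}$-invariant by hypothesis; here $\AffSpc{1}$-invariance of $M$ is exactly what you are trying to prove, so the citation is circular. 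If every $p$-complete big étale sheaf of spectra were automatically small, \cref{lem:etale-nilpotent:transfers} would be content-free, \cref{lem:strict:p-completely-strict-if-4-strict} would not need the $m\ge 4$ hypothesis, and the paper would not bother with the Gabber-rigidity argument at all. The step "$M$ is $p$-complete $\Rightarrow$ $M$ is small" is precisely what needs a real proof.

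The paper closes this gap as follows: it first reduces (via $\rho^*$ to an algebraic closure $\overline k$, using \cref{lem:etale-motives:morphism-to-local-detects-A1,lem:etale-motives:morphism-to-local-preserves-A1} and the compatibility of $\omega^\infty$ and $L_{\et}$ with base change) to the case $k$ algebraically closed. Then, after noting that \cite[Theorem 3.1]{bachmann2021remarksetalemotivicstable} reduces one to proving $F\sslash p$ small, it checks the counit $\iota_!\iota^*(F\sslash p)\to F\sslash p$ on points: by \cite[Chapter II, Remark 2.17 (b)]{milne1980etale} one may restrict to closed points, where $k(x)=k$ (because $k=\overline k$) and hence $\mathcal O^{\sh}_{X,x}\cong\mathcal O^{h}_{X,x}$; then Gabber-type rigidity in the form of \cite[Theorem 1.2]{ananyevskiy2018rigidity} gives $(\omega^\infty E\sslash p)(\mathcal O^{\sh}_{X,x})\cong(\omega^\infty E\sslash p)(k)$, and \cref{lem:etale-nilpotent:iota-shriek-evaluation} lets one compare with $\iota_!\iota^*$. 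Your proposal skips both the base-change reduction (which the closed-point argument actually needs) and the rigidity input, so the $p\neq e$ case does not go through as written.
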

\begin{proof}
    Let $\overline{k}$ be an algebraic closure of $k$.
    Write $\rho^* \colon \ShvTopH{\et}{\Sm{k}} \to \ShvTopH{\et}{\Sm{\overline{k}}}$
    for the geometric morphism from \cref{defn:etale:morphism-to-local}.
    By \cref{lem:etale-motives:morphism-to-local-detects-A1} we see that $\rho^*$ detects $\AffSpc{1}$-invariance,
    i.e.\ it suffices to show that $\rho^* L_{\et} \omega^\infty E$ is $\AffSpc{1}$-invariant.
    Consider the following diagram:
    \begin{center}
        \begin{tikzcd}
            \Sp(\ShvTopH{\et}{\Sm{k}}) \ar[d, "\rho^*"] &\Sp(\ShvTop{\nis}{\Sm{k}}) \ar[l, "L_{\et}"] \ar[d, "\rho^*"] &\SH{k} \ar[d, "\rho^*"] \ar[l, "\omega^\infty"]\\
            \Sp(\ShvTopH{\et}{\Sm{\overline{k}}}) &\Sp(\ShvTop{\nis}{\Sm{\overline{k}}}) \ar[l, "L_{\et}"] &\SH{\overline{k}}\ar[l, "\omega^\infty"]\rlap{,}
        \end{tikzcd}
    \end{center}
    where the functor in the middle is defined in the same way as the functor on the left,
    and the functor on the right is its $\Gm$-stabilization.
    The diagram commutes: for the left square this holds as all involved functors 
    are induced by morphisms of sites, and they already commute on the level of sites.
    For the right square, it suffices to note that $\rho^* \colon \Sp(\ShvTop{\nis}{\Sm{k}}) \to \Sp(\ShvTop{\nis}{\Sm{\overline{k}}})$
    commutes with $\Gm$-loops, i.e.\ $\Omega_{\Gm} \rho^* X \cong \rho^* \Omega_{\Gm} X$.
    For this, see the discussion before \cite[Lemma A.7]{hoyois2013hopkinsmorel}.
    Therefore, $\rho^* L_{\et} \omega^\infty E \cong L_{\et} \omega^\infty \rho^* E$.
    Moreover, multiplication by $e$ on $\rho^* E$ is clearly an equivalence.
    Hence, we may assume that $E \in \SH{\overline{k}}$,
    i.e.\ that $k$ is already algebraically closed.

    Write $F \coloneqq L_{\et} \omega^\infty E \in \Sp(\ShvTopH{\et}{\Sm{k}})$.
    By \cref{lem:etale-nilpotent:check-mod-p},
    it suffices to show that $L_\Q F$ is $\AffSpc{1}$-invariant
    and that the $F \sslash p$ are $\AffSpc{1}$-invariant for all primes $p$.

    We first show that $L_\Q F$ is $\AffSpc{1}$-invariant.
    Since $L_{\et}$ and $\omega^\infty$ commute with filtered colimits 
    (see \cite[Lemma 6.1]{bachmannyakerson} for the latter),
    it follows from \cite[Corollary 2.7]{mattis2024fracture}
    that $L_\Q F \cong L_{\et} \omega^\infty L_\Q E$.
    We have seen in \cref{lem:rational:transfers-etale-descent}
    that $\omega^\infty L_\Q E$ satisfies étale hyperdescent,
    hence we see that $L_{\et} \omega^\infty L_\Q E \cong \omega^\infty L_\Q E$,
    which is $\AffSpc{1}$-invariant.

    We finish the proof by showing that $F \sslash p$ is $\AffSpc{1}$-invariant.
    If $p = e$, then multiplication by $p$ is invertible on $F$ by assumption,
    and hence $F \sslash p = 0$ is $\AffSpc{1}$-invariant.
    Hence, we may assume that $p \in k^\times$.
    Note that $F \sslash p$ is $p$-complete,
    and hence by \cite[Theorem 3.1]{bachmann2021remarksetalemotivicstable} it suffices to show that $F \sslash p$ is small,
    i.e. that $\iota_! \iota^* (F \sslash p) \to F \sslash p$ is an equivalence.
    Since we work in the étale topology, it hence suffices to show 
    that $(\iota_! \iota^* F \sslash p)(\mathcal{O}_{X, x}^{sh}) \to (F \sslash p)(\mathcal{O}_{X, x}^{sh})$ 
    is an equivalence for every $X \in \Sm{k}$ and $x \in X$, cf.\ \cref{lem:etale:points}.
    In fact, by \cite[Chapter II, Remark 2.17 (b)]{milne1980etale} we may check this on closed points 
    (we can use the $1$-categorical result since we work with hypersheaves and hence 
    can check equivalences on homotopy sheaves).
    So pick an $X \in \Sm{k}$ and $x \in X$ a closed point.
    This implies that $k(x)$ is a finite (hence algebraic) 
    field extension of $k$. Since $k$ is algebraically closed, it follows that $k = k(x)$.
    In particular, we see that $\mathcal{O}_{X, x}^{sh} \cong \mathcal{O}_{X, x}^{h}$.
    We first show that $(F \sslash p)(\mathcal{O}_{X, x}^{sh}) \cong (F \sslash p)(k)$.
    Indeed, since both $\mathcal{O}_{X, x}^{sh}$ and $k$ are strictly henselian,
    and $F \sslash p \cong L_{\et} \omega^\infty E \sslash p$, it suffices to show 
    that $(\omega^\infty E \sslash p)(\mathcal{O}_{X, x}^{sh}) \cong (\omega^\infty E \sslash p)(k)$.
    Since $\mathcal{O}_{X, x}^{sh} \cong \mathcal{O}_{X, x}^{h}$ 
    this follows from \cite[Theorem 1.2]{ananyevskiy2018rigidity} 
    as $p \in k^\times$ and multiplication by $p^2$ is null on $E \sslash p$.
    Now consider the following equivalences 
    \begin{equation*}
        (F \sslash p)(k)
        \cong (\iota_! \iota^* (F \sslash p))(k)
        \cong (\iota_! \iota^* (F \sslash p))(\mathcal{O}_{X, x}^{sh}).
    \end{equation*}
    Here we used that $k$ is living in the small étale site
    and \cref{lem:etale-nilpotent:iota-shriek-evaluation}.
    Combining these equivalences yields $(F \sslash p)(\mathcal{O}_{X, x}^{sh}) \cong (\iota_! \iota^* (F \sslash p))(\mathcal{O}_{X, x}^{sh})$
    which is what we wanted to show.
\end{proof}

\begin{prop} \label{lem:etale-nilpotent:2-effective-nilpotent}
    Let $k$ be a perfect field with $\cd{}{k} < \infty$ and write $e$ for the exponential characteristic of $k$.
    Let $X \in \Spc{k}_*$ be a nilpotent and $2$-effective motivic space
    that is $\Z[\frac{1}{e}]$-local.
    Then $L_{\et} X \in \ShvTopH{\et}{\Sm{k}}_*$ is étale $\AffSpc{1}$-nilpotent.
    In particular, $L_{\et} X$ is still $\AffSpc{1}$-invariant.
\end{prop}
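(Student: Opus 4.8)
The plan is to transport the $\Pp^1$-Postnikov tower of $X$ into the étale hypercomplete topos and check that the layers become $\AffSpc1$-invariant étale sheaves of spectra. First I would apply \cref{lem:etale-nilpotent:effective-Postnikov-refinement} to the nilpotent $2$-effective motivic space $X$ to obtain a tower $(X_i)_i$ under $X$ with $X_0 \cong *$, $X \cong \limil{i} X_i$, fiber sequences $X_{i+1} \to X_i \to K_i$ with $K_i \cong \pLoopP E_i$ for $2$-connective $2$-effective $E_i \in \SH{k}$, and connectivities of the $K_i$ tending to $\infty$. Since $X$ is $\Z[\frac1e]$-local, \cref{lem:etale-nilpotent:effective-Postnikov-refinement-rational} (with $R = \Z[\frac1e]$) lets me furthermore assume all $X_i$, $K_i$ and $E_i$ are $\Z[\frac1e]$-local; in particular multiplication by $e$ is invertible on each $E_i$. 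A downward induction along the fiber sequences (using $X_0 \cong *$ and that each $K_i$ is $2$-connective) shows that every $X_i$, and hence $X \cong \limil{i} X_i$, is connected.

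Next I would apply étale hypersheafification $L_{\et}$, viewing the $X_i$ and $K_i$ as Nisnevich sheaves. Since $L_{\et} \colon \ShvTop{\nis}{\Sm{k}} \to \ShvTopH{\et}{\Sm{k}}$ is left exact (it is the inverse image of a geometric morphism), the sequences $L_{\et} X_{i+1} \to L_{\et} X_i \to L_{\et} K_i$ remain fiber sequences and $L_{\et} X_0 \cong *$. The tower $(X_i)_i$ is locally highly connected subordinate to the Nisnevich cover of \cite[Proposition A.3]{mattis2024fracture}: the $X_i$ are connected and the transition maps have connectivity tending to $\infty$ (as $\Fib{X_{i+1}\to X_i} \cong \Omega K_i$), and both properties are preserved by the point functors of that cover. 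By \cref{lem:etale:morphism-of-covers}, $L_{\et}$ upgrades to a morphism of $\infty$-topoi with locally finite dimensional covers, so \cref{lem:highly-connected:morphism-of-covers-preserves-towers} yields $L_{\et} X \cong L_{\et} \limil{i} X_i \cong \limil{i} L_{\et} X_i$. Moreover, being the inverse image of a geometric morphism, $L_{\et}$ commutes with $\pLoop$ on connective sheaves of spectra, so writing $F_i \coloneqq L_{\et} \omega^\infty E_i \in \Stab{\ShvTopH{\et}{\Sm{k}}}$ we get $L_{\et} K_i \cong L_{\et} \pLoop \omega^\infty E_i \cong \pLoop F_i$; since $\omega^\infty E_i$ is $2$-connective and $L_{\et}$ is right t-exact, $F_i$ is $2$-connective, in particular $1$-connective.

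Thus $(L_{\et} X_i)_i$ is a highly connected tower under $L_{\et} X$ satisfying conditions (1)--(3) of \cref{def:etale-nilpotent:etale-nilpotent} (with $\Cat L = \id{}$), the remaining connectivity-of-the-layers requirement holding because the connectivities of the $\pLoop F_i$ tend to $\infty$. It remains to verify condition (4): that each $F_i$ is $\AffSpc1$-invariant. But $E_i \in \SH{k}$ is $\Z[\frac1e]$-local, so multiplication by $e$ is invertible on it, and therefore $F_i = L_{\et} \omega^\infty E_i$ is $\AffSpc1$-invariant by \cref{lem:etale-nilpotent:transfers}. Hence $L_{\et} X$ is étale $\AffSpc1$-nilpotent, and the final sentence follows from \cref{lem:etale-nilpotent:nilpotent-A1-inv}. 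The main obstacle is the commutation of $L_{\et}$ with the limit defining $X$ in the second step: the $\Pp^1$-Postnikov tower is a genuinely infinite limit and étale hypersheafification does not commute with arbitrary limits, so one must invoke the finite-dimensionality of both the Nisnevich and étale topoi together with the high connectivity of the tower, via the formalism of \cref{sec:highly-connected}; the other load-bearing input is \cref{lem:etale-nilpotent:transfers}, which itself rests on stable rigidity and rigidity for transfers.
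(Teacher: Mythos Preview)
Your proposal is correct and follows essentially the same approach as the paper: build the $\Pp^1$-Postnikov tower via \cref{lem:etale-nilpotent:effective-Postnikov-refinement} and \cref{lem:etale-nilpotent:effective-Postnikov-refinement-rational}, push it through $L_{\et}$ using left exactness and \cref{lem:highly-connected:morphism-of-covers-preserves-towers}/\cref{lem:etale:morphism-of-covers} for the limit, and invoke \cref{lem:etale-nilpotent:transfers} for the layers. Your write-up is in fact somewhat more careful than the paper's (e.g.\ explicitly verifying connectedness of the $X_i$ and $1$-connectivity of the $F_i$), but the architecture and the two load-bearing inputs you identify are exactly those used there.
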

\begin{proof}
    Choose motivic spaces $X_n$ and $\Pp^1$-infinite loop spaces $K_n = \pLoopP E_n$ 
    with $E_n \in \SH{k}$ a $2$-connective $2$-effective motivic spectrum
    as in \cref{lem:etale-nilpotent:effective-Postnikov-refinement}.
    We may moreover assume that multiplication by $e$ is invertible on $E_n$ by \cref{lem:etale-nilpotent:effective-Postnikov-refinement-rational}.

    As $L_{\et}$ is a geometric morphism, it preserves finite limits, and hence
    we get fiber sequences $L_{\et} X_{n+1} \to L_{\et} X_n \to L_{\et} K_n$,
    where $L_{\et} K_n \cong L_{\et} \pLoopP E_n \cong \pLoop L_{\et} \omega^\infty E_n$
    is a connected infinite loop sheaf.
    We moreover have $L_{\et} X \cong \limil{n} L_{\et} X_n$:
    for this, note that as the connectivity of the $K_i$ tends to $\infty$ as $i \to \infty$,
    the tower $(X_n)_n$ is highly connected.
    Hence, the claim follows from \cref{lem:highly-connected:morphism-of-covers-preserves-towers},
    using that the étale hypersheafification functor $L_{\et}$ upgrades to a morphism 
    of $\infty$-topoi with locally finite dimensional cover, cf.\ \cref{lem:etale:morphism-of-covers}.
    Hence, we are reduced to show that $L_{\et} \omega^\infty E_n$ is $\AffSpc{1}$-invariant.
    This was shown in \cref{lem:etale-nilpotent:transfers}.

    It follows now from \cref{lem:etale-nilpotent:nilpotent-A1-inv} that $L_{\et} X$ is in particular 
    $\AffSpc{1}$-invariant.
\end{proof}

\begin{lem}[Stability under retracts] \label{lem:etale-nilpotent:retracts}
    Let $X, Y \in \ShvTopH{\et}{\Sm{k}}_*$.
    Suppose that there exists a retract $L_p X \xrightarrow{i} L_p Y \xrightarrow{r}$.
    If $Y$ is $p$-completely small, then so is $X$.
\end{lem}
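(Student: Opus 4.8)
The plan is to run a purely formal argument based on naturality of the counit $\epsilon \colon \iota_!\iota^* \to \id$ (on pointed étale sheaves on $\Sm{k}$) together with the stability of equivalences under retracts. Observe that $L_p\epsilon \colon L_p\iota_!\iota^* \to L_p$ is a natural transformation of endofunctors of $\ShvTopH{\et}{\Sm{k}}_*$, and that by definition a sheaf $Z$ is $p$-completely small exactly when $L_p\epsilon_Z \colon L_p\iota_!\iota^* Z \to L_p Z$ is an equivalence.

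First I would check that $Z$ is $p$-completely small if and only if $L_p Z$ is. Applying $L_p$ to the naturality square of $\epsilon$ along the $p$-equivalence $Z \to L_p Z$ yields a commutative square whose bottom map is an equivalence (as $Z \to L_p Z$ is a $p$-equivalence) and whose top map $L_p\iota_!\iota^* Z \to L_p\iota_!\iota^* L_p Z$ is an equivalence, since $\iota^*$ and $\iota_!$ preserve $p$-equivalences (being left adjoints, as used already in the proof of \cref{lem:rigidity:ess-img}). Hence $L_p\epsilon_Z$ is an equivalence if and only if $L_p\epsilon_{L_p Z}$ is. In particular, from the hypothesis on $Y$ we get that $L_p\epsilon_{L_p Y}$ is an equivalence.

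Next I would apply the natural transformation $L_p\epsilon$ to the retract maps $i \colon L_p X \to L_p Y$ and $r \colon L_p Y \to L_p X$, obtaining the commutative diagram
\begin{center}
    \begin{tikzcd}
        L_p\iota_!\iota^* L_p X \ar[r] \ar[d, "L_p\epsilon_{L_p X}"'] & L_p\iota_!\iota^* L_p Y \ar[r] \ar[d, "L_p\epsilon_{L_p Y}"', "\simeq"] & L_p\iota_!\iota^* L_p X \ar[d, "L_p\epsilon_{L_p X}"] \\
        L_p X \ar[r, "i"] & L_p Y \ar[r, "r"] & L_p X \rlap{,}
    \end{tikzcd}
\end{center}
whose top row is $L_p\iota_!\iota^*$ applied to $i$ and to $r$. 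Both rows compose to the identity, because applying the functor $L_p\iota_!\iota^*$ (resp.\ the identity functor) to $r \circ i \simeq \id{L_p X}$ gives the identity. Thus $L_p\epsilon_{L_p X}$ is a retract, in the arrow category, of the equivalence $L_p\epsilon_{L_p Y}$, hence is itself an equivalence. By the first step this means $L_p\epsilon_X$ is an equivalence, i.e.\ $X$ is $p$-completely small.

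I do not expect a genuine obstacle: the argument is entirely formal. The only points requiring minor care are the identification $L_p\iota_!\iota^* Z \simeq L_p\iota_!\iota^* L_p Z$ (which is exactly where one uses that $\iota_!$ and $\iota^*$ preserve $p$-equivalences) and the bookkeeping that $p$-complete smallness of a sheaf and of its $p$-completion coincide; everything else is the observation that functors preserve retracts and that equivalences are closed under retracts.
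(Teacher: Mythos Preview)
Your proof is correct and follows essentially the same approach as the paper: both use that $\iota_!$ and $\iota^*$ preserve $p$-equivalences to identify $L_p\iota_!\iota^* Z$ with $L_p\iota_!\iota^* L_p Z$, apply naturality of the counit to the retract diagram, and conclude by stability of equivalences under retracts. The paper packages this into a single commutative diagram rather than first isolating the ``$Z$ is $p$-completely small iff $L_p Z$ is'' step, but the content is identical.
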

\begin{proof}
    By assumption $L_p Y \cong L_p \iota_! \iota^* Y$,
    and we have to show that $L_p X \cong L_p \iota_! \iota^* X$.
    For this, consider the commutative diagram 
    \begin{center}
        \begin{tikzcd}
            L_p \iota_! \iota^* L_p X \ar[d, "i"] &L_p \iota_! \iota^* X \ar[l, "\cong"'] \ar[r] &L_p X \ar[d, "i"] \\
            L_p \iota_! \iota^* L_p Y &L_p \iota_! \iota^* Y \ar[l, "\cong"'] \ar[r, "\cong"] &L_p Y\rlap{.}
        \end{tikzcd}
    \end{center}
    Since $\iota_!$ and $\iota^*$ preserve $p$-equivalences (as they are left adjoints),
    the left horizontal maps are equivalences.
    By assumption the vertical maps admit retractions,
    and the lower right horizontal map is an equivalence.
    As equivalences are stable under retracts, we conclude that the upper right horizontal map 
    is an equivalence. This is precisely what we wanted to prove.
\end{proof}

We now use the last few lemmas and the retract from \cref{lem:retract:main-thm}
to show that in fact over a perfect field $k$ any $\AffSpc{1}$-invariant $4$-connective 
étale sheaf is $p$-completely small.
We start with the case of an algebraically closed field
and then deduce the general case.
\begin{cor} \label{lem:etale-nilpotent:4-connective-nilpotent-alg-closed}
    Let $k$ be an algebraically closed field, $p \neq \operatorname{char}(k)$ a prime and $X \in \ShvTopH{\et}{\Sm{k}}_*$.
    If $X$ is $4$-connective and $\AffSpc{1}$-invariant then $X$ is $p$-completely small,
    i.e.\ $L_p X \cong L_p \iota_! \iota^* X$.
\end{cor}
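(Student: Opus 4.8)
The plan is to reduce an arbitrary $4$-connective $\AffSpc{1}$-invariant étale sheaf to a sheaf of the form $\Sigma^2 T$ with $T$ itself built (up to retract) from étale motivic spaces coming from the small site, using the canonical resolution of \cref{lem:canonical:main-thm} together with the retract $L_p S^4 \cong L_p L_{\et,\AffSpc{1}} \Sigma^2 \Pp^1$ from \cref{lem:retract:double-suspension}. First I would apply \cref{lem:canonical:main-thm} with $n = 4$: the sheaf $X$ is a sifted (hence in particular filtered-plus-finite) colimit of sheaves of the form $\Sigma^4 W \cong \Sigma^2(\Sigma^2 W)$ for $W \in \ShvTopH{\et}{\Sm{k}}_*$. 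Since $X$ is $\AffSpc{1}$-invariant, it is the $\AffSpc{1}$-localization of this colimit, i.e. $X \cong L_{\et,\AffSpc{1}} \colim_\bullet \Sigma^4 W_\bullet$. I would then want to know that $p$-completely small sheaves, or rather sheaves whose $p$-completion lies in the essential image of $\iota_!^p$, are stable under the relevant colimits after applying $L_p L_{\et,\AffSpc{1}}$; this should follow because $L_p$, $L_{\et,\AffSpc{1}}$ and $\iota_!$ are all left adjoints (or preserve the relevant colimits by \cref{lem:etale-mot:A1-stable-under-filtered} in the stable case, and by the finitary nature of the étale topology plus \cref{lem:etale-motives:preserves-connectedness} for the unstable connectivity bookkeeping), so it suffices to handle a single $\Sigma^4 W$.

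For a single generator $\Sigma^4 W = \Sigma^2 \Sigma^2 W$, the key input is \cref{lem:retract:double-suspension}, which gives $L_p L_{\et,\AffSpc{1}} \Sigma^2 S^2 \cong L_p L_{\et,\AffSpc{1}} \Sigma^2 \Pp^1$, i.e. an equivalence $L_p S^4 \simeq L_p L_{\et,\AffSpc{1}}\Sigma^2\Pp^1$ in $\ShvTopH{\et}{\Sm{k}}$ (after $\AffSpc{1}$-localization). Smashing with $W$ and using that $L_{\et,\AffSpc{1}}$ and $L_p$ are symmetric monoidal, one gets $L_p L_{\et,\AffSpc{1}} \Sigma^4 W \cong L_p L_{\et,\AffSpc{1}} (\Sigma^2\Pp^1 \wedge W)$. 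Now $\Sigma^2 \Pp^1 \wedge W$, or rather its $\AffSpc{1}$-localization, is (a retract of) the $\AffSpc{1}$-localization of a $2$-effective, $2$-connective étale motivic space built from a Nisnevich-local nilpotent $2$-effective motivic space — here I would invoke \cref{lem:etale-nilpotent:2-effective-nilpotent} to see that such sheaves are étale $\AffSpc{1}$-nilpotent, hence $p$-completely étale $\AffSpc{1}$-nilpotent by \cref{lem:etale-nilpotent:etale-nilpotent-implies-p-comp-rationally}, hence $p$-completely small by \cref{lem:rigidity:ess-img} (Essential Image). Stability of $p$-completely smallness under retracts is \cref{lem:etale-nilpotent:retracts}.

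The main obstacle I anticipate is the bookkeeping in the reduction step: making precise that "$X$ is a colimit of $\Sigma^4 W$'s" interacts correctly with $L_{\et,\AffSpc{1}}$ and $L_p$, and in particular that the resulting sheaf stays $4$-connective and $\AffSpc{1}$-invariant throughout, so that one can actually assemble the pieces. One has to be careful that the simplicial diagram from \cref{lem:canonical:main-thm} is built using $\Sigma^4 \Omega^4$ applied iteratively, so the "generators" are $\Sigma^4$ of genuine sheaves but with no a priori $\AffSpc{1}$-invariance; after $L_{\et,\AffSpc{1}}$ they become étale motivic spaces, and one needs \cref{lem:etale-motives:preserves-connectedness} to keep track of connectivity and \cref{lem:p-comp:p-comp-tower-limit} / \cref{lem:fib-lem:main-thm} to commute $L_p$ past the relevant (co)limits. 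A clean way to package this is: the full subcategory of $\ShvTopH{\et}{\Sm{k}}_{*,\ge 4}$ consisting of sheaves $X$ with $L_p X$ in the essential image of $\iota_!^p$ contains all $\Sigma^2 \Pp^1 \wedge W$ (by the above) and is closed under the colimits appearing in the canonical resolution, and then conclude. The secondary subtlety is verifying that $\Sigma^2\Pp^1 \wedge W$ genuinely falls under the hypotheses of \cref{lem:etale-nilpotent:2-effective-nilpotent} (nilpotence, $2$-effectivity, $\Z[\tfrac1e]$-locality of the underlying Nisnevich object) — but since $k$ is algebraically closed of characteristic $\ne p$ and $\Pp^1$ is $2$-effective and $2$-connective after $\Sigma$, and smashing with an arbitrary pointed sheaf preserves these properties after suitable localization, this should go through.
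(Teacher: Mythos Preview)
Your overall strategy is the same as the paper's: use the canonical resolution of \cref{lem:canonical:main-thm} with $n=4$, reduce to a single generator of the form $\Sigma^4 W$, compare it $p$-completely to a $\Pp^1$-twisted object, and invoke \cref{lem:etale-nilpotent:2-effective-nilpotent} together with \cref{lem:rigidity:ess-img} and \cref{lem:etale-nilpotent:retracts}. The colimit bookkeeping you worry about is handled in the paper exactly via the retract $X \to \colim_i L_{\et,\AffSpc{1}} W_i \to X$ and the closure of $p$-equivalences under colimits, so your instincts there are fine.

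There is, however, a genuine gap in your effectivity count. You invoke \cref{lem:retract:double-suspension} to identify $L_p L_{\et,\AffSpc{1}}(\Sigma^4 W)$ with $L_p L_{\et,\AffSpc{1}}(\Sigma^2\Pp^1 \wedge W)$, and then assert that $\Sigma^2\Pp^1 \wedge W$ is $2$-effective. But $\Sigma^2\Pp^1 \cong S^3 \wedge \Gm$ is only \emph{$1$-effective}: suspension increases connectivity, not effectivity. Your parenthetical that ``$\Pp^1$ is $2$-effective and $2$-connective after $\Sigma$'' is therefore false, and \cref{lem:etale-nilpotent:2-effective-nilpotent} (which rests on \cref{lem:etale-nilpotent:effective-Postnikov-refinement}, requiring $q\ge 2$) does not apply to $\Sigma^2\Pp^1 \wedge W$.

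The paper's fix is to use the \emph{retract} of \cref{lem:retract:main-thm} twice rather than the equivalence of \cref{lem:retract:double-suspension} once: this exhibits $L_p L_{\et,\AffSpc{1}}(S^4 \wedge Z)$ as a retract of $L_p L_{\et,\AffSpc{1}}(\Pp^1 \wedge \Pp^1 \wedge Z) \cong L_p L_{\et,\AffSpc{1}}(S^2 \wedge \Gm^{\wedge 2} \wedge Z)$, which \emph{is} $2$-effective and $2$-connective. One then passes to the Nisnevich model $Y = L_{\Z[1/e]} L_{\nis,\AffSpc{1}}(S^2 \wedge \Gm^{\wedge 2} \wedge \iota_{\nis} Z)$, applies \cref{lem:etale-nilpotent:2-effective-nilpotent}, and concludes by \cref{lem:etale-nilpotent:retracts}. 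This double application is precisely why the connectivity bound in the statement is $4$ rather than $3$; as the introduction notes under ``Improving the bounds'', if the $\epsilon=1$ case of the Bachmann--Yakerson equivalence were known, your single use of \cref{lem:retract:double-suspension} would suffice and the bound would drop.
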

\begin{proof}
    By \cref{lem:canonical:main-thm} there is an equivalence $X \cong \colimil{i} W_i$,
    with $W_i$ of the form $W_i \cong \Sigma^4 V_i$ for some $V_i \in \ShvTopH{\et}{\Sm{k}}_*$.

    Suppose for the moment that we know that $L_{\et, \AffSpc{1}} W_i$ is $p$-completely small.
    We show that the same is true for $X$.
    For this, note that we have maps 
    \begin{equation*}
        X = \colim{i} W_i \to \colim{i} L_{\et, \AffSpc{1}} W_i \to L_{\et, \AffSpc{1}} \colim{i} L_{\et, \AffSpc{1}} W_i.
    \end{equation*}
    Since $\AffSpc{1}$-equivalences are stable under colimits, and since $X$ is $\AffSpc{1}$-invariant,
    we see that 
    \begin{equation*}
        L_{\et, \AffSpc{1}} \colim{i} L_{\et, \AffSpc{1}} W_i \cong L_{\et, \AffSpc{1}} \colim{i} W_i \cong L_{\et, \AffSpc{1}} X \cong X.
    \end{equation*}
    In particular, by construction we therefore have a retract diagram 
    \begin{equation*}
        X \to \colim{i} L_{\et, \AffSpc{1}} W_i \to X.
    \end{equation*}
    By assumption the map $\iota_! \iota^* L_{\et, \AffSpc{1}} W_i \to L_{\et, \AffSpc{1}} W_i$
    is a $p$-equivalence for all $i$,
    and since both $\iota_!$ and $\iota^*$ commute with colimits, 
    and $p$-equivalences are stable under colimits,
    we see that the canonical map $\iota_! \iota^* \colimil{i} L_{\et, \AffSpc{1}} W_i \to \colimil{i} L_{\et, \AffSpc{1}} W_i$
    is a $p$-equivalence.
    Hence, also the retract $\iota_! \iota^* X \to X$ is a $p$-equivalence, i.e.\ $X$ is $p$-completely small.

    It remains to show that $L_{\et, \AffSpc{1}} (S^4 \wedge Z)$ is $p$-completely small
    for every $Z \in \ShvTopH{\et}{\Sm{k}}_*$.
    By applying \cref{lem:retract:main-thm} twice (here we use our assumption that $k$ is algebraically closed),
    there is a retract 
    \begin{equation*}
        L_p L_{\et, \AffSpc{1}} (S^4 \wedge Z) \to L_p L_{\et, \AffSpc{1}} (\Pp^1 \wedge \Pp^1 \wedge Z) \cong L_p L_{\et, \AffSpc{1}} (S^2 \wedge \Gm^{\wedge 2} \wedge Z).
    \end{equation*}
    Write $Y \coloneqq L_{\Z[\frac{1}{e}]} L_{\nis, \AffSpc{1}} (S^2 \wedge \Gm^{\wedge 2} \wedge \iota_{\nis} Z) \in \Spc{k}_*$ for the Nisnevich local version
    (where we also invert the exponential characteristic $e$ of $k$).
    This is clearly $2$-connective and $2$-effective.
    Hence, we see that $L_{\et} Y$ is étale $\AffSpc{1}$-nilpotent, cf.\ \cref{lem:etale-nilpotent:2-effective-nilpotent},
    and in particular $\AffSpc{1}$-invariant by \cref{lem:etale-nilpotent:nilpotent-A1-inv}, whence $L_{\Z[\frac{1}{e}]} L_{\et, \AffSpc{1}} (S^2 \wedge \Gm^{\wedge 2} \wedge Z) \cong L_{\et} Y$.
    Thus, $L_p L_{\et, \AffSpc{1}} (S^4 \wedge Z)$ is a retract of $L_p L_{\et} Y$
    (for this note that $L_p L_{\Z[\frac{1}{e}]} \cong L_p$ as functors since $p \neq e$ by assumption,
    and $p$-completion in particular inverts $\Z[\frac{1}{e}]$-local equivalences).
    Since by \cref{lem:etale-nilpotent:etale-nilpotent-implies-p-comp-rationally} $L_{\et} Y$ is also $p$-completely étale $\AffSpc{1}$-nilpotent,
    it is in particular $p$-completely small by \cref{lem:rigidity:ess-img}.
    Hence, we see that the same is true for $L_{\et, \AffSpc{1}} (S^4 \wedge Z)$ using \cref{lem:etale-nilpotent:retracts}.
\end{proof}
We now generalize the above corollary to arbitrary perfect fields of finite étale cohomological dimension.
\begin{prop} \label{lem:etale-nilpotent:4-connective-nilpotent}
    Let $k$ be a perfect field with $\cd{}{k} < \infty$, $p \neq \operatorname{char}(k)$ a prime and $X \in \ShvTopH{\et}{\Sm{k}}_*$.
    If $X$ is $4$-connective and $\AffSpc{1}$-invariant, then $X$ is $p$-completely small, i.e.\ $L_p \iota_! \iota^* X \cong L_p X$.
\end{prop}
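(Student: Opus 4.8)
The plan is to deduce the general case from the algebraically closed case \cref{lem:etale-nilpotent:4-connective-nilpotent-alg-closed} by descent along base change to an algebraic closure. Let $\overline{k}$ be an algebraic closure of $k$; since $k$ is perfect this is also a separable closure, hence the strict henselization of $\Spec k$, and we obtain the geometric morphism $\rho^* \colon \ShvTopH{\et}{\Sm{k}} \to \ShvTopH{\et}{\Sm{\overline{k}}}$ of \cref{defn:etale:morphism-to-local}. As $\Spec k$ has a single geometric point, \cref{lem:etale-motives:morphism-to-local-conservative} says that this single functor $\rho^*$ is conservative. Moreover $\rho^*$ is a left-exact left adjoint, hence preserves $n$-connectivity; it preserves $\AffSpc{1}$-invariance by \cref{lem:etale-motives:morphism-to-local-preserves-A1}; and it commutes with $\iota_!$ and $\iota^*$ by \cref{lem:etale:morphism-to-local-commutation}.

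First I would observe that $\rho^* X$ is a $4$-connective $\AffSpc{1}$-invariant sheaf over the algebraically closed field $\overline{k}$, so \cref{lem:etale-nilpotent:4-connective-nilpotent-alg-closed} applies and shows $\rho^* X$ is $p$-completely small, i.e.\ the counit $\iota_! \iota^* \rho^* X \to \rho^* X$ is a $p$-equivalence. Using the commutations $\rho^* \iota_! \cong \iota_! \rho^*$ and $\rho^* \iota^* \cong \iota^* \rho^*$, this counit is identified with $\rho^*$ applied to the counit $c \colon \iota_! \iota^* X \to X$. Hence $\rho^* c$ is a $p$-equivalence, and it remains to conclude that $c$ itself is a $p$-equivalence; granting this, $L_p \iota_! \iota^* X \to L_p X$ is an equivalence, as claimed. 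Note that $c$ is a map of $4$-connective (in particular simply connected, hence nilpotent) pointed sheaves.

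The main obstacle is exactly this last step: showing that $\rho^*$ \emph{reflects} $p$-equivalences (equivalently, that $\rho^*$ commutes with $L_p$). Since $\rho^*$ is already conservative, it suffices to show it commutes with $L_p$; for the map $c$ of pointed connected sheaves one can instead use the characterization of $p$-equivalences of connected sheaves as $\finfld{p}$-homology equivalences, i.e.\ as the maps inverted by the functor $X \mapsto \Sus X \otimes H\finfld{p}$ with values in sheaves of $H\finfld{p}$-modules. The functor $\rho^*$ is symmetric monoidal, commutes with $\Sus$ (as $\rho^* \Sus \cong \Sus \rho^*$, both being left adjoints compatible with $\Loop$), and carries the constant sheaf $H\finfld{p}$ to $H\finfld{p}$; hence it commutes with $X \mapsto \Sus X \otimes H\finfld{p}$. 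Therefore, if $\rho^* c$ is an $\finfld{p}$-homology equivalence, then conservativity of $\rho^*$ forces $c$ to be one as well, which finishes the proof. (Alternatively, one can show directly that $\rho^*$ preserves $p$-complete objects, using that $\rho$ is a cofiltered limit of finite étale morphisms for which the inverse image functor is ambidextrous, so that $\rho^* L_p X$ is again $p$-complete and thus equals $L_p \rho^* X$.)
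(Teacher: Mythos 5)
Your proposal follows the same strategy as the paper: reduce to the algebraically closed case via the geometric morphism $\rho^* \colon \ShvTopH{\et}{\Sm{k}} \to \ShvTopH{\et}{\Sm{\overline{k}}}$, using that $\rho^*$ is conservative, commutes with $\iota_!$ and $\iota^*$ (\cref{lem:etale:morphism-to-local-commutation}), preserves $4$-connectivity and $\AffSpc{1}$-invariance (\cref{lem:etale-motives:morphism-to-local-preserves-A1}), and then invoke \cref{lem:etale-nilpotent:4-connective-nilpotent-alg-closed}. The one place where you spend more words than the paper is in justifying that $\rho^*$ detects $p$-equivalences: the paper simply cites \cite[Lemma 3.11]{mattis2024unstable} alongside conservativity, whereas you give a self-contained argument via the characterization of $p$-equivalences of connected sheaves as $\finfld{p}$-homology equivalences and the compatibility of $\rho^*$ with $\Sus(-) \otimes H\finfld{p}$; this is a correct and reasonable way to fill in that step, and it amounts to the same observation. (Your parenthetical ``alternative'' about ambidexterity for the cofiltered limit of finite étale maps is sketchier and not obviously needed, but since it is offered as an aside it does not affect the validity of the argument.)
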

\begin{proof}
    Let $\overline{k}$ be an algebraic closure of $k$.
    Consider the geometric morphisms 
    $\rho^* \colon \ShvTopH{\et}{\Sm{k}} \rightleftarrows \ShvTopH{\et}{\Sm{\overline{k}}} \noloc \rho_*$
    and 
    $\rho^* \colon \ShvTopH{\et}{\smet{k}} \rightleftarrows \ShvTopH{\et}{\smet{\overline{k}}} \noloc \rho_*$
    from \cref{defn:etale:morphism-to-local}, both left adjoints are conservative by \cref{lem:etale:morphism-to-local-conservative}.
    Moreover, $\iota^* \rho^* \cong \rho^* \iota^*$,
    as well as $\iota_! \rho^* \cong \rho^* \iota_!$ by \cref{lem:etale:morphism-to-local-commutation}.
    As $\rho^*$ is conservative, it suffices to show by \cite[Lemma 3.11]{mattis2024unstable} that 
    $\rho^* \iota_! \iota^* X \to \rho^* X$ is a $p$-equivalence.
    By the above, this is equivalent to the morphism
    \begin{equation*}
        \iota_! \iota^* \rho^* X \to \rho^* X.
    \end{equation*}
    As $\rho^* X$ is $4$-connective (since $\rho^*$ is a geometric morphism) 
    and $\AffSpc{1}$-invariant by \cref{lem:etale-motives:morphism-to-local-preserves-A1},
    it follows from \cref{lem:etale-nilpotent:4-connective-nilpotent-alg-closed} 
    that $\iota_! \iota^* \rho^* X \to \rho^* X$ is a $p$-equivalence.
\end{proof}

\begin{cor} \label{lem:etale-nilpotent:4-connective-main-thm}
    Let $k$ be a perfect field with $\cd{}{k} < \infty$ and $p \neq \operatorname{char}(k)$.
    Then 
    \begin{equation*}
        \iota_!^p \colon \complete{(\ShvTopH{\et}{\smet{k}}_*)} \to \complete{(\ShvTopH{\et}{\Sm{k}}_*)}
    \end{equation*}
    induces an equivalence between the full subcategory of $\complete{(\ShvTopH{\et}{\smet{k}}_*)}$ 
    consisting of those sheaves that are the $p$-completion of a $4$-connective sheaf,
    and the full subcategory of $\complete{(\ShvTopH{\et}{\Sm{k}}_*)}$ 
    consisting of those sheaves that are the $p$-completion of a $4$-connective $\AffSpc{1}$-invariant sheaf.
\end{cor}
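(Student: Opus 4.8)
The plan is to realise the asserted equivalence as the restriction of the globally fully faithful functor $\iota_!^p$ to the two subcategories in question, and then to check essential surjectivity onto the target. Note first that $\operatorname{Spec}(k)$ satisfies the running hypotheses of \cref{sec:rigidity}: it is qcqs, of Krull dimension $0$, and étale bounded since $\cd{}{k} < \infty$, and $p$ is invertible on it as $p \neq \Char{k}$. Hence \cref{lem:rigidity:iota-shriek-p-fully-faithful} applies and $\iota_!^p \colon \complete{(\ShvTopH{\et}{\smet{k}}_*)} \to \complete{(\ShvTopH{\et}{\Sm{k}}_*)}$ is fully faithful, so it stays fully faithful after restriction to any full subcategory. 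Write $\Cat C$ for the full subcategory of $\complete{(\ShvTopH{\et}{\smet{k}}_*)}$ consisting of $p$-completions of $4$-connective sheaves, and $\Cat D$ for the full subcategory of $\complete{(\ShvTopH{\et}{\Sm{k}}_*)}$ consisting of $p$-completions of $4$-connective $\AffSpc{1}$-invariant sheaves. It then suffices to show that $\iota_!^p$ carries $\Cat C$ into $\Cat D$ and that every object of $\Cat D$ lies in its image.

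For $\iota_!^p(\Cat C)\subseteq\Cat D$, let $X \in \ShvTopH{\et}{\smet{k}}_*$ be $4$-connective, so that $L_p X$ is a typical object of $\Cat C$ and $\iota_!^p(L_p X) \cong L_p \iota_! X$ because $\iota_!$ preserves $p$-equivalences. Since $\iota_!$ is the inverse image of a geometric morphism it preserves $n$-connectivity (cf.\ \cref{lem:rigidity:iota-shriek-nilpotent} for the analogous reasoning with nilpotence), so $\iota_! X$ is $4$-connective; and since unstable $p$-completion preserves $n$-connectivity for $n \ge 1$ (for $n = 1$ this is \cite[Lemma 3.12]{mattis2024unstable}, and for larger $n$ the sheaves are simply connected, hence nilpotent, so one argues on the refined Postnikov tower via \cref{lem:fib-lem:main-thm}), the sheaf $L_p \iota_! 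X$ is again $4$-connective. On the other hand, being $4$-connective, $X$ is simply connected, hence nilpotent, so \cref{lem:rigidity:factors-through} shows $\iota_! X$ is $p$-completely étale $\AffSpc{1}$-nilpotent, whence $L_p \iota_! X$ is $\AffSpc{1}$-invariant by \cref{lem:etale-nilpotent:nilpotent-A1-inv}. Thus $L_p \iota_! X$ is a $4$-connective $\AffSpc{1}$-invariant sheaf equal to its own $p$-completion, so it lies in $\Cat D$.

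For surjectivity onto $\Cat D$, let $Y \in \ShvTopH{\et}{\Sm{k}}_*$ be $4$-connective and $\AffSpc{1}$-invariant. By \cref{lem:etale-nilpotent:4-connective-nilpotent} the sheaf $Y$ is $p$-completely small, i.e.\ the counit induces an equivalence $L_p \iota_! \iota^* Y \xrightarrow{\simeq} L_p Y$. Now $\iota^*$, being the inverse image of a geometric morphism, preserves $n$-connectivity, so $\iota^* Y$ is $4$-connective and $L_p \iota^* Y \in \Cat C$; and since $\iota_!$ preserves $p$-equivalences,
\begin{equation*}
    \iota_!^p(L_p \iota^* Y) \cong L_p \iota_!(L_p \iota^* Y) \cong L_p \iota_! \iota^* Y \cong L_p Y .
\end{equation*}
Hence $L_p Y$ is in the image of $\iota_!^p|_{\Cat C}$; combined with the previous paragraph and full faithfulness this exhibits $\iota_!^p$ as an equivalence $\Cat C \xrightarrow{\simeq} \Cat D$.

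In short, once \cref{lem:rigidity:factors-through}, \cref{lem:rigidity:iota-shriek-p-fully-faithful} and \cref{lem:etale-nilpotent:4-connective-nilpotent} are available, this corollary is essentially bookkeeping. The only points requiring a little care are that $4$-connectivity is preserved by $\iota_!$, $\iota^*$ and $L_p$, and that a highly connective pointed sheaf is automatically nilpotent so that \cref{lem:rigidity:factors-through} applies. The genuinely deep input — the $p$-complete smallness of $4$-connective $\AffSpc{1}$-invariant sheaves — is \cref{lem:etale-nilpotent:4-connective-nilpotent}, proven upstream via the $\Pp^1$-retract of \cref{sec:retract}, so there is no new obstacle in the present argument.
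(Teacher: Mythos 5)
Your proof is correct and follows the same structure as the paper's: full faithfulness from \cref{lem:rigidity:iota-shriek-p-fully-faithful}, the forward inclusion from \cref{lem:rigidity:factors-through} plus the fact that $\iota_!$ (resp.\ $\iota^*$) preserves connectivity as a geometric morphism, and essential surjectivity from \cref{lem:etale-nilpotent:4-connective-nilpotent}. If anything you are slightly more explicit than the paper in spelling out that $L_p \iota_! X$ is itself $4$-connective (so that it is visibly the $p$-completion of a $4$-connective $\AffSpc{1}$-invariant sheaf, namely itself), a point the paper only states for $\iota_! X$ and leaves implicit for its $p$-completion.
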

\begin{proof}
    From \cref{lem:rigidity:iota-shriek-p-fully-faithful} we see that $\iota_!^p$ is fully faithful.
    First, let $Y \in \ShvTopH{\et}{\smet{k}}_{*, \ge 4}$ be $4$-connective.
    Note that $\iota_!^p L_p Y \cong L_p \iota_! Y$ (since $\iota_!$ preserves $p$-equivalences as a left adjoint),
    and by \cref{lem:rigidity:factors-through} this sheaf is $\AffSpc{1}$-invariant.
    Moreover, $\iota_! Y$ is $4$-connective since $\iota_!$ is a geometric morphism.

    To prove the corollary, it thus suffices to show that if $X \in \ShvTopH{\et}{\Sm{k}}_{*, \ge 4}$ is $4$-connective 
    and $\AffSpc{1}$-invariant, then $L_p X$ is in the essential image of $\iota_!^p$ restricted 
    to the full subcategory of $\complete{(\ShvTopH{\et}{\smet{k}}_*)}$ 
    consisting of those sheaves that are the $p$-completion of a $4$-connective sheaf.
    By \cref{lem:etale-nilpotent:4-connective-nilpotent} we see that $L_p X \cong L_p \iota_! \iota^* X \cong \iota_!^p L_p \iota^* X$.
    Since $\iota^*$ is a geometric morphism, we see that $\iota^* X$ is $4$-connective.
\end{proof}                \label{sec:examples}
\section{Application: Étale strict \texorpdfstring{$\AffSpc{1}$}{A1}-invariance}
In this section, we use our rigidity result to prove a weak version of Morel's theorem 
that strongly $\AffSpc{1}$-invariant Nisnevich sheaves of abelian groups are strictly $\AffSpc{1}$-invariant.
For this, we need the following list of definitions.
\begin{defn}
    Let $S$ be a scheme.
    Let $A \in \ShvTop{\et}{\Sm{S}, \Ab}$ be an étale sheaf of abelian groups.
    We say that $A$ is
    \begin{itemize}
        \item \emph{$n$-strictly étale $\AffSpc{1}$-invariant} for some $n \in \N$
            if $K(A, n) \in \ShvTopH{\et}{\Sm{S}}$ is $\AffSpc{1}$-invariant,
        \item \emph{strictly étale $\AffSpc{1}$-invariant}
            if $A$ is $n$-strictly étale $\AffSpc{1}$-invariant for all $n \in \N$,
        \item \emph{$p$-completely $n$-strictly étale $\AffSpc{1}$-invariant} for a prime $p$ and $n \in \N$
            if $L_p K(A, n) \in \ShvTopH{\et}{\Sm{S}}$ is $\AffSpc{1}$-invariant,
        \item \emph{$p$-completely strictly étale $\AffSpc{1}$-invariant} for a prime $p$
            if $A$ is $p$-completely $n$-strictly étale $\AffSpc{1}$-invariant for all $n \in \N$,
        \item \emph{rationally $n$-strictly étale $\AffSpc{1}$-invariant} for some $n \in \N$
            if $L_\Q K(A, n) \in \ShvTopH{\et}{\Sm{S}}$ is $\AffSpc{1}$-invariant,
        \item \emph{rationally strictly étale $\AffSpc{1}$-invariant}
            if $A$ is rationally $n$-strictly étale $\AffSpc{1}$-invariant for all $n \in \N$,
    \end{itemize}
\end{defn}

\begin{lem} \label{lem:strict:loops}
    Let $S$ be a scheme.
    Let $A \in \ShvTop{\et}{\Sm{S}, \Ab}$ be an étale sheaf of abelian groups,
    and $1 \le n \le m$.
    If $A$ is $m$-strictly étale $\AffSpc{1}$-invariant (resp.\ $p$-completely, resp.\ rationally),
    then $A$ is $n$-strictly étale $\AffSpc{1}$-invariant (resp.\ $p$-completely, resp.\ rationally).
\end{lem}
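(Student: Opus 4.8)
The plan is to reduce to a single loop step and then use that $\AffSpc{1}$-invariant hypersheaves are the local objects of the Bousfield localization $L_{\et,\AffSpc{1}}$, hence closed under all limits, in particular under the loop functor $\Omega(-)=\ast\times_{(-)}\ast$. It suffices to treat the case $m=n+1$: the general case follows by iterating, all intermediate indices lying in $[n,m]$ and so being $\ge n\ge 1$. So assume $1\le n$ and that $\Cat L K(A,n+1)$ is $\AffSpc{1}$-invariant, where $\Cat L$ is $\id{}$, $L_p$, or $L_\Q$; I want $\Cat L K(A,n)$ to be $\AffSpc{1}$-invariant.

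For $\Cat L=\id{}$ this is immediate from $K(A,n)\cong\Omega K(A,n+1)$ and closure under $\Omega$. For the other two cases I would first rewrite $K(A,j)\cong\pLoop\Sigma^j HA$ for $j\ge 1$, with $HA\in\Stab{\ShvTopH{\et}{\Sm{S}}}$ the étale Eilenberg--MacLane sheaf of spectra. Since $\Sigma^j HA$ is $1$-connective for $j\ge 1$, \cite[Lemma 3.17]{mattis2024unstable} (resp.\ \cite[Lemma 3.18]{mattis2024fracture}), together with exactness of $L_p$ (resp.\ $L_\Q$) on sheaves of spectra, gives $L_p K(A,j)\cong\tau_{\ge 1}\pLoop\Sigma^j L_p HA$ (resp.\ $L_\Q K(A,j)\cong\pLoop\Sigma^j L_\Q HA$). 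Now the key point is that $L_\Q HA\cong H(A\otimes\Q)$ is discrete, and $L_p HA$ is connective: for the latter I would invoke (the proof of) \cite[Lemma 6.11]{mattis2024fracture} on the connectivity of the $p$-completion of an Eilenberg--MacLane sheaf, the underlying input being that the inverse system $(A/p^kA)_k$ has surjective transition maps, so the relevant $\varprojlim^1$ vanishes (this can also be checked stalkwise, since $p$-completion commutes with the points of the étale topos by \cite[Lemma 6.10]{mattis2024fracture}). Hence $\Sigma^j(\Cat L HA)$ is $j$-connective, so for $j\ge 1$ the sheaf $\pLoop\Sigma^j(\Cat L HA)$ is already connected and the truncation $\tau_{\ge 1}$ is superfluous, yielding $\Cat L K(A,j)\cong\pLoop\Sigma^j(\Cat L HA)$ for all $j\ge 1$.

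Finally, since $\pLoop$ commutes with $\Omega$, this identifies $\Cat L K(A,n)\cong\pLoop\Sigma^n(\Cat L HA)\cong\Omega\bigl(\pLoop\Sigma^{n+1}(\Cat L HA)\bigr)\cong\Omega\,\Cat L K(A,n+1)$; thus $\AffSpc{1}$-invariance of $\Cat L K(A,n+1)$ forces $\AffSpc{1}$-invariance of $\Cat L K(A,n)$, and iterating down from $m$ to $n$ completes the proof. The only genuinely delicate point I anticipate is controlling the truncation $\tau_{\ge 1}$ in the $p$-complete formula, i.e.\ the connectivity estimate for $L_p HA$ and the attendant $\varprojlim^1$-vanishing; everything else is formal manipulation with loop functors and the stability of $\AffSpc{1}$-invariant sheaves under limits.
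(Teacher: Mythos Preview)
Your argument is sound and reaches the same conclusion, but you take a detour the paper avoids. For the plain case both proofs agree. For the $p$-complete (and rational) case the paper simply writes $L_p K(A,n)\cong \tau_{\ge 1}\Omega^{m-n}L_pK(A,m)$ via the Bousfield--Kan fiber lemma (\cref{lem:fib-lem:main-thm}, resp.\ \cite[Lemma 3.13]{mattis2024fracture}) and then invokes \cref{lem:etale-motives:connected-cover-invariant}: the connected cover of an $\AffSpc{1}$-invariant sheaf is $\AffSpc{1}$-invariant. That is the whole proof---no connectivity estimate for $L_pHA$ is needed, and the $\tau_{\ge 1}$ is never removed.

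Your route instead tries to eliminate $\tau_{\ge 1}$ by showing $L_pHA$ is connective. This is exactly the ``genuinely delicate point'' you flag, and it is more delicate than you indicate: the $\varprojlim^1$-vanishing argument requires Mittag--Leffler for towers of \emph{sheaves} of abelian groups with epimorphic transition maps, which is not automatic in a general Grothendieck abelian category; and your citation of \cite[Lemma 6.10]{mattis2024fracture} is off---that lemma concerns members of a locally finite dimensional cover, not stalks, and stable $L_p=\varprojlim_k(-)\sslash p^k$ need not commute with arbitrary points. These issues are likely surmountable under the standing hypotheses on $S$, but the paper's use of \cref{lem:etale-motives:connected-cover-invariant} sidesteps them entirely in one line. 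The upshot: your strategy trades a short invocation of ``connected cover preserves $\AffSpc{1}$-invariance'' for a connectivity computation that needs more justification than given.
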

\begin{proof}
    We have $K(A, n) \cong \Omega^{m-n} K(A, m)$.
    As limits of $\AffSpc{1}$-invariant sheaves are $\AffSpc{1}$-invariant,
    the statement follows.
    For the $p$-complete or rational versions of the statement,
    note that $L_p K(A, n) \cong \tau_{\ge 1} \Omega^{m-n} L_p K(A, m)$,
    and $L_\Q K(A, n) \cong \tau_{\ge 1} \Omega^{m-n} L_\Q K(A, m)$
    by \cref{lem:fib-lem:main-thm} and \cite[Lemma 3.13]{mattis2024fracture}.
    To show that these sheaves are $\AffSpc{1}$-invariant,
    it suffices to note that the connected cover functor preserves $\AffSpc{1}$-invariance,
    cf.\ \cref{lem:etale-motives:connected-cover-invariant}.
\end{proof}

\begin{lem} \label{lem:strict:reduction}
    Let $S$ be a qcqs scheme.
    Let $A \in \ShvTop{\et}{\Sm{S}, \Ab}$ be an étale sheaf of abelian groups,
    and $n \ge 1$.
    If $A$ is $n$-strictly étale $\AffSpc{1}$-invariant,
    then $A$ is rationally $n$-strictly étale $\AffSpc{1}$-invariant.

    If $n \ge 2$, and $A$ is $n$-strictly étale $\AffSpc{1}$-invariant,
    then $A$ is $p$-completely $(n-1)$-strictly étale $\AffSpc{1}$-invariant
    for all primes $p$.
\end{lem}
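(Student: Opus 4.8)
The plan is to prove the two claims separately, in each case reducing to the single piece of input we have, namely that $K(A,n)$ is $\AffSpc{1}$-invariant, together with the already-established behaviour of $L_\Q$ and $L_p$ on Eilenberg--MacLane sheaves.

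For the rational claim I would first identify the rationalization. Since $n\ge 1$, the spectrum $\Sigma^n HA$ is $1$-connective, so $L_\Q$ commutes with $\pLoop$ on it by \cite[Lemma 3.18]{mattis2024fracture}, and since the stable rationalization of $HA$ is $H(A\otimes\Q)$, this gives $L_\Q K(A,n)\cong K(A\otimes\Q,n)$. Now write $A\otimes\Q$ as the filtered colimit $\colimil{m}\bigl(A\xrightarrow{\cdot 2}A\xrightarrow{\cdot 3}A\xrightarrow{\cdot 4}\cdots\bigr)$ of étale sheaves of abelian groups. Fix $U\in\Sm{S}$; then $U$ and $U\times\AffSpc{1}$ are quasi-compact and quasi-separated, so étale cohomology over them commutes with filtered colimits of coefficient sheaves (a standard property of \'etale cohomology of qcqs schemes, cf.\ \cite{SGA4}). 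Using the identification $\pi_{n-i}\bigl(K(B,n)(U)\bigr)\cong H^i_{\et}(U,B)$, we obtain $\pi_{n-i}\bigl((L_\Q K(A,n))(U)\bigr)\cong\colimil{m}H^i_{\et}(U,A)$, and likewise over $U\times\AffSpc{1}$. Since $K(A,n)$ is $\AffSpc{1}$-invariant, the maps $H^i_{\et}(U,A)\to H^i_{\et}(U\times\AffSpc{1},A)$ are isomorphisms compatibly in $m$, hence so are their colimits; as both sides are sections of infinite loop sheaves, this shows $(L_\Q K(A,n))(U)\to (L_\Q K(A,n))(U\times\AffSpc{1})$ is an equivalence for every $U$, i.e.\ $L_\Q K(A,n)$ is $\AffSpc{1}$-invariant, which is the rational claim.

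For the $p$-complete claim, assume $n\ge 2$, so that $K(A,n-1)=\Omega K(A,n)$ is connected. The sheaf $K(A,n)$ is nilpotent, being a connected infinite loop sheaf (cf.\ \cite{mattis2024unstable}), and it is $n$-truncated, so \cref{lem:fib-lem:truncated} applied to the morphism $*\to K(A,n)$ yields $L_p K(A,n-1)\cong\tau_{\ge 1}\Omega L_p K(A,n)$. It therefore suffices to prove that $\Omega L_p K(A,n)$ is $\AffSpc{1}$-invariant, since its connected cover is then $\AffSpc{1}$-invariant by \cref{lem:etale-motives:connected-cover-invariant}. The key observation is that $L_p K(A,n)\cong\limil{k}K(A,n)\sslash p^k$ and that for each $k$ there is a fiber sequence $\Omega\bigl(K(A,n)\sslash p^k\bigr)\to K(A,n)\xrightarrow{p^k}K(A,n)$ (cf.\ \cite{mattis2024unstable}); hence $\Omega L_p K(A,n)\cong\limil{k}\Omega\bigl(K(A,n)\sslash p^k\bigr)$ is a limit of finite limits of copies of the $\AffSpc{1}$-invariant sheaf $K(A,n)$, and $\AffSpc{1}$-invariant sheaves are stable under limits. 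This proves the $p$-complete claim.

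I expect the main obstacle to lie in the $p$-complete claim, specifically in arranging the identification $L_p K(A,n-1)\cong\tau_{\ge 1}\Omega L_p K(A,n)$ and then recognizing that $\Omega L_p K(A,n)$ can be built out of \emph{limits} of $K(A,n)$ alone, via the power maps $p^k$ on $K(A,n)$ itself; this is exactly what forces the drop by one degree, and it is also why one needs $n\ge 2$ (so that $K(A,n-1)$ is connected and the total fibers of the power-map sequences are connected). The rational claim is comparatively soft once one knows that étale cohomology over a qcqs scheme commutes with filtered colimits of coefficient sheaves.
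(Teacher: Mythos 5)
Your argument is correct, and the two parts have rather different relationships to the paper's own proof.

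For the rational claim you take a genuinely different route. The paper identifies $L_\Q K(A,n)$ with a filtered colimit $\colimil{\N}K(A,n)$ of copies of $K(A,n)$ at the level of sheaves (via the colimit description of $L_\Q$ from \cite[Lemma 2.6]{mattis2024fracture} and commutation of $\pLoop$ with filtered colimits), and then cites \cref{lem:etale-mot:A1-stable-under-filtered} (compactness of $T$ and $T\times\AffSpc{1}$ in $\ShvTopH{\et}{\Sm{S}}$) to conclude that a filtered colimit of $\AffSpc{1}$-invariant sheaves is $\AffSpc{1}$-invariant. You instead identify $L_\Q K(A,n)\cong K(A_\Q,n)$, unwind to \'etale cohomology, and use the classical fact that $H^{i}_{\et}$ of a qcqs scheme commutes with filtered colimits of abelian coefficient sheaves. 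The upshot is the same, but your version needs only qcqs of $U$ and $U\times\AffSpc{1}$, whereas the lemma the paper cites is stated under the stronger running assumptions (finite Krull dimension, \'etale boundedness). Both are fine; yours is a bit more hands-on and arguably closer to the literal hypotheses of the statement.

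For the $p$-complete claim your argument is essentially the paper's. You derive $L_pK(A,n-1)\cong\tau_{\ge 1}\Omega L_pK(A,n)$ via \cref{lem:fib-lem:truncated} applied to $*\to K(A,n)$ (the paper gets the same identity from \cite[Lemma 3.17]{mattis2024unstable} applied to $\Sigma^{n-1}HA$), and then you write $\Omega L_pK(A,n)$ as $\limil{k}\Fib{K(A,n)\xrightarrow{p^{k}}K(A,n)}$ and conclude by stability under limits and connected covers (\cref{lem:etale-motives:connected-cover-invariant}), exactly as in the paper. One notational caveat: the expression $K(A,n)\sslash p^{k}$ and the clean formula $L_pK(A,n)\cong\limil{k}K(A,n)\sslash p^{k}$ only make literal sense at the level of (sheaves of) spectra; as a sheaf of anima the right-hand side needs a $\tau_{\ge1}$, and what you actually use after applying $\Omega$ is the object $\Fib{K(A,n)\xrightarrow{p^{k}}K(A,n)}=\pLoop(\Sigma^{n-1}HA\sslash p^{k})$, which is precisely what the paper writes. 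Since $\Omega$ discards the missing $\tau_{\ge1}$, this does not affect the argument, but it would be cleaner to phrase it as the paper does, entirely in terms of $\Sigma^{n-1}HA\sslash p^{k}$.
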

\begin{proof}
    By assumption, $K(A, n)$ is $\AffSpc{1}$-invariant, we have to show that the same is true 
    for $L_p K(A, n-1)$ and $L_\Q K(A, n)$.

    We first prove that $L_\Q K(A, n)$ is $\AffSpc{1}$-invariant.
    Note that we have equivalences 
    \begin{align*}
        L_\Q K(A, n) 
        &\cong \pLoop L_\Q \Sigma^n HA \\
        &\cong \pLoop \colimil{\N} \Sigma^n HA \\
        &\cong \colimil{\N} \pLoop \Sigma^n HA \\
        &\cong \colimil{\N} K(A, n),
    \end{align*}
    where the colimit is over the $\N$-indexed diagram from \cite[Lemma 2.6]{mattis2024fracture}.
    In the third equivalence, we used that $\pLoop$ commutes with filtered colimits
    (combine \cite[Corollary 5.3.6.10]{highertopoi} with the fact that filtered colimits commute with finite limits in any $\infty$-topos,
    cf.\ \cite[Example 7.3.4.7]{highertopoi}).
    Since $\AffSpc{1}$-invariant sheaves are stable under filtered colimits 
    by \cref{lem:etale-mot:A1-stable-under-filtered}, and $K(A, n)$ is $\AffSpc{1}$-invariant by assumption, the result follows.

    We now show that if $n \ge 2$ also $L_p K(A, {n-1})$ is $\AffSpc{1}$-invariant.
    For this, note that we have equivalences
    \begin{align*}
        L_p K(A, n-1)
        &\cong \tau_{\ge 1} \pLoop L_p \Sigma^{n-1} HA \\
        &\cong \tau_{\ge 1} \pLoop \limil{k} \Sigma^{n-1} HA \sslash p^k \\
        &\cong \tau_{\ge 1} \limil{k} \pLoop \Sigma^{n-1} HA \sslash p^k \\
        &\cong \tau_{\ge 1} \limil{k} \Fib{K(A, n) \xrightarrow{p^k} K(A, n)}. 
    \end{align*}
    Here we used \cite[Lemmas 3.17 and 2.5]{mattis2024unstable} in the first and second equivalence,
    that $\pLoop$ preserves limits as a right adjoint in the third equivalence,
    and that 
    \begin{equation*}
        \Sigma^{n-1} HA \sslash p^k \cong \Fib{\Sigma^{n} HA \xrightarrow{p^k} \Sigma^{n} HA}
    \end{equation*}
    in the last equivalence.
    Hence, as $\AffSpc{1}$-invariant sheaves are stable under limits and connected covers (cf.\ \cref{lem:etale-motives:connected-cover-invariant} 
    for the latter), it suffices to show that $K(A, n)$ is $\AffSpc{1}$-invariant,
    which holds by assumption.
\end{proof}

\begin{prop} \label{lem:strict:p-completely-strict-if-4-strict}
    Let $k$ be a perfect field with $\cd{}{k} < \infty$ and $p \neq \operatorname{char}(k)$ be a prime.
    Let $A \in \ShvTop{\et}{\Sm{k}, \Ab}$ be an étale sheaf of abelian groups.
    Suppose that $A$ is $m$-strictly étale $\AffSpc{1}$-invariant
    for some $m \ge 4$.
    Then $A$ is $p$-completely strictly étale $\AffSpc{1}$-invariant, and 
    $L_p HA$ is an $\AffSpc{1}$-invariant sheaf of spectra.
\end{prop}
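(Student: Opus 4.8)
The plan is to reduce the whole statement to the single claim that $L_p\Sigma^4 HA$ is an $\AffSpc{1}$-invariant sheaf of spectra, and to prove that claim by delooping the unstable rigidity result \cref{lem:etale-nilpotent:4-connective-nilpotent} and feeding the outcome into Bachmann's stable rigidity theorem.

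First I would observe that, by \cref{lem:strict:loops} and the assumption $m\ge 4$, the sheaf $K(A,4)=\pLoop\Sigma^4 HA$ is $\AffSpc{1}$-invariant; it is moreover $4$-connective, since $\Sigma^4 HA$ is. Hence \cref{lem:etale-nilpotent:4-connective-nilpotent} applies to $K(A,4)$ and shows that the counit $c\colon\iota_!\iota^*K(A,4)\to K(A,4)$ is a $p$-equivalence.

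Next I would deloop $c$. The functors $\iota^*$ and $\iota_!$ on sheaves of spectra are $t$-exact (\cite[Remark 1.3.2.8]{sag}) with hearts the restriction and extension functors on sheaves of abelian groups, $\iota^*$ commutes with $\pLoop$ and with the formation of Eilenberg--MacLane objects, and $\iota_!$ commutes with $\pLoop$ on $1$-connective sheaves of spectra (this is used in the proof of \cref{lem:rigidity:factors-through}, via $\iota_!\pLoop E\cong\pLoop\iota_!E$). Combining these with the compatibility of the suspension/loop and $\iota_!/\iota^*$ adjunctions, a diagram chase identifies $\pLoop$ of the stable counit $c^{\mathrm{st}}\colon\iota_!\iota^*\Sigma^4 HA\to\Sigma^4 HA$ with $c$. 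Since the source $\iota_!\iota^*\Sigma^4 HA$ and target $\Sigma^4 HA$ of $c^{\mathrm{st}}$ are $4$-connective, hence $1$-connective, \cref{lem:p-comp:loop-detects-peq} then shows that $c^{\mathrm{st}}$ is itself a $p$-equivalence, i.e.\ $L_p\Sigma^4 HA\cong L_p\iota_!\iota^*\Sigma^4 HA$. By Bachmann's stable rigidity theorem \cite[Theorem 6.6]{Bachmann2021etalerigidity} (applicable to $S=\Spec k$, which is qcqs of Krull dimension zero and étale bounded, with $p$ invertible), the functor $L_p\iota_!$ on sheaves of spectra lands in the subcategory of $\AffSpc{1}$-invariant $p$-complete sheaves of spectra; hence $L_p\Sigma^4 HA$ is $\AffSpc{1}$-invariant.

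Finally I would harvest the consequences. As $\AffSpc{1}$-invariant sheaves of spectra are closed under limits and colimits (\cref{lem:etale-mot:A1-stable-under-filtered}) and $L_p$ is exact, $L_p\Sigma^n HA\cong\Sigma^{n-4}L_p\Sigma^4 HA$ is $\AffSpc{1}$-invariant for every $n$; in particular $L_p HA$ is. For $n\ge 1$ the sheaf $\Sigma^n HA$ is $1$-connective, so $L_p K(A,n)\cong\tau_{\ge 1}\pLoop L_p\Sigma^n HA$ by \cite[Lemma 3.17]{mattis2024unstable}, and this is $\AffSpc{1}$-invariant since $\pLoop$ preserves $\AffSpc{1}$-invariance (\cref{lem:etale-mot:SHone-adjunction}) and so does $\tau_{\ge 1}$ (\cref{lem:etale-motives:connected-cover-invariant}). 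For $n=0$ we have $K(A,0)=A$, which is $p$-complete as a sheaf of sets (\cite[Lemma 3.13]{mattis2024unstable}) and $\AffSpc{1}$-invariant because $A\cong\Omega^4 K(A,4)$ with $K(A,4)$ $\AffSpc{1}$-invariant; thus $L_p K(A,0)=A$ is $\AffSpc{1}$-invariant. Altogether $A$ is $p$-completely $n$-strictly étale $\AffSpc{1}$-invariant for all $n\in\N$, i.e.\ $p$-completely strictly étale $\AffSpc{1}$-invariant. The main obstacle is the delooping step: pinning down the identification $\pLoop c^{\mathrm{st}}\simeq c$ requires a careful but routine verification that the various unstable and stable avatars of $\iota_!$, $\iota^*$, $\pLoop$ and $\Sigma^4 H(-)$ assemble into compatible commuting squares.
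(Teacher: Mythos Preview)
Your argument is correct and follows essentially the same route as the paper: reduce to showing the stable counit $\iota_!\iota^*\Sigma^m HA\to\Sigma^m HA$ is a $p$-equivalence, identify its image under $\pLoop$ with the unstable counit on $K(A,m)$, invoke \cref{lem:etale-nilpotent:4-connective-nilpotent} and \cref{lem:p-comp:loop-detects-peq}, and then feed into Bachmann's stable rigidity to deduce $\AffSpc{1}$-invariance of $L_p HA$ and hence of each $L_p K(A,n)$. The only cosmetic differences are that the paper works directly with the given $m\ge 4$ rather than first reducing to $m=4$ via \cref{lem:strict:loops}, and that you spell out the (trivial) case $n=0$ explicitly.
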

\begin{proof}
    It suffices to show that $L_p HA$ is $\AffSpc{1}$-invariant.
    Indeed, then for any $n \ge 1$ we have 
    $L_p K(A, n) \cong \tau_{\ge 1} \pLoop \Sigma^n L_p HA$
    by \cite[Corollary 3.18]{mattis2024unstable}. Hence, it is $\AffSpc{1}$-invariant as both 
    $\pLoop$ and $\tau_{\ge 1}$ preserve $\AffSpc{1}$-invariant sheaves,
    cf.\ \cref{lem:etale-mot:SHone-adjunction,lem:etale-motives:connected-cover-invariant}.

    Now, it is enough to show that $\iota_! \iota^* HA \to HA$ is a $p$-equivalence,
    as then $L_p HA \cong L_p \iota_! \iota^* HA$ is $\AffSpc{1}$-invariant by the proof of \cite[Corollary 6.2]{Bachmann2021etalerigidity}.
    For this, it is clearly enough to show that $\Sigma^m \iota_! \iota^* HA \to \Sigma^m HA$ is a $p$-equivalence.
    Since $\pLoop$ commutes with both $\iota_!$ and $\iota^*$, and detects 
    $p$-equivalences between $1$-connective objects by \cref{lem:p-comp:loop-detects-peq},
    it thus suffices to show that
    $\iota_! \iota^* \pLoop \Sigma^m HA \to \pLoop \Sigma^m HA$ is a $p$-equivalence.
    Since $\pLoop \Sigma^m HA \cong K(A, m)$ and $m \ge 4$, this follows from \cref{lem:etale-nilpotent:4-connective-nilpotent}.
\end{proof}

\begin{prop} \label{lem:strict:rationally-strict-if-1-strict}
    Let $k$ be a perfect field.
    Let $A \in \ShvTop{\et}{\Sm{k}, \Ab}$ be an étale sheaf of abelian groups.
    Suppose that $A$ is rationally $1$-strictly étale $\AffSpc{1}$-invariant.
    Then $A$ is rationally strictly étale $\AffSpc{1}$-invariant.
\end{prop}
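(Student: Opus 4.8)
The plan is to reduce the assertion to Morel's theorem \cite[Theorem 4.46]{morel2012a1} on Nisnevich sheaves of abelian groups, using that over an arbitrary perfect field the étale cohomology of a sheaf of $\Q$-vector spaces agrees with its Nisnevich cohomology. This is why, in contrast with \cref{lem:strict:p-completely-strict-if-4-strict}, no hypothesis on $\cd{}{k}$ is needed.

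First I would set $B \coloneqq A \otimes \Q$; since an étale sheaf is in particular a Nisnevich sheaf, $B$ is simultaneously an étale and a Nisnevich sheaf of $\Q$-vector spaces on $\Sm{k}$. As in the proof of \cref{lem:strict:reduction}, the rationalization $L_\Q HA$ has homotopy sheaves $\pi_i(HA) \otimes \Q$, hence lies in the heart of the homotopy t-structure and equals $HB$; consequently $L_\Q K(A,n) \cong \pLoop \Sigma^n HB$, the étale Eilenberg--MacLane sheaf $K(B,n) \in \ShvTopH{\et}{\Sm{k}}$, for every $n \ge 1$, while $L_\Q K(A,0) = L_\Q A \cong B$. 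So the statement reduces to: if $K(B,1)$ is $\AffSpc{1}$-invariant as an étale sheaf, then $K(B,n)$ is $\AffSpc{1}$-invariant for all $n \ge 0$.

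The key step is the comparison $R\Gamma_{\nis}(U, B) \xrightarrow{\ \simeq\ } R\Gamma_{\et}(U, B)$ for every $U \in \Sm{k}$. Write $R\epsilon_*$ for the derived pushforward along the étale-to-Nisnevich change of topology on $\Sm{k}$, so that $R\Gamma_{\et}(U,B) \cong R\Gamma_{\nis}(U, R\epsilon_* B)$; the obstruction to the comparison is $R^{>0}\epsilon_* B$, the Nisnevich sheaf whose stalk at a point $u$ of a smooth $k$-scheme $U$ is $H^{>0}_{\et}(\Spec \mathcal{O}^{\mathrm{h}}_{U,u}, B)$. As $\mathcal{O}^{\mathrm{h}}_{U,u}$ is henselian local with residue field $\kappa$, its small étale topos is equivalent to that of $\Spec \kappa$, so this group is the continuous Galois cohomology $H^{>0}_{\mathrm{cont}}(G_\kappa, V)$ of the absolute Galois group $G_\kappa$ with coefficients in the corresponding $\Q$-vector space stalk $V$ of $B$; and $H^{>0}_{\mathrm{cont}}(G_\kappa, V) = 0$, being the filtered colimit of the cohomologies of the finite quotients $G_\kappa/H$, each of which vanishes since $|G_\kappa/H|$ is invertible in $V$. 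Hence $R\epsilon_* B = \epsilon_* B = B$, giving the comparison. Therefore, for $n \ge 1$ and $U \in \Sm{k}$, $K(B,n)(U) \cong \Omega^\infty\bigl(R\Gamma_{\et}(U,B)[n]\bigr) \cong \Omega^\infty\bigl(R\Gamma_{\nis}(U,B)[n]\bigr) \cong K_{\nis}(B,n)(U)$ naturally in $U$ (and likewise $K(B,0) = B = K_{\nis}(B,0)$ on $\Sm{k}$), so $K(B,n)$ is $\AffSpc{1}$-invariant as an étale sheaf if and only if $K_{\nis}(B,n)$ is $\AffSpc{1}$-invariant as a Nisnevich sheaf.

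Finally, the proof concludes as follows: the hypothesis says $K_{\nis}(B,1)$ is $\AffSpc{1}$-invariant, i.e.\ $B$ is $1$-strictly (strongly) Nisnevich $\AffSpc{1}$-invariant, so by \cite[Theorem 4.46]{morel2012a1} it is strictly Nisnevich $\AffSpc{1}$-invariant; translating back through the comparison, $K(B,n)$ — equivalently $L_\Q K(A,n)$ — is $\AffSpc{1}$-invariant for every $n \ge 0$, which is exactly the claim. The step I expect to be the main obstacle is the bookkeeping in the comparison: carefully setting up the big-versus-small-site change of topology, the identification of its stalks with continuous Galois cohomology, and the compatibility of all this with hypercompletion, together with the minor point of pinning down $L_\Q K(A,0) \cong B$.
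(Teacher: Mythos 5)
Your proof is correct and follows essentially the same route as the paper: identify $L_\Q K(A,n)$ with the Eilenberg--MacLane sheaf $K(A_\Q,n)$, compare the étale and Nisnevich Eilenberg--MacLane sheaves of the $\Q$-linear sheaf $A_\Q$, and then invoke Morel's theorem on the Nisnevich side. The only substantive difference is that for the middle step the paper simply cites \cite[Proposition 5.27]{voevodsky2000cohomological} for the agreement of $K_{\nis}(A_\Q, n)$ with $K_{\et}(A_\Q,n)$, whereas you re-derive this from scratch by pushing forward along the change-of-topology morphism and killing the higher direct images stalkwise via the vanishing of continuous Galois cohomology with $\Q$-vector-space coefficients; that is a valid and more self-contained argument, at the cost of the extra bookkeeping you flag (stalks of the pushforward, invariance of the small \'etale topos of a henselian local ring, hypercompleteness). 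The other small divergence is citation only: you invoke Morel's original \cite[Theorem 4.46]{morel2012a1}, while the paper uses Bachmann's formulation \cite[Theorem 1.7]{bachmann2024stronglya1}.
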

\begin{proof}
    First note that for every $n \ge 1$ we have 
    $L_\Q K(A, n) \cong K(A_\Q, n)$ using e.g.\ \cite[Proposition 3.12]{mattis2024fracture}.
    It follows from \cite[Proposition 5.27]{voevodsky2000cohomological} that $K_{\nis}(A_\Q, n) \cong K_{\et}(A_\Q, n)$.
    But if $K_{\nis}(A_\Q, 1)$ is $\AffSpc{1}$-invariant,
    then so is $K_{\nis}(A_\Q, n)$ by Morel's theorem \cite[Theorem 1.7]{bachmann2024stronglya1}.
\end{proof}

\begin{prop} \label{lem:strict:at-the-characteristic}
    Let $k$ be a perfect field with $p = \operatorname{char}(k) > 0$, and
    $A \in \ShvTop{\et}{\Sm{k}, \Ab}$ be an étale sheaf of abelian groups.
    Suppose that $A$ is $p$-completely $3$-strictly étale $\AffSpc{1}$-invariant.
    Then $L_p HA \cong 0$ and $L_p K(A, n) = *$ for all $n \ge 1$.
    In particular, $A$ is $p$-completely strictly étale $\AffSpc{1}$-invariant,
    and $L_p HA$ is $\AffSpc{1}$-invariant.
\end{prop}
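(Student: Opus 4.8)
The plan is to observe that the hypothesis of the proposition is exactly the input required by \cref{lem:at-char:EM-vanishing}. Indeed, by definition, saying that $A$ is $p$-completely $3$-strictly étale $\AffSpc{1}$-invariant means precisely that $L_p K(A, 3) \in \ShvTopH{\et}{\Sm{k}}$ is $\AffSpc{1}$-invariant. Since $3 \ge 3$ and $\operatorname{char}(k) = p$, I would apply \cref{lem:at-char:EM-vanishing} with $i = 3$; it gives at once that $L_p K(A, n) \cong *$ for every $n \ge 1$ and that $L_p HA \cong 0$, which are the two displayed conclusions.

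It then only remains to unwind the definitions for the ``in particular'' clause, which is purely formal. The terminal sheaf $*$ is $\AffSpc{1}$-invariant, hence $L_p K(A, n)$ is $\AffSpc{1}$-invariant for all $n \ge 1$; by definition this says that $A$ is $p$-completely strictly étale $\AffSpc{1}$-invariant. Likewise, the zero spectrum is $\AffSpc{1}$-invariant, so $L_p HA \cong 0$ is an $\AffSpc{1}$-invariant sheaf of spectra.

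I do not expect any genuine obstacle here: all of the substance is already contained in \cref{lem:at-char:EM-vanishing}, which in turn draws on \cref{lem:at-char:HFp-module-case,lem:at-char:HZ-module-case} and exploits the Artin--Schreier sequence together with the stable vanishing result \cite[Theorem A.1]{bachmann2021remarksetalemotivicstable}. The only point to be careful about is the bookkeeping with the various ``$p$-completely $n$-strict'' notions, i.e.\ checking that the hypothesis matches the $\AffSpc{1}$-invariance of $L_p K(A,3)$ verbatim so that \cref{lem:at-char:EM-vanishing} applies directly.
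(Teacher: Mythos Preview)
Your proposal is correct and matches the paper's own proof, which simply states that the proposition is a reformulation of \cref{lem:at-char:EM-vanishing}. You have additionally spelled out the ``in particular'' clause, which is indeed purely formal as you say.
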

\begin{proof}
    This is a reformulation of \cref{lem:at-char:EM-vanishing}.
\end{proof}

\begin{thm} \label{lem:strictly:main-thm}
    Let $k$ be a perfect field with $\cd{}{k} < \infty$.
    Let $A \in \ShvTop{\et}{\Sm{k}, \Ab}$ be an étale sheaf of abelian groups.
    Assume that $A$ is $4$-strictly étale $\AffSpc{1}$-invariant.
    Then $A$ is strictly étale $\AffSpc{1}$-invariant.
\end{thm}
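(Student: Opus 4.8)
The plan is to reformulate the conclusion in terms of the Eilenberg--MacLane sheaf of spectra $HA \in \Stab{\ShvTopH{\et}{\Sm{k}}}$ and then verify it one localization at a time, feeding in the results proved earlier in this section. First I would observe that $A$ is strictly étale $\AffSpc{1}$-invariant if and only if $HA$ is $\AffSpc{1}$-invariant as a sheaf of spectra: a sheaf of spectra is $\AffSpc{1}$-invariant precisely when all its shifted infinite loop sheaves $\pLoop\Sigma^n E$ are (this is how $\AffSpc{1}$-invariance of sheaves of spectra is detected, cf.\ the proof of the lemma following \cref{lem:etale-mot:SHone-adjunction}, using that $\pLoop$ preserves $\AffSpc{1}$-invariant sheaves of spectra), and since $HA$ has homotopy concentrated in degree $0$ we have $\pLoop\Sigma^n HA \cong K(A, n)$ for $n \ge 1$ and $\pLoop HA \cong A \cong \Omega K(A, 1)$, while the negative shifts are trivial. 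Hence it suffices to prove that $HA$ is $\AffSpc{1}$-invariant, and for that I would invoke \cref{lem:etale-nilpotent:check-mod-p}: one must check that $HA\sslash p$ is $\AffSpc{1}$-invariant for every prime $p$ and that $L_\Q HA$ is $\AffSpc{1}$-invariant (the hypothesis $\cd{}{k}<\infty$ being what makes that lemma applicable over $k$).

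For the $p$-complete input, I first note that by \cref{lem:strict:loops} the hypothesis that $A$ is $4$-strictly étale $\AffSpc{1}$-invariant already gives that $A$ is $m$-strictly étale $\AffSpc{1}$-invariant for every $1 \le m \le 4$. If $p \neq \operatorname{char}(k)$, then \cref{lem:strict:p-completely-strict-if-4-strict} (applied with $m = 4$) shows that $L_p HA$ is $\AffSpc{1}$-invariant, so $HA\sslash p \cong L_p HA\sslash p$ (the unit $HA \to L_p HA$ is a $p$-equivalence, and $-\sslash p$ inverts $p$-equivalences) is $\AffSpc{1}$-invariant as the cofibre of a map between $\AffSpc{1}$-invariant sheaves of spectra. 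If $p = \operatorname{char}(k) > 0$, then by \cref{lem:strict:reduction} (the $n \ge 2$ clause with $n = 4$) the sheaf $A$ is $p$-completely $3$-strictly étale $\AffSpc{1}$-invariant, whence $L_p HA \cong 0$ by \cref{lem:strict:at-the-characteristic}; again $HA\sslash p \cong L_p HA\sslash p \cong 0$ is $\AffSpc{1}$-invariant. In either case $HA\sslash p$ is $\AffSpc{1}$-invariant.

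For the rational input, \cref{lem:strict:loops} gives that $A$ is $1$-strictly étale $\AffSpc{1}$-invariant, hence rationally $1$-strictly by \cref{lem:strict:reduction}, hence rationally strictly étale $\AffSpc{1}$-invariant by \cref{lem:strict:rationally-strict-if-1-strict}; that is, $L_\Q K(A, n)$ is $\AffSpc{1}$-invariant for all $n \ge 1$. Now $L_\Q HA$ is a connective sheaf of spectra with homotopy concentrated in degree $0$, so by the same criterion as above it is $\AffSpc{1}$-invariant as soon as $\pLoop\Sigma^n L_\Q HA$ is $\AffSpc{1}$-invariant for all $n \ge 0$. For $n \ge 1$ the sheaf of spectra $\Sigma^n HA$ is $1$-connective, so $\pLoop$ commutes with $L_\Q$ by \cite[Lemma 3.18]{mattis2024fracture} and $\pLoop\Sigma^n L_\Q HA \cong L_\Q\pLoop\Sigma^n HA \cong L_\Q K(A, n)$ is $\AffSpc{1}$-invariant; for $n = 0$ we get $\pLoop L_\Q HA \cong \Omega\pLoop\Sigma L_\Q HA \cong \Omega L_\Q K(A, 1)$, which is $\AffSpc{1}$-invariant since limits of $\AffSpc{1}$-invariant sheaves are $\AffSpc{1}$-invariant. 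Hence $L_\Q HA$ is $\AffSpc{1}$-invariant.

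Combining the last two paragraphs, \cref{lem:etale-nilpotent:check-mod-p} applies and shows that $HA$ is $\AffSpc{1}$-invariant, and therefore $A$ is strictly étale $\AffSpc{1}$-invariant by the first step. The genuine content of the argument is entirely contained in the inputs proved earlier — the rigidity theorem \cref{lem:etale-nilpotent:4-connective-nilpotent} (through \cref{lem:strict:p-completely-strict-if-4-strict}), Morel's theorem together with Voevodsky's comparison of rational Nisnevich and étale Eilenberg--MacLane objects (through \cref{lem:strict:rationally-strict-if-1-strict}), and the characteristic-$p$ vanishing (through \cref{lem:strict:at-the-characteristic}) — so the only real work here is the bookkeeping that assembles them via the fracture square. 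The one point requiring a little care is the passage between the spaces $K(A, n)$ and the sheaf of spectra $HA$ in low homotopical degrees, where $\pLoop$ fails to commute with $L_\Q$ and one must shift up to a $1$-connective object before localizing and then loop back down.
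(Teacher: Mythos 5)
Your proof is correct, and it uses the same substantive inputs as the paper's — \cref{lem:strict:loops}, \cref{lem:strict:reduction}, \cref{lem:strict:p-completely-strict-if-4-strict}, \cref{lem:strict:at-the-characteristic}, and \cref{lem:strict:rationally-strict-if-1-strict} — but the final assembly is genuinely different. The paper stays unstable throughout: for each $m \ge 1$ it applies the unstable arithmetic fracture square \cite[Theorem 8.7]{mattis2024fracture} to the sheaf $K(A,m)$ and then verifies $\AffSpc{1}$-invariance of the three remaining corners, which requires an explicit chain of equivalences to identify the corner $L_\Q\tau_{\ge 1}\prod_p L_p K(A,m)$ with $\tau_{\ge 1}\pLoop L_\Q\prod_p L_p\Sigma^m HA$. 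You instead repackage the whole conclusion as ``$HA$ is an $\AffSpc{1}$-invariant sheaf of spectra'' and invoke the stable fracture criterion \cref{lem:etale-nilpotent:check-mod-p} once. This buys you a shorter argument — no separate computation of the bottom-right corner, and no quantification over $m$ — at the modest cost of justifying the passage between $K(A,n)$ and $HA$ (the criterion that $E$ is $\AffSpc{1}$-invariant iff all $\pLoop\Sigma^n E$ are, from the proof of the lemma after \cref{lem:etale-mot:SHone-adjunction}) and the $n=0$ bookkeeping in the rational step, where $\pLoop$ only commutes with $L_\Q$ on $1$-connective objects and you correctly work around this by writing $\pLoop L_\Q HA \cong \Omega\pLoop\Sigma L_\Q HA$. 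The care you took there is exactly the right kind: the $p$-complete input \cref{lem:strict:p-completely-strict-if-4-strict} already produces the statement about $L_p HA$, so passing to spectra is natural, and your observation that $HA\sslash p \cong L_p HA\sslash p$ lets you feed that directly into \cref{lem:etale-nilpotent:check-mod-p}. One should just make sure (as you implicitly do) that \cref{lem:etale-nilpotent:check-mod-p} is applied over a base that is qcqs of finite Krull dimension and étale bounded, since its proof relies on \cref{lem:etale-mot:A1-stable-under-filtered}; for $S = \Spec k$ with $\cd{}{k} < \infty$ this holds.
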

\begin{proof}
    We have to show that $A$ is $m$-strictly étale $\AffSpc{1}$-invariant for every $m \ge 1$.
    From \cite[Theorem 8.7]{mattis2024fracture} we have a pullback square 
    \begin{center}
        \begin{tikzcd}
            K(A, m) \cartsymb \ar[r] \ar[d] & \tau_{\ge 1} \prod_p L_p K(A, m) \ar[d] \\
            L_\Q K(A, m) \ar[r] & L_\Q \tau_{\ge 1} \prod_p L_p K(A, m)\rlap{.} 
        \end{tikzcd}
    \end{center}
    We have to show that the top left object is $\AffSpc{1}$-invariant.
    As limits of $\AffSpc{1}$-invariant sheaves are $\AffSpc{1}$-invariant,
    it suffices to show that the other objects in the above diagram are $\AffSpc{1}$-invariant.
    That $L_p K(A, m)$ is $\AffSpc{1}$-invariant for every $p$ was shown in \cref{lem:strict:p-completely-strict-if-4-strict}
    if $p \neq \operatorname{char}(k)$, and in \cref{lem:strict:at-the-characteristic}
    if $p = \operatorname{char}(k)$, using that $L_p K(A, 3)$ is $\AffSpc{1}$-invariant by \cref{lem:strict:reduction}.
    Moreover, that $L_\Q K(A, m)$ is $\AffSpc{1}$-invariant is exactly \cref{lem:strict:rationally-strict-if-1-strict},
    again using \cref{lem:strict:reduction}.
    Now also $\tau_{\ge 1} \prod_p L_p K(A, m)$ is $\AffSpc{1}$-invariant as a 
    connected cover of a limit 
    of $\AffSpc{1}$-invariant sheaves, cf.\ \cref{lem:etale-motives:connected-cover-invariant}.
    For the object in the bottom right corner we compute 
    \begin{align*}
        L_\Q \tau_{\ge 1} {\prod}_p L_p K(A, m)
        &\cong L_\Q \tau_{\ge 1} {\prod}_p \tau_{\ge 1} \pLoop L_p \Sigma^m HA &\text{\cite[Corollary 3.18]{mattis2024unstable}} \\
        &\cong L_\Q \tau_{\ge 1} {\prod}_p \pLoop L_p \Sigma^m HA &\text{\cite[Lemma 4.2]{mattis2024fracture}}\\
        &\cong L_\Q \tau_{\ge 1} \pLoop {\prod}_p L_p \Sigma^m HA &(\pLoop \text{ is a right adjoint})\\
        &\cong \tau_{\ge 1} \pLoop L_\Q {\prod}_p L_p \Sigma^m HA &\text{\cite[Lemma 3.15]{mattis2024fracture}}.
    \end{align*}
    Now $L_p HA$ is $\AffSpc{1}$-invariant (again by \cref{lem:strict:p-completely-strict-if-4-strict}
    if $p \neq \operatorname{char}(k)$, and by \cref{lem:strict:at-the-characteristic} if $p = \operatorname{char}(k)$),
    and hence so is the product over all primes.
    Since $\pLoop$ and $\tau_{\ge 1}$ preserve $\AffSpc{1}$-invariant sheaves
    by \cref{lem:etale-mot:SHone-adjunction,lem:etale-motives:connected-cover-invariant}, 
    it thus suffices to show that $L_\Q$ on $\Sp(\ShvTop{\et}{\Sm{k}})$
    preserves $\AffSpc{1}$-invariant sheaves of spectra.
    This holds since $L_\Q E$ is given by a filtered colimit of a diagram involving only $E$ (cf.\ \cite[Lemma 2.6]{mattis2024fracture}),
    and $\AffSpc{1}$-invariant sheaves of spectra are closed under colimits, cf. \cref{lem:etale-mot:A1-stable-under-filtered}.
    This proves the theorem.
\end{proof}
      \label{sec:application:strict}

\appendix
\section{Nilpotent morphisms}
Let $\topos X$ be an $\infty$-topos.
In this section we discuss nilpotent morphisms in $\topos X$.
We essentially copy the contents from \cite{asok2022localization}
about nilpotent morphisms of motivic spaces
to the setting of an $\infty$-topos.

\begin{defn}\label{def:nilpotent:morphism-def}
    Let $f \colon E \to B$ be a morphism in $\topos X_*$.
    We say that $f$ is \emph{nilpotent} 
    if the fiber $\Fib{f}$ is connected,
    and the action of $\pi_1(E)$ on $\pi_n(\Fib{f})$ is nilpotent for every $n$.
\end{defn}

\begin{rmk} \label{rmk:nilpotent:morphism-to-ast}
    Let $X \in \topos X_*$.
    Then $X$ is nilpotent if and only if $X \to *$ is nilpotent.
\end{rmk}

\begin{lem} \label{lem:nilpotent:composition}
    Let $q \colon E_2 \to E_1$ and $p \colon E_1 \to E_0$
    be two morphisms in $\topos X_*$,
    such that $\Fib{q}$, $\Fib{p}$ and $\Fib{pq}$ are all connected.
    If $p$ and $q$ are nilpotent, then so is $pq$.
\end{lem}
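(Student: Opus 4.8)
The plan is to run the classical long exact sequence argument, now with homotopy \emph{sheaves} in place of homotopy groups, feeding off \cref{lem:p-comp:three-fibers}. First I would apply that lemma with $\Cat C = \topos X_*$ to the composable pair $E_2 \xrightarrow{q} E_1 \xrightarrow{p} E_0$; this produces a fiber sequence
\[
    \Fib{q} \to \Fib{pq} \to \Fib{p}
\]
in $\topos X_*$. (Connectedness of $\Fib{pq}$ is in fact already forced by this sequence together with the connectedness of $\Fib{q}$ and $\Fib{p}$, but it is assumed in the statement in any case.) I would then pass to the associated long exact sequence of homotopy sheaves
\[
    \cdots \to \underline{\pi}_{n+1}\Fib{p} \xrightarrow{\partial} \underline{\pi}_n\Fib{q} \xrightarrow{\iota} \underline{\pi}_n\Fib{pq} \xrightarrow{\rho} \underline{\pi}_n\Fib{p} \xrightarrow{\partial} \cdots
\]
and observe that, since all three fibers and the maps between them are built over $E_2$ (respectively compatibly with $q$), this is an exact sequence of sheaves of groups equivariant for $\underline{\pi}_1(E_2)$, where $\underline{\pi}_1(E_2)$ acts on $\underline{\pi}_n\Fib{pq}$ through $\Fib{pq}\to E_2 \to E_0$ — precisely the action appearing in the definition of nilpotence of $pq$ — on $\underline{\pi}_n\Fib{q}$ through $\Fib{q}\to E_2\to E_1$, and on $\underline{\pi}_n\Fib{p}$ by restriction along $q_*\colon \underline{\pi}_1(E_2)\to\underline{\pi}_1(E_1)$ of the action through $\Fib{p}\to E_1\to E_0$.

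Granting this, the remainder is formal. For each $n\ge 1$: $\underline{\pi}_n\Fib{q}$ is a nilpotent $\underline{\pi}_1(E_2)$-group because $q$ is nilpotent, and $\underline{\pi}_n\Fib{p}$ is a nilpotent $\underline{\pi}_1(E_2)$-group because $p$ is nilpotent and the restriction of a nilpotent action along a group homomorphism is again nilpotent. Exactness exhibits $\underline{\pi}_n\Fib{pq}$ as an extension of $\underline{\pi}_1(E_2)$-groups
\[
    1 \to \iota\big(\underline{\pi}_n\Fib{q}\big) \to \underline{\pi}_n\Fib{pq} \to \ker\!\big(\partial\colon \underline{\pi}_n\Fib{p}\to\underline{\pi}_{n-1}\Fib{q}\big) \to 1,
\]
in which the normal sub is an equivariant quotient of $\underline{\pi}_n\Fib{q}$ and the quotient is an equivariant subgroup of $\underline{\pi}_n\Fib{p}$. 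Since nilpotent $\underline{\pi}_1(E_2)$-groups are stable under passage to equivariant subgroups, equivariant quotients, and extensions, $\underline{\pi}_n\Fib{pq}$ is nilpotent; as this holds for every $n$ and $\Fib{pq}$ is connected, $pq$ is nilpotent.

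The main obstacle I anticipate is the bookkeeping in the first paragraph: one must check that the long exact sequence — including the connecting maps — is equivariant for the single group $\underline{\pi}_1(E_2)$, and that the resulting action on $\underline{\pi}_n\Fib{pq}$ is literally the one used to define nilpotence of $pq$; and for $n=1$ one works with possibly non-abelian sheaves of groups, so the notions of subobject, quotient, and extension must be read in the appropriate sense (normal $\underline{\pi}_1(E_2)$-subgroups, central series of actions, and so on). Both points are handled by the standard naturality of the fibration long exact sequence and the elementary theory of nilpotent group actions, exactly as in the classical treatment being transcribed \cite{asok2022localization}.
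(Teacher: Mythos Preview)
Your argument is correct and is precisely the classical proof of \cite[Proposition 3.3.2]{asok2022localization} transcribed to the $\infty$-topos setting, which is exactly what the paper's proof instructs one to do. The only addition is that you make the reference to \cref{lem:p-comp:three-fibers} explicit for the fiber sequence of fibers; otherwise the approaches coincide.
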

\begin{proof}
    The proof of \cite[Proposition 3.3.2]{asok2022localization}
    can also be used for the $\infty$-topos case.
\end{proof}

\begin{cor} \label{lem:nilpotent:morphism-of-object}
    Let $f \colon E \to B$ be a morphism in $\topos X_*$.
    Suppose that $E$ and $B$ are nilpotent,
    and that $\Fib{f}$ is connected.
    Then also $f$ is nilpotent.
\end{cor}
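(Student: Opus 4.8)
The plan is to follow the proof of the analogous statement for motivic spaces in \cite{asok2022localization}, transported to the $\infty$-topos $\topos X$ exactly as in \cref{lem:nilpotent:composition}. Set $F \coloneqq \Fib{f}$, which is connected by hypothesis, and consider the fiber sequence $F \to E \xrightarrow{f} B$ with its long exact sequence of homotopy sheaves. All maps in this sequence are equivariant for the natural action of $\pi_1(E)$, which acts on the $\pi_n(F)$ through the monodromy of the fiber sequence, on the $\pi_n(E)$ as usual, and on the $\pi_n(B)$ through the homomorphism $f_* \colon \pi_1(E) \to \pi_1(B)$; note that $f_*$ is surjective since $\pi_0(F) = \ast$ (this is where the connectedness of $F$ enters). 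Since $E$ and $B$ are nilpotent, $\pi_1(E)$ acts nilpotently on every $\pi_n(E)$, and $\pi_1(B)$ acts nilpotently on every $\pi_n(B)$; because $f_*$ is surjective, the latter action pulls back to a nilpotent action of $\pi_1(E)$ on each $\pi_n(B)$. I will also use that, $E$ being nilpotent, $\pi_1(E)$ is itself a nilpotent sheaf of groups.

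Now I would run the standard dévissage along the long exact sequence, using that the class of sheaves of groups with nilpotent $\pi_1(E)$-action is stable under invariant subobjects, quotients, and central extensions — a purely group-theoretic bookkeeping that is set up alongside the definition of nilpotent action. For $n \ge 2$ the exact sequence yields a short exact sequence of $\pi_1(E)$-modules \[ 0 \to \coker\!\big(\pi_{n+1}(E) \to \pi_{n+1}(B)\big) \to \pi_n(F) \to \ker\!\big(\pi_n(E) \to \pi_n(B)\big) \to 0, \] whose left term is a quotient of $\pi_{n+1}(B)$ and whose right term is a subobject of $\pi_n(E)$; both therefore carry a nilpotent $\pi_1(E)$-action, hence so does $\pi_n(F)$. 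For $n = 1$ one obtains instead a short exact sequence of sheaves of groups \[ 1 \to \ker\!\big(\pi_1(F) \to \pi_1(E)\big) \to \pi_1(F) \to \ker\!\big(\pi_1(E) \to \pi_1(B)\big) \to 1. \] Here the right-hand term is the normal subobject $\ker(f_*)$ of the nilpotent sheaf of groups $\pi_1(E)$, so it has nilpotent $\pi_1(E)$-action for the conjugation action; the left-hand term is a quotient of $\pi_2(B)$, hence has nilpotent $\pi_1(E)$-action, and it is moreover central in $\pi_1(F)$ because the conjugation action of $\pi_1(F)$ on itself is restricted along $\pi_1(F) \to \pi_1(E)$. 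Thus $\pi_1(F)$ is a central extension of sheaves of groups with nilpotent $\pi_1(E)$-action, and so has nilpotent $\pi_1(E)$-action itself. Combining the two cases, $f$ is nilpotent.

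The main obstacle is not the nilpotence bookkeeping but the verification that the classical facts about fiber sequences hold in an arbitrary $\infty$-topos: the long exact sequence of homotopy sheaves together with the $\pi_1(E)$-equivariance of all its maps, the identification of $\ker(\pi_1(F) \to \pi_1(E))$ with the image of the connecting morphism $\pi_2(B) \to \pi_1(F)$, and the fact that the self-action of $\pi_1(F)$ on its homotopy sheaves is restricted from $\pi_1(E)$ (which gives the centrality used above). When $\topos X$ has enough points all of these may be checked on stalks, reducing to the statements for anima; in general they are established exactly as in \cite{asok2022localization}, the route this appendix follows throughout.
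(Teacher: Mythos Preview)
Your proof is correct. The paper's argument is different in packaging but not in substance: it simply invokes \cref{lem:nilpotent:composition} with $q = f \colon E \to B$ and $p \colon B \to *$, using \cref{rmk:nilpotent:morphism-to-ast} to identify the nilpotence of $E$ and $B$ with that of the morphisms $pq$ and $p$, and then appeals to the two-out-of-three property from \cite[Proposition~3.3.2]{asok2022localization} (note that the direction actually needed here --- $p$ and $pq$ nilpotent $\Rightarrow$ $q$ nilpotent --- is the one coming from the cited reference rather than from the statement of \cref{lem:nilpotent:composition} itself). You instead unpack that direction explicitly via the long exact sequence, which is exactly how one proves it in \cite{asok2022localization}. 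So the two approaches coincide once one opens the black box; the paper's version is more modular, yours is more self-contained. One minor remark: the surjectivity of $f_* \colon \pi_1(E) \to \pi_1(B)$ is not needed for the pulled-back action of $\pi_1(E)$ on $\pi_n(B)$ to be nilpotent --- restriction of a nilpotent action along any homomorphism is again nilpotent --- so you may drop that observation.
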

\begin{proof}
    This is an application of \cref{lem:nilpotent:composition}
    for $q = f \colon E \to B$ and $p \colon B \to *$,
    using \cref{rmk:nilpotent:morphism-to-ast}
    and the assumption that $B$, $E$ and $\Fib{f}$ are connected.
\end{proof}

\begin{prop}[Principal Moore--Postnikov tower] \label{lem:nilpotent:refined-Post-Moore-tower}
    Let $f \colon E \to B$ be a morphism in $\topos X_*$.
    If $f$ is nilpotent, then there is a system of sheaves $E_i$ under $E$ and over $B$
    with $E_0 \cong B$,
    such that $E \cong \limil{i} E_i$,
    and such that there are fiber sequences 
    \begin{equation*}
        E_{i+1} \to E_i \to K(A_i, n_i)
    \end{equation*} 
    with $A_i \in \AbObj{\Disc{\topos X}}$ and $n_i \ge 2$,
    such that $n_i \to \infty$ as $i \to \infty$.
\end{prop}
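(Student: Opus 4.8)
The plan is to transport to the $\infty$-topos setting the classical construction of a principal refinement of the Moore--Postnikov tower of a nilpotent fibration, following \cite{asok2022localization} almost verbatim. There are two ingredients: (i) the \emph{relative} Postnikov tower of $f$, interpolating between $B$ and $E$, whose successive layers are fiber sequences with Eilenberg--MacLane fibers $K(\pi_n(\Fib f),n)$ carrying the $\pi_1(E)$-action, and hence are not principal in general; and (ii) a refinement of each such layer into \emph{finitely many} principal fibrations, made possible precisely because the $\pi_1(E)$-action is nilpotent.

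For (i) I would regard $f\colon E\to B$ as an object of the $\infty$-topos $\topos X_{/B}$ and set $P_n(f):=\tau_{\le n}(E\to B)$, its $n$-truncation computed there; concretely $P_n(f)$ is the fiberwise $n$-th Postnikov stage of $f$. This produces a tower $E\to\cdots\to P_n(f)\to P_{n-1}(f)\to\cdots\to P_0(f)\cong B$ of sheaves under $E$ and over $B$, with $E\cong\limil n P_n(f)$; since $\Fib f$ is connected the transition maps in this tower have connectivity tending to $\infty$, and the equivalence holds as in \cite{asok2022localization}. Passing to fibers over the basepoint of $B$, the layer $P_n(f)\to P_{n-1}(f)$ sits in a fiber sequence $K(A_n,n)\to P_n(f)\to P_{n-1}(f)$ with $A_n:=\pi_n(\Fib f)$, where for $n\ge 2$ the homotopy sheaf $A_n$ lies in $\AbObj{\Disc{\topos X}}$ and is equipped with a $\pi_1(E)$-action; this layer is classified by a map $P_{n-1}(f)\to B\operatorname{Aut}(K(A_n,n))$ and is principal exactly when the action is trivial. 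For $n=1$ one runs the analogous nonabelian discussion with a central series of the nilpotent group $\pi_1(\Fib f)$, whose layers have the form $K(\cdot,1)$; these are absent --- so that $n_i\ge 2$ as stated --- when $\Fib f$ is simply connected, which is the situation in all our applications.

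The main point is (ii). Since $\pi_1(E)$ acts nilpotently on $A_n$, choose a finite filtration $0=M_0\subseteq M_1\subseteq\cdots\subseteq M_r=A_n$ by $\pi_1(E)$-stable subsheaves of abelian groups on which the induced action on each subquotient $M_j/M_{j-1}$ is trivial. I would then refine $P_n(f)\to P_{n-1}(f)$ into a tower $P_{n-1}(f)=Q_0\leftarrow Q_1\leftarrow\cdots\leftarrow Q_r=P_n(f)$ with fiber sequences $K(M_j/M_{j-1},n)\to Q_j\to Q_{j-1}$, each of which --- the coefficients now carrying trivial action --- is principal, i.e.\ identifies $Q_j$ with $\Fib{Q_{j-1}\to K(M_j/M_{j-1},n+1)}$. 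Producing the $Q_j$ together with their classifying maps is the $\infty$-categorical form of the obstruction-theoretic argument of \cite[§3.3--§3.4]{asok2022localization}; I expect that assembling this refinement, and checking that the composite tower still computes $E$ as its limit, will be the main obstacle, but it is a faithful translation of that argument, using \cref{lem:nilpotent:composition} and \cref{lem:nilpotent:morphism-of-object} in place of their motivic analogues and the standard behaviour of truncation functors and geometric morphisms in an $\infty$-topos \cite{highertopoi}.

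Finally, splicing the refined towers over all $n\ge 1$ yields a single $\N$-indexed system $(E_i)_i$ with $E_0\cong B$ and fiber sequences $E_{i+1}\to E_i\to K(A_i,n_i)$, $A_i\in\AbObj{\Disc{\topos X}}$, $n_i\ge 2$; since each degree $n$ receives only the finitely many layers coming from $\pi_n(\Fib f)$, we get $n_i\to\infty$. The equivalence $E\cong\limil i E_i$ then follows by combining $E\cong\limil n P_n(f)$ with the finite factorizations $P_n(f)=Q_r\to\cdots\to Q_0=P_{n-1}(f)$, the tower $(E_i)_i$ being cofinal among the $(P_n(f))_n$.
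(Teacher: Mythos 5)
Your proposal reproduces the argument of \cite[Corollary~4.2.4]{asok2022localization}, which is exactly what the paper's proof does --- it simply cites that argument and asserts it transports to the $\infty$-topos setting --- so the two approaches coincide. One point needs correcting, though. You write that the $n=1$ layers, arising from a central series of $\pi_1(\Fib f)$, ``are absent --- so that $n_i\ge 2$ as stated --- when $\Fib f$ is simply connected, which is the situation in all our applications.'' This misreads the indexing in the statement: in the fiber sequence $E_{i+1}\to E_i\to K(A_i,n_i)$, the Eilenberg--MacLane sheaf is the \emph{base} (target of the classifying map), not the fiber of $E_{i+1}\to E_i$. A principal layer with fiber $K(A,n)$ therefore corresponds to $E_{i+1}=\Fib{E_i\to K(A,n+1)}$, i.e.\ $n_i=n+1$; your own formula identifying $Q_j$ with $\Fib{Q_{j-1}\to K(M_j/M_{j-1},n+1)}$ already says exactly this. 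In particular the layers coming from $\pi_1(\Fib f)$ yield $n_i=2$, which already meets the bound, and the subquotients $M_j/M_{j-1}$ are abelian because they are central in the successive quotients of the nilpotent group $\pi_1(\Fib f)$. So $n_i\ge 2$ and $A_i\in\AbObj{\Disc{\topos X}}$ hold automatically, and no simple connectivity hypothesis on $\Fib f$ is needed or present in the proposition. With that misreading removed, the rest of your construction (relative Postnikov tower in $\topos X_{/B}$, refinement of each layer by a finite $\pi_1(E)$-stable filtration with trivial action on subquotients, splicing and cofinality of the resulting tower) is a faithful account of the argument the paper invokes.
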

\begin{proof}
    The proof of \cite[Corollary 4.2.4]{asok2022localization}
    can also be used for the $\infty$-topos case.
\end{proof}

\begin{lem} \label{lem:nilpotent:refined-Post-Moore-tower-lower-bound}
    Let $f \colon E \to B$ be a morphism in $\topos X_*$.
    If $f$ is nilpotent, and $\Fib{f}$ is $k$-connective for some $k \ge 2$,
    then we may assume that in the situation of \cref{lem:nilpotent:refined-Post-Moore-tower}
    the integers $n_i$ are $\ge k$.
\end{lem}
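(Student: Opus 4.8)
The plan is to take the tower furnished by \cref{lem:nilpotent:refined-Post-Moore-tower} and delete its redundant layers, exploiting that any layer whose degree lies below the connectivity of $\Fib{f}$ must have trivial coefficient group. So I would first invoke \cref{lem:nilpotent:refined-Post-Moore-tower} to obtain sheaves $E_i$ under $E$ and over $B$ with $E_0 \cong B$, $E \cong \limil{i} E_i$, and fiber sequences $E_{i+1} \to E_i \to K(A_i, n_i)$ with $A_i \in \AbObj{\Disc{\topos X}}$, $n_i \ge 2$, and $n_i \to \infty$.

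The key point is that in the construction underlying \cref{lem:nilpotent:refined-Post-Moore-tower} (following the proof of \cite[Corollary 4.2.4]{asok2022localization}) one refines the ordinary Moore--Postnikov factorization of $f$, the stage that inserts $\pi_n(\Fib{f})$ being broken into principal fibrations whose coefficient groups are the successive quotients of a $\pi_1(E)$-equivariant filtration of $\pi_n(\Fib{f})$ with trivial action on those quotients; hence every layer $K(A_i, n_i)$ has $A_i$ a subquotient of $\pi_{n_i - 1}(\Fib{f})$. Now if $\Fib{f}$ is $k$-connective with $k \ge 2$, then $\pi_j(\Fib{f}) = 0$ for $j < k$, so $A_i = 0$ whenever $n_i \le k$. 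But a fiber sequence $E_{i+1} \to E_i \to K(0, n_i)$ exhibits an equivalence $E_{i+1} \simeq E_i$, so every such stage is redundant; I would delete all of them (composing the adjacent transition maps) and reindex. The resulting tower still has $E_0 \cong B$ --- its bottom term is joined to the original $E_0 = B$ by a chain of equivalences --- still has limit $E$, and its layers $K(A_i, n_i)$ now all have $A_i \neq 0$, which forces $n_i \ge k+1$ and in particular $n_i \ge k$; moreover the surviving degrees still tend to $\infty$, being a subsequence of the original ones (and if only finitely many survive the tower stabilizes, so the condition holds vacuously).

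The one step that will require care is the claim that the coefficient group $A_i$ produced by \cref{lem:nilpotent:refined-Post-Moore-tower} is a subquotient of $\pi_{n_i - 1}(\Fib{f})$; this is transparent once one unwinds the cited construction, but if one prefers not to peer inside it, the alternative --- and essentially equivalent --- approach is to re-run the inductive proof of \cite[Corollary 4.2.4]{asok2022localization} starting the induction at homotopy degree $k$ instead of degree $1$, since there is nothing to kill in degrees below $k$, the corresponding homotopy groups of $\Fib{f}$ being zero; this directly produces a tower with all $n_i \ge k$.
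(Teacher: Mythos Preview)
Your proposal is correct and aligns with the paper's proof, which is the single sentence ``This is immediate from the construction.'' Your second alternative---re-running the inductive construction of \cite[Corollary 4.2.4]{asok2022localization} starting at degree $k$ because the lower homotopy groups of $\Fib{f}$ vanish---is exactly what that sentence means, and your first approach (deleting the trivial layers afterward) is just a mild repackaging of the same observation.
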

\begin{proof}
    This is immediate from the construction.
\end{proof}     \label{sec:nilpotent-morphisms}

\bibliographystyle{alpha}
\bibliography{bibliography}

\end{document}